\renewcommand{\geq}{\geqslant}
\renewcommand{\leq}{\leqslant}
\newtheorem{theorem}{Theorem}[section]
\newtheorem{lemma}[theorem]{Lemma}
\newtheorem{proposition}[theorem]{Proposition}
\newtheorem{corollary}[theorem]{Corollary}
\newtheorem{conjecture}[theorem]{Conjecture}
\newtheorem{metaconjecture}[theorem]{Meta-conjecture}
\newtheorem*{conjecture*}{Conjecture}
\theoremstyle{definition}
\newtheorem{definition}[theorem]{Definition}
\newtheorem{example}[theorem]{Example}
\theoremstyle{remark}
\newtheorem{remark}[theorem]{Remark}
\newtheorem{question}[theorem]{Question}
\newcommand{\cM}{\mathcal{M}}
\newcommand{\NN}{\ensuremath{\mathbb{N}}} 
\newcommand{\PP}{\ensuremath{\mathbb{P}}}
\newcommand{\ZZ}{\ensuremath{\mathbb{Z}}}
\newcommand{\vv}{{\bm{\mathrm v}}}
\DeclareMathOperator{\sExt}{\mathcal{E}\!\mathit{xt}}
\DeclareMathOperator{\Gmot}{G_{mot}}
\DeclareMathOperator{\pHS}{HS^{pol}_{\mathbf{Q}}}
\DeclareMathOperator{\HS}{HS}
\DeclareMathOperator{\KS}{KS}
\DeclareMathOperator{\Ku}{Ku}
\DeclareMathOperator{\MT}{MT}
\newcommand{\C}{{\bf C}}
\newcommand{\R}{{\bf R}}
\newcommand{\Z}{{\bf Z}}
\newcommand{\N}{{\bf N}}
\newcommand{\Q}{{\bf Q}}
\newcommand{\ie}{\textit {i.e.}}
\newcommand{\cf}{\textit {cf.}~}
\newcommand{\vs}{\textit {vs.}~}
\newcommand{\resp}{\textit {resp.}~}
\newcommand{\alb}{\mathop{\rm alb}\nolimits} 
\newcommand{\Bl}{\mathop{\rm Bl}\nolimits} 
\newcommand{\CH}{\mathop{\rm CH}\nolimits} 
\newcommand{\ch}{\mathop{\rm ch}\nolimits} 
\newcommand{\End}{\mathop{\rm End}\nolimits} 
\newcommand{\Ext}{\mathop{\rm Ext}\nolimits} 
\newcommand{\Gr}{\mathop{\rm Gr}\nolimits} 
\newcommand{\Gal}{\mathop{\rm Gal}\nolimits} 
\newcommand{\Hilb}{\mathop{\rm Hilb}\nolimits}
\newcommand{\Hom}{\mathop{\rm Hom}\nolimits}
\newcommand{\id}{\mathop{\rm id}\nolimits} 
\newcommand{\im}{\mathop{\rm Im}\nolimits} 
\newcommand{\Ker}{\mathop{\rm Ker}\nolimits} 
\newcommand{\NS}{\mathop{\rm NS} \nolimits} 
\newcommand{\pr}{\mathop{\rm pr}\nolimits} 
\newcommand{\Sing}{\mathop{\rm Sing}\nolimits} 
\newcommand{\Spec}{\mathop{\rm Spec}\nolimits}
\newcommand{\Stab }{\mathop{\rm Stab}\nolimits} 
\newcommand{\Sym}{\mathop{\rm Sym}\nolimits} 
\newcommand{\td}{\mathop{\rm td}\nolimits} 
\newcommand{\GL}{\mathop{\rm GL}\nolimits}
\newcommand{\SO}{\mathop{\rm SO}\nolimits}
\newcommand{\Spin}{\mathop{\rm Spin}\nolimits}
\newcommand{\CSpin}{\mathop{\rm CSpin}\nolimits}
\renewcommand{\bar}{\overline}
\renewcommand{\tilde}{\widetilde}
\renewcommand{\hat}{\widehat}
\newcommand{\cart}{\ar@{}[dr]|\square} 
\renewcommand{\tilde}{\widetilde}
\newcommand{\AM }{\mathop{\rm {AM}}\nolimits}
\newcommand{\h}{\mathop{\mathfrak {h}}\nolimits} 
\newcommand{\CHM }{\mathop{\rm {CHM}}\nolimits}
\newcommand{\DM }{\mathop{\rm {DM}}\nolimits}
\newcommand{\GRM }{\mathop{\rm {GRM}}\nolimits}
\renewcommand{\1}{\mathop{\mathds{1}}\nolimits} 
\newcommand{\SmProj }{\mathop{\rm {SmProj}}\nolimits}
\newcommand{\Vect }{\mathop{\rm {Vect}}\nolimits}
\DeclareMathOperator{\LLV}{LLV}
\begin{document}

\title{On the motive of O'Grady's ten-dimensional hyper-K\"ahler varieties}

\author{Salvatore Floccari}
\address{Radboud University\\
 IMAPP\\
 Heyendaalseweg 135\\
 6525 AJ, Nijmegen\\
 Netherlands}
 \email{s.floccari@math.ru.nl}

\author{Lie Fu}
\address{Universit\'e Claude Bernard Lyon 1\\
Institut Camille Jordan\\
43 Boulevard du 11 novembre 1918\\
69622 Cedex Villeurbanne\\
France}
\address{$\&$}
\address{Radboud University\\
IMAPP\\
Heyendaalseweg 135\\
6525 AJ, Nijmegen\\
Netherlands}
\email{fu@math.univ-lyon1.fr}

\author{Ziyu Zhang}
\address{ShanghaiTech University\\
Institute of Mathematical Sciences\\
393 Middle Huaxia Road\\
Shanghai 201210\\
P.R.China}
\email{zhangziyu@shanghaitech.edu.cn}

\keywords{Moduli spaces, motives, K3 surfaces, hyper-K\"ahler varieties, Mumford--Tate conjecture}

\subjclass[2010]{14D20, 14C15, 14J28, 14F05, 14J32, 53C26}

\begin{abstract}
We investigate how the motive of hyper-K\"ahler varieties is controlled by weight-2 (or surface-like) motives via tensor operations. In the first part, we study the Voevodsky motive of singular moduli spaces of semistable sheaves on K3 and abelian surfaces as well as the Chow motive of their crepant resolutions, when they exist. We show that these motives are in the tensor subcategory generated by the motive of the surface, provided that a crepant resolution exists. This extends a recent result of B\"ulles to the O'Grady-10 situation. In the non-commutative setting, similar results are proved for the Chow motive of moduli spaces of (semi-)stable objects of the K3 category of a cubic fourfold. As a consequence, we provide abundant examples of hyper-K\"ahler varieties of O'Grady-10 deformation type satisfying the standard conjectures. In the second part, we study the Andr\'{e} motive of projective hyper-K\"{a}hler varieties. We attach to any such variety its defect group, an algebraic group which acts on the cohomology and measures the difference between the full motive and its weight-2 part. When the second Betti number is not 3, we show that the defect group is a natural complement of the Mumford--Tate group inside the motivic Galois group, and that it is deformation invariant. We prove the triviality of this group for all known examples of projective hyper-K\"{a}hler varieties, so that in each case the full motive is controlled by its weight-2 part. As applications, we show that for any variety motivated by a product of known hyper-K\"ahler varieties, 
all Hodge and Tate classes are motivated, the motivated Mumford--Tate conjecture \ref{conj:mtcMot} holds, and the Andr\'e motive is abelian. This last point completes a recent work of Soldatenkov and provides a different proof for some of his results. 
\end{abstract}

\maketitle
\setcounter{tocdepth}{1}
\tableofcontents

\section{Introduction}
An important source of constructions of higher-dimensional algebraic varieties is given by taking moduli spaces of (complexes of) coherent sheaves, subject to various stability conditions, on some lower-dimensional algebraic varieties. The topological, geometric, algebraic and arithmetic properties of the variety are  certainly expected to be reflected in and sometimes even control the corresponding properties of the moduli space. Such relations can be made precise in terms of cohomology groups (enriched with Hodge structures and Galois actions for instance) or more fundamentally, at the level of motives\footnote{We work with rational coefficients for cohomology groups and motives. All varieties are defined over the field of complex numbers if not otherwise specified.}. The prototype of such interplay we have in mind is del Ba\~no's result \cite{Ban01}, which says that the Chow motive of the moduli space $\cM_{r, d}(C)$ of stable vector bundles of coprime rank and degree on a smooth projective curve $C$ is a direct summand of the Chow motive of some power of the curve; in other words, the Chow motive of $\cM_{r, d}(C)$ is in the pseudo-abelian tensor subcategory generated by the Chow motive of $C$. In \cite{Ban01}, a precise formula for the virtual motive of $\cM_{r, d}(C)$ in terms of the virtual motive of $C$ was obtained, a result which has been recently lifted to the level of motives in a greater generality by Hoskins and Pepin-Lehalleur \cite{HPL18}.

In the realm of compact hyper-K\"ahler varieties \cite{beauville1983varietes} \cite{Huy99}, this philosophy plays an even more important role: it turns out that taking the moduli spaces of (complexes of) sheaves on Calabi--Yau  surfaces or their non-commutative analogues provides the most general and almost exhaustive way for constructing examples, see \cite{Muk84} \cite{O'G99} \cite{O'G03} \cite{PR13} \cite{Yos01} \cite{BM14a} \cite{BM14b} \cite{BLMS} and \cite{BLMNPS} etc. As the first important relationship between the K3 or abelian surface $S$ and a moduli space $\cM:=\cM_S(\vv)$ of stable (complexes of) sheaves on $S$ with (non-isotropic) Mukai vector $\vv$, the second cohomology of $\cM$ is identified, as a Hodge lattice, with the orthogonal complement of $\vv$ in $\tilde{H}(S, \Z)$, the Mukai lattice of $S$ \cite{O'G97} \cite{PR13}. Regarding the aforementioned result of del Ba\~no in the curve case, a relation between the motive of $S$ and the motive of $\cM$ is desired. 
The first breakthrough in this direction is the following result of B\"ulles \cite{Bue18} based on the work of Markman \cite{Mar02}.
\begin{theorem}[B\"ulles]\label{thm:Buelles}
Let $S$ be a projective K3 or abelian surface together with a Brauer class $\alpha$. Let $\cM$ be a smooth and projective moduli space of stable objects in $D^b(S, \alpha)$ with respect to some Bridgeland stability condition. Then the Chow motive of $\cM$ is contained in the pseudo-abelian tensor subcategory generated by the Chow motive of $S$.
\end{theorem}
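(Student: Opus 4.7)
The plan is to express the diagonal class of $\cM$ in $\CH^*(\cM \times \cM)_{\QQ}$ as a sum of compositions of correspondences factoring through powers of $S$; from such a decomposition, a standard projector argument realizes $\h(\cM)$ as a direct summand of $\bigoplus_i \h(S^{k_i})(r_i)$, yielding the desired membership in $\langle \h(S) \rangle^{\otimes}$.

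The first step is to produce a (twisted, quasi-)universal family $\mathcal{E}$ on $S \times \cM$, which exists in the Bridgeland-stable setting after possibly replacing $\mathcal{E}$ by a twisted sheaf compatible with the Brauer class $\alpha$; in any case the Mukai vector $v(\mathcal{E}) = \ch(\mathcal{E}) \sqrt{\td(S)}$ gives, after normalization by the rank, a well-defined untwisted class in $\CH^*(S \times \cM)_{\QQ}$. Its K\"unneth-type components $\gamma_i$ define correspondences $\gamma_i \colon \h(S)(r_i) \to \h(\cM)$ of Chow motives, and their transposes go the other way. The key input is a theorem of Markman \cite{Mar02}, which in cohomology expresses the diagonal $[\Delta_\cM]$ as an explicit polynomial in the components of $v(\mathcal{E})$ and $v(\mathcal{E})^{\vee}$ together with pulled-back algebraic classes on $S \times S$.

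The heart of the argument, due to B\"ulles, is to lift Markman's polynomial identity from cohomology to $\CH^*(\cM \times \cM)_{\QQ}$. Every ingredient — Chern classes of the universal family, Todd classes of $S$, and pulled-back algebraic cycles on $S \times S$ — is Chow-defined, and Markman's derivation uses Atiyah classes and derived pushforwards of universal complexes, operations that already take place at the Chow level. The main obstacle is precisely to rule out any cohomologically-trivial correction in the Chow-level identity. This can be handled by working universally over the moduli of $\mathcal{E}$ and using the freedom to modify $\mathcal{E}$ by line bundles pulled back from $\cM$ to kill residual terms; alternatively, one may exploit that $\h(S)$ is Kimura--O'Sullivan finite-dimensional (which is known for both K3 and abelian surfaces), so that any cohomologically trivial correspondence within $\langle \h(S) \rangle^{\otimes}$ is nilpotent and can be absorbed by iteratively refining the projector. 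Once the Chow-level diagonal decomposition is established, $\h(\cM)$ appears as a direct summand of a finite sum of Tate twists of tensor powers of $\h(S)$, completing the proof.
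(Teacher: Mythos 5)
There is a genuine gap: you misidentify where the real work lies. You treat Markman's diagonal identity as a cohomological statement and then propose machinery to ``lift'' it to $\CH^{*}(\cM\times\cM)_{\QQ}$, but the cited result already holds at the Chow level. Markman's proof in \cite{Mar02} works with Chow groups throughout; the Chow-level statement is explicit in \cite[Theorem~8]{Mar07}, and its extension to the twisted/Bridgeland setting in \cite{MZ17} likewise gives an equality $\Delta_{\cM}=c_{2m}\bigl(-\sExt^!_{\pi_{13}}(\pi_{12}^{*}\mathcal{E},\pi_{23}^{*}\mathcal{F})\bigr)$ in $\CH^{2m}(\cM\times\cM)$. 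There is therefore no ``cohomologically-trivial correction'' to rule out, and both of your proposed workarounds are red herrings. In particular, the Kimura-finiteness argument is circular: to invoke nilpotence of a cohomologically trivial correspondence \emph{inside} $\langle\h(S)\rangle_{\CHM}$ you would first need to know that this correction lies in that subcategory, which is exactly the membership you are trying to establish; as a class in $\CH^{2m}(\cM\times\cM)$ nothing forces it to be nilpotent.

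What your proposal omits is the actual content of B\"ulles' argument: why this Chow-level Chern class decomposes through powers of $S$. The argument introduces the two-sided ideal $I=\langle\,\beta\circ\alpha \mid \alpha\in\CH^{*}(\cM\times S^{k}),\ \beta\in\CH^{*}(S^{k}\times\cM),\ k\in\NN\,\rangle$ of the correspondence ring $\CH^{*}(\cM\times\cM)$, checks via Lieberman's formula that $I$ is closed under the intersection product (hence a $\Q$-subalgebra), computes the Chern character of the relevant $K$-theory class by Grothendieck--Riemann--Roch as $-(\pi_{13})_{*}(\pi_{12}^{*}\alpha\cdot\pi_{23}^{*}\beta)$ with $\alpha=\ch(\mathcal{E}^{\vee})\cdot\pi_{2}^{*}\sqrt{\td(S)}$ and $\beta=\ch(\mathcal{F})\cdot\pi_{2}^{*}\sqrt{\td(S)}$, so that every $\ch_{n}$ lies in $I$, and then passes from Chern characters to Chern classes by induction inside the subalgebra $I$. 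It is this multiplicativity-plus-GRR step, not any lift from cohomology, that yields $\Delta_{\cM}\in I$ and hence a decomposition $\Delta_{\cM}=\sum_{i}\delta_{i}\circ\gamma_{i}$ exhibiting $\h(\cM)$ as a direct summand of $\bigoplus_{i}\h(S^{k_{i}})(n_{i})$.
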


The analogous result on the level of Grothendieck motives or Andr\'e motives was obtained before by Arapura \cite{MR2271985}. It is also worth pointing out that B\"ulles' method gives a short and new proof of del Ba\~no's result using the classical analogue of Markman's result in the curve case proved by Beauville \cite{Bea95}. 

\subsection{Singular or open moduli spaces and resolutions}
\label{subsec:Intro1}
The first objective of the paper is to investigate the situations beyond Theorem \ref{thm:Buelles}.

More precisely, let us fix a projective K3 or abelian surface $S$ together with a Brauer class $\alpha$, a not necessarily primitive Mukai vector $\vv$ and a not necessarily generic stability condition $\sigma$ on $D^b(S, \alpha)$. We want to understand, in terms of the motive of $S$, the (mixed) motives \cite{Voe00} of the following varieties (or algebraic spaces\footnote{See the recent work \cite{AHLH} for the existence of good moduli spaces.}). 
\begin{itemize}
    \item  The (smooth but in general non-proper) moduli space of $\sigma$-stable objects $$\cM^\mathrm{st}:=\cM^\mathrm{st}_{S, \sigma}(\vv, \alpha).$$
    \item The (proper but in general singular) moduli space of $\sigma$-semistable objects $$\cM:=\cM_{S, \sigma}(\vv, \alpha).$$
    \item A crepant resolution $\tilde\cM$ of $\cM$, if exists.
\end{itemize}
Here is our expectation for their motives:
\begin{conjecture}\label{conj:SingularModuli}
Notation is as above. 
\begin{enumerate}[$(i)$]
    \item The motives and the motives with compact support (in the sense of Voevodsky) of $\cM^\mathrm{st}$ and $\cM$ are in the triangulated tensor subcategory generated by the motive of $S$ within the category of Voevodsky's geometric motives.
    \item  The Chow motive of $\tilde{\cM}$ (if it exists) is in the pseudo-abelian tensor subcategory generated by the motive of $S$ within the category of Chow motives.
\end{enumerate}
\end{conjecture}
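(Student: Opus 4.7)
The overall strategy is to reduce everything, via Jordan--H\"older stratification and excision, to Theorem \ref{thm:Buelles} applied to smooth projective moduli spaces of stable objects with \emph{primitive} Mukai vector. Write $\vv = k\ww$ with $\ww \in \tilde{H}(S,\Z)$ primitive, let $v_0 := \ww^2/2$, and let $\cM_\ww := \cM^{\mathrm{st}}_{S,\sigma}(\ww,\alpha)$ which, for $\sigma$ $\ww$-generic, is smooth projective and falls within the scope of Theorem \ref{thm:Buelles}. Any $\sigma$-semistable object $E$ of class $\vv$ admits a Jordan--H\"older filtration whose polystable graded $\bigoplus_i E_i^{\oplus n_i}$ has each $E_i$ stable of primitive Mukai vector proportional to $\ww$; this combinatorial data stratifies $\cM$ locally closed by partition type $\lambda \vdash k$ into pieces related to symmetric products of the $\cM_{\ww_i}$.

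\emph{Part (i).} I would use the Gysin/localization distinguished triangles in Voevodsky's category $\mathrm{DM}^{\mathrm{gm}}(\CC,\QQ)$: for each stratum $\cM_\lambda \hookrightarrow \overline{\cM_\lambda}$ one has
\[
M^c(\cM_\lambda) \longrightarrow M^c(\overline{\cM_\lambda}) \longrightarrow M^c(\overline{\cM_\lambda}\setminus\cM_\lambda) \longrightarrow M^c(\cM_\lambda)[1],
\]
and similarly for $M$. Inducting on the partial order of partitions, this reduces the Voevodsky motive of $\cM$ and of $\cM^{\mathrm{st}}$ to motives of symmetric products $\mathrm{Sym}^\lambda \cM_\ww$. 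The key inputs are then: (a) Theorem \ref{thm:Buelles} places $M(\cM_\ww)$ in the tensor subcategory generated by $M(S)$; and (b) the symmetrisation idempotent $\frac{1}{n!}\sum_\sigma \sigma$ realises $M(\mathrm{Sym}^n X)$ as a direct summand of $M(X)^{\otimes n}$, so symmetric powers stay inside any pseudo-abelian tensor subcategory containing $M(X)$.

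\emph{Part (ii).} A crepant resolution $\pi : \tilde{\cM} \to \cM$ exists only in the O'Grady--6 (on abelian surfaces) and O'Grady--10 (on K3 surfaces) cases, namely when $\vv = 2\ww$ with $\ww$ primitive of Mukai square $2$. The plan is to exploit the explicit construction due to O'Grady and Lehn--Sorger, which realises $\pi$ as a sequence of blow-ups along smooth centres (after controlling the singular strata of $\cM$, whose deepest locus is $\cM_\ww \times \cM_\ww / \gS_2$, and whose shallower double-curve locus fibres over the symmetric square). At each step I would apply the Chow-motivic blow-up formula
\[
M(\mathrm{Bl}_Z X) \isom M(X) \oplus \bigoplus_{i=1}^{\codim_X Z -1} M(Z)(i),
\]
together with the projective-bundle formula on the exceptional divisors. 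Each centre can be expressed motivically in terms of $\cM_\ww$, Hilbert schemes of points on $S$, and symmetric products; all of these are in the pseudo-abelian tensor subcategory generated by $M(S)$, by Theorem \ref{thm:Buelles}, de Cataldo--Migliorini for Hilbert schemes, and the symmetrisation argument used in part (i).

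\emph{Main obstacle.} The hard part is the OG10 case of part (ii): the singular locus of $\cM$ is itself singular, with a stratification by S-equivalence type (stable/polystable loci), so $\pi$ is not the blow-up of a single smooth centre, and one must track how the motive of each intermediate exceptional divisor is built out of $M(\cM_\ww)$ and motives of $\mathrm{Sym}^n S$. Establishing the required inductive motivic decomposition along the O'Grady/Lehn--Sorger tower, and matching it with the summands prescribed by the blow-up formula, is where genuinely new work is needed beyond \cite{Bue18} and \cite{Mar02}, and is precisely what extends B\"ulles' theorem to the singular/crepantly-resolved setting announced in the abstract.
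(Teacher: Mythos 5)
You are attempting to prove the full generality of what the paper labels a \emph{conjecture}; the paper itself only establishes Conjecture \ref{conj:SingularModuli} in the single case $\vv = 2\vv_0$ with $\vv_0$ primitive, $\vv_0^2 = 2$, and $\sigma$ $\vv_0$-generic (Theorem \ref{thm:main1}, via Corollaries \ref{cor:motive_tildeM} and \ref{cor:motive_Ms}). Your Jordan--H\"older-stratification plan for part (i) in arbitrary generality runs into a genuine obstruction: when $\sigma$ is not generic with respect to the constituent Mukai vectors, the pieces $\cM_{\ww_i}$ are not smooth projective, so Theorem \ref{thm:Buelles} does not apply to them; moreover the closures of the strata and the relevant ``symmetric-product-like'' parametrizing spaces are themselves singular in general, so the localization-triangle induction does not close on known inputs. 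The paper sidesteps this entirely by restricting to the one geometry where every stratum is explicitly understood.

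More seriously, in part (ii) your plan is to run the Chow-motivic blow-up formula along the O'Grady/Lehn--Sorger tower. But the formula
\[
\h\bigl(\Bl_Z X\bigr) \cong \h(X) \oplus \bigoplus_{i=1}^{\codim_X Z - 1} \h(Z)(-i)
\]
requires $X$ smooth, and the bases $\cM$ and $\bar{\cM}$ are singular. You cannot climb the tower from the bottom, and your ``main obstacle'' paragraph correctly senses this but leaves the gap open. The paper's actual argument is structurally different and avoids the issue: it first extends Markman's diagonal identity to the smooth \emph{quasi-projective} stable locus $\cM^\mathrm{st}$ (Theorem \ref{thm:diag-chern-k3} via \cite{MZ17} together with Proposition \ref{prop:diag-prod-k3}); then, in the proof of Proposition \ref{prop:motive_hatM}, it takes Zariski closures of those correspondences inside $\hat{\cM}\times S^{k_i}$, observes that the discrepancy with $\Delta_{\hat{\cM}}$ is supported on the boundary $\hat\Omega\cup\hat\Sigma$, and rewrites $\Delta_{\hat{\cM}}$ as a sum of correspondences factoring through $S^{k_i}$, $\hat\Sigma$ and $\hat\Omega$, whose motives are handled by Lemma \ref{lem:motive_hathat} via projective-bundle and quadric-bundle formulas. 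The blow-up formula enters only at the final step, to pass from the smooth $\hat{\cM}$ down to $\tilde{\cM}$ (Corollary \ref{cor:motive_tildeM}). Without Proposition \ref{prop:diag-prod-k3} and the closure-of-correspondences argument, your plan does not produce $\h(\hat{\cM})$ and therefore cannot reach $\h(\tilde{\cM})$.
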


Our first main result below confirms Conjecture \ref{conj:SingularModuli} in the presence of a crepant resolution. Recall that by \cite{KLS06}, this happens only in the case of O'Grady's ten-dimensional example \cite{O'G99} (extended by \cite{PR13}).
\begin{theorem}[=Corollaries \ref{cor:motive_tildeM} and \ref{cor:motive_Ms}]\label{thm:main1}
    Let $S$ be a projective K3 or abelian surface, let $\alpha$ be a Brauer class, let $\vv_0\in \tilde{H}(S)$ be a primitive Mukai vector with $\vv_0^2=2$, and let $\sigma$ be a $\vv_0$-generic stability condition on $D^b(S, \alpha)$. Denote by $\cM^\mathrm{st}$ (\resp $\cM$) the 10-dimensional moduli space of $\sigma$-stable (\resp semistable) objects in $D^b(S,\alpha)$ with Mukai vector $\vv=2\vv_0$. Let $\tilde{\cM}$ be any crepant resolution of $\cM$. Then
    the conclusions of Conjecture \ref{conj:SingularModuli} hold.
\end{theorem}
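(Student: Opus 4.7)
The plan is to treat the three varieties $\cM^{\mathrm{st}}$, $\cM$ and $\tilde\cM$ in sequence, combining B\"ulles' theorem (Theorem \ref{thm:Buelles}) with localisation triangles in Voevodsky's $\mathrm{DM}_{\mathrm{gm}}$ and the explicit geometry of the O'Grady--Lehn--Sorger symplectic resolution of $\cM$.

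For the smooth but possibly non-proper stable locus $\cM^{\mathrm{st}}$, I would adapt B\"ulles' argument. Its key input is Markman's theorem that the Chow ring of a smooth projective moduli space of stable sheaves is generated by Chern characters of a (quasi-)universal family, and the analogous generation statement holds for the open stable locus in the non-primitive case. Working inside $\mathrm{DM}_{\mathrm{gm}}$, the tautological correspondences between $\cM^{\mathrm{st}}$ and powers of $S$ produced by B\"ulles' argument descend to this open setting. Using that open immersions admit Gysin triangles for both $M(-)$ and $M^c(-)$, one concludes that $M(\cM^{\mathrm{st}})$ and $M^c(\cM^{\mathrm{st}})$ both lie in the triangulated tensor subcategory generated by $M(S)$.

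For the singular compact moduli space $\cM$, I would exploit the Jordan--H\"older stratification. With $\vv = 2\vv_0$ and $\vv_0^2 = 2$, the strictly semistable locus $\Sigma \subset \cM$ is parametrised by $\Sym^2(\cM_0)$, where $\cM_0 := \cM_{S,\sigma}(\vv_0,\alpha)$ is the smooth projective hyper-K\"ahler moduli space of $\mathrm{K3}^{[2]}$-type attached to $\vv_0$; there is a distinguished deeper stratum $\Omega \cong \cM_0$ of polystable objects of the form $E \oplus E$. Applying the original B\"ulles theorem to $\cM_0$ places $M(\cM_0)$, and hence $M(\Sym^2(\cM_0))$, in the tensor subcategory generated by $M(S)$. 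Combined with the localisation triangle for the open-closed decomposition $\cM = \cM^{\mathrm{st}} \sqcup \Sigma$, this yields the conclusion for $M(\cM)$ and $M^c(\cM)$. For the crepant resolution $\tilde\cM$, the construction of Lehn--Sorger and Perego--Rapagnetta realises $\tilde\cM$ as an iterated blowup of $\cM$ with smooth centres built from $\Omega$ and (the proper transform of) $\Sigma$, whose exceptional divisors fibre over these centres with smooth projective fibres. The blowup formula and the projective/Grassmannian bundle formulas for Chow motives then place $h(\tilde\cM)$ in the pseudo-abelian tensor subcategory generated by $h(S)$.

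The main obstacle is the third step: producing a sufficiently precise motivic description of each exceptional locus of the O'Grady-type resolution. Unlike in the Hilbert scheme or generalised Kummer cases covered by B\"ulles' original theorem, the varieties appearing as intermediate blowup centres and their exceptional loci are not themselves moduli spaces of stable objects, so a direct appeal to B\"ulles is unavailable. The technical core of the argument is therefore to analyse the transverse singularity types of $\cM$ along $\Sigma$ and along $\Omega$ carefully enough to identify each auxiliary variety in the resolution explicitly, and to place its motive in the tensor subcategory generated by $M(S)$ by a bootstrap from Step 2 or a mild extension of B\"ulles' method to the relevant auxiliary moduli data.
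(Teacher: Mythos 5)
Your overall toolkit is the right one and matches the paper's---Markman/Marian--Zhao diagonal decomposition \`a la B\"ulles, the O'Grady/Lehn--Sorger resolution geometry, blow-up and bundle formulas, and localisation triangles in $\DM$---but there are two genuine gaps, one logical and one substantive.

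First, the logical order. You treat $\cM^{\mathrm{st}}$ first, then $\cM$, then $\tilde\cM$, but your Step~1 does not actually close: Markman's diagonal decomposition on the quasi-projective $\cM^{\mathrm{st}}$ only produces a factorisation of the comparison map $M(\cM^{\mathrm{st}})\to M_c(\cM^{\mathrm{st}})$ through objects of $\langle M(S)\rangle_{\DM}$ (this is exactly the paper's Corollary~\ref{prop:diag-prod-k3} and the corollary after it), which does \emph{not} place $M(\cM^{\mathrm{st}})$ or $M_c(\cM^{\mathrm{st}})$ in that subcategory, since the comparison map is not an isomorphism when $\cM^{\mathrm{st}}$ is non-proper. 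If instead you try to get $M_c(\cM^{\mathrm{st}})$ from the Gysin triangle attached to $\cM^{\mathrm{st}}\hookrightarrow\cM$, you need $M_c(\cM)$, which is your Step~2---and Step~2 in turn invokes Step~1, so the scheme is circular. The paper avoids this by proving the statement \emph{first} for the smooth projective compactification $\hat\cM$ (Proposition~\ref{prop:motive_hatM}): the diagonal decomposition of $\cM^{\mathrm{st}}$ is corrected by boundary terms supported on $(\hat\cM\times\partial\hat\cM)\cup(\partial\hat\cM\times\hat\cM)$, exhibiting $\h(\hat\cM)$ as a direct summand of a sum of $\h(S^{k_i})(n_i)$, $\h(\hat\Sigma)$ and $\h(\hat\Omega)$ (with Tate twists). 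Only afterwards do $\tilde\cM$, $\cM$ and $\cM^{\mathrm{st}}$ follow: $\h(\tilde\cM)$ is a direct summand of $\h(\hat\cM)$ because $\hat\cM=\Bl_{\tilde\Omega}(\tilde\cM)$ with $\tilde\Omega$ smooth (Corollary~\ref{cor:motive_tildeM}), and the mixed motives of $\cM^{\mathrm{st}}$ and $\cM$ are deduced by stratifying $\hat\cM$ and chasing localisation triangles (Corollary~\ref{cor:motive_Ms}). Note here a second slip: you describe $\tilde\cM$ as an iterated blowup of $\cM$ with smooth centres, but the iterated blowup is $\hat\cM$; $\tilde\cM$ is obtained from $\hat\cM$ by a contraction, and the Lehn--Sorger identification $\tilde\cM=\Bl_{\Sigma}\cM$ has a \emph{singular} centre inside a singular ambient space, so the Manin blow-up formula is not applicable there---one has to go ``down'' from $\hat\cM$ using $\hat\cM=\Bl_{\tilde\Omega}\tilde\cM$.

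Second, you flag the identification of the exceptional loci as ``the technical core'' but leave it undone, and you claim ``a direct appeal to B\"ulles is unavailable'' for the centres and exceptional divisors---this is more pessimistic than necessary. All the auxiliary spaces that appear are explicit bundles over moduli spaces to which B\"ulles' Theorem~\ref{thm:Buelles} \emph{does} apply: $\Omega\cong\cM_{S,\sigma}(\vv_0,\alpha)=:\cM_0$ is a smooth projective moduli of stable objects for the primitive vector $\vv_0$; $\bar\Sigma\cong\Hilb^2(\cM_0)$ is handled by the $\Hilb^2$-as-quotient-of-blowup lemma (Lemma~\ref{lem:Hilb2}); $\tilde\Omega$ is a $3$-dimensional quadric bundle over $\Omega$; $\hat\Omega$ is a $\PP^2$-bundle over $\tilde\Omega$; $\hat\Sigma$ is a $\PP^1$-bundle over $\bar\Sigma$; and $\hat\Sigma\cap\hat\Omega$ is a conic bundle over $\tilde\Omega$. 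Quadric/conic and projective bundle formulas then place all of these in $\langle\h(S)\rangle_{\CHM}$ once B\"ulles has handled $\cM_0$ (this is Lemma~\ref{lem:motive_hathat}). Carrying out this bookkeeping is exactly what your proposal postpones, and it is the piece that makes the whole argument close.

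Finally, a useful technical point you may want in your toolbox: the paper's second proof of Proposition~\ref{prop:motive_hatM} works by stratifying $\hat\cM$ and bouncing between $\DM$ and $\CHM$; this relies on Proposition~\ref{prop:GeneratedCats} (a weight-structure argument of Wildeshaus/Bondarko) to pass from ``the Voevodsky motive lies in the triangulated tensor subcategory generated by $M(S)$'' back to ``the Chow motive lies in the pseudo-abelian tensor subcategory generated by $\h(S)$''; without this elimination-of-cones step, the $\DM$-level conclusion for $\hat\cM$ would not directly give the Chow-motivic statement you need for $\tilde\cM$.
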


Note that by the result of Rie{\ss}   \cite{Rie14}, birational hyper-K\"ahler varieties have isomorphic Chow motives, hence we only need to treat one preferred crepant resolution, namely the one constructed by O'Grady \cite{O'G99}.

\begin{remark}[Hodge numbers of OG10]
The Hodge numbers of hyper-K\"ahler varieties of OG10-type are recently computed by de Cataldo--Rapagnetta--Sacc\`a in \cite{dCRS} via the decomposition theorem and a refinement of Ng\^o's support theorem. A representation theoretic approach was discovered shortly after by Green--Kim--Laza--Robles \cite[Theorem 3.26]{GKLR}, where the vanishing of the odd cohomology is required to conclude. Note that Theorem \ref{thm:main1} implies in particular the triviality of the odd cohomology of hyper-K\"ahler varieties of OG10-type and hence allows \cite{GKLR} to obtain an independent proof of  \cite[Theorem A]{dCRS}; see \cite[Remark 3.30]{GKLR}.
\end{remark}

\subsection{Non-commutative Calabi--Yau ``surfaces''}
We see in the above setting that the Calabi--Yau surface plays its role almost entirely through its derived category and the second goal of the paper is to extend Theorem \ref{thm:Buelles} and the results of \S\ref{subsec:Intro1}  to the non-commutative setting.

Indeed, it has been realized since \cite{BLMS} that one can develop an equally satisfactory theory of moduli spaces starting with a 2-Calabi--Yau category $\mathcal{A}$, \ie~an $\Ext$-finite saturated triangulated category in which the double shift $[2]$ is a Serre functor, equipped with Bridgeland stability conditions. Such a category often comes as an admissible subcategory of the derived category of a Fano variety, as the ``key" component (the so-called \textit{Kuznetsov component}) in some semi-orthogonal decomposition. We expect the similar relations as in \S\ref{subsec:Intro1}
between the motive of the moduli space of stable objects in this category $\mathcal{A}$ and the (non-commutative) motive of $\mathcal{A}$, hence also the motive of the Fano variety. 

To be more precise, let us leave the general technical results to \S\ref{sec:NCK3} and stick in the introduction to the most studied example of such 2-Calabi--Yau categories, namely the Kuznetsov component of  the derived category of a cubic fourfold. Let $Y$ be a smooth cubic fourfold and let $\Ku(Y)\colon\!\!\!\!=\langle \mathcal{O}_Y,\mathcal{O}_Y(1),\mathcal{O}_Y(2)\rangle^{\perp}=\{E\in D^b(Y)~\mid~ \Ext^*(\mathcal{O}_Y(i), E)=0 \text{ for } i=0, 1, 2\}$ be its Kuznetsov component, which is a K3 category.\footnote{A K3 category is a 2-Calabi--Yau category whose Hochschild homology coincides with that of a K3 surface.} One can associate with it a natural Hodge lattice $\tilde{H}(\Ku(Y))$ using topological K-theory \cite{AT14}. In \cite{BLMS}, a natural stability condition on $\Ku(Y)$ is constructed and by the general theory of Bridgeland \cite{Bri07}, we have  at our disposal a connected component of the manifold of stability conditions, denoted by $\Stab^\dagger(\Ku(Y))$.

Our second main result generalizes B\"ulles' Theorem \ref{thm:Buelles} to this non-commutative setting:
\begin{theorem}[Special case of Theorem \ref{thm:NCK3_smooth}]\label{thm:main2}
Let $Y$ be a smooth cubic fourfold, let $\Ku(Y)$ be its Kuznetsov component, let $\vv\in \tilde{H}(\Ku(Y))$ be a primitive Mukai vector, and let $\sigma\in \Stab^\dagger(\Ku(Y))$ be a $\vv$-generic stability condition. Then the Chow motive of the projective hyper-K\"ahler manifold $\cM:=\cM_{\Ku(Y), \sigma}(\vv)$ is in the pseudo-abelian tensor subcategory generated by the Chow motive of~$Y$.
\end{theorem}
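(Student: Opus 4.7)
The plan is to adapt B\"ulles' strategy behind Theorem \ref{thm:Buelles} to the non-commutative setting, replacing the Calabi--Yau surface $S$ by the Kuznetsov component $\Ku(Y)$, and exploiting the fact that $\Ku(Y)$ sits inside $D^b(Y)$ via the semi-orthogonal decomposition $D^b(Y) = \langle \mathcal{O}_Y, \mathcal{O}_Y(1), \mathcal{O}_Y(2), \Ku(Y)\rangle$. The proof has three essentially independent ingredients: construction of a universal family on $\cM \times Y$, a Markman-type tautological generation theorem in Chow, and a formal motivic repackaging.

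First, since $\vv$ is primitive and $\sigma$ is $\vv$-generic, standard results on moduli of stable objects in $\Ku(Y)$ produce a (possibly quasi-) universal family $\mathcal{E}$, which via the embedding $\Ku(Y) \hookrightarrow D^b(Y)$ I would view as an object of $D^b(\cM \times Y)$. Rational twisting factors can be absorbed into $\operatorname{ch}(\mathcal{E}) \in \CH^*(\cM \times Y)_{\QQ}$. The K\"unneth components of this class yield correspondences $\cM \vdash Y$, whose iterated compositions produce morphisms $\mathfrak{h}(\cM) \to \mathfrak{h}(Y)^{\otimes n}(k)$ for various $n$ and $k$.

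Second, the heart of the argument is to establish a non-commutative analogue of Markman's tautological surjectivity: I want to show that $\CH^*(\cM)_{\QQ}$ is generated as a $\QQ$-algebra by classes of the form $(p_{\cM})_{*}(\operatorname{ch}(\mathcal{E})^k \cdot p_Y^* \alpha)$ for $\alpha \in \CH^*(Y)_{\QQ}$. Granted this, B\"ulles' formal machinery applies with little change: one writes the diagonal class $[\Delta_\cM] \in \CH^*(\cM \times \cM)_{\QQ}$ in terms of these tautological classes, thereby exhibiting $\id_{\mathfrak{h}(\cM)}$ as a sum of compositions factoring through finite sums of motives of the form $\mathfrak{h}(Y)^{\otimes n}(k)$. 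Taking appropriate idempotents realizes $\mathfrak{h}(\cM)$ as a direct summand of a motive in the pseudo-abelian tensor subcategory generated by $\mathfrak{h}(Y)$.

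The main obstacle is precisely the Markman-type generation statement for moduli on $\Ku(Y)$. Markman's original proof uses in an essential way the specific geometry of K3 surfaces, in particular the existence of twistor families and deformations into Hilbert schemes. To transfer this to the non-commutative setting, my preferred route is to specialize $Y$ to a cubic fourfold lying in a Hassett divisor $\mathcal{C}_d$ for which $\Ku(Y) \simeq D^b(S, \alpha)$ for a (twisted) K3 surface $(S, \alpha)$, invoke B\"ulles' theorem there, and then promote the generation statement to arbitrary cubics by a deformation argument in families of stability conditions along the lines of Bayer--Lahoz--Macr\`i--Nuer--Perry--Stellari, using that tautological relations in Chow deform flatly in smooth families and that the relevant Chow-theoretic Markman surjection is constructible enough to spread out. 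Alternatively, one might bypass specialization entirely and adapt Markman's monodromy argument intrinsically, using the action of the group of autoequivalences of $\Ku(Y)$ on the moduli space, but this direct approach looks considerably more delicate.
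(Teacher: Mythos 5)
Your three-ingredient plan — universal family, Markman-type diagonal identity, B\"ulles' formal repackaging — matches the paper's strategy exactly, and the first and third parts are fine. The divergence is in how you handle the second ingredient, and that is where your proposal has a genuine gap.

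You claim that Markman's proof ``uses in an essential way the specific geometry of K3 surfaces, in particular the existence of twistor families and deformations into Hilbert schemes,'' and on this basis you propose to specialize $Y$ into a Hassett divisor where $\Ku(Y)\simeq D^b(S,\alpha)$, invoke B\"ulles there, and promote the identity back to a general cubic by deformation. The first issue is that your diagnosis of the obstacle is off. The version of Markman's theorem that the paper actually relies on is the Marian--Zhao extension \cite{MZ17}, whose proof (as the paper observes in the proof of Proposition~\ref{prop:diag-chern-cubic4}) only uses the Serre duality of the ambient 2-Calabi--Yau category, smoothness and deformation theory of the moduli of stable objects, and standard Whitney/Porteous-type manipulations with the relative $\mathcal{E}\!xt$-complex; there is no appeal to twistor deformations or Hilbert schemes in that argument. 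Consequently the diagonal identity in Chow transfers to $\Ku(Y)$ (and indeed to any admissible 2-CY subcategory of $D^b(Y)$) with essentially no change, which is exactly what the paper does.

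The second and more serious problem is that your proposed workaround would not actually work even if the obstacle were real. The diagonal decomposition is an equality of cycle classes in $\CH^*(\cM\times\cM)_{\QQ}$; such identities do \emph{not} deform flatly in families. Chow groups of the fibers of a smooth projective family do not form a local system, and there is no specialization/generization principle that lets one promote a Chow identity verified on the Hassett locus (a countable union of proper divisors, hence dense only in the analytic or countable-Zariski sense) to the very general cubic. Spreading out goes the wrong direction: one can specialize a class from the generic fiber to an open dense set of special fibers, but one cannot deduce an identity at the generic point from its validity on a measure-zero dense subset. Your assertion that ``tautological relations in Chow deform flatly in smooth families and \dots spread out'' is precisely the unjustified step, and it is false in this generality. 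The paper avoids the issue entirely by proving the Markman-type identity intrinsically in the 2-Calabi--Yau category, which is both simpler and applicable to the general Theorem~\ref{thm:NCK3_smooth} (e.g.\ Gushel--Mukai or Debarre--Voisin Kuznetsov components) where no analogue of the Hassett specialization is available.
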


\begin{remark}
Note that by the recent work of Li--Pertusi--Zhao \cite{LPZ18}, the moduli spaces considered in Theorem \ref{thm:main2} already include the hyper-K\"ahler fourfold $F(Y)$ constructed as Fano variety of lines in $Y$ \cite{BD85} and the hyper-K\"ahler eightfold $Z(Y)$ constructed from twisted cubics in $Y$ (when $Y$ does not contain a plane) \cite{LLSvS}. In the first case, the conclusion of Theorem \ref{thm:main2} can be deduced from the earlier work of Laterveer \cite{Lat17}; in the second case, our approach was speculated in \cite[Remark 2.7]{CCL18}. Nevertheless, Theorem \ref{thm:main2} applies to the infinitely many complete families of projective hyper-K\"ahler varieties recently constructed by Bayer \textit{et al.} \cite{BLMNPS}.
\end{remark}

Just as in \S\ref{subsec:Intro1}, for non-primitive Mukai vectors or non-generic stability conditions, the moduli space of stable (\resp semistable) objects $\cM^{\mathrm{st}}:=\cM^{\mathrm{st}}_{\Ku(Y), \sigma}(\vv)$ (\resp $\cM:=\cM_{\Ku(Y), \sigma}(\vv)$) is in general not proper (\resp smooth). We expect the following analogy of Conjecture \ref{conj:SingularModuli} in this non-commutative setting.
\begin{conjecture}[Special case of Conjecture \ref{conj:SingularModuli_NC}]
\label{conj:cubic_singular}
Notation is as above. 
\begin{enumerate}[$(i)$]
    \item The motives and the motives with compact support (in the sense of Voevodsky) of $\cM^\mathrm{st}$ and $\cM$ are in the triangulated tensor subcategory generated by the motive of $Y$ within the category of Voevodsky's geometric motives.
    \item  If there exists a crepant resolution $\tilde{\cM}\to \cM$, then the Chow motive of $\tilde{\cM}$ is in the pseudo-abelian tensor subcategory generated by the motive of $Y$ within the category of Chow motives.
\end{enumerate}
\end{conjecture}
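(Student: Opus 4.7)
The plan is to transfer the OG10-case strategy used for Theorem \ref{thm:main1} to the Kuznetsov-component setting, with Theorem \ref{thm:main2} playing the role that B\"ulles' theorem played there. I would first establish the conjecture in the non-commutative analogue of the OG10 situation (primitive $\vv_0$ with $\vv_0^2=2$ and $\vv=2\vv_0$), where a crepant resolution is expected to exist by extending Perego--Rapagnetta to $\Ku(Y)$; the general case is then conditional on a non-commutative counterpart of Kaledin--Lehn--Sorger, which one hopes to reduce to the commutative one by local deformation theory inside $\Ku(Y)$.

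For part $(i)$, I would exploit the natural stratification of $\cM$ by $S$-equivalence classes: a point of a given stratum corresponds to a polystable object whose Jordan--H\"older factors are $\sigma$-stable with primitive Mukai vectors $\vv_i$ of strictly smaller norm. A Luna-type slice argument for the moduli stack identifies the closed strata, up to finite-group quotients, with products of smooth moduli spaces $\cM^{\mathrm{st}}_{\Ku(Y),\sigma}(\vv_i)$, each of which already lies in the tensor subcategory generated by $\h(Y)$ by Theorem \ref{thm:main2}. Voevodsky's localisation (Gysin) triangles attached to the stratification, combined with the compatibility of rational motives with symmetric quotients, then piece the Voevodsky motive and the motive with compact support of $\cM$ out of these known building blocks. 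The motive of the open subscheme $\cM^{\mathrm{st}}$ is then controlled as the complement of the deeper strata.

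For part $(ii)$, the strategy is to imitate the proof of Theorem \ref{thm:main1}: construct a sufficiently explicit crepant resolution $\tilde\cM\to\cM$ as an iterated blow-up whose successive centres are projective bundles over products of smooth-proper moduli spaces $\cM^{\mathrm{st}}_{\Ku(Y),\sigma}(\vv_i)$ of primitive Mukai vectors, following O'Grady's original recipe and its transposition to the Kuznetsov component. Repeated application of the Chow-motivic blow-up formula then expresses $\h(\tilde\cM)$ as an explicit combination of motives of these centres and of the exceptional divisors, all of which are controlled by Theorem \ref{thm:main2} and hence lie in the pseudo-abelian tensor subcategory generated by $\h(Y)$. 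Rie{\ss}'s theorem on birational hyper-K\"ahler manifolds makes the choice of crepant resolution immaterial.

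The principal obstacle is geometric rather than motivic: to execute part $(ii)$ one needs a full non-commutative analogue of the local symplectic-singularity description of $\cM$, together with an explicit crepant resolution whose exceptional loci admit the bundle description above. In the commutative case this rests on Lehn--Sorger's local model and O'Grady's detailed blow-up analysis; in $\Ku(Y)$ the corresponding input requires a local $\Ext$-analysis of polystable objects together with the construction of a (twisted) universal family that is well behaved under the slice maps. Once such a description is secured, the motivic decomposition in $(ii)$ becomes formal; the Luna-type statement needed to pull the Voevodsky argument of $(i)$ through in the non-commutative setting should follow from the same local model.
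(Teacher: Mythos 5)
Your high-level plan is the paper's: combine the non-commutative Markman/B\"ulles input (Theorem \ref{thm:NCK3_smooth}) with O'Grady's explicit resolution geometry and conclude as in \S\ref{sec:OG10}, with Rie{\ss} making the choice of crepant resolution irrelevant. But note that the statement is a conjecture, proved in the paper only in the ten-dimensional case of Theorem \ref{thm:main3} ($Y$ very general, $\vv=2(\lambda_1+\lambda_2)$, $\sigma$ generic), and the closing of what you call the ``principal obstacle'' is not something the paper re-proves: the description of the singular moduli space (singular locus $\Sym^2\cM(\vv_0)$, diagonal $\cM(\vv_0)$) and the existence of the O'Grady-type crepant resolution in the Kuznetsov setting are taken from Li--Pertusi--Zhao \cite{LPZ19}. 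Your proposal leaves exactly this input as an open hypothesis (a non-commutative Perego--Rapagnetta/Kaledin--Lehn--Sorger), so as written it does not prove even the special case, let alone the general conjecture, which remains open in the paper as well.

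There are also two concrete gaps in the motivic part. For $(ii)$, you cannot ``repeatedly apply the Chow-motivic blow-up formula'': the intermediate blow-ups $\bar\cM=\Bl_\Omega\cM$ and $\hat\cM=\Bl_{\bar\Sigma}\bar\cM$ are blow-ups of \emph{singular} varieties, where Manin's formula is unavailable. The paper's mechanism is different: one first controls $\h(\hat\cM)$ either by spreading the diagonal decomposition of $\Delta_{\cM^{\mathrm{st}}}$ (Proposition \ref{prop:diag-chern-cubic4}) to the compactification, with correction terms supported on the boundary divisors controlled by the analogue of Lemma \ref{lem:motive_hathat}, or by the $\DM$-localization argument combined with the weight-structure Proposition \ref{prop:GeneratedCats}; only afterwards does one use the blow-up formula, for the smooth blow-up $\hat\cM=\Bl_{\tilde\Omega}\tilde\cM$, to exhibit $\h(\tilde\cM)$ as a direct summand of $\h(\hat\cM)$. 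For $(i)$ in the generality you aim at, your stratification argument is circular and its building blocks are not covered by Theorem \ref{thm:NCK3_smooth}: when $\sigma$ is not generic for the Jordan--H\"older vectors $\vv_i$, the loci $\cM^{\mathrm{st}}_{\Ku(Y),\sigma}(\vv_i)$ are smooth but non-proper, so that theorem does not apply to them; and the open stratum $\cM^{\mathrm{st}}$ is not produced by the localization triangles --- its $M_c$ must be controlled independently, which in the paper is done precisely via the smooth projective compactification $\hat\cM$ with controlled boundary (Corollary \ref{cor:motive_Ms} and its non-commutative analogue). This is why the paper's unconditional statement is restricted to the case where the crepant resolution and its explicit stratification are available.
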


Analogously to Theorem \ref{thm:main1}, our third result establishes Conjecture \ref{conj:cubic_singular} in the ten-dimensional situation studied in \cite{LPZ19}, where a crepant resolution of $\cM$ exists (it is again of O'Grady-10 deformation type). Recall that $\tilde{H}(\Ku(Y))$ contains (and it is equal to, if $Y$ is very general) a canonical $A_2$-lattice generated by $\lambda_1$ and $\lambda_2$ (see \cite{BLMS} for the notation).

\begin{theorem}
    \label{thm:main3}
    Notation is as above. Assume that $Y$ is very general.  Let the Mukai vector $\vv=2\vv_0$ with $\vv_0=\lambda_1+\lambda_2$ and let $\sigma$ be $\vv_0$-generic. Then  
   the conclusions of Conjecture \ref{conj:cubic_singular} hold true for $\cM^{\mathrm{st}}$, $\cM$ and any crepant resolution $\tilde{\cM}$ of $\cM$.
\end{theorem}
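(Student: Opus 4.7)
The strategy is to parallel the proof of Theorem \ref{thm:main1} in the non-commutative setting, using Theorem \ref{thm:main2} as the primitive building block. By the work of Li--Pertusi--Zhao, the smooth moduli space $\cM_0 := \cM_{\Ku(Y), \sigma}(\vv_0)$ is a projective hyper-K\"ahler fourfold of $K3^{[2]}$-deformation type, and $\cM = \cM_{\Ku(Y), \sigma}(2\vv_0)$ admits a crepant resolution $\tilde\cM$ of O'Grady-10 deformation type, constructed exactly as in O'Grady's original picture. By Rie{\ss}'s theorem, birational hyper-K\"ahler models share the same Chow motive, so I am free to work with whichever resolution is most convenient.

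First I would establish the motive of the smooth primitive moduli space: by Theorem \ref{thm:main2}, $h(\cM_0)$ lies in the pseudo-abelian tensor subcategory generated by $h(Y)$. Consequently, products $\cM_0^n$ and symmetric products $\Sym^n(\cM_0)$, as direct summands of $h(\cM_0)^{\otimes n}$, also lie in this subcategory.

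Next I would analyze the singular locus $\Sigma \subset \cM$. Since $\vv_0$ is primitive with $\vv_0^2 = 2$ and $\sigma$ is $\vv_0$-generic, the only non-trivial polystable type is $E_1 \oplus E_2$ with each $E_i$ stable of class $\vv_0$; hence $\Sigma \cong \Sym^2(\cM_0)$, whose deepest stratum (the diagonal) is $\cM_0$ itself. For part (i), the Voevodsky localization triangle
\[
M^c(\Sigma) \to M^c(\cM) \to M^c(\cM^{\mathrm{st}}) \to M^c(\Sigma)[1]
\]
reduces the problem for $\cM^{\mathrm{st}}$ to controlling $M^c(\cM)$ and $M^c(\Sigma)$; the latter is handled by the previous paragraph, and the former reduces via the crepant resolution to $\tilde\cM$ (using that Chow motives of smooth projective varieties realize into Voevodsky's category). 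For part (ii), I would use the explicit realization of $\tilde\cM$, following the O'Grady/Lehn--Sorger construction transported to $\Ku(Y)$: it is obtained from $\cM$ by a sequence of blow-ups whose centers and exceptional divisors are, up to finite covers, projective bundles over $\cM_0$ or $\Sym^2(\cM_0)$. Iterated application of the motivic blow-up formula then expresses $h(\tilde\cM)$ as a direct sum of Tate twists of motives already shown to lie in the tensor subcategory generated by $h(Y)$.

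The main obstacle is ensuring that O'Grady's symplectic resolution, together with the refined geometric analysis of Lehn--Sorger and Perego--Rapagnetta identifying the successive blow-up centers as moduli spaces of primitive Mukai vector, carries through faithfully in the Kuznetsov-component setting. Concretely, one must verify that each center and exceptional divisor is identified with a variety whose motive is already controlled by Theorem \ref{thm:main2}. Granting this transfer, which should follow from Li--Pertusi--Zhao together with the deformation invariance of the resolution picture, the motivic computation is essentially formal and mirrors the proof of Theorem \ref{thm:main1}.
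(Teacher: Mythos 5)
Your proposal is correct and follows essentially the same route as the paper: the paper's proof (end of \S\ref{sec:NCK3}) likewise identifies $\Sigma\cong\Sym^2(\cM(\vv_0))$ with diagonal $\cM(\vv_0)$, transports O'Grady's blow-up procedure to the Kuznetsov setting via \cite{LPZ19}, controls the strata by Theorem~\ref{thm:NCK3_smooth} (of which Theorem~\ref{thm:main2} is the cubic-fourfold special case), and then repeats the Chow/Voevodsky bookkeeping of \S\ref{sec:OG10}. The only cosmetic difference is that the paper phrases the localization step via the stratification of $\hat{\cM}$ rather than appealing first to $\tilde{\cM}$, but this has no mathematical consequence.
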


As a by-product, we deduce Grothendieck's standard conjectures \cite{MR0268189} \cite{MR1265519} for many hyper-K\"ahler varieties of O'Grady-10 deformation type, \textit{cf.~\cite{CM13}}.

\begin{corollary}
    \label{cor:StandConj}
    The standard conjectures hold for all the crepant resolutions $\tilde{\cM}$ appeared in Theorem \ref{thm:main1} and Theorem \ref{thm:main3}.
\end{corollary}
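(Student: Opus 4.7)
The plan is to deduce the standard conjectures for $\tilde{\cM}$ directly from the Chow-motivic containment in Theorems~\ref{thm:main1} and~\ref{thm:main3}, combined with the known validity of these conjectures for a K3 or abelian surface $S$ and for a smooth cubic fourfold $Y$.

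First I would recall the formal stability of Grothendieck's standard conjectures $B$ (Lefschetz), $C$ (K\"{u}nneth), $D$ (homological $=$ numerical) and $I$ (Hodge) under direct sums, Tate twists, tensor products, and passage to direct summands of Chow motives; for $B$, $C$ and $D$ this is recorded in Kleiman's survey \cite{MR1265519}, while stability of $I$ under direct summands is immediate and stability under tensor products---once $B$, $C$, $D$ are granted---follows from the multiplicativity of primitive K\"{u}nneth decompositions together with the positivity of tensor products of positive-definite quadratic forms. In particular, if the full package of standard conjectures holds for a motive $\h(X)$, it holds for every object in the pseudo-abelian tensor subcategory it generates.

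By Theorems~\ref{thm:main1}(ii) and~\ref{thm:main3}(ii), the Chow motive $\h(\tilde{\cM})$ lies in the pseudo-abelian tensor subcategory generated by $\h(S)$ or by $\h(Y)$ respectively, so it suffices to verify the standard conjectures for $S$ or $Y$. For a projective K3 or abelian surface $S$ this is classical: $B$ and $C$ are trivial in dimension two (with Lieberman's theorem covering the abelian case), $D$ is known for surfaces, and $I$ is the Hodge index theorem. For a smooth cubic fourfold $Y$ I would use that $\h(Y)$ is a direct summand of a Tate twist of $\h(F(Y))$ via the universal line correspondence, where $F(Y)$ is the Fano variety of lines---a hyper-K\"{a}hler fourfold of K3$^{[2]}$-deformation type for which the Lefschetz standard conjecture is proved by Charles--Markman \cite{CM13}. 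The remaining standard conjectures for $F(Y)$, and hence for $Y$, can then be reduced to the K3 case via the tensor containment of $\h(F(Y))$ inside the subcategory generated by a K3 surface, either through B\"ulles' Theorem~\ref{thm:Buelles} applied in a Hassett-special deformation combined with a deformation argument, or through the direct approach of Laterveer \cite{Lat17}.

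The main delicate point is ensuring that the Hodge standard conjecture $I$ propagates through the tensor subcategory closure, which demands a careful bookkeeping of primitive K\"{u}nneth components; and, for the cubic fourfold case, the further reduction of the standard conjectures for $Y$ to those of an associated K3 surface via $F(Y)$. Once these ingredients are in place, the corollary follows at once from the tensor containments supplied by Theorems~\ref{thm:main1} and~\ref{thm:main3}.
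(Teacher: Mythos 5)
Your overall strategy is correct and is the same as the paper's, but you make it substantially harder than it needs to be, and you worry about the wrong things.

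The paper's proof is shorter: it observes that the containment $\h(\tilde{\cM}) \in \langle \h(S)\rangle_{\CHM}$ (resp.\ $\langle \h(Y)\rangle_{\CHM}$) passes to Grothendieck motives, so $\tilde{\cM}$ is \emph{motivated} by $S$ (resp.\ $Y$) in Arapura's sense, and then invokes Arapura's Lemma~4.2 of \cite{MR2271985}, which says precisely that if a smooth projective variety is motivated by varieties satisfying the Lefschetz standard conjecture $B$, then the full package of standard conjectures holds for it. This is exactly the closure property you try to reconstruct from scratch in your first paragraph. Crucially, it means one only needs $B(S)$ and $B(Y)$ for the generators, not the full package; the propagation of $C$, $D$, $I$ is handled once and for all by Arapura's argument.

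Your ``main delicate point,'' the propagation of the Hodge standard conjecture $I$, is a non-issue here: the paper works over $\C$, and in characteristic zero $I$ is a theorem, being a consequence of the Hodge--Riemann bilinear relations. There is nothing to propagate. Similarly, your route for the cubic fourfold case is a large detour. The Lefschetz standard conjecture $B(Y)$ for a smooth cubic fourfold $Y$ is elementary and requires no hyper-K\"ahler input at all: $H^1(Y)=H^3(Y)=H^5(Y)=H^7(Y)=0$ and $H^0, H^2, H^6, H^8$ are each one-dimensional, spanned by powers of the hyperplane class, so the inverse Lefschetz isomorphisms $H^{2n-i}\to H^i$ for $i<4$ are visibly algebraic. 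Passing through $F(Y)$, Charles--Markman, and then a further reduction to a K3 surface via a deformation argument (which would additionally run into the fact that Theorem~\ref{thm:main3} concerns \emph{very general} $Y$, hence $Y$ not associated to a K3 surface) is correct in spirit but entirely unnecessary, and the deformation step is not actually carried out in your sketch.

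In short: your proposal would work once the Hodge-index subtlety is dissolved by the char-0 observation and the cubic fourfold case is handled, but the paper's route via Arapura plus the triviality of $B$ for surfaces and cubic fourfolds is both cleaner and the one you should internalize.
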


Theorem \ref{thm:main3} and Corollary \ref{cor:StandConj} are proved in the end of \S \ref{sec:NCK3}.

\subsection{Defect groups of hyper-K\"ahler varieties}
It is easy to see that a general projective deformation of (a crepant resolution of) a moduli space of semistable sheaves (or objects) on a Calabi--Yau surface is no longer of this form (even by deforming the surface).
If we still want to understand the motive of such a hyper-K\"ahler variety $X$ in terms of some tensor constructions of a ``surface-like'' (or rather ``weight-2'') motive, the right substitution of the surface motive would be the degree-2 motive of $X$ itself. We are therefore interested in the following meta-conjecture.
\begin{metaconjecture}\label{conj:meta}
Let $X$ be a projective hyper-K\"ahler variety and fix some rigid tensor category of motives. If the odd Betti numbers of $X$ vanish, then its motive is in the tensor subcategory generated by its degree-2 motive. In general, the motive of $X$ lies in the tensor subcategory generated by the Kuga--Satake construction of its degree-2 motive. In any case, the motive of $X$ is abelian. 
\end{metaconjecture}

We will see in Proposition \ref{cor:MT} that the analogous statement holds at the level of Hodge structures. This is essentially a consequence of Verbitsky's results \cite{verbitsky1996cohomology}, related works are \cite{KSV2017} and \cite{solda19}. Unfortunately, staying within the category of Chow motives (or Voevodsky motives), we are confronted with several essential difficulties:
\begin{itemize}
    \item As an immediate obstruction, to speak of the degree $2$ motive, we have to admit the algebraicity of the K\"unneth projector, which is part of the standard conjectures.
    \item Even in the case where the standard conjectures are known (for example \cite{CM13}), the construction of the degree $2$ part of the Chow motive $\h(X)$, denoted by $\h^2(X)$, is still conjectural in general: assuming the cohomological K\"{u}nneth projector is algebraic, there is no canonical way to lift it to an algebraic cycle which is a projector modulo rational equivalence (see Murre \cite{Mur93}); even when such a candidate construction is available (see for example \cite{SV16}, \cite{VialCrelle} \cite{FTV19} in some special cases), it seems too difficult to relate $\h(X)$ and $\h^2(X)$ for a general $X$ in the moduli space of hyper-K\"ahler varieties. Nevertheless, let us point out that B\"ulles' Theorem \ref{thm:Buelles} and our extensions Theorems \ref{thm:main1}, \ref{thm:main2} and \ref{thm:main3} indeed give some evidence in this direction (see also Corollary~\ref{cor:InfChowAb}).
    \item The algebraicity of the Kuga--Satake construction is wide open.
\end{itemize}

The third purpose of the paper is to make precise sense of the meta-conjecture \ref{conj:meta}. To circumvent the aforementioned difficulties we leave the category of Chow motives and work within the category of \textit{Andr\'e motives} \cite{andre1996Motives}. Essentially, this amounts to replacing rational equivalence by homological equivalence and formally adding the cycles predicted by the standard conjectures; the result is a semisimple abelian $\Q$-linear tannakian category, see \S \ref{subsec:AM} for a quick introduction. Through the tannakian formalism, most properties of an Andr\'{e} motive $M$ are encoded in its motivic Galois group $\Gmot(M)$. Note that since the Hodge theoretic version of meta-conjecture \ref{conj:meta} holds, its validity at the level of Andr\'{e} motives is implied by Conjecture \ref{mhc} which says that all Hodge classes are motivated.


Our main contribution in this direction is about a $\Q$-algebraic group, which we call the \textit{defect group}, associated with a projective hyper-K\"ahler variety.
Let $X$ be a projective hyper-K\"ahler variety and let $\mathcal{H}(X)$ be its Andr\'e motive. We have the K\"unneth decomposition $\mathcal{H}(X)=\bigoplus_{i} \mathcal{H}^i(X)$. The \textit{even motive} of $X$ is by definition $\mathcal{H}^+(X)=\bigoplus_{i} \mathcal{H}^{2i}(X)$. 
The \textit{even defect group} of $X$, denoted by $P^+(X)$, is defined as the kernel of the surjective morphism of motivic Galois groups induced by the natural inclusion $\mathcal{H}^2(X)\subset \mathcal{H}^+(X)$, namely,
$$P^+(X):=\Ker\left(\Gmot(\mathcal{H}^+(X))\twoheadrightarrow \Gmot(\mathcal{H}^2(X))\right).$$
By definition, $P^+(X)$ is trivial if and only if $\mathcal{H}^+(X)$ belongs to the tannakian subcategory of Andr\'{e} motives generated by $\mathcal{H}^2(X)$.

If all the odd Betti numbers of $X$ vanish, then by convention the \textit{defect group} of $X$, denoted by $P(X)$, is simply $P^+(X)$. Otherwise, the role of $\mathcal{H}^2(X)$ is naturally taken by a Kuga--Satake abelian variety $A$ attached to this weight-2 motive, see Definition \ref{def:Kuga-Satake}; the reader may safely take for $A$ the abelian variety given by the classical Kuga--Satake construction \cite{deligne1971conjecture}. The \emph{Kuga--Satake category} $\mathsf{KS}(X):= \langle \mathcal{H}^1(A) \rangle $ is independent of the choice of $A$, see Theorem \ref{thm:kuga-satake}; furthermore, provided that $b_2(X) \neq 3$, we prove in Lemma \ref{lem:KSvsodd} that the motive $\mathcal{H}^1(A)$ belongs to the tannakian subcategory of Andr\'{e} motives generated by~$\mathcal{H}(X)$. We define the \textit{defect group} of $X$ as the kernel of the corresponding surjective morphism
$$P(X):=\Ker\left(\Gmot(\mathcal{H}(X))\twoheadrightarrow \Gmot(\mathcal{H}^1(A))\right)$$
 of motivic Galois groups.
The uniqueness of the Kuga--Satake category ensures that $P(X)$ does not depend on the choice of $A$; by definition, the defect group $P(X)$ is trivial if and only if $\mathcal{H}(X)$ belongs to the tannakian category $\mathsf{KS}(X)$.



Recall that the motivic Galois group of $\mathcal{H}(X)$ contains naturally the \textit{Mumford--Tate group} $\MT(H^*(X))$. We show that the defect group is a canonical complement. 

\begin{theorem}[=Theorem \ref{thm:product2}, Splitting]\label{thm:product}
Notation is as before. Assume that $b_2(X)\neq 3$.
Then, inside $\Gmot(\mathcal{H}(X))$, the subgroups $P(X)$ and $\MT(H^*(X))$ commute, intersect trivially with each other and generate the whole group. In short, we have an equality:
    $$\Gmot(\mathcal{H}(X))=\MT(H^*(X))\times P(X).$$
 Similarly, the even defect group is a direct complement of the even Mumford--Tate group in the motivic Galois group of the even Andr\'e motive of $X$,
 $$\Gmot(\mathcal{H}^+(X))=\MT(H^+(X))\times P^+(X).$$
\end{theorem}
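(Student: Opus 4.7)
The argument runs in parallel for the two identities; I would treat the even case in detail and then indicate the modification needed for the full motive. In the even case the goal is to analyse the defining short exact sequence
$$1 \to P^+(X) \to \Gmot(\mathcal{H}^+(X)) \to \Gmot(\mathcal{H}^2(X)) \to 1.$$
First I would identify the quotient: since $H^2(X)$ is a Hodge structure of K3 type, André's theorem on Hodge classes on K3-type Hodge structures gives $\Gmot(\mathcal{H}^2(X))=\MT(H^2(X))$. Next, Proposition \ref{cor:MT} (the Hodge-theoretic counterpart of the meta-conjecture, ultimately Verbitsky's LLV theorem) places $H^+(X)$ in the tensor subcategory of polarizable $\Q$-Hodge structures generated by $H^2(X)$; equivalently, the restriction $\MT(H^+(X))\twoheadrightarrow \MT(H^2(X))$ induced by $H^2(X)\hookrightarrow H^+(X)$ is an isomorphism. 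Composing this isomorphism with the inclusion $\MT(H^+(X))\hookrightarrow \Gmot(\mathcal{H}^+(X))$ (motivated cycles being Hodge) exhibits the Mumford--Tate group as a section of the surjection in the exact sequence. From this formal setup one reads off both $\MT(H^+(X))\cap P^+(X)=\{1\}$ and $\MT(H^+(X))\cdot P^+(X)=\Gmot(\mathcal{H}^+(X))$, yielding a semidirect product decomposition.

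The main obstacle is the remaining step: upgrading this semidirect product to a direct product, i.e.\ showing $[\MT(H^+(X)),P^+(X)]=\{1\}$. My plan is to leverage that the identification of $H^+(X)$ with a summand of tensor constructions on $H^2(X)$ is realized by cycles that are motivated in the sense of André, not merely Hodge. Once such a motivated realization is fixed, the Mumford--Tate image in $GL(H^+(X))$ is assembled from operators coming from $\mathcal{H}^2(X)$ and its tensor constructions, while by definition $P^+(X)$ acts trivially on the whole tannakian subcategory $\langle \mathcal{H}^2(X)\rangle$. Consequently the two subgroups centralize each other inside $\Gmot(\mathcal{H}^+(X))$. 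The heart of the argument is therefore to establish that the Verbitsky-type decomposition lifts to a genuine statement at the level of André motives to the extent needed to produce this centralizing property.

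For the full motive, one runs the same argument with $\mathcal{H}^2(X)$ replaced by $\mathcal{H}^1(A)$, where $A$ is a Kuga--Satake abelian variety as in Definition \ref{def:Kuga-Satake}. The assumption $b_2(X)\neq 3$ enters here through Lemma \ref{lem:KSvsodd}, which ensures that $\mathcal{H}^1(A)$ indeed belongs to the tannakian category generated by $\mathcal{H}(X)$, so that the analogous short exact sequence
$$1\to P(X) \to \Gmot(\mathcal{H}(X)) \to \Gmot(\mathcal{H}^1(A)) \to 1$$
makes sense. The analogue of Step 1 is Deligne's theorem that Hodge classes on abelian varieties are absolutely Hodge, extended to motivated cycles by André, giving $\Gmot(\mathcal{H}^1(A))=\MT(H^1(A))$; the analogue of Step 2 is the full Hodge-theoretic meta-conjecture provided by Proposition \ref{cor:MT}, which forces $\MT(H^*(X))\twoheadrightarrow \MT(H^1(A))$ to be an isomorphism; and the analogue of the commutation step invokes the motivated nature of the Kuga--Satake embedding. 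The well-definedness of $P(X)$ independently of the choice of $A$, granted by the uniqueness of $\mathsf{KS}(X)$ (Theorem \ref{thm:kuga-satake}), makes the final statement intrinsic to $X$.
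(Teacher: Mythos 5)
Your first step is sound and coincides with the paper's: using Proposition \ref{cor:MT} together with Andr\'e's Theorems \ref{abelianityH2} and \ref{mhcabelian} (and, in the odd case, Lemma \ref{lem:KSvsodd} and Theorem \ref{thm:kuga-satake}) to split the surjection $\Gmot(\mathcal{H}^+(X))\twoheadrightarrow\Gmot(\mathcal{H}^2(X))$, resp.\ $\Gmot(\mathcal{H}(X))\twoheadrightarrow\Gmot(\mathcal{H}^1(A))$, by a section whose image is the Mumford--Tate group; this gives the semidirect product with trivial intersection. The genuine gap is in your centralization step. You propose to prove that $P^+(X)$ commutes with $\MT(H^+(X))$ by establishing that the Verbitsky-type identification of $H^+(X)$ with a summand of tensor constructions on $H^2(X)$ is realized by \emph{motivated} cycles (and, for the full motive, by invoking ``the motivated nature of the Kuga--Satake embedding'' relative to $\mathcal{H}(X)$). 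But if that identification were motivated, then $\mathcal{H}^+(X)$ would already lie in $\langle\mathcal{H}^2(X)\rangle_{\AM}$, i.e.\ $P^+(X)$ would be trivial by definition; this is exactly Conjecture \ref{conj:Ptrivial}, the open problem the theorem is designed to work around, and it is only known for the four known deformation types by explicit geometric arguments. So the ``heart of the argument'' you defer to is not available: your plan reduces the theorem to a strictly stronger unproved statement, hence is circular rather than a proof. (Knowing that the Kuga--Satake correspondence between $H^2(X)$ and $H^1(A)$ is motivated --- which does follow from Andr\'e's work --- says nothing about how the subgroup of $\Gmot(\mathcal{H}(X))$ acting trivially on $\mathcal{H}^1(A)$ interacts with the Mumford--Tate action on the higher-degree cohomology.)

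The paper obtains the commutation unconditionally by a different mechanism, which is what you need here: (a) Lemma \ref{lem:contained} shows $\MT(H^*(X))\subset\im(\sigma)$, where $\sigma$ is the representation obtained by integrating the LLV Lie algebra and twisting by the weight cocharacter, the key input being Theorem \ref{gLLV}$(iii)$ that the Weil operator lies in $\rho(\mathfrak{so}(H))\otimes\RR$; (b) every element of the motivic Galois group acts on $H^*(X)$ by graded algebra automorphisms, because the cup product is induced by the (algebraic) small diagonal in $X\times X\times X$; (c) Lemma \ref{lem:commute} then shows that any such automorphism acting trivially on $H^2(X)$ commutes with all $L_x$ and $\theta$, hence with each $\Lambda_x$ by uniqueness in the $\mathfrak{sl}_2$-triple, hence with the whole LLV algebra. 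Applying this to $P^+(X)$, resp.\ to the larger group $Q(X)=\Ker\bigl(\Gmot(\mathcal{H}(X))\to\Gmot(\mathcal{H}^2(X))\bigr)\supseteq P(X)$ in the odd case, yields that the defect group commutes with $\im(\sigma)$ and therefore with the Mumford--Tate group, with no need for the Hodge-theoretic inclusion $H^+(X)\in\langle H^2(X)\rangle_{\HS}$ to be motivated. Replace your centralization step by this argument (or an equivalent one) and the proof goes through.
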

It follows that $\MT(H^+(X))$
is canonically isomorphic to $\Gmot(\mathcal{H}^2(X))$, and hence to $\MT(H^2(X))$ by Andr\'e's results \cite{Andre1996} \cite{andre1996Motives}. But this is the first step towards the proof of Theorem \ref{thm:product} (see Proposition \ref{cor:MT}). 
Note that the natural morphism $\Gmot(\mathcal{H}(X))\twoheadrightarrow \Gmot(\mathcal{H}^+(X))$ preserves the direct product decomposition given in the theorem, so that $P^+(X)$ is a quotient of $P(X)$.

Theorem \ref{thm:product} can be seen as a structure result for the motivic Galois group. The proof is given in \S\ref{subsec:Splitting}. It admits the following consequence (proved in \S\ref{subsec:Justification}), which justifies the name of the group $P(X)$.
\begin{corollary}[=Corollary \ref{cor:Pdefect2}]
\label{cor:Pdefect}
    For any projective hyper-K\"ahler variety $X$ with $b_2(X)\neq 3$, the following conditions are equivalent:
    \begin{enumerate}[$(i^+)$]
        \item The even defect group $P^+(X)$ is trivial.
        \item The even Andr\'e motive $\mathcal{H}^+(X)$ is in the tannakian subcategory generated by $\mathcal{H}^2(X)$.
        \item $\mathcal{H}^+(X)$ is abelian.
        \item Conjecture \ref{mhc} holds for $\mathcal{H}^+(X)$: $\MT(H^+(X))=\Gmot(\mathcal{H}^+(X)).$
    \end{enumerate}
    Similarly, if some odd Betti number of $X$ is not zero, we have the following equivalent conditions:
    \begin{enumerate}[$(i)$]
        \item The defect group $P(X)$ is trivial.
        \item The Andr\'e motive $\mathcal{H}(X)$ is in the tannakian subcategory generated by $\mathcal{H}^1(\mathrm{KS}(X))$, where $\mathrm{KS}(X)$ is any Kuga--Satake abelian variety associated to $H^2(X)$.
        \item $\mathcal{H}(X)$ is abelian.
        \item Conjecture \ref{mhc} holds for $\mathcal{H}(X)$: $\MT(H^*(X))=\Gmot(\mathcal{H}(X))$.
    \end{enumerate}
\end{corollary}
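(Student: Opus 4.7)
The work is already done by Theorem \ref{thm:product}, so this corollary is, as its name suggests, essentially a formal consequence of the splitting and of standard tannakian yoga; I sketch the even case in detail since the non-superscript case runs in parallel.

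First I would deduce $(i^+)\Leftrightarrow (iv^+)$ directly from Theorem \ref{thm:product}: the identity $\Gmot(\mathcal{H}^+(X))=\MT(H^+(X))\times P^+(X)$ shows that the inclusion $\MT(H^+(X))\subseteq \Gmot(\mathcal{H}^+(X))$ is an equality precisely when the complementary factor $P^+(X)$ is trivial. Next, $(i^+)\Leftrightarrow (ii^+)$ is an immediate instance of the tannakian Galois correspondence: for any inclusion of Andr\'e motives $N\subseteq M^{\otimes \bullet}$, the restriction $\Gmot(M)\twoheadrightarrow \Gmot(N)$ has trivial kernel if and only if $M$ lies in the tannakian subcategory generated by $N$; applying this to $N=\mathcal{H}^2(X)\subseteq \mathcal{H}^+(X)=M$ gives the equivalence.

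It remains to insert $(iii^+)$ into the cycle of implications. For $(ii^+)\Rightarrow (iii^+)$, I would invoke the motivated nature of the Kuga--Satake construction (Theorem \ref{thm:kuga-satake}): if $A$ is a Kuga--Satake abelian variety of $H^2(X)$, then $\mathcal{H}^2(X)$ is a direct summand of a tensor construction on $\mathcal{H}^1(A)$, so that $\langle \mathcal{H}^2(X)\rangle \subseteq \mathsf{KS}(X)=\langle \mathcal{H}^1(A)\rangle$; hence the hypothesis $\mathcal{H}^+(X)\in \langle \mathcal{H}^2(X)\rangle$ forces $\mathcal{H}^+(X)$ to be abelian. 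The remaining step $(iii^+)\Rightarrow (iv^+)$ is Andr\'e's foundational theorem \cite{andre1996Motives} that all Hodge classes on abelian varieties are motivated, which identifies the Mumford--Tate and motivic Galois groups on the tannakian subcategory of abelian Andr\'e motives.

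The non-superscript case $(i)\Leftrightarrow (ii)\Leftrightarrow (iii)\Leftrightarrow (iv)$ follows by the same four arguments, with $\mathcal{H}^+(X)$ replaced by $\mathcal{H}(X)$ and $\mathcal{H}^2(X)$ replaced by $\mathcal{H}^1(A)$ for $A=\mathrm{KS}(X)$; here the only subtle point is that in order to even define $P(X)$ as the kernel of a map $\Gmot(\mathcal{H}(X))\twoheadrightarrow \Gmot(\mathcal{H}^1(A))$, one needs $\mathcal{H}^1(A)$ to live in the tannakian subcategory generated by $\mathcal{H}(X)$, which is precisely Lemma \ref{lem:KSvsodd} and the reason the statement excludes the anomalous value $b_2(X)=3$. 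Since all the conceptual content sits inside Theorem \ref{thm:product} and Lemma \ref{lem:KSvsodd}, no step of this corollary presents a genuine obstacle.
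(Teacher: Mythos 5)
Your proof is correct and follows the same backbone as the paper's: equivalence of $(i^+)$ and $(iv^+)$ via the splitting of Theorem \ref{thm:product}, tannakian duality for $(i^+)\Leftrightarrow(ii^+)$, and Andr\'e's Theorem \ref{mhcabelian} for $(iii^+)\Rightarrow(iv^+)$, with Lemma \ref{lem:KSvsodd} explaining why the odd case is well-posed. The one place you take a slightly longer route is $(ii^+)\Rightarrow(iii^+)$: you pass through the motivic Kuga--Satake correspondence, but that correspondence (Corollary \ref{cor:motivicKS}, not Theorem \ref{thm:kuga-satake}, which is a purely Hodge-theoretic statement) is itself only available once one knows that $\mathcal{H}^2(X)$ is an abelian Andr\'e motive, which is Andr\'e's Theorem \ref{abelianityH2}; the paper cites that theorem directly, making the implication a one-liner, whereas your version invokes the same ingredient implicitly through the Kuga--Satake abelian variety. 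Not a gap, just a needless detour in the even case (where the Kuga--Satake variety plays no role).
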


Thanks to Corollary \ref{cor:Pdefect}, Conjecture \ref{mhc} for hyper-K\"ahler varieties and the meta-conjecture \ref{conj:meta} for their Andr\'e motives are all equivalent to the following conjecture.
\begin{conjecture}\label{conj:Ptrivial}
The defect group of any projective hyper-K\"ahler variety is trivial. 
\end{conjecture}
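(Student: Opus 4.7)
The strategy is to reduce Conjecture \ref{conj:Ptrivial} to the four currently known deformation classes of projective hyper-K\"ahler varieties and handle each one via the moduli-space technology developed earlier in the paper. The first step is to invoke the \emph{deformation invariance} of $P(X)$ in smooth projective families with $b_2\neq 3$, which is the property announced in the abstract, following from Andr\'e's variational framework for motivated cycles together with a fibrewise application of the splitting Theorem \ref{thm:product} and a relative Kuga--Satake construction. Granting this and the fact that all known projective hyper-K\"ahler manifolds have $b_2\geq 6$, it suffices to exhibit a single representative with $P(X)=1$ in each of the classes $K3^{[n]}$, $\mathrm{Kum}_n$, OG6, and OG10.

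In each deformation class one can arrange that $X$ is, or is birational to, a (possibly singular) moduli space of (semi)stable sheaves on a K3 surface $S$ or an abelian surface $A$ --- in the OG10 case one may alternatively use the Kuznetsov component of a cubic fourfold $Y$ via Theorem \ref{thm:main3}. Applying the appropriate result among Theorems \ref{thm:Buelles}, \ref{thm:main1}, \ref{thm:main2}, \ref{thm:main3} and descending through the realisation functor from Chow (or Voevodsky) motives to Andr\'e motives, one obtains $\mathcal{H}(X)$ inside the tannakian subcategory generated by $\mathcal{H}(S)$, or by $\mathcal{H}(Y)$ (which is itself abelian, being built from a K3-type weight-$4$ block plus Tate motives). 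To close the loop to the Kuga--Satake category $\mathsf{KS}(X)$, I would invoke Andr\'e's theorem for K3 surfaces to the effect that $\mathcal{H}(S)$ lies in the tannakian subcategory generated by $\mathcal{H}^1(\mathrm{KS}(S))$; the O'Grady--Perego--Rapagnetta identification of $H^2(X)$ with the orthogonal complement of the Mukai vector $\vv$ inside $\tilde H(S)$ then shows that $X$ and $S$ share the same transcendental weight-$2$ Hodge structure, so their Kuga--Satake abelian varieties are isogenous up to abelian factors already contained in $\mathsf{KS}(X)$. Chaining the inclusions yields $\mathcal{H}(X)\in\mathsf{KS}(X)$, and $P(X)=1$ follows from Corollary \ref{cor:Pdefect}.

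The main obstacle lies in proving the conjecture for a hypothetical hyper-K\"ahler variety outside the four known deformation types, where no moduli-theoretic presentation on a Calabi--Yau surface (or K3-type Kuznetsov component) is available to feed into the machine above. An intrinsic argument would have to construct $\mathsf{KS}(X)$ from $\mathcal{H}(X)$ without the crutch of an auxiliary surface --- perhaps via a motivic lift of the LLV Lie-algebra action on $H^*(X)$, or through a purely Tannakian characterisation of hyper-K\"ahler Andr\'e motives --- and at present no such tools are visible. A secondary subtlety one sidesteps above by the bound $b_2\geq 6$ is the boundary case $b_2(X)=3$, where the splitting of Theorem \ref{thm:product} breaks down and the very definition of the defect group requires separate treatment.
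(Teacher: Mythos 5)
This statement is a \emph{conjecture}, and the paper itself does not prove it; in Remark \ref{rm:Potential} the authors explicitly say they can only establish it for the four known deformation classes (Corollary \ref{cor:KnownHKs}), which is exactly the scope you address, and you correctly flag the remaining gap for hypothetical unknown deformation types. So your overall framing is accurate.

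For the part the paper does prove, your reduction agrees with the paper's: deformation invariance of the defect group (Corollary \ref{cor:deformationDefect}, itself a consequence of Theorem \ref{thm:DeformationInv2}) combined with the bound $b_2\geq 6$ for known types reduces the problem to exhibiting one representative in each class whose André motive is abelian. However, your blanket plan of ``arrange that $X$ is (or is birational to) a moduli space of (semi)stable sheaves on a K3 or abelian surface and apply Theorems \ref{thm:Buelles}, \ref{thm:main1}--\ref{thm:main3}'' does not go through for two of the four classes. Generalized Kummer varieties and OG6 varieties are \emph{fibers} of the Albanese map of a moduli space over an abelian surface, and the paper emphasizes in Remark \ref{rmk:ChallengeKummerst} and the remark at the end of \S\ref{sec:OG10} that the Markman-type decomposition of the diagonal --- the engine of the Bülles argument --- is \emph{not} known for these Kummer fibers. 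The paper circumvents this with different inputs: for $\mathrm{Kum}^n$-type it uses de Cataldo--Migliorini's semi-small resolution technique as worked out by Xu and by Fu--Tian--Vial to get an explicit abelian motivic decomposition; for OG6 it uses the Mongardi--Rapagnetta--Saccà presentation of an OG6-type variety as a quotient of a $\mathrm{K3}^{[3]}$-type variety by a birational involution, reducing to case (ii). Your Kuga--Satake chain at the end is also heavier than necessary: once $\mathcal{H}(X)$ is known to be abelian, $P(X)=1$ follows at once from Corollary \ref{cor:Pdefect}; the comparison of Kuga--Satake abelian varieties of $X$ and $S$ via the Mukai-lattice isometry is not needed and would require justification (the transcendental parts agree but the full $H^2$ differ by Tate summands, so ``their Kuga--Satake abelian varieties are isogenous'' is a shortcut that should not be taken as obvious). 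Finally, note that OG6-type has $b_2=8$ and $\mathrm{Kum}^n$-type has $b_2=7$, not $\geq 22$; your bound $b_2\geq 6$ is correct but the relevant fact is simply $b_2\neq 3$, which holds in all four classes.
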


\begin{remark}[Potential approaches]\label{rm:Potential}
We are not able to prove Conjecture \ref{conj:Ptrivial} in general so far, but only for all the known examples of hyper-K\"ahler varieties (Corollary \ref{cor:KnownHKs} below). However,  
\begin{enumerate}[$(i)$]
    \item we will show in Corollary \ref{cor:application} that Conjecture \ref{conj:Ptrivial} is implied by the following conjecture: an Andr\'e motive is of Tate type if and only if its Hodge realization is of Tate type;
    \item The defect group satisfies many constraints. For example, its action on the rational cohomology ring is compatible with the ring structure as well as the Looijenga--Lunts--Verbitsky Lie algebra action \cite{looijenga1997lie} \cite{verbitsky1996cohomology}, and most importantly, it is a deformation invariant.
\end{enumerate}
\end{remark}
 
\begin{theorem}[=Theorem \ref{thm:DeformationInv2}, Deformation invariance of defect groups]\label{thm:DeformationInv}
    Let $S$ be a smooth quasi-projective variety and $\mathcal{X}\to S$ be a smooth proper morphism with fibers being projective hyper-K\"ahler manifolds with $b_2\neq 3$. Then for any $s, s'\in S$, the defect groups $P(X_s)$ and $P(X_{s'})$ are canonically isomorphic, and similarly for the even defect groups.
\end{theorem}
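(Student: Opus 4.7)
The plan is to apply André's theorem on the parallel transport of motivated cycles to the family $\mathcal{X}\to S$ combined with a relative Kuga--Satake abelian scheme, and to read off deformation invariance of $P(X)$ as a local-constancy statement for the motivic Galois group.

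We may assume $S$ is connected. Since $R^2\pi_*\mathbb{Q}$ is a polarized variation of Hodge structure of K3 type, Deligne's Kuga--Satake construction performed in families yields, after passing to a connected component of some finite étale cover $S'\to S$, an abelian scheme $\pi'\colon\mathcal{A}\to S'$ equipped with an embedding of variations of Hodge structure $R^2\pi_*\mathbb{Q}(1)|_{S'}\hookrightarrow\End(R^1\pi'_*\mathbb{Q})$ realizing fibrewise the Kuga--Satake construction of Definition \ref{def:Kuga-Satake}. Choose points $s_0,s_0'\in S'$ lying above $s,s'$ together with a path $\gamma$ from $s_0$ to $s_0'$.

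Applied to the smooth proper family $\mathcal{X}\times_S\mathcal{A}\to S'$, André's theorem (\cite{andre1996Motives}) ensures that parallel transport along $\gamma$ sends motivated classes to motivated classes, and consequently induces an equivalence of tannakian subcategories of Andr\'e motives
$$\langle\mathcal{H}(X_{s_0})\oplus\mathcal{H}^1(A_{s_0})\rangle \xrightarrow{\sim} \langle\mathcal{H}(X_{s_0'})\oplus\mathcal{H}^1(A_{s_0'})\rangle$$
compatible with the Betti fibre functor. By Tannakian duality this yields an isomorphism of motivic Galois groups commuting with the natural surjections to $\Gmot(\mathcal{H}^1(A_{s_0}))$ and $\Gmot(\mathcal{H}^1(A_{s_0'}))$. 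Under the hypothesis $b_2\neq 3$, Lemma \ref{lem:KSvsodd} gives $\mathcal{H}^1(A_{s_0})\in\langle\mathcal{H}(X_{s_0})\rangle$, so the left-hand tannakian category coincides with $\langle\mathcal{H}(X_{s_0})\rangle$, and similarly on the right. Passing to kernels of the two surjections then produces the desired canonical isomorphism $P(X_s)\cong P(X_{s'})$; the uniqueness (up to canonical isomorphism) of the Kuga--Satake category of Theorem \ref{thm:kuga-satake} ensures that the result is independent of the auxiliary choice of $\mathcal{A}$.

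For the even defect group the argument is identical and slightly simpler: one replaces the Kuga--Satake embedding by the tautological inclusion $\mathcal{H}^2(X)\subset\mathcal{H}^+(X)$, restricts throughout to even-degree cohomology, and the appeal to Lemma \ref{lem:KSvsodd} becomes unnecessary. The main technical hurdle is to formulate the relative motivated structure and its parallel transport rigorously enough for André's deformation principle to be applied to the compound family $\mathcal{X}\times_S\mathcal{A}\to S'$; once this is in place the theorem is essentially the statement that the sub-group-scheme $P(X_s)$ cut out by the kernel condition inside the locally constant family of algebraic groups $\Gmot(\mathcal{H}(X_s))$ is itself locally constant on $S'$.
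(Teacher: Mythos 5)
Your proposal hinges on a false claim: that ``parallel transport along $\gamma$ sends motivated classes to motivated classes, and consequently induces an equivalence of tannakian subcategories of Andr\'e motives'' between the fibers at $s_0$ and $s_0'$. This is not what Andr\'e's deformation principle gives. Theorem~\ref{thm:DefPrinciple} (Andr\'e's deformation principle) applies only to \emph{global} (hence monodromy-invariant) sections of the relevant local system: if a flat section is motivated at one point, then its restriction at any other point is motivated. It says nothing about transporting an arbitrary motivated class at $s_0$ to $s_0'$. Indeed, the family of actual motivic Galois groups $s\mapsto\Gmot(\mathcal{H}(X_s))$ is \emph{not} a local system: by Theorem~\ref{thm:familiesMotives}, these groups can properly jump down at special points, being contained in but possibly strictly smaller than the fibers of the locally constant \emph{generic} motivic Galois group $\Gmot(\mathcal{H}(\mathcal{X}/S))$. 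If parallel transport furnished a tannakian equivalence between $\langle\mathcal{H}(X_{s_0})\rangle$ and $\langle\mathcal{H}(X_{s_0'})\rangle$ compatible with the fiber functors, the motivic Galois (and Mumford--Tate) groups at all fibers would be conjugate, which is false.

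The subtlety the paper's proof actually exploits is the following two-step mechanism, which your proposal omits. First, by the Splitting Theorem~\ref{thm:product2}, $\Gmot(\mathcal{H}^+(X_s))=\MT(H^+(X_s))\times P^+(X_s)$ at every $s$, so that the defect group is the ``non-Hodge'' factor. Second, the algebraic monodromy group is contained in the \emph{generic Mumford--Tate group} (Theorem~\ref{thm:familiesMotives}$(i)$), so monodromy acts trivially on the defect factor $P^+(X_{s_0})$ at a very general point $s_0$, and $P^+$ extends to a constant local system $P^+(\mathcal{X}/S)$ sitting in the generic motivic Galois group. For an arbitrary $s$, Theorem~\ref{thm:familiesMotives}$(iv)$ gives $\mathrm{G}_{\mathrm{mono}}(\mathcal{H}^+(\mathcal{X}/S))_s\cdot\Gmot(\mathcal{H}^+(X_s))=\Gmot(\mathcal{H}^+(\mathcal{X}/S))_s$, and since the monodromy factor lies entirely inside the Mumford--Tate component, one forces $P^+(X_s)=P^+(\mathcal{X}/S)_s$; the jump between $\Gmot(\mathcal{H}^+(X_s))$ and its generic value is absorbed by the Mumford--Tate factor, leaving the defect group invariant. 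To repair your argument you would need to justify precisely this: that the kernel of the natural surjection to $\Gmot(\mathcal{H}^2)$ is insensitive to the drop in the motivic Galois group at special fibers. As written, the assertion that $\Gmot(\mathcal{H}(X_s))$ forms a locally constant family (end of your proof) is incorrect, and the conclusion does not follow from Andr\'e's deformation principle alone.
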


\subsection{Applications to ``known" hyper-K\"ahler varieties}
In the sequel, a hyper-K\"ahler variety is called \textit{known}, if it is deformation equivalent to Hilbert schemes of K3 surfaces ($\mathrm{K3}^{[n]}$-type) \cite{beauville1983varietes}, generalized Kummer varieties associated to abelian surfaces ($\operatorname{Kum}^n$-type) \cite{beauville1983varietes}, O'Grady's 6-dimensional examples (OG6-type) \cite{O'G03}, or O'Grady's 10-dimensional examples (OG10-type)~\cite{O'G99}.

First, we can prove Conjecture \ref{conj:Ptrivial} for all known hyper-K\"ahler varieties.
\begin{corollary}\label{cor:KnownHKs}
    The defect group is trivial for all known hyper-K\"ahler varieties.
\end{corollary}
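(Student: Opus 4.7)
The plan is to combine the deformation invariance of the defect group (Theorem \ref{thm:DeformationInv}) with the tensor-generation results already at our disposal, together with the classical Kuga--Satake correspondence, and to handle each of the four known deformation classes by exhibiting one convenient representative. All known hyper-K\"ahler varieties satisfy $b_2\in\{7,8,23,24\}$, in particular $b_2\neq 3$, so the hypotheses of Theorem \ref{thm:DeformationInv} and Corollary \ref{cor:Pdefect} are met. In view of that corollary it suffices to prove, in each case, that the Andr\'e motive $\mathcal{H}(X)$ is abelian, since triviality of $P(X)$ will then propagate to every member of the deformation class.

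For each deformation class I would exhibit a representative arising as (a crepant resolution of) a moduli space of semistable sheaves on a K3 or abelian surface $S$, and establish an inclusion $\mathcal{H}(X)\in\langle \mathcal{H}(S)\rangle^{\otimes}$ in the category of Andr\'e motives. The Hilbert scheme $S^{[n]}$ for $\mathrm{K3}^{[n]}$-type is directly covered by B\"ulles's Theorem \ref{thm:Buelles}; the generalized Kummer $K_n(A)$ for $\operatorname{Kum}^n$-type follows from the same theorem applied to $A^{[n+1]}$ combined with the isotrivial structure of the summation map $A^{[n+1]}\to A$ with fiber $K_n(A)$, which realises $\mathcal{H}(K_n(A))$ as a direct summand of $\mathcal{H}(A^{[n+1]})\otimes\mathcal{H}(A)^{\vee}$ up to a finite \'etale cover; an O'Grady crepant resolution $\tilde{\cM}$ for OG10-type is the content of Theorem \ref{thm:main1}; and for OG6-type one would invoke the six-dimensional analogue of Theorem \ref{thm:main1}, namely a motivic generation statement for the symplectic resolution of the relevant singular moduli space on an abelian surface, proved by a direct adaptation of the methods of the first part of the paper.

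The remaining step is purely formal. Abelian Andr\'e motives form a tannakian subcategory of the category of all Andr\'e motives, closed under tensor products, duals, direct sums and direct summands; hence whenever $\mathcal{H}(S)$ is abelian, so is every object of $\langle \mathcal{H}(S)\rangle^{\otimes}$, and in particular $\mathcal{H}(X)$. For $S$ an abelian surface this is tautological, and for a projective K3 surface it is a theorem of Andr\'e \cite{andre1996Motives} obtained by upgrading Deligne's algebraicity of the Kuga--Satake correspondence to motivated cycles. Applying Corollary \ref{cor:Pdefect} to the chosen representative gives $P(X)=\{1\}$, and Theorem \ref{thm:DeformationInv} then propagates the vanishing to the entire deformation class.

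The only genuine obstacle is the OG6 case, for which an explicit motivic generation statement is not singled out in the excerpt. One must either adapt the proof of Theorem \ref{thm:main1} to the six-dimensional setting---where, by the Kaledin--Lehn--Sorger classification, the geometry of the exceptional locus of the resolution admits a similarly explicit description to the one exploited for OG10---or rely on the parallel result carried out in the body of the paper. The $\mathrm{K3}^{[n]}$, $\operatorname{Kum}^n$ and OG10 cases, by contrast, follow immediately from the machinery already developed; the tannakian formalism and Corollary \ref{cor:Pdefect} do all the remaining work.
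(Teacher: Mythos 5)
Your overall strategy is exactly the paper's: observe $b_2\in\{7,8,22,23,24\}\neq 3$, apply Corollary \ref{cor:Pdefect} to reduce the triviality of $P(X)$ to the abelianity of $\mathcal{H}(X)$, check abelianity for one representative in each deformation class, and propagate via deformation invariance (Corollary \ref{cor:deformationDefect}). The K3, $\mathrm{K3}^{[n]}$ and OG10 cases are handled correctly and consistently with the paper (the paper cites de Cataldo--Migliorini \cite{deCataldoMigliorini2002} for $\mathrm{K3}^{[n]}$ rather than B\"ulles, but both give abelianity of $\mathcal{H}(S^{[n]})$).

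However, there is a genuine gap in the OG6 case. You propose to establish abelianity for a representative of OG6-type by ``a direct adaptation of the methods of the first part of the paper'' to the six-dimensional symplectic resolution. This does not work: the OG6 variety is not the ten-dimensional O'Grady moduli space $\tilde{\cM}$ over an abelian surface (to which Theorem \ref{thm:main1} does apply), but its Albanese fiber $\tilde{\mathcal{K}}$. As the paper explicitly points out in Remark \ref{rmk:ChallengeKummerst} and the concluding remark of \S\ref{sec:OG10}, the analogue of Markman's Theorem \ref{thm:diag-chern-k3} is \emph{unknown} for Kummer moduli spaces, and consequently the analogues of Proposition \ref{prop:motive_hatM}, Corollary \ref{cor:motive_tildeM} and Corollary \ref{cor:motive_Ms} are open for the Albanese fibers. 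The Kaledin--Lehn--Sorger description of the resolution does not circumvent this: the obstruction is the diagonal decomposition on the stable locus of the fiber, not the geometry of the exceptional divisor. The paper sidesteps the problem entirely by invoking \cite{MRS18}, which exhibits a specific OG6-type variety as a quotient (up to birational modification) of a variety of $\mathrm{K3}^{[3]}$-type by a birational involution with well-understood indeterminacy loci, so that abelianity is inherited from the $\mathrm{K3}^{[n]}$ case. Without this (or some other) input, your argument leaves OG6 unproved.

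A secondary imprecision: for $\operatorname{Kum}^n$-type you claim the isotrivial fibration $A^{[n+1]}\to A$ ``realises $\mathcal{H}(K_n(A))$ as a direct summand of $\mathcal{H}(A^{[n+1]})\otimes\mathcal{H}(A)^{\vee}$ up to a finite \'etale cover,'' but the \'etale covering relation $A^{[n+1]}=(K_n(A)\times A)/A[n+1]$ gives the inclusion in the other direction: $\mathcal{H}(A^{[n+1]})$ is a direct summand of $\mathcal{H}(K_n(A))\otimes\mathcal{H}(A)$. Extracting the Kummer fiber's motive as an abelian motive requires a genuine motivic decomposition of $K_n(A)$ itself; this is exactly what \cite{Xu18} and \cite{FTV19} prove (via de Cataldo--Migliorini's semi-small resolution), and what the paper cites. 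Your sketch as stated does not yield the conclusion without this input.
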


Combining this with Corollary \ref{cor:Pdefect}, we have the following consequences, providing evidences to the meta-conjecture \ref{conj:meta} in the world of Andr\'e motives. 
\begin{corollary}\label{cor:KnownHKs2}
Let $X$ be a \textit{known} projective hyper-K\"ahler variety. Then 
\begin{enumerate}[$(i)$]
    \item its Andr\'e motive is abelian;
    \item for any $m\in \N$, all Hodge classes of $H^*(X^{m}, \Q)$ are motivated (hence absolutely Hodge);
    \item if $X$ is of $\mathrm{K3}^{[n]}$, OG6, or OG10-type, then $\mathcal{H}(X)\in \langle\mathcal{H}^2(X)\rangle$;
    \item if $X$ is of $\operatorname{Kum}^n$-type, then $\mathcal{H}(X)\in \langle\mathcal{H}^1(\KS(X))\rangle$ and $\mathcal{H}^+(X)\in \langle\mathcal{H}^2(X)\rangle$.
\end{enumerate}
\end{corollary}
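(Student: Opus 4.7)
The plan is to derive Corollary \ref{cor:KnownHKs2} as a formal consequence of Corollary \ref{cor:KnownHKs} combined with the tannakian equivalences packaged in Corollary \ref{cor:Pdefect}; the only additional ingredients are standard inputs on odd Betti numbers in each deformation family and a tensor-power argument to pass from $X$ to $X^m$.

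First I would observe that Corollary \ref{cor:KnownHKs} provides triviality of the defect group $P(X)$ for every known deformation type. Applying the equivalences $(i)\Leftrightarrow (ii)\Leftrightarrow (iii)$ of Corollary \ref{cor:Pdefect} immediately yields (i): the motive $\mathcal{H}(X)$ lies in the Kuga--Satake subcategory $\mathsf{KS}(X) = \langle\mathcal{H}^1(\KS(X))\rangle$, which is generated by the motive of an abelian variety and is therefore abelian.

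Next, to separate (iii) from (iv), I would split cases according to whether the odd Betti numbers vanish. For $\mathrm{K3}^{[n]}$-type this vanishing is classical (G\"ottsche); for OG6-type it follows from Mongardi--Rapagnetta--Sacc\`a; and for OG10-type it is precisely the consequence of Theorem \ref{thm:main1} highlighted in the remark following its statement. In all three of these cases $\mathcal{H}(X)=\mathcal{H}^+(X)$ and the defect group coincides with its even version $P^+(X)$, so the equivalence $(i^+)\Leftrightarrow (ii^+)$ of Corollary \ref{cor:Pdefect} gives $\mathcal{H}(X)\in \langle\mathcal{H}^2(X)\rangle$, which is (iii). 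For $\operatorname{Kum}^n$-type the odd Betti numbers do not vanish, so the equivalence $(i)\Leftrightarrow (ii)$ gives the first half of (iv); the second half follows because Theorem \ref{thm:product} presents $P^+(X)$ as a quotient of $P(X)$, hence also trivial, and then $(i^+)\Leftrightarrow (ii^+)$ applies.

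For (ii), Corollary \ref{cor:Pdefect}$(iv)$ translates triviality of $P(X)$ into the equality $\MT(H^*(X))=\Gmot(\mathcal{H}(X))$, which is Conjecture \ref{mhc} for $X$. For any $m\in\NN$, the Andr\'e motive $\mathcal{H}(X^m)\simeq \mathcal{H}(X)^{\otimes m}$ lies in the tannakian subcategory generated by $\mathcal{H}(X)$, so the natural restriction maps
\[
\Gmot(\mathcal{H}(X))\twoheadrightarrow \Gmot(\mathcal{H}(X^m)), \qquad \MT(H^*(X))\twoheadrightarrow \MT(H^*(X^m))
\]
are surjective and compatible with the inclusion $\MT\subseteq \Gmot$. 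Equality for $X$ therefore forces equality for $X^m$, so every Hodge class on $X^m$ is $\Gmot$-invariant, i.e. motivated, and hence absolutely Hodge by Andr\'e's theorem. There is no serious obstacle at this stage: the real work has been pushed upstream into Corollary \ref{cor:KnownHKs}, and what remains is essentially bookkeeping. The only mildly delicate point is the correct citation of odd Betti number vanishing in each of the three families entering (iii), with the OG10 case being the newly established input coming from Theorem \ref{thm:main1}.
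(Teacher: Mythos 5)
Your proof is correct and follows essentially the same route the paper intends, namely combining Corollary \ref{cor:KnownHKs} with the equivalences of Corollary \ref{cor:Pdefect} and a case-split on the vanishing of odd Betti numbers; the paper itself leaves the details implicit. The only minor remark is that for (ii) a slightly more direct route is available — abelianity of $\mathcal{H}(X)$ from (i) gives abelianity of $\mathcal{H}(X^m)$, and Andr\'e's Theorem \ref{mhcabelian} then yields motivatedness of all Hodge classes directly — but your argument via propagating the equality $\MT=\Gmot$ to powers is equally valid.
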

The item $(i)$ on the abelianity of Andr\'e motive is proved for $\mathrm{K3}^{[n]}$-type by Schlickewei \cite{Sch12}, for $\operatorname{Kum}^n$-type and OG6-type in the recent work of Soldatenkov \cite{soldatenkov19}.

Second, we can prove the Mumford--Tate conjecture for all known hyper-K\"ahler varieties defined over a finitely generated field extension of $\Q$; see \S\ref{subsec:MTC} for the precise statement of the conjecture. For varieties of $\mathrm{K3}^{[n]}$-type, it has been proven in \cite{floccari2019}.
In fact, what we obtain in the following Theorem \ref{thm:MMTKnownHK} is a stronger result in two aspects:
\begin{itemize}
    \item we identify the Mumford--Tate group and the Zariski closure of the image of the Galois representation \textit{via} a third group, namely the motivic Galois group. This is the so-called \emph{motivated Mumford--Tate conjecture} \ref{conj:mtcMot};
    \item we can treat products. In general, it is far from obvious to deduce the Mumford--Tate conjecture for a product of varieties from the conjecture for the factors. Thanks to the work of Commelin \cite{commelin2019} this can be done when the Andr\'e motives of the varieties involved are abelian.
\end{itemize}

\begin{theorem}[Special case of Theorem \ref{thm:abelianHKmotives1}]\label{thm:MMTKnownHK}
Let $k$ be a finitely generated subfield of $\C$. For any smooth projective $k$-variety that is motivated~\footnote{A smooth projective variety $X$ is said to be \textit{motivated} by another smooth projective variety $Y$ if its Andr\'e motive $\mathcal{H}(X)$ belongs to $\langle \mathcal{H}(Y)\rangle$, the tannakian subcategory of Andr\'e motives generated by $\mathcal{H}(Y)$; or equivalently, $\mathcal{H}(X)$ is a direct summand of the Andr\'e motive of a power of $Y$. Note that any non-zero divisor of $Y$ gives rise to a splitting injection $\mathbb{Q}(-1)\to \mathcal{H}(Y)$, hence $\langle \mathcal{H}(Y)\rangle$ is automatically stable by Tate twists.}
by a product of \textit{known} hyper-K\"ahler varieties, the motivated Mumford--Tate conjecture \ref{conj:mtcMot} holds.
In particular, the Tate conjecture and the Hodge conjecture are equivalent for such varieties.
\end{theorem}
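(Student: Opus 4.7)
The plan combines three ingredients: (a) the abelianity of the André motives of known hyper-Kähler varieties (Corollary \ref{cor:KnownHKs2}(i)); (b) Commelin's theorem \cite{commelin2019}, which reduces the motivated MT conjecture for a product of abelian-motivated varieties to the conjecture for each factor; and (c) the classical Mumford--Tate conjecture for K3 and abelian surfaces. First I would reduce to the case where $X$ itself is a product $X_1\times\cdots\times X_n$ of known hyper-Kähler varieties: the motivated MT conjecture for $\mathcal{H}(X)$ descends from any André motive in which $\mathcal{H}(X)$ embeds as a direct summand, because the MT group, the motivic Galois group and the Zariski closure of the $\ell$-adic Galois image are all compatible with the corresponding projector. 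Applying Corollary \ref{cor:KnownHKs2}(i) and then \cite{commelin2019} would then reduce the statement to the motivated MT conjecture for each single factor $\mathcal{H}(X_i)$.

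For a single known hyper-Kähler $X_i$, the triviality of the defect group (Corollary \ref{cor:KnownHKs}) combined with Corollary \ref{cor:Pdefect} immediately yields the ``Hodge half'' of the conjecture, namely $\Gmot(\mathcal{H}(X_i))=\MT(H^*(X_i))$. For the ``arithmetic half'', I would propagate the tannakian generation of Corollary \ref{cor:KnownHKs2}(iii)--(iv) back to a surface: in the $\mathrm{K3}^{[n]}$, OG6 and OG10 cases, the motive $\mathcal{H}^2(X_i)$ sits inside the André motive of a K3 or abelian surface via Theorems \ref{thm:Buelles} and \ref{thm:main1} together with Verbitsky's identification of $H^2$; in the Kum$^n$ case, the Kuga--Satake motive $\mathcal{H}^1(\KS(X_i))$ is by construction a summand of the André motive of an abelian surface. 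The classical MT conjecture is known for K3 surfaces (Tankeev, André) and for abelian surfaces, so it transports along these tannakian inclusions to give the desired equality $\MT(H^*(X_i))\otimes\Q_\ell = G_\ell(X_i)$.

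The final Hodge--Tate equivalence for $X$ is then immediate from the motivated MT conjecture: by Corollary \ref{cor:KnownHKs2}(ii) every Hodge class on $X^m$ is motivated and hence cut out by the MT group, while Tate classes are the invariants of the Zariski closure of the Galois image; the motivated MT conjecture forces these two groups to coincide after base change to $\Q_\ell$, so the Hodge and Tate conjectures become equivalent for such $X$.

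\textbf{Main obstacle.} The most delicate step is ensuring that the tannakian embedding into a surface motive---a statement about André motives and their Betti realizations---remains compatible with the $\ell$-adic realizations and their Galois actions. For the $\mathrm{K3}^{[n]}$ and OG10 types this compatibility is largely geometric, arising from the moduli-theoretic constructions of Theorems \ref{thm:Buelles} and \ref{thm:main1}; but for the Kum$^n$ (and partially OG6) cases the Kuga--Satake abelian variety is not a priori geometrically related to $X_i$, and one must invoke the full formalism of motivated cycles to propagate the $\ell$-adic half of the conjecture through the tannakian equivalence. A secondary care is needed in the reduction to the product case: although direct summands behave well, one must verify that the relevant projector is motivated (not merely Hodge), which is precisely what the abelianity of $\mathcal{H}(X_i)$ from Corollary \ref{cor:KnownHKs2}(i) guarantees.
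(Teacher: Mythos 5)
The high-level scaffolding of your proposal is sound and matches the paper in its first and last steps: the reduction of products to single factors via Commelin's theorem \cite{commelin2019}, the triviality of defect groups (Corollary \ref{cor:KnownHKs}) as the source of abelianity, and the concluding observation that a motivated Mumford--Tate statement identifies Hodge and Tate classes. Where your argument diverges from the paper, and develops a genuine gap, is in the treatment of the ``arithmetic half'' for a single known hyper-K\"ahler variety $X_i$.

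You propose to transport the Mumford--Tate conjecture to $\mathcal{H}(X_i)$ from the motive of an underlying K3 or abelian surface, invoking the moduli-theoretic results of Theorems \ref{thm:Buelles} and \ref{thm:main1} together with the Kuga--Satake description in the $\operatorname{Kum}^n$ case. But those geometric surface models only exist for the special members of each deformation class that actually arise as moduli spaces of sheaves (or as Kummer moduli spaces): a generic projective deformation of $\mathrm{K3}^{[n]}$-, OG6-, OG10- or $\operatorname{Kum}^n$-type has no underlying K3 or abelian surface, and for such $X_i$ there is no Theorem \ref{thm:Buelles}-type inclusion $\mathcal{H}(X_i)\subset\langle\mathcal{H}(S)\rangle$ to lean on. The theorem you are asked to prove concerns all varieties motivated by known hyper-K\"ahler varieties, i.e.\ arbitrary members of the four deformation classes, so you cannot restrict to the moduli-space locus, and the Mumford--Tate conjecture is a statement about an individual variety over $k$ that does not obviously propagate along algebraic deformations the way the defect group does. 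You sense this yourself in the ``main obstacle'' paragraph, where you hope that ``the full formalism of motivated cycles'' will bridge the gap for $\operatorname{Kum}^n$ and OG6, but this is precisely the part that needs an argument and is not supplied.

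The paper sidesteps this entirely by invoking Andr\'e's Theorem \ref{thm:mtcH2}: the \emph{motivated} Mumford--Tate conjecture already holds for $\mathcal{H}^2(X)$ for any hyper-K\"ahler variety with $b_2\neq 3$, with no geometric surface required. Proposition \ref{mtcDefect} then lifts this from $\mathcal{H}^2$ to the full motive $\mathcal{H}(X_i)$ using the splitting $\Gmot(\mathcal{H}(X_i))=\MT(H^*(X_i))\times P(X_i)$ of Theorem \ref{thm:product2} and the triviality of $P(X_i)$, so that all three groups --- the Mumford--Tate group, the motivic Galois group, and the Zariski closure of the $\ell$-adic Galois image --- are pinned down simultaneously. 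This is the missing ingredient in your argument: replace the appeal to an explicit surface model by Andr\'e's deformation-free result on $\mathcal{H}^2$ plus the splitting theorem, and the rest of your plan goes through.
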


\begin{remark}[Relation to \cite{soldatenkov19}]
The combination of Corollary \ref{cor:Pdefect} and Theorem \ref{thm:DeformationInv} (plus the fact that two deformation equivalent hyper-K\"ahler varieties can be connected by algebraic families) implies that the abelianity of the Andr\'e motive of hyper-K\"ahler varieties is a deformation invariant property (Corollary \ref{cor:application}$(i)$). When finalizing the paper, we discovered the recent update of Soldatenkov's preprint \cite{soldatenkov19}, where he also obtained this result, as well as Corollary \ref{cor:KnownHKs2}$(i)$, except for the O'Grady-10 case. We attribute the overlap to him. The proofs and points of view are somewhat different: \cite{soldatenkov19} makes a detailed study of the Kuga--Satake construction in families, while our argument does not involve the Kuga--Satake construction when the odd cohomology is trivial, but relies on Andr\'e's theorem \cite{Andre1996} on the abelianity of $\mathcal{H}^2$. As a bonus of  emphasizing the usage of defect groups in our study, on the one hand, there seems to be some promising approaches mentioned in Remark \ref{rm:Potential} to show the abelianity of the Andr\'e motive of hyper-K\"ahler varieties in general; on the other hand, even if Conjecture \ref{conj:Ptrivial} (hence the abelianity) turned out to fail for some deformation family of hyper-K\"ahler varieties, our method can still control their Andr\'e motives by its degree-2 part together with information on the Andr\'e motive of one given member in that family, see Corollary \ref{cor:application} $(ii), (iii)$.
\end{remark}

\noindent{\textbf{Convention:}} From \S\ref{sec:StableModuli} to \S\ref{subsec:KnownHK}, all varieties are defined over the field of complex numbers $\C$ if not otherwise stated. 
We work exclusively with rational coefficients for cohomology groups and Chow groups, as well as for the corresponding categories of motives. For simplicity, the notation $\CHM$ (\resp $\DM$, $\AM$) stands for the category of rational Chow motives (\resp rational geometric motives in the sense of Voevodsky, rational Andr\'e motives) over a base field $k$, which are usually denoted by $\CHM(k)_{\Q}$ (\resp $\DM_{gm}(k)_{\Q}$, $\AM(k)$) in the literature.

\noindent{\textbf{Acknowledgement:}} We want to thank Chunyi Li for helpful discussions and Ben Moonen for his careful reading of the draft. We also thank the referee for all the helpful comments for improvement.

\section{Generalities on motives}
In this section, we recall various categories of motives that we will be using, gather some of their basic properties, and explain some relations between them. Most of the content is standard and well-documented, except Proposition \ref{prop:GeneratedCats} and results of \S \ref{subsubsec:familiesAM} in the non-projective setting.

\subsection{Chow motives}\label{subsec:CHM}
Let $\mathrm{SmProj}_{k}$ be the category of smooth projective varieties over an arbitrary base field $k$.
Let $\CHM$ be the category of Chow motives with rational coefficients, equipped with the functor $$\h: \mathrm{SmProj}^{op}_{k}\to \CHM.$$ 
We follow the notation and conventions of \cite{andre}. $\CHM$ is a pseudo-abelian rigid symmetric tensor category, whose objects consist of triples
$(X,p,n)$, where $X$ is a smooth projective variety of dimension $d_X$ over the base field $k$, $p\in	\CH^{d_X}(X\times_{k} X)$ with $p\circ p = p$, and $n\in \Z$. Morphisms $f: M=(X,p,n) \to N=(Y,q,m)$ are elements $\gamma \in \CH^{d_X+m-n}(X\times_{k} Y)$ such that $\gamma \circ p = q\circ \gamma = \gamma$. The tensor product of two motives is defined in the obvious way by the fiber product over the base field, while the dual of $M=(X, p, n)$ is $M^\vee = (X,{}^tp,-n+d_X)$, where ${}^tp$ denotes the transpose of $p$. The Chow motive of a smooth projective variety $X$ is defined as $\h(X) := (X,\Delta_X,0)$,	where $\Delta_X$ denotes the class of the diagonal inside $X\times_{k} X$, and the
\emph{unit motive} is denoted by $\mathds{1} := \h(\Spec (k))$. In
particular, we have $\CH^l(X) = \Hom(\mathds{1}(-l),\h(X))$. The
\emph{Tate motive} of weight $-2i$ is the motive $\mathds{1}(i) := (\Spec (k), \Delta_{\Spec(k)},i)$. A motive is said to be of \emph{Tate type} if it is isomorphic to a direct sum of Tate motives (of various weights).

Given a Chow motive $M\in \CHM$, the pseudo-abelian tensor subcategory of $\CHM$ generated by $M$ is by definition the smallest full subcategory of $\CHM$ containing $M$ that is stable under isomorphisms, direct sums, direct summands, tensor products and duality. We denote this subcategory by $\langle M \rangle_{\CHM}$; it is again a pseudo-abelian rigid tensor category.
Note that if $M=\h(X)$ is the motive of a smooth projective variety $X$, then any divisor on $X$ gives rise to a splitting injection $\1(-1)\to \h(X)$; therefore when $X$ has a non-zero divisor, $\langle \h(X)\rangle_{\CHM}$ contains the Tate motives and hence it is also stable under Tate twists.

\subsection{Mixed motives}\label{subsec:DM}
Let $\mathrm{Sch}_{k}$ be the category of separated schemes of finite type over a perfect base field $k$.
Let $\DM$ be Voevodsky's triangulated category of geometric motives over $k$ with rational coefficients \cite{Voe00}.
There are two canonical functors 
$$M: \operatorname{Sch}_{k}\to\DM \text{ and } M_{c}: (\operatorname{Sch}_{k}, \text{proper morphisms})\to\DM.$$
For any $X\in \operatorname{Sch}_k$, $M(X)$ is called its \textit{(mixed) motive}  and $M_c(X)$ is called its \textit{motive with compact support} (or rather its \emph{Borel--Moore motive}). There is a canonical comparison morphism $M(X)\to M_c(X)$, which is an isomorphism if $X$ is proper over $k$. The category 
$\DM$ is a rigid tensor triangulated category, where the duality functor is determined by the so-called \emph{motivic Poincar\'e duality}, which says that for any connected smooth $k$-variety $X$ of dimension $d$, 
\begin{equation}\label{eqn:MotPD}
M(X)^\vee\simeq M_c(X)(-d)[-2d].
\end{equation}
The Chow groups are interpreted as the corresponding Borel--Moore theory. More precisely, if $X$ is an equi-dimensional quasi-projective $k$-variety, then for any $i\in \N$,
\begin{equation}\label{eqn:ChowBM}
\CH_{i}(X)=\Hom(\1(i)[2i], M_{c}(X)).
\end{equation}

An important property we will use is the localization distinguished triangle \cite{Voe00}: let $Z$ be a closed subscheme of $X\in \operatorname{Sch}_k$, then there is a distinguished triangle in $\DM$:
\begin{equation}\label{eqn:localization}
M_{c}(Z)\to M_{c}(X)\to M_{c}(X\backslash Z)\to M_c(Z)[1].
\end{equation}

Given a mixed motive $M\in \DM$, the tensor triangulated subcategory of $\DM$ generated by $M$, denoted by $\langle M \rangle_{\DM}$ is the smallest full subcategory of $\DM$ containing $M$ that is stable under isomorphisms, direct sums,  tensor products, duality and cones (hence also shifts and direct summands). By definition, $\langle M \rangle_{\DM}$ is a pseudo-abelian rigid tensor triangulated category. Again for a smooth projective variety $X$ admitting a non-zero effective divisor, $\langle M(X) \rangle_{\DM}$ contains all Tate motives, and hence it is also stable by Tate twists.

By \cite{Voe00}, there is a fully faithful tensor functor
$$\CHM^{op} \longrightarrow \DM,$$
which sends the Chow motive $\h(X)$ of a smooth projective variety $X$ to its mixed motive $M(X)\simeq M_c(X)$; for any $i\in \mathbb{Z}$, the Tate object $\1(-i)$ in $\CHM$ is sent to $\1(i)[2i]$. In this paper we identify $\CHM^{op}$ with its essential image in $\DM$. 
\begin{question}[Elimination of cones]
If a Chow motive can be obtained from another Chow motive by performing tensor operations and cones in $\DM$, can it be obtained already within $\CHM$ by performing tensor operations therein? 
\end{question}
The following observation gives a positive answer to this question. 
 \begin{proposition}\label{prop:GeneratedCats}
Notation is as before. Let $M$ be a Chow motive. Then we have an equality of subcategories of $\DM$:
$$\langle M\rangle_{\CHM}=\langle M\rangle_{\DM}\cap \CHM.$$
 \end{proposition}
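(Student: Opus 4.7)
The inclusion $\langle M\rangle_{\CHM}\subseteq \langle M\rangle_{\DM}\cap \CHM$ is immediate, since $\CHM^{\mathrm{op}}\hookrightarrow \DM$ is a fully faithful tensor functor, so the content lies in the reverse inclusion. I would approach it via Bondarko's \emph{Chow weight structure} $w_{\mathrm{Chow}}$ on $\DM$, whose heart is precisely $\CHM$ seen inside $\DM$. The existence of this weight structure rests on the Hom-vanishing
\[\Hom_{\DM}(\h(X),\h(Y)[i])=0\]
for smooth projective $X,Y$ and $i>0$, which is equivalent to the vanishing of Bloch's higher Chow groups $\CH^{\dim Y}(X\times Y,n)$ for $n<0$.

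The key observation is that the subcategory $\mathcal{T}:=\langle M\rangle_{\DM}$, being closed under $\otimes$ and $(-)^{\vee}$, coincides with the thick triangulated subcategory of $\DM$ generated by the tensor-and-dual closed family $\mathcal{P}:=\{M^{\otimes a}\otimes (M^{\vee})^{\otimes b}:a,b\in\N\}\subseteq \CHM$. I would then apply Bondarko's restriction theorem: when a thick triangulated subcategory of a triangulated category equipped with a weight structure is generated by a set of heart objects, the weight structure restricts, and its heart is the pseudo-abelian additive closure, inside the ambient heart, of the generating set. In our situation, this closure of $\mathcal{P}$ in $\CHM$ is precisely $\langle M\rangle_{\CHM}$, because $\mathcal{P}$ is already tensor-and-dual closed. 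On the other hand, the heart of $w_{\mathrm{Chow}}|_{\mathcal{T}}$ is obviously $\mathcal{T}\cap \CHM$, so matching the two identifications gives $\mathcal{T}\cap \CHM=\langle M\rangle_{\CHM}$.

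The main difficulty will be to invoke Bondarko's restriction theorem with the correct hypotheses, and to verify that the tensor-and-dual orbit of $M$ genuinely generates $\mathcal{T}$ as a thick triangulated subcategory. As a hands-on alternative, one can use Bondarko's weight complex functor $t\colon \DM\to K^b(\CHM)$: by induction on the construction of an object $X\in \mathcal{T}$ from $M$ via cones, sums, shifts, tensors, duals, and retracts, one shows $t(X)\in K^b(\langle M\rangle_{\CHM})$, using that $t$ is compatible with each of these operations. When $X\in \mathcal{T}\cap \CHM$ is already in the heart, $t(X)$ is chain-homotopic to $X$ concentrated in degree zero, forcing $X\in \langle M\rangle_{\CHM}$.
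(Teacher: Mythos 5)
Your primary approach is essentially the same as the paper's: the paper also invokes Bondarko's Chow weight structure on $\DM$ and his Theorem 4.3.2 on weight structures generated by negative subcategories, following an argument of Wildeshaus. The one place where you elide content is the phrase ``Bondarko's restriction theorem.'' Theorem 4.3.2 of \cite{Bon10} constructs, from the negative subcategory $\langle M\rangle_{\CHM}$, a bounded weight structure $v$ on $\mathcal{T}=\langle M\rangle_{\DM}$ whose heart is the Karoubi closure of the generating class; it does \emph{not} assert out of the box that this $v$ is the restriction of the ambient weight structure $w$, because the $w$-weight decomposition of an object of $\mathcal{T}$ could a priori leave $\mathcal{T}$. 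Showing $v=w|_{\mathcal{T}}$ (equivalently, $\mathcal{T}^{v\geq n}=\DM^{w\geq n}\cap\mathcal{T}$ and likewise for $\leq$) is precisely the argument the paper spells out, via first proving inclusions by induction on boundedness and then upgrading them to equalities using weight-orthogonality. So if you cite a literal restriction theorem, make sure the source really does package this compatibility statement; otherwise you have identified the correct tool but left the key verification implicit. Once that is supplied, the conclusion $\langle M\rangle_{\CHM}=\mathcal{T}^{v=0}=\mathcal{T}\cap\CHM$ follows exactly as you say.

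Your ``hands-on'' alternative via the weight complex functor $t$ is a genuinely different (if related) route and is also viable. Two small caveats: $t$ lands in the \emph{weak} homotopy category $K_w^b(\CHM)$ rather than $K^b(\CHM)$, and its compatibility with $\otimes$ and $(-)^{\vee}$ is folklore but not entirely formal — it relies on the fact that weight decompositions can be tensored because $\DM^{w\geq 0}$ and $\DM^{w\leq 0}$ are $\otimes$-ideals in the appropriate sense. Modulo those checks, your final step is fine: a bounded complex in $\langle M\rangle_{\CHM}$ that is homotopy-equivalent to a complex concentrated in degree $0$ forces that degree-$0$ object to be a retract of a term of the complex, hence to lie in $\langle M\rangle_{\CHM}$ (which is Karoubi-closed). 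This alternative trades the abstract uniqueness argument for an induction over the cell structure of $\mathcal{T}$, which some readers may find more transparent.
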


 \begin{proof}
 The argument is due to Wildeshaus \cite[Proposition 1.2]{Wil15}, which we reproduce here for the convenience of the readers. This statement is also independently discovered by Hoskins--Pepin-Lehalleur recently in \cite{HPL19}. 
In \cite{Bon10} Bondarko introduced the notion of weight structures on triangulated categories and constructs a bounded non-degenerate weight structure $w$ on $\DM$ whose heart $\DM^{w=0}$ consists of the Chow motives $\CHM^{op}$. As $\langle M\rangle_{\DM}$ is generated by the subcategory $\langle M \rangle_{\CHM}$ and the latter, being a subcategory of $\CHM$, is \emph{negative} in the sense of \cite[Definition 4.3.1]{Bon10}, we can apply \cite[Theorem 4.3.2 II]{Bon10} to conclude that there exists a unique bounded weight structure $v$ on $\langle M\rangle_{\DM}$ whose heart $\langle M\rangle_{\DM}^{v=0}$ is $\langle M\rangle_{\CHM}$. 
By shifting, we see that for any $n\in \Z$, 
\begin{equation}\label{eq:sameweight}
\langle M\rangle_{\DM}^{v=n}\subset \DM^{w=n}.
\end{equation}
 We claim that for any $n$, we have 
 \begin{equation}\label{eq:vinw}
 \langle M\rangle_{\DM}^{v\geq n}\subset \DM^{w\geq n}\cap \langle M\rangle_{\DM} \text{  and   }  \langle M\rangle_{\DM}^{v\leq n}\subset \DM^{w\leq n} \cap \langle M\rangle_{\DM}.
 \end{equation}
 Indeed, it suffices to show the first inclusion in the case $n=0$. Given any object $N$ of $\langle M\rangle_{\DM}^{v\geq 0}$, by the boundedness of $v$, $N$ can be obtained by a finite sequence of successive extensions of objects of $\langle M\rangle_{\DM}$ with non-negative $v$-weight, which have non-negative $w$-weight by \eqref{eq:sameweight}. Therefore $N\in \DM^{w\geq 0}$, and the claim is proved.\\
 Now we show that the inclusions in \eqref{eq:vinw} are actually equalities. Given $N\in \DM^{w\geq 0}\cap \langle M\rangle_{\DM}$, we have $\Hom(N, N')=0$ for all $N'\in \langle M\rangle_{\DM}^{v\leq -1}$ since, by the second inclusion in \eqref{eq:vinw}, $N'\in \DM^{w\leq -1}$. Therefore $N\in \langle M\rangle_{\DM}^{v\geq 0}$. The first equality is proved; the argument for the second one is similar. \\
 As a consequence, $\langle M\rangle_{\CHM}=\langle M\rangle_{\DM}^{v=0}=\DM^{w=0}\cap \langle M\rangle_{\DM}=\CHM\cap  \langle M\rangle_{\DM}$.
 \end{proof}
 
 Obviously, the proof shows that the same result holds if we start with a subcategory of $\CHM$ instead of just an object. The case of the subcategory of abelian motives is exactly \cite[Proposition~1.2]{Wil15}, from which we borrowed the argument above.
 
 Note that due to the abstract machinery of weight structures, the above proof does not give a constructive way to eliminate the usage of cones if a Chow motive is explicitly expressed in terms of a second one by tensor operations and cones.

\subsection{Andr\'e motives}\label{subsec:AM}
Let the base field $k$ be a subfield of the field of complex numbers $\C$.
Replacing the Chow group by the $\Q$-vector space of algebraic cycles modulo homological equivalence (here we use the rational singular homology group of the associated complex analytic space) in the construction of Chow motives (\S\ref{subsec:CHM}) one obtains the category of \emph{Grothendieck motives}, denoted $\GRM$, which comes with a canonical full functor $\CHM\to \GRM$. The category of Grothendieck motives is conjectured to be semisimple and abelian; Jannsen \cite{Jan92} showed that it is the case if and only if numerical equivalence agrees with homological equivalence, which is one of Grothendieck's standard conjectures.

The standard conjectures being difficult, in \cite{andre1996Motives} an unconditional theory was proposed by Andr\'e, refining Deligne's category of absolute Hodge motives \cite{deligne1982hodge}. He replaced in the construction of Grothendieck motives the group of algebraic cycles up to homological equivalence by the group of \emph{motivated cycles}, which are roughly speaking cohomology classes that can be obtained by using algebraic cycles and the Hodge $*$-operator. The resulting category of \emph{Andr\'e motives} is denoted by $\AM$, and it is a semisimple abelian category. The canonical faithful functor $\GRM\to \AM$ is an isomorphism if the standard conjectures hold true for all smooth projective varieties.

The virtue of $\AM$ is that it works well with the tannakian formalism. There are natural functors:
$$\SmProj^{op}\xrightarrow{\mathcal{H}}\AM\xrightarrow{r}\pHS\xrightarrow{F} \Vect_{\Q},$$ where $\mathcal{H}$ is the functor that associates to a variety its Andr\'e motive, $\pHS$ is the category of polarizable rational Hodge structures, $r$ is the Hodge realization functor, and $F$ is the forgetful functor. The composition of $r\circ \mathcal{H}$ is equal to the functor $H$ attaching to a smooth projective variety its rational cohomology group.
It is easy to see that the functors $r$ and $F$ are conservative.

\subsubsection{Mumford--Tate group and motivic Galois group}

It is well-known that $\pHS$ is a neutral tannakian semisimple abelian category with fiber functor $F$. Given a polarizable rational Hodge structure $V$, let $\langle V\rangle_{\HS}$ be the full tannakian subcategory of $\pHS$ generated by $V$. The restriction of $F$ to this subcategory is again a fiber functor. The \emph{Mumford--Tate group} of $V$, denoted by $\MT(V)$, is by definition the automorphism group of the tensor functor $F|_{\langle V\rangle_{\HS}}$ and $\langle V\rangle_{\HS}$ is equivalent to the category of representations of $\MT(V)$. Note that as $V$ is assumed to be polarizable, $\MT(V)$ is reductive.  Mumford--Tate groups are known to be always connected. The $\MT(V)$-invariants in a tensor construction on $V$ are precisely the Hodge classes of type (0,0). 

In a similar fashion, $\AM$ is also neutral tannakian with fiber functor $F\circ r$. Given an Andr\'e motive $M\in \AM$, the tannakian subcategory $\langle M\rangle_{\AM}$ is again neutral tannakian, with fiber functor $F\circ r|_{\langle M\rangle_{\AM}}$; the tensor automorphism group of this functor is denoted by $\Gmot(M)$ and called the \emph{motivic Galois group} of $M$. The tannakian category $\langle M\rangle_{\AM}$ is then equivalent to the category of representations of this reductive group, and the $\Gmot(M)$-invariants in any tensor construction of $r(M)$ are precisely the motivated classes. 


\subsubsection{Motivated \vs Hodge}

Let $k\subset \C$ be in addition algebraically closed. For any $M\in \AM$, as all motivated cycles are Hodge classes, the tensor invariants of the motivic Galois group are all tensor invariants of the Mumford--Tate group. Both groups being reductive, we have a canonical inclusion $\MT(r(M))\subset \Gmot(M)$, by \cite[Proposition~3.1]{deligne1982hodge}. The Hodge conjecture implies that the
 converse should hold as well. 	
	\begin{conjecture}[Hodge classes are motivated]\label{mhc}
		Let $k$ be an algebraically closed subfield of $\C$. For any $M\in\AM$, we have an equality of subgroups of $\GL\bigl(r(M)\bigr)$: $$\MT\bigl(r(M)\bigr)= \Gmot(M).$$
	\end{conjecture}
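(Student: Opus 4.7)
The statement $\MT(r(M)) = \Gmot(M)$ is a motivated-cycles analog of the Hodge conjecture: by tannakian duality applied to the already-established inclusion $\MT(r(M)) \subset \Gmot(M)$, equality holds precisely when every Hodge class in every tensor construction built from $r(M)$ (allowing duals and Tate twists) is motivated. So the first move is to reformulate the problem as showing that the $\MT(r(M))$- and $\Gmot(M)$-invariants in every tensor power of $r(M)$ coincide. Equivalently, one must exhibit each Hodge class in such a tensor construction as a $\Q$-linear combination of motivated classes.

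The plan I would pursue is a reduction-and-cases strategy. By semisimplicity of $\AM$, I would decompose $M$ into isotypic components and reduce to understanding the tannakian subcategory $\langle M\rangle_{\AM}$ generated by each simple factor. For summands lying in the tannakian subcategory generated by $\mathcal{H}^1(A)$ of an abelian variety $A$, I would invoke Andr\'e's theorem that Deligne's absolutely Hodge classes on abelian varieties are in fact motivated, which gives the equality in that case. For summands controlled by a weight-two Hodge structure of K3 type, I would use Andr\'e's result \cite{Andre1996} that the Kuga--Satake correspondence is motivated, reducing back to the abelian case. The defect-group formalism introduced later in this paper (in particular Corollary \ref{cor:Pdefect}) would then provide a further reduction whenever $M$ is controlled by the motive of a hyper-K\"ahler variety whose defect group can be shown to be trivial, enlarging the class of motives for which the conjecture is known.

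The main obstacle — and the reason this is phrased as a conjecture rather than a theorem — is that no general technique produces motivated cycles from arbitrary Hodge classes. For an Andr\'e motive with no \emph{a priori} geometric structure linking it to abelian varieties or surface-like motives, one lacks any handle on which Hodge classes in tensor powers are forced to be motivated. A complete proof in full generality would amount to upgrading Deligne's absolute-Hodge theorem from the category of abelian motives to the full category of smooth projective varieties, which is out of reach by current methods; indeed it would imply a great deal of the Hodge conjecture. The realistic intermediate goal, and the one pursued in the remainder of this paper, is to establish the conjecture on ever-larger tannakian subcategories generated by explicit geometric objects — here, products of the known hyper-K\"ahler varieties.
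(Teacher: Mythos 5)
You have correctly identified that the statement labelled \ref{mhc} is a \emph{conjecture}, not a theorem: the paper offers no proof of it, and indeed a full proof is out of reach, since (as you observe) it would upgrade Deligne's absolute-Hodge theorem for abelian varieties to all smooth projective varieties and would imply a substantial portion of the Hodge conjecture. Your reformulation via invariants of reductive groups and the already-known inclusion $\MT(r(M))\subset\Gmot(M)$ (which the paper attributes to \cite[Proposition~3.1]{deligne1982hodge}) is accurate, and your survey of the partial evidence — Andr\'e's Theorem~\ref{mhcabelian} for abelian Andr\'e motives, the motivated Kuga--Satake correspondence of \cite{Andre1996}, and the defect-group reduction of Corollary~\ref{cor:Pdefect} — matches exactly the role the conjecture plays in the paper. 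There is nothing to compare against a paper-internal proof because none exists; the correct answer here was to recognize the statement as open, which you did.

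One small caveat worth recording: the defect-group machinery of Corollary~\ref{cor:Pdefect} is not unconditional even for hyper-K\"ahler varieties — it applies under the hypothesis $b_2(X)\neq 3$, and the reduction you sketch (from the full conjecture to triviality of the defect group) is an equivalence only \emph{within} the tannakian category generated by the motive of such a hyper-K\"ahler variety. So the "reduction-and-cases" strategy establishes the conjecture on the subcategory $\mathcal{C}_k$ generated by the known hyper-K\"ahler examples (Theorem~\ref{thm:abelianHKmotives1}), not on any larger piece of $\AM$. Framing the general conjecture as reducible to cases understates how little is known outside those geometrically privileged subcategories.
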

Since Mumford--Tate groups are connected, Conjecture \ref{mhc} predicts in particular that $\Gmot(M)$ should also be connected; already this statement is a difficult open problem.

The most significant evidence to this conjecture is Andr\'e's result in \cite{andre1996Motives} saying that on abelian varieties, all Hodge classes are motivated, strengthening the previous result of Deligne \cite{deligne1982hodge} on absolute Hodge classes. Let us state the result in the following form:
\begin{theorem}[{\cite[Theorem~0.6.2]{andre1996Motives}}] \label{mhcabelian}
		Conjecture~\ref{mhc} holds for any abelian Andr\'e motives. More precisely, over an algebraically closed field $k\subset \C$, for any $M\in \AM^{ab}$, the rigid tensor subcategory of $\AM$ generated by the motives of abelian varieties,  we have $\MT\bigl(r(M)\bigr)= \Gmot(M)$.		
\end{theorem}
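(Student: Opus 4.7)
By the tannakian formalism, the inclusion $\MT(r(M))\subseteq \Gmot(M)$ of reductive subgroups of $\GL(r(M))$ is an equality \iff their invariants in every tensor construction coincide. Any object of $\AM^{ab}$ being a subquotient of some $\mathcal{H}(A)(j)^{\otimes n}$ for an abelian variety $A$, it suffices to prove that every Hodge class in any tensor construction of $H^*(A,\Q)$ is motivated. Divisor classes are algebraic, so the heart of the matter is to produce higher-codimension Hodge classes from algebraic cycles together with the polarized Hodge $*$-operator.

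Following Deligne's approach for absolute Hodge classes, the first step I would carry out is a reduction to the CM case via a \emph{Principle B} for motivated cycles: in any smooth proper family $\pi\colon\mathcal{A}\to S$ of polarized abelian varieties over a connected base, a flat section of $R^{2k}\pi_*\Q(k)$ which is motivated on one fiber is motivated on every fiber. Establishing this principle for motivated classes is the key technical input: it rests on the fact that the polarized Hodge $*$-operator varies algebraically in such families, a consequence of Lieberman's theorem that $*$ on an abelian variety is induced by an algebraic cycle on $A\times A$. Since CM points are analytically dense in the Shimura variety parametrizing polarized abelian varieties of a given type, any $A$ can be placed in a smooth proper connected family with a CM fiber. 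A Hodge class on $A$ spreads, after a finite base change if needed, to a flat global Hodge section of $R^{2k}\pi_*\Q(k)$, and Principle B then transports motivatedness from the CM fiber back to $A$.

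The second step treats a CM abelian variety $A$. Here I would invoke Deligne's structure theorem: every Hodge class on a power $A^n$ is a $\Q$-linear combination of products of divisor classes and \emph{split Weil classes} associated with CM subfields of $\End(A^n)\otimes\Q$. Divisor classes are motivated by Lefschetz, so the task reduces to realizing split Weil classes as motivated cycles. The strategy is to cut out the line spanned by a given Weil class as the image of a motivated projector built from the CM action --- namely, from graphs of CM endomorphisms, isogeny correspondences, and applications of the motivated Hodge $*$-operator --- and to verify that the resulting class coincides with the prescribed Weil class under the realization.

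The hard part, and the reason this argument strictly surpasses what algebraic cycles alone can do, is the construction of split Weil classes on CM abelian varieties: indeed, the Hodge conjecture for Weil classes is still open in general, so any purely algebraic construction is out of reach, and the proof must genuinely exploit that the Hodge $*$-operator is motivated by definition. Once both Principle B and the Weil class construction are in place, summing over CM subfields and combining with divisors exhausts all Hodge classes on $A^n$, forcing $\MT(r(M))= \Gmot(M)$ for every $M\in\AM^{ab}$.
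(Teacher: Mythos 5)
This result is quoted from Andr\'e \cite[Th\'eor\`eme 0.6.2]{andre1996Motives} and the paper gives no proof of its own, so I compare your sketch against Andr\'e's argument. Your high-level architecture is correct: via the tannakian formalism one reduces to showing Hodge classes on (powers of) abelian varieties are motivated; a deformation principle for motivated cycles reduces to the CM case; and Deligne's structure theorem on Hodge classes of CM abelian varieties reduces the problem to Weil classes. These are indeed the three pillars of Andr\'e's proof, which faithfully transplants Deligne's absolute-Hodge argument into the motivated setting. But two of the technical claims are off.

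First, your explanation of why the deformation principle holds for motivated cycles is misattributed. Andr\'e's Th\'eor\`eme 0.5 --- which appears in this very paper as Theorem \ref{thm:DefPrinciple} --- is a statement about arbitrary smooth projective families; it is proved via a compactification of the total space and Deligne's theorem of the fixed part, exactly as in the absolute-Hodge case. Lieberman's theorem (the Lefschetz standard conjecture for abelian varieties, i.e.\ algebraicity of the $*$-operator on $A\times A$) plays no role there: the $*$-operator enters only through the very definition of motivated cycle, and what is needed is the fixed-part theorem applied to the image of $H^*(\bar{\mathcal{X}})\to H^*(X_s)$, not any algebraicity of $*$ on the fibers.

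Second, and more seriously, your treatment of the Weil classes on a CM abelian variety does not follow Andr\'e's (or Deligne's) path and appears to have a genuine gap. The actual argument applies the deformation principle a \emph{second} time: each Weil class sits as a flat section over the Shimura family of abelian varieties of the given Weil type, and at a suitable special fiber (``split'' Weil type) the Weil classes degenerate to exterior powers of divisor classes, hence are honestly algebraic; motivatedness then propagates along the family. Your proposal instead tries to \emph{directly} exhibit the given Weil class as the image of a motivated projector built from CM correspondences and the $*$-operator. Even granting that the eigenspace containing the Weil class can be cut out by algebraic projectors (graphs of CM endomorphisms), this only produces a rank-1 Andr\'e submotive $L$ with $r(L)$ spanned by the Weil class; it does not by itself show the class is motivated. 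For that one would need $L\cong\mathds{1}$, i.e.\ triviality of $\Gmot(L)$, which is precisely the assertion you are trying to prove --- a priori $\Gmot(L)$ could be a nontrivial subgroup of $\mathbb{G}_m$ while $\MT(r(L))$ is trivial. The deformation-to-split-type step is exactly what closes this gap, and your sketch has no replacement for it.
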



\subsection{Relative Andr\'e motives and monodromy: proper setting}\label{subsubsec:familiesAM}

Another remarkable aspect of Andr\'{e} motives is their behaviour under deformations. The results presented below are essentially due to Andr\'e (based on Deligne \cite{deligne1971theorie}) in the projective setting and formalized by Moonen \cite[\S4]{moonen17}. We generalize these results to the proper setting. Let $k$ be an uncountable and algebraically closed subfield of $\C$. The starting point is the following observation.

\begin{lemma}\label{rmk:observation}
The contra-variant functor $\mathcal{H}\colon \SmProj_k\to \AM$ extends naturally to the category $\operatorname{SmProp}_k$ of smooth proper varieties. 
\end{lemma}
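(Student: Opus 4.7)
The plan is to extend $\mathcal{H}$ to $\operatorname{SmProp}_k$ by realizing the motive of any smooth proper variety as a direct summand, cut out by an algebraic (hence motivated) projector, of the motive of a smooth projective resolution. Concretely, given $X\in\operatorname{SmProp}_k$, Chow's lemma combined with Hironaka's theorem yields a proper birational morphism $\pi\colon\tilde{X}\to X$ with $\tilde{X}\in\mathrm{SmProj}_k$. The graph $\Gamma_\pi\subset\tilde{X}\times X$ is an algebraic cycle; since $X$ is proper, the middle projection in the composition law is proper and we may form the composition $e_\pi:=[\Gamma_\pi]\circ[\Gamma_\pi]^{t}\in \CH^{\dim\tilde X}(\tilde{X}\times\tilde{X})$, regarding everything in the calculus of correspondences between smooth proper varieties. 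Birationality of $\pi$ forces $\pi_*\pi^*=\id$ on cohomology, so $e_\pi$ is an idempotent in $\End_{\AM}(\mathcal{H}(\tilde X))$, and one sets $\mathcal{H}(X):=(\tilde X,e_\pi,0)\in\AM$.

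Next I would verify independence of the choice of resolution. Given two smooth projective resolutions $\tilde{X}_1,\tilde{X}_2\to X$, dominate them by a third resolution $\tilde{X}_3\to \tilde{X}_1\times_X\tilde{X}_2$ which is smooth projective (again by Hironaka applied to a component of the fiber product). The induced morphisms $\tilde{X}_3\to\tilde{X}_i$ are proper birational, so the same construction yields graph correspondences providing mutually inverse isomorphisms $(\tilde{X}_1,e_1,0)\simeq(\tilde{X}_3,e_3,0)\simeq(\tilde{X}_2,e_2,0)$, all canonical up to the standard identities for correspondences between birational smooth projective varieties. The functoriality on morphisms is set up similarly: for $f\colon X\to Y$ in $\operatorname{SmProp}_k$, pick resolutions $\tilde{X}\to X$ and $\tilde{Y}\to Y$ such that $f$ lifts to $\tilde{f}\colon\tilde{X}\to\tilde{Y}$ (which can always be arranged after further blowing up $\tilde{X}$), and set $\mathcal{H}(f):=e_Y\circ[\Gamma_{\tilde f}]\circ e_X$; independence of choices follows from the same common-refinement argument.

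The main obstacle is bookkeeping rather than ideas: one must check that the projector $e_\pi$, obtained by a composition of correspondences involving a merely proper (not projective) intermediate variety, still lies in the group of motivated cycles on the smooth projective variety $\tilde{X}\times\tilde{X}$. This reduces to the fact that proper pushforward of algebraic cycles preserves algebraicity, which works exactly as in the smooth projective case because the source and target of $e_\pi$ are smooth projective, so the image of $e_\pi$ sits inside a genuine Andr\'{e} motive and the algebraicity of the cycle is what is needed, not the projectivity of the intermediate $X$. Once this is settled, all verifications (idempotency of $e_\pi$, well-definedness, compatibility with composition, and agreement with the original $\mathcal{H}$ on $\mathrm{SmProj}_k$) reduce to identities of correspondences between smooth projective varieties and follow from the established calculus in $\AM$.
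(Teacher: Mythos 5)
Your proposal is correct but takes a genuinely different route from the paper. The paper's proof uses the theory of Nori motives and Arapura's equivalence: starting from the (always defined) Nori motive $\mathcal{H}_{\mathrm{Nori}}(X)=\bigoplus_i\mathcal{H}^i_{\mathrm{Nori}}(X)$, one observes that each $\mathcal{H}^i_{\mathrm{Nori}}(X)$ carries a weight filtration compatible with the Hodge realization, that for smooth \emph{proper} $X$ the Hodge structure $H^i(X)$ is pure (exactly via the Chow--Hironaka blow-up $\tilde X\to X$ and the projection formula, which realize $H^i(X)$ as a direct summand of $H^i(\tilde X)$), and then by conservativity of the realization functor the Nori motive itself is pure; Arapura's theorem identifying pure Nori motives with Andr\'{e} motives finishes the job. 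Your argument bypasses Nori motives entirely and constructs $\mathcal{H}(X)$ ``by hand'' as the summand $(\tilde X, e_\pi, 0)$ cut out by the algebraic projector $\pi^*\pi_*$ on a smooth projective resolution. Both routes rest on the same geometric input (a smooth projective $\tilde X$ birationally dominating $X$, via Chow's lemma and Hironaka), but they deploy it differently: the paper uses it only to certify purity and then lets the Nori formalism produce the object canonically with no choices to track, whereas your construction is explicit and elementary but shifts the burden to verifying independence of the resolution, functoriality, and the behaviour of correspondences composed through the proper non-projective intermediate variety $X$. Those verifications do go through (proper pushforward and the usual cycle identities are all that is needed, and idempotency of $e_\pi$ is detected on cohomology since $\AM$ lives modulo homological equivalence), but they are a nontrivial amount of bookkeeping that the paper's argument silently outsources to Arapura's theorem. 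Be careful in the final write-up with the contravariance conventions: with $\mathcal{H}$ contravariant, the morphism on motives associated to a lift $\tilde f\colon \tilde X\to \tilde Y$ should be $e_X\circ[\Gamma_{\tilde f}]^t\circ e_Y$, going from $\mathcal{H}(Y)$ to $\mathcal{H}(X)$.
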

\begin{proof}
Let $X$ be a smooth and proper (non-necessarily projective) algebraic variety defined over $k$. 
Consider its Nori motive $\mathcal{H}_{\mathrm{Nori}}(X)=\bigoplus_i\mathcal{H}_{\mathrm{Nori}}^{i}(X)$. For each $i\in \NN$, $\mathcal{H}_{\mathrm{Nori}}^{i}(X)$ carries a weight filtration $W_\bullet$, inducing the weight filtration on its Hodge realization \cite[Theorem 10.2.5]{HKMS}. In particular,
$$r\left(\Gr^W_l\mathcal{H}^i_{\operatorname{Nori}}(X)\right)=\Gr^W_lH^i(X).$$ However, the Hodge structure $H^{i}(X)$ is pure\footnote{This can be easily seen in the following way: by Chow's lemma, one can find a  blow-up $\tilde X \to X$ with $\tilde X$ smooth and projective. Then by the projection formula, $H^i(X)$ is a direct summand of the pure Hodge structure $H^i(X)$, hence is also pure.}, $\Gr^W_lH^i(X)$ is zero for all $l\neq 0$. By the conservativity of $r$, the Nori motive $\Gr^W_l\mathcal{H}^i_{\operatorname{Nori}}(X)$ is also trivial for $l\neq 0$. In other words, $\mathcal{H}^i_{\operatorname{Nori}}(X)$ is pure. We conclude by invoking Arapura's theorem \cite{Ara13} which says that the category of pure Nori motives is equivalent to the category of Andr\'{e} motives.
\end{proof}

The following result generalizes Andr\'{e}'s \emph{deformation principle} for motivated cycles \cite[Th\'{e}or\`{e}me 0.5]{andre1996Motives} to the proper setting (but always with projective fibers). It has been obtained recently by Soldatenkov \cite[Proposition 5.1]{soldatenkov19}. We include here an alternative proof with the point being that Andr\'{e}'s original proof actually works, when combined with Lemma \ref{rmk:observation}.

\begin{theorem}[Andr\'e--Soldatenkov]\label{thm:DefPrinciple}
Let $S$ be a connected and reduced variety and let $f\colon \mathcal{X} \to S$ be a proper smooth morphism with projective fibers. Let $\xi \in H^{0}(S, R^{2i}f_* \Q (i))$, and assume that there exists $s_0\in S$ such that the restriction $\xi_{s_0}\in H^{2i}(X_{s_0},\Q(i))$ of $\xi$ to the fibre over $s_0$ is motivated. Then, for all $s\in S$, the class $\xi_{s}\in H^{2i}(X_s, \Q(i))$ is motivated.
	\end{theorem}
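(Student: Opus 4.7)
The plan is to reduce the statement to Andr\'{e}'s original deformation principle for smooth projective families, by means of Chow's lemma combined with the functoriality provided by Lemma \ref{rmk:observation}.

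First, I would perform standard reductions. By replacing $S$ with a resolution of singularities of one of its irreducible components, and iterating across the components that cover any chosen path from $s_0$ to $s$, one may assume $S$ smooth and connected. By intersecting with generic hyperplane sections and passing to chains of smooth curves, one further reduces to the case in which $S$ is a smooth connected curve. In this form the theorem becomes the assertion that the subspace of motivated classes inside each stalk of $R^{2i}f_*\Q(i)$ is preserved under Gauss--Manin parallel transport.

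Next, since $f\colon \mathcal{X}\to S$ is only assumed proper, I would invoke Chow's lemma followed by Hironaka's resolution of singularities to construct a proper birational morphism $\pi\colon \tilde{\mathcal{X}}\to \mathcal{X}$ with $\tilde{\mathcal{X}}$ smooth and $\tilde{f}:= f\circ \pi\colon \tilde{\mathcal{X}}\to S$ projective. Restricting to a dense open $U\subseteq S$ on which $\tilde{f}$ is also smooth (and, in order to cover every fibre of interest, iterating with Chow modifications centered at different points of $S$), I may reduce to the situation where $\tilde{f}$ is smooth projective over a neighborhood of each chosen $s$. I would then pull back to form the flat section $\tilde{\xi}:=\pi^*\xi\in H^0(S, R^{2i}\tilde{f}_*\Q(i))$. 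By Lemma \ref{rmk:observation}, for each $s\in S$ the morphism $\pi_s\colon \tilde{X}_s\to X_s$ of smooth proper varieties induces a morphism $\pi_s^*\colon \mathcal{H}(X_s)\to \mathcal{H}(\tilde{X}_s)$ in $\AM$, so the pullback of a motivated class is motivated; in particular $\tilde{\xi}_{s_0}$ is motivated.

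At this point Andr\'{e}'s projective deformation principle \cite[Th\'{e}or\`{e}me~0.5]{andre1996Motives} applies to the smooth projective family $\tilde{f}$ and the flat section $\tilde{\xi}$, yielding the motivatedness of $\tilde{\xi}_s$ for all $s\in S$. To descend to $\xi_s$, I would exploit the identity $\pi_{s,*}\pi_s^* = \mathrm{id}$ on rational cohomology (valid for any proper birational morphism between smooth proper varieties) together with the fact that $\pi_{s,*}$ is itself induced by a morphism of Andr\'{e} motives, which is guaranteed by Poincar\'{e} duality in $\AM$ applied to $\tilde{X}_s$ and $X_s$ via Lemma \ref{rmk:observation}. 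Thus $\xi_s = \pi_{s,*}\tilde{\xi}_s$ is motivated for every $s\in S$.

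The main technical difficulty I foresee is ensuring that the Chow--Hironaka modification is smooth over every chosen $s\in S$: Chow's lemma delivers only projectivity of $\tilde{\mathcal{X}}\to S$, and after resolution the composition $\tilde{f}$ is smooth only over a dense open $U\subseteq S$, so one needs to cover $S$ by such subsets coming from different modifications. The other essential ingredient, without which this strategy would not even make sense, is Lemma \ref{rmk:observation}: it is what allows us to regard $\pi_s^*$ and $\pi_{s,*}$ as morphisms of Andr\'{e} motives even though the total space $\tilde{\mathcal{X}}$ is not projective over the base.
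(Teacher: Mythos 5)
Your strategy (Chow's lemma plus Hironaka to make the family projective, then apply Andr\'e's original deformation principle) is genuinely different from the paper's (compactify the total space $\mathcal{X}$ as an abstract smooth proper variety $\bar{\mathcal{X}}$ over $\C$ --- \emph{not} over $S$ --- and use the theorem of the fixed part to identify the monodromy invariants in $H^{2i}(X_s)$ with the image of $j_s^*\colon\mathcal{H}^{2i}(\bar{\mathcal{X}})\to\mathcal{H}^{2i}(X_s)$). However, your argument has a real gap that the paper's route is specifically designed to avoid.

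The gap is the iteration. After Chow and Hironaka you obtain $\tilde{f}\colon\tilde{\mathcal{X}}\to S$ projective, but smooth only over a dense open $U\subsetneq S$; worse, the birational map $\pi\colon\tilde{\mathcal{X}}\to\mathcal{X}$ need only be an isomorphism over a dense open $V\subset\mathcal{X}$, and $\pi_s\colon\tilde X_s\to X_s$ is birational only for those $s$ with $X_s\not\subseteq\mathcal{X}\setminus V$. You do not get to prescribe $U$ or $V$: Chow's lemma furnishes \emph{some} blow-up ideal, with no control on its vanishing locus. Since the failure of relative projectivity of $\mathcal{X}/S$ (the relevant case being non-polarized families of hyper-K\"{a}hler varieties, where no relatively ample line bundle exists even locally on $S$) is a global monodromy obstruction, there is no reason that for a prescribed $s\in S$ --- including the starting point $s_0$ --- one can find a modification whose smooth-and-birational locus contains $s$. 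Without that, the chain $U_1,U_2,\dots$ need not cover $S$, and your argument may not even start. The paper's use of the theorem of the fixed part sidesteps this entirely: the inclusion $j_s\colon X_s\hookrightarrow\bar{\mathcal{X}}$ makes sense for \emph{every} $s$, with no genericity assumption, and the resulting morphism of motives $j_s^*$ identifies the monodromy invariants at $s$ as a submotive depending only on the flat structure.

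A smaller point: you misplace the role of Lemma \ref{rmk:observation}. In your setup, $\tilde X_s$ and $X_s$ are both smooth \emph{projective} for $s\in U$, so $\pi_s^*$ and $\pi_{s,*}$ are morphisms of Andr\'e motives by the classical construction; the lemma plays no role. It is the paper's compactification $\bar{\mathcal{X}}$ --- smooth and proper but not necessarily projective --- that genuinely requires Lemma \ref{rmk:observation} in order for $\mathcal{H}(\bar{\mathcal{X}})$ and $j_s^*$ to make sense in $\AM$.
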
 
\begin{proof}
    As in \cite{andre1996Motives}, we can assume that $S$ is a smooth affine curve. Choose a smooth compactification $\bar{\mathcal{X}}$ of the total space $\mathcal{X}$ and let $j_s\colon X_s\to \bar{\mathcal{X}}$ be the inclusion morphism for all $s\in S$. The theorem of the fixed part \cite[4.1.1]{deligne1971theorie} ensures that the image of the morphism of Hodge structures $j_s^*\colon H^{2i}(\bar{\mathcal{X}}, \Q(i))\to H^{2i}(X_s, \Q(i))$ coincides with the subspace of monodromy invariants. Andr\'{e}'s proof uses the morphism $j^*_s$ induced on Andr\'{e} motives, and conclude that the subspace of monodromy invariants at $s\in S$ is a submotive which does not depend on the chosen point. Now, in our case $\bar{\mathcal{X}}$ is not necessarily projective, but still has a well-defined Andr\'{e} motive $\mathcal{H}(\bar{\mathcal{X}})=\bigoplus_i \mathcal{H}^{i}(\bar{\mathcal{X}})$ by Lemma \ref{rmk:observation}, and $j_s^*$ is a morphism of Andr\'{e} motives. Then we can conclude via the same argument as in Andr\'{e} \cite{andre1996Motives}.
\end{proof}

The following definition extends slightly the usual notion of families of Andr\'e motives.
\begin{definition}[\cf {\cite[Definition 4.3.3]{moonen17}}]
\label{def:RelativeAM}
Let $S$ be a smooth connected quasi-projective variety. An \textit{Andr\'e motive} (\resp \textit{generalized Andr\'e motive}) over $S$ is a triple $(\mathcal{X}/S, e, n)$ with
\begin{itemize}
    \item $f\colon\mathcal{X}\to S$ a smooth projective (\resp proper) morphism with connected projective fibers,
    \item $e$ a global section of $R^{2d}(f\times f)_*\Q_{\mathcal{X}\times_S\mathcal{X}}(d)$, where $d$ is the relative dimension of $f$,
    \item $n$ an integer,
\end{itemize}
such that for some $s\in S$ (or equivalently by Theorem \ref{thm:DefPrinciple}, for any $s\in S$), the value $e(s)\in H^{2d}(X_s\times X_s, \Q(d))$ is a motivated projector.
\end{definition}

These objects, with morphisms defined in the usual way, form a tannakian semisimple abelian category denoted by $\AM(S)$ (\resp $\tilde\AM(S)$). Obviously, a generalized Andr\'e motive over a point is nothing else but an Andr\'e motive introduced before. There is a natural realization functor from the category of generalized Andr\'e motives over $S$ to the tannakian category of \textit{algebraic} variations\footnote{A variation of $\Q$-Hodge structures over $S$ is called \textit{algebraic} if the restriction to some non-empty Zariski open subset $U$ of $S$ is a direct summand of a variation of the form $R^if_*\Q(j)$ for some smooth projective morphism $f\colon \mathcal{X}\to U$ and some integer $j$.} of $\Q$-Hodge structures in the sense of Deligne \cite[Definition 4.2.4]{deligne1971theorie}: $$
\AM(S)\subset \tilde\AM(S)\xrightarrow{r}\operatorname{VHS}_{\Q}^{\operatorname{a}}(S)\subset \operatorname{VHS}_{\Q}^{\operatorname{pol}}(S).$$

By construction, for any smooth proper morphism $f\colon \mathcal{X}\to S$ with projective fibers and any integer $i$, we have a generalized Andr\'{e} motive $\mathcal{H}^i(\mathcal{X}/S)$ whose realization is $R^{i}f_*\Q\in \operatorname{VHS}_{\Q}^{\operatorname{a}}(S)$.

Given a (generalized) Andr\'e motive $M/S\in \tilde\AM(S)$, we aim to study the variation of motivic Galois groups $\Gmot(M_s)$ and Mumford--Tate groups $\MT(r(M)_s)$ when $s$ varies in $S$.
Consider the monodromy representation $\pi_1(S,s)\to \GL(r(M)_s)$ associated to the local system underlying the realization of $M/S$. The \textit{algebraic monodromy group} at a point $s\in S$, denoted by $\mathrm{G}_{\mathrm{mono}}(M/S)_s$, is defined as the Zariski closure in $\GL(r(M)_s)$ of the image of the monodromy representation. It is not necessarily connected, but it becomes so after some finite \'{e}tale cover of $S$; Deligne \cite[Theorem 4.2.6]{deligne1971theorie} proved that $\mathrm{G}_{\mathrm{mono}}(M/S)_s^0 $ is a semisimple $\Q$-algebraic group. The variation of these groups with $s$ determines a local system of algebraic groups $\mathrm{G}_{\mathrm{mono}}(M/S)$.

\begin{theorem}[\cf {\cite[\S 4.3]{moonen17}} ]\label{thm:familiesMotives}
Let $S$ be as above and let $M/S$ be a generalized Andr\'{e} motive over $S$. There exists two local systems of reductive algebraic groups $\MT(r(M)/S)$ and $\Gmot(M/S)$ over~$S$ with the following properties:
\begin{enumerate}[(i)]
  	\item we have inclusions of local systems of algebraic groups: $$\mathrm{G}_{\mathrm{mono}}(M/S)^0\subset \MT(r(M)/S)\subset \Gmot(M/S)\subset \GL(r(M)/S);$$
  	\item for a very general (\ie, outside of a countable union of closed subvarieties of $S$) point $s\in S$, we have $\MT(r(M)_s)=\MT(r(M) /S)_s$ and $\Gmot(M_s)=\Gmot(M/S)_s$;
  	\item for all $s\in S$, we have $\MT(r(M)_s) \subset \MT(r(M)/S)_s$ and  $ \Gmot(M_s)\subset \Gmot(M/S)_s$;
  	\item for all $s\in S$, we have
  	\[
	\mathrm{G}_{\mathrm{mono}}(M/S)^0_s \cdot \MT(r(M)_s) = \MT(r(M)/S)_s  \text{ and }
	\mathrm{G}_{\mathrm{mono}}(M/S)^0_s \cdot \Gmot(M_s) = \Gmot(M/S)_s.
	\]	
	In particular, each of the inclusion in $(iii)$ is an equality if and only if $\mathrm{G}_{\mathrm{mono}}(M/S)_s^0$ is contained respectively in $\MT(r(M)_s)$  and  $\Gmot(M_s)$.
  \end{enumerate}
  The local system $\MT(r(M)/S)$ is called the generic Mumford--Tate group of $r(M)/S$, and $\Gmot(M/S)$ is called the generic motivic Galois group of $M/S$.
\end{theorem}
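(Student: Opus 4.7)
The plan is to follow Andr\'e's original construction in the projective case, as formalized by Moonen \cite[\S4]{moonen17}, replacing the classical deformation principle for motivated cycles by its proper-setting generalization given in Theorem \ref{thm:DefPrinciple}. This generalization is available here because, although the total space of a smooth proper family with projective fibres need not be projective, its Andr\'e motive still exists by Lemma \ref{rmk:observation}, and the theorem of the fixed part is available on a suitable smooth compactification.

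First I would construct the local system $\Gmot(M/S)$. For each tensor construction $T = M^{\otimes a}\otimes (M^\vee)^{\otimes b}(c)$ of $M/S$ in $\tilde\AM(S)$, consider the finite-dimensional $\Q$-vector space of global flat sections of the local system underlying $r(T)$. By Theorem \ref{thm:DefPrinciple}, any such global section is motivated at every fibre as soon as it is motivated at one. Define $\Gmot(M/S)_s$ as the pointwise stabilizer in $\GL(r(M)_s)$ of all such global sections, as $(a,b,c)$ vary. This is a reductive $\Q$-algebraic group, and its variation with $s$ is manifestly a local system, since the collection of global sections is monodromy stable. The construction of $\MT(r(M)/S)$ is parallel: by the Cattani--Deligne--Kaplan theorem, for any rational class in a tensor construction of $r(M)/S$ at a point, the locus where its parallel transports remain of type $(0,0)$ is a countable union of closed algebraic subvarieties of $S$; the complement of the union of all such loci (over all classes which are not everywhere Hodge) is the complement of a countable union of proper subvarieties, and outside it the Mumford--Tate group is locally constant, defining $\MT(r(M)/S)$.

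For (i), the inclusion $\mathrm{G}_{\mathrm{mono}}(M/S)^0 \subset \MT(r(M)/S)$ is classical: by construction the stabilizer of globally Hodge classes contains the image of monodromy, and this forces its connected component to sit inside the Mumford--Tate group. The inclusion $\MT(r(M)/S) \subset \Gmot(M/S)$ is pointwise: at a very general $s$, a Hodge class in a tensor construction spreads to a global flat section hence is motivated by Theorem \ref{thm:DefPrinciple}, so the stabilizer condition defining $\Gmot$ is weaker than the one defining $\MT$. Statement (ii) is essentially the construction: the locus where some Hodge class at $s$ fails to be globally Hodge, or some motivated class at $s$ fails to extend to a global motivated section, lies in a countable union of proper subvarieties. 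Statement (iii) is immediate: $\MT(r(M)_s)$ fixes every Hodge class at $s$, in particular every globally Hodge class, so $\MT(r(M)_s) \subset \MT(r(M)/S)_s$, and similarly for $\Gmot$.

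The main obstacle, and the real content, is the equality in (iv). I would argue as follows: the tensor invariants of the product $\mathrm{G}_{\mathrm{mono}}(M/S)^0_s \cdot \MT(r(M)_s)$ are exactly the classes that are both monodromy-invariant and Hodge at $s$; by the theorem of the fixed part \cite[4.1.1]{deligne1971theorie}, applied to a smooth compactification (whose motive behaves well by Lemma \ref{rmk:observation}), any such class extends to a global Hodge section, so it is fixed by $\MT(r(M)/S)_s$; the reverse inclusion of invariants being obvious, reductivity yields equality of the groups. The motivic analogue is identical, with Theorem \ref{thm:DefPrinciple} substituted for the theorem of the fixed part to ensure that a monodromy-invariant class which is motivated at $s$ extends to a global motivated section. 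The final characterization in (iv) follows formally: $\mathrm{G}_{\mathrm{mono}}(M/S)^0_s \subset \MT(r(M)_s)$ is equivalent to $\mathrm{G}_{\mathrm{mono}}(M/S)^0_s \cdot \MT(r(M)_s) = \MT(r(M)_s)$, hence to $\MT(r(M)_s) = \MT(r(M)/S)_s$, and the identical formal argument works for $\Gmot$.
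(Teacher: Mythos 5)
Your approach diverges from the paper's in a substantive way: you construct the two local systems from scratch (via stabilizers of globally motivated flat sections and the Cattani--Deligne--Kaplan argument), whereas the paper observes that any generalized Andr\'e motive restricts to an honest Andr\'e motive over a non-empty Zariski open $U\subset S$, cites Moonen's results there (which give all the desired properties over $U$, and in particular $(i)$ and $(ii)$), and then extends the local systems to $S$ using the surjection $\pi_1(U)\twoheadrightarrow\pi_1(S)$. Your route has an attractive feature --- with the stabilizer definition, $(iii)$ for $\Gmot$ becomes tautological, whereas the paper has to work for it --- and the two proofs of $(iv)$ coincide in essence, resting in both cases on the theorem of the fixed part and on Theorem~\ref{thm:DefPrinciple}. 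So you have identified the correct novel ingredient for the proper setting.

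The genuine gap is in $(ii)$ for $\Gmot$, together with the reductivity claim, both of which you treat as formal consequences of your construction. Showing that your stabilizer agrees with $\Gmot(M_s)$ at a very general $s$ is not ``essentially the construction'': it requires knowing that every motivated tensor at a very general $s$ is invariant under the connected monodromy, i.e.\ that $\mathrm{G}_{\mathrm{mono}}(M/S)^0_s\subset\Gmot(M_s)$ for very general $s$. That inclusion is not supplied by CDK alone; one has to pass through $\mathrm{G}_{\mathrm{mono}}^0\subset\MT(r(M)_s)\subset\Gmot(M_s)$ (the former being the Hodge-theoretic version of $(i)$ and $(ii)$), and only then invoke Theorem~\ref{thm:DefPrinciple} to promote motivated tensors at $s$ to globally motivated sections. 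This is precisely the content the paper imports from \cite{moonen17} over $U$. Without it, the assertion that your stabilizer is reductive is also unjustified: an algebraic subgroup of $\GL(V)$ is always a tensor stabilizer (Chevalley), so being defined by tensors proves nothing; reductivity follows only once you know the stabilizer coincides with $\Gmot(M_s)$ at a very general $s$. There is also a minor inconsistency worth flagging: you define $\Gmot(M/S)_s$ as a stabilizer, but $\MT(r(M)/S)$ as the generic Mumford--Tate group transported by monodromy, and then argue $(i)$ as though both were stabilizers. Finally, in your treatment of $(iv)$ the parenthetical appeal to Lemma~\ref{rmk:observation} is irrelevant for the Hodge-theoretic half --- the theorem of the fixed part is purely Hodge-theoretic and does not need the motive of the compactification; Lemma~\ref{rmk:observation} enters only through Theorem~\ref{thm:DefPrinciple} for the motivic half.
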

\begin{proof}
There exists a non-empty Zariski open subset $U\subset S$ such that the restriction of $M/S$ to $U$ is an Andr\'e motive over~$U$. The desired conclusions hold for the restricted family over~$U$ by Theorems 4.1.2, 4.1.3, 4.3.6, and 4.3.9 in Moonen's survey \cite{moonen17}; hence, we get two local systems of algebraic groups over $U$ with the properties above.
The fundamental group of $S$ is a quotient of that of $U$. Since $(i)$ holds over $U$, we can extend the generic Mumford--Tate and motivic Galois groups which we have over $U$ to local systems $\MT(r(M)/S)$ and $\Gmot(M/S)$ over $S$. We prove that these local systems satisfy the desired properties. Note that $(i)$ and $(ii)$ are immediate since both conditions can be checked over $U$, where we already know they hold.\\
$(iii)$. We only give the proof for the generic motivic Galois group; the argument for the generic Mumford--Tate group is similar.
Up to a base change of the family $M/S$ by a finite \'{e}tale cover of~$S$, we may assume that the algebraic monodromy group is connected. Let $s_0\in S$ be any point such that $\mathrm{G}_{\mathrm{mono}}(M/S)_{s_0}$ is contained in $\Gmot(M_{s_0})$; this is the case for a very general point, by $(i)$ and $(ii)$. The monodromy group acts on $\Gmot(M_{s_0})$ by conjugation, and this defines a local system of algebraic groups $\Gmot(M_{s_{0}}/S)$ with fiber isomorphic to the motivic Galois group at the point $s_0$.
Consider any tensor construction $T/S=(M/S)^{\otimes m}\otimes (M/S)^{\vee, \otimes n}$, and let $\xi_{s_0}$ be the cohomology class of a motivated cycle in $r(T)_{s_0}$. The class $\xi_{s_0}$ is monodromy invariant, and therefore it extends to a global section $\xi$ of the local system underlying $r(T)/S$. By Theorem~\ref{thm:DefPrinciple} the restriction $\xi_{s}$ is motivated for any $s\in S$. By the reductivity of the groups involved, we deduce that for any $s\in S$ we have $\Gmot(M_s)\subset \Gmot(M_{s_0}/S)_{s}$, and we conclude by $(ii)$ that the latter must be equal to $\Gmot(M/S)_s$. This proves $(iii)$ and that if $\mathrm{G}_{\mathrm{mono}}(M/S)_{s}\subset\Gmot(M_{s})$ then $\Gmot(M_{s})=\Gmot(M/S)_s$. \\
$(iv)$. By $(i)$ and $(iii)$, we clearly have $\mathrm{G}_{\mathrm{mono}}(M/S)^0_s \cdot \Gmot(M_s) \subset \Gmot(M/S)_s$. Since both sides are reductive, we only need to compare their invariants on the tensor constructions $T/S$ on $M/S$ as above. If $\xi_s\in r(T)_s$ is invariant for the action of $\mathrm{G}_{\mathrm{mono}}({M}/S)^0_s \cdot \Gmot({M}_s)$, then it is the class of a motivated cycle which is monodromy invariant. By Theorem \ref{thm:DefPrinciple}, it extends to a global section $\xi$ of $r(T)/S$ such that $\xi_{s'}$ is motivated at any $s'\in S$. It follows that $\xi_s$ is invariant for $\Gmot({M}/S)_s$. The proof of the assertion regarding the Mumford--Tate group is similar.
\end{proof}

\subsection{Relations}
We summarize in the diagram below the natural functors relating the various categories of motives we discussed above. For the sake of completeness, we inserted in the diagram also Nori's category of mixed motives $\operatorname{MM_{Nori}}$, whose pure part is the abelian category of Andr\'e motives by Arapura's result in \cite{Ara13}, see also \cite{HKMS} for a recent account.

\begin{equation}
\xymatrix{
\SmProj^{op}_{k}\ar@{^{(}->}[r] \ar@{^{(}->}[dddd]\ar[dr]_{\h}&\operatorname{SmProp}_k^{op}\ar[dr]_{\mathcal{H}}\ar[drr] \ar[drrr]_-{H^{*}}&&&\\
&\CHM\ar[r]\ar@{^{(}->}[dd]&\AM\ar[r]_{r}\ar@{^{(}->}[d]&\pHS\ar[r]_{F}\ar@{^{(}->}[d]&\Vect_{\Q}\ar@{=}[d]\\
&&\operatorname{MM_{Nori}}\ar[r]^r&\operatorname{MHS_{\mathbf{Q}}}\ar[r]^{F}&\Vect_{\Q}\\
&\DM^{op}\ar[r]^-C &D^{b}(\operatorname{MM_{Nori}})\ar[u]_{\oplus H^i}\ar[r]^r&D^{b}(\operatorname{MHS_{\mathbf{Q}}})\ar[r]^{F}\ar[u]_{\oplus H^i}& D^{b}(\Vect_{\Q})\ar[u]_{\oplus H^i}\\
\operatorname{Sch}^{op}_{k}\ar[ur]^{M}\ar[urr]\ar[urrr]\ar[urrrr]&&&&
}
\end{equation}
Here the comparison functor $C$ is due to Harrer \cite[Theorem 7.4.17]{Har16}.

\section{Motives of the stable loci of moduli spaces}\label{sec:StableModuli}
In this section, we generalize an argument of B\"ulles \cite{Bue18}  to give a relationship between the motive of the (in general quasi-projective) moduli space of stable sheaves on a K3 or abelian surface and the motive of the surface. 

Let $S$ be a projective K3 surface or abelian surface. Denote by $\widetilde{\operatorname{NS}}(S)=H^0(S, \mathbb{Z})\oplus \operatorname{NS}(S)\oplus H^4(S, \mathbb{Z})$ the algebraic Mukai lattice, equipped with the following Mukai pairing: for any $\vv=(r, l, s)$ and $\vv'=(r, l, s') $ in $\widetilde{\NS}(S)$, $$\langle \vv, \vv'\rangle:= (l, l')-rs'-r's\in \Z.$$ Given a Brauer class $\alpha$, a Mukai vector  $\vv\in \widetilde{\operatorname{NS}}(S)$ with $\vv^2 \geq 0$ and a Bridgeland stability condition $\sigma$ of the $\alpha$-twisted derived category $D^{b}(S, \alpha)$, let  $\cM^{\mathrm{st}}$ be the moduli space of $\sigma$-stable objects in $D^{b}(S, \alpha)$ with Mukai vector $\vv$.  By \cite{Muk84}, $\cM^{\mathrm{st}}$ is a smooth quasi-projective holomorphic symplectic variety of dimension $2m:=\vv^2+2$. To understand the (mixed) motive of $\cM^\mathrm{st}$, let us first recall the following result of Markman, extended by Marian--Zhao.

\begin{theorem}[\cite{Mar02} \cite{Mar07} \cite{MZ17}]
    \label{thm:diag-chern-k3}
    Let $\mathcal{E}$ and $\mathcal{F}$ be two (twisted) universal families over $\cM^\mathrm{st}\times S$. Then
    $$ \Delta_{\cM^\mathrm{st}} = c_{2m}(- \sExt^!_{\pi_{13}}(\pi_{12}^\ast(\mathcal{E}), \pi_{23}^\ast(\mathcal{F}))) \in \CH^{2m}(\cM^\mathrm{st} \times \cM^\mathrm{st}), $$
    where $2m$ is the dimension of $\cM^\mathrm{st}$ and $\sExt^!_{\pi_{13}}(\pi_{12}^\ast(\mathcal{E}), \pi_{23}^\ast(\mathcal{F}))$ denotes the class of the complex $R\pi_{13,*}(\pi_{12}^{*}(\mathcal{E})^{\vee}\otimes^{\mathbb{L}}\pi_{23}^{*}(\mathcal{F}))$ in the Grothendieck group of $\cM^\mathrm{st} \times \cM^\mathrm{st}$, where $\pi_{ij}$'s are the natural projections from $\cM^{\mathrm{st}}\times S\times \cM^{\mathrm{st}}$.
\end{theorem}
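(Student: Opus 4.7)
The plan is to verify the identity fiberwise on $\cM^{\mathrm{st}} \times \cM^{\mathrm{st}}$, using stability to control the Ext complex, and then promote the fiberwise picture to an identity in $\CH^{2m}$ via the canonical section produced by $\id_{\mathcal{E}}$. Write $\mathcal{K} \deff R\pi_{13,*}\bigl(\pi_{12}^*(\mathcal{E})^{\vee} \otimes^{\mathbb{L}} \pi_{23}^*(\mathcal{F})\bigr)$. By cohomology and base change, $\mathcal{K}|_{(x,y)} \isom R\Hom_{D^b(S,\alpha)}(\mathcal{E}_x, \mathcal{F}_y)$; since $D^b(S,\alpha)$ is a $2$-Calabi--Yau category, Serre duality identifies $\Ext^2(\mathcal{E}_x, \mathcal{F}_y)$ with $\Hom(\mathcal{F}_y, \mathcal{E}_x)^{\vee}$. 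The $\sigma$-stability of $\mathcal{E}_x$ and $\mathcal{F}_y$ together with the equality of their Mukai vectors $\vv$ forces both $\Hom$-groups to vanish as soon as $\mathcal{E}_x \not\isom \mathcal{F}_y$. Thus, away from the diagonal, $\mathcal{K}$ is concentrated in cohomological degree $1$ and represents a genuine vector bundle, whereas on $\Delta_{\cM^{\mathrm{st}}}$ the cohomology jumps by one dimension in each of degrees $0$ and $2$.

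Next I would globalize the identity $\id_{\mathcal{E}}$: it induces a canonical morphism $\mathcal{O}_{\Delta_{\cM^{\mathrm{st}}}} \to \mathcal{H}^{0}(\mathcal{K})|_{\Delta}$, and Serre duality on $S$ gives dually a trace morphism $\mathcal{H}^{2}(\mathcal{K})|_{\Delta} \to \mathcal{O}_{\Delta}$. Combined with the Kodaira--Spencer isomorphism $\Ext^1(E,E)_{0} \isom T_{E}\cM^{\mathrm{st}}$, these realize the ``traceless'' summand of $\mathcal{K}$ along $\Delta$ as a shift of the tangent bundle $T_{\cM^{\mathrm{st}}}$, and exhibit $-\mathcal{K}$ in the Grothendieck group as the traceless Ext complex plus contributions supported on $\Delta$. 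Applying the Whitney sum formula to the resulting decomposition reduces the computation of $c_{2m}(-\mathcal{K})$ to a Chern class identity involving $T_{\cM^{\mathrm{st}}}$ and $\mathcal{O}_{\Delta}$-terms, which must collapse to the single class $[\Delta_{\cM^{\mathrm{st}}}]$.

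The crux of the proof, and the main obstacle, is showing that this reduction produces the diagonal cycle with multiplicity exactly one. This is Markman's refinement of Mukai's computation: the Atiyah class of the universal family, viewed through Kodaira--Spencer, identifies the normal bundle of $\Delta_{\cM^{\mathrm{st}}}$ in $\cM^{\mathrm{st}} \times \cM^{\mathrm{st}}$ with the fiber of the traceless Ext bundle, yielding the precise cancellation in the top Chern class that leaves $[\Delta]$ reduced. When $\mathcal{E}$ exists only as an $\alpha$-twisted family, the tensor product $\pi_{12}^*(\mathcal{E})^{\vee} \otimes^{\mathbb{L}} \pi_{23}^*(\mathcal{F})$ is untwisted (the $\alpha$-twists cancel), so $\mathcal{K}$ is a genuine object of $D^{b}(\cM^{\mathrm{st}} \times \cM^{\mathrm{st}})$ with well-defined Chern classes, and the argument extends verbatim; this, together with the passage from Gieseker-stable sheaves to Bridgeland-stable complexes, is the content of the Marian--Zhao extension in \cite{MZ17}.
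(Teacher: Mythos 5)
The paper does not actually prove this theorem; its \emph{Pointer to references} merely tracks which part of the statement comes from which source: Markman \cite{Mar02} for Gieseker-stable sheaves (cohomological statement, Chow-theoretic proof), Markman \cite{Mar07} for dropping the universal-family hypothesis via universal K-theory classes, and Marian--Zhao \cite{MZ17} for the Bridgeland/twisted generality. You instead sketch a direct proof, so the two texts are not really doing the same thing.

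Your sketch handles the easy half correctly. Stability together with $2$-Calabi--Yau Serre duality kills $\Hom$ and $\Ext^2$ between non-isomorphic stable objects with the same Mukai vector, so off the diagonal $-\mathcal{K}$ is represented by a rank-$(2m-2)$ bundle, hence $c_{2m}(-\mathcal{K})$ restricts to zero on $\cM^{\mathrm{st}}\times\cM^{\mathrm{st}}\setminus\Delta$; by the localization exact sequence and the fact that $\Delta$ has codimension $2m$ in the connected variety $\cM^{\mathrm{st}}\times\cM^{\mathrm{st}}$, this gives $c_{2m}(-\mathcal{K})=\lambda[\Delta]$ for some $\lambda\in\Q$. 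But the actual content of Markman's theorem --- that $\lambda=1$ --- is never established in your sketch; you assert it as ``Markman's refinement of Mukai's computation'' and gesture at the Atiyah class, Kodaira--Spencer, and the Whitney formula without performing the computation. This is a genuine gap, not a minor omission, since it is the whole theorem. I also caution that the argument you outline (localize to $\Delta$, then compute the multiplicity from the normal bundle) does not accurately reflect the structure of the cited proofs: Markman's argument in \cite{Mar02} proves the identity first for Hilbert schemes of points, where the universal sheaf is explicit, and then transports it to arbitrary Gieseker moduli spaces using the compatibility of the universal Chern classes with (stratified) Fourier--Mukai equivalences; the multiplicity-one statement is never a separate step to be checked in that approach.

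Two smaller imprecisions. First, for the full moduli space $\cM^{\mathrm{st}}$ treated in the theorem (not the Albanese fiber), the Kodaira--Spencer isomorphism reads $\Ext^1(E,E)\cong T_E\cM^{\mathrm{st}}$, not the traceless part $\Ext^1(E,E)_0$; the traceless subspace is the tangent space to the Kummer fiber when $S$ is abelian, which is not the variety under consideration here. Second, your claim that the $\alpha$-twists cancel so that $\mathcal{K}$ lives on the untwisted $D^b(\cM^{\mathrm{st}}\times\cM^{\mathrm{st}})$ is correct precisely because the theorem assumes a universal family exists (possibly twisted only in the $S$-direction); the delicate point the paper emphasizes via \cite{Mar07} is what to do when no such family exists, which your sketch does not address.
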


\begin{proof}[Pointer to references]
    For the case of Gieseker-stable sheaves,  \cite[Theorem 1]{Mar02} states the result for the cohomology class, but the proof gives the equality in Chow groups. Indeed, in \cite[Theorem 8]{Mar07}, the statement is for Chow groups. Moreover, the assumption on the existence of a universal family can be dropped (\cite[Proposition 24]{Mar07}): it suffices to replace in the formula the sheaves $\mathcal{E}$ and $\mathcal{F}$ by certain universal classes in the Grothendieck group $K_0(S\times \cM^\mathrm{st})$ constructed  in \cite[Definition 26]{Mar07}. More recently, it is shown in \cite{MZ17} that the technique of Markman can be adapted to obtain the result in the full generality as stated.
\end{proof}

As a consequence, we can obtain the following analogue of \cite[(3), p.6]{Bue18}

\begin{proposition}[Decomposition of the diagonal]
    \label{prop:diag-prod-k3}
   There exist finitely many integers $k_i$ and cycles $\gamma_i \in \CH^{e_i}(\cM^\mathrm{st} \times S^{k_i})$, $\delta_i \in \CH^{d_i}(S^{k_i} \times \cM^\mathrm{st})$, such that
    $$ \Delta_{\cM^{\mathrm{st}}} = \sum \delta_i \circ \gamma_i \in \CH^{2m}(\cM^\mathrm{st}\times \cM^\mathrm{st}),$$ 
    here $\dim \cM^{\mathrm{st}}=2m=e_{i}+d_{i}-2k_{i}$ for all $i$.
\end{proposition}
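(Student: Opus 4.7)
The plan is to adapt B\"ulles' strategy from the proof of Theorem~\ref{thm:Buelles} in \cite{Bue18}, the new input being Theorem~\ref{thm:diag-chern-k3} together with the Grothendieck--Riemann--Roch formula applied to the projection $\pi_{13}\colon \cM^{\mathrm{st}}\times S\times \cM^{\mathrm{st}}\to \cM^{\mathrm{st}}\times \cM^{\mathrm{st}}$. First, by Theorem~\ref{thm:diag-chern-k3}, the diagonal is expressed as $\Delta_{\cM^{\mathrm{st}}}=c_{2m}(-\mathcal{G})$, where $\mathcal{G}=R\pi_{13,*}(\pi_{12}^{*}\mathcal{E}^{\vee}\otimes^{\mathbb{L}}\pi_{23}^{*}\mathcal{F})$ (or its variant in terms of universal classes \cite{MZ17} when a universal family does not exist). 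Newton's identities then rewrite $c_{2m}(-\mathcal{G})$ as a $\Q$-polynomial in the Chern characters $\operatorname{ch}_{j}(\mathcal{G})$ for $j=0,\dots,2m$.

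Next, I would compute each $\operatorname{ch}_{j}(\mathcal{G})$ via GRR applied to the smooth morphism $\pi_{13}$: since its relative tangent bundle is $\pi_{S}^{*}T_{S}$, one has
\[
\operatorname{ch}(\mathcal{G})=\pi_{13,*}\bigl(\pi_{12}^{*}\operatorname{ch}(\mathcal{E}^{\vee})\cdot \pi_{23}^{*}\operatorname{ch}(\mathcal{F})\cdot \pi_{S}^{*}\operatorname{td}(S)\bigr).
\]
Expanding by Chow--K\"unneth-like bookkeeping on $\cM^{\mathrm{st}}\times S\times \cM^{\mathrm{st}}$ and absorbing the Todd factor into one of the two outer factors, one sees that each $\operatorname{ch}_{j}(\mathcal{G})$ is a finite sum of terms of the form
\[
\pi_{13,*}\bigl(\pi_{12}^{*}a\cdot \pi_{23}^{*}b\bigr)\quad \text{with}\quad a\in \CH^{*}(\cM^{\mathrm{st}}\times S),\ b\in \CH^{*}(S\times \cM^{\mathrm{st}}).
\]
By definition, each such term is precisely a composition $b\circ a$ of correspondences with $k=1$.

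Thus $\Delta_{\cM^{\mathrm{st}}}$ becomes a $\Q$-polynomial in such ``one-step'' compositions. The remaining point is that the product of $l$ correspondences of this type, taken in the ring $\CH^{*}(\cM^{\mathrm{st}}\times \cM^{\mathrm{st}})$, can itself be recast as a single composition passing through $S^{l}$. Concretely, for classes $\gamma_{i}$ on $\cM^{\mathrm{st}}\times S$ and $\delta_{i}$ on $S\times \cM^{\mathrm{st}}$ ($i=1,\dots,l$), an elementary base-change and projection-formula computation on $\cM^{\mathrm{st}}\times S^{l}\times \cM^{\mathrm{st}}$ shows that
\[
\prod_{i=1}^{l}(\delta_{i}\circ \gamma_{i})=\Bigl(\prod_{i}\operatorname{pr}_{S_{i},\cM}^{*}\delta_{i}\Bigr)\circ \Bigl(\prod_{i}\operatorname{pr}_{\cM,S_{i}}^{*}\gamma_{i}\Bigr),
\]
where the pullbacks are via the obvious projections from $\cM^{\mathrm{st}}\times S^{l}$ and $S^{l}\times \cM^{\mathrm{st}}$. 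Assembling all contributions coming from the (finitely many) monomials in the Newton expansion of $c_{2m}$ yields the desired finite decomposition, with the codimension relation $e_{i}+d_{i}-2k_{i}=2m$ being automatic from the fact that composition of correspondences decreases total codimension by the dimension of the intermediate variety $S^{k_{i}}$.

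The main obstacle is not conceptual but bookkeeping: one has to keep track of the interplay between the pullback-pushforward formalism and the polynomial manipulations, and one must invoke the twisted/universal-class version of Theorem~\ref{thm:diag-chern-k3} from \cite{MZ17} in order to cover the case where no genuine universal family exists on $\cM^{\mathrm{st}}\times S$; once the formula of Theorem~\ref{thm:diag-chern-k3} is in hand, the rest is a routine application of GRR and of the base-change identity above.
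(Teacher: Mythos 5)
Your proposal is correct and follows essentially the same strategy as the paper's proof, which is B\"ulles' argument applied to Markman's (and Marian--Zhao's) formula from Theorem~\ref{thm:diag-chern-k3}: GRR for the projection $\pi_{13}$ expresses each $\ch_j$ of the relative $\sExt$ class as a one-step composition through $S$, and Newton-type identities pass from Chern characters to the Chern class $c_{2m}$. The only difference is presentational: the paper invokes Lieberman's formula to note that the ideal $I$ of correspondences factoring through powers of $S$ is closed under intersection product (hence a $\Q$-subalgebra), whereas you unwind that observation explicitly via the base-change identity turning a product $\prod_i (\delta_i \circ \gamma_i)$ in $\CH^*(\cM^{\mathrm{st}}\times\cM^{\mathrm{st}})$ into a single composition through $S^l$; the paper also symmetrizes $\td(S)$ by splitting it as $\sqrt{\td(S)}\cdot\sqrt{\td(S)}$ into both factors, while you absorb it into one side, which is equally valid.
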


\begin{proof}
    We follow the proof of \cite[Theorem 1]{Bue18}. First of all, we observe that by Lieberman's formula (see
	\cite[\S 3.1.4]{andre} and \cite[Lemma~3.3]{Via17} for a proof), the following two-sided ideal of $\CH^*(\cM^\mathrm{st} \times \cM^\mathrm{st})$ (with respect to the ring structure given by the composition of correspondences)
    $$ I = \langle \beta \circ \alpha \mid \alpha \in \CH^\ast(\cM^\mathrm{st} \times S^k), \beta \in \CH^\ast(S^k \times \cM^\mathrm{st}), k \in \NN \rangle \subseteq \CH^\ast(\cM^\mathrm{st} \times \cM^\mathrm{st}) $$
    is closed under the intersection product, hence is a $\Q$-subalgebra of $\CH^\ast(\cM^\mathrm{st} \times \cM^\mathrm{st})$. A computation similar to \cite[(2), p.6]{Bue18} using the Grothendieck--Riemann--Roch theorem shows that
    $$ \ch(-[\sExt_{\pi_{13}}^!(\pi_{12}^\ast(\mathcal{E}), \pi_{23}^\ast(\mathcal{F}))]) = - (\pi_{13})_\ast(\pi_{12}^\ast\alpha \cdot \pi_{23}^\ast\beta) $$
    where
    $$ \alpha = \ch(\mathcal{E}^\vee) \cdot \pi_2^\ast\sqrt{\td(S)} \quad\text{and}\quad \beta = \ch(\mathcal{F}) \cdot \pi_2^\ast\sqrt{\td(S)}. $$
    It follows that $\ch_n(-[\sExt_{\pi_{13}}^!(\pi_{12}^\ast(\mathcal{E}), \pi_{23}^\ast(\mathcal{F}))]) \in I$ for any $n \in \NN$. An induction argument then shows that $c_n(-[\sExt_{\pi_{13}}^!(\pi_{12}^\ast(\mathcal{E}), \pi_{23}^\ast(\mathcal{F}))]) \in I$ for each $n \in \NN$. 
    In particular, combined with Theorem \ref{thm:diag-chern-k3},  $\Delta_{\cM^{\mathrm{st}}}$ is in $I$, which is equivalent to the conclusion.
\end{proof}

In terms of mixed motives, one can reformulate Proposition \ref{prop:diag-prod-k3} as follows.

\begin{corollary}[Factorization of the comparison map]
In the category $\DM$ of mixed motives, the canonical comparison morphism $M(\cM^\mathrm{st})\to M_c(\cM^\mathrm{st})$ can be factorized as the following composition:
$$M(\cM^\mathrm{st})\to \bigoplus_i M(S^{k_i})(e_i-2k_i)[2e_i-4k_i]\to M_c(\cM^\mathrm{st}),$$
for finitely many integers $k_{i}$'s and $e_{i}$'s.
\end{corollary}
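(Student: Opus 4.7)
The plan is to lift the diagonal decomposition from Proposition \ref{prop:diag-prod-k3} to a factorization of morphisms in $\DM$. First I would establish the following general identification: for smooth equi-dimensional quasi-projective varieties $X$ and $Y$ over $k$ with $\dim Y=d_Y$, and for any integer $r$, one has a canonical isomorphism
$$\CH^r(X\times Y)\ \cong\ \Hom_{\DM}\bigl(M(X),\ M_c(Y)(r-d_Y)[2r-2d_Y]\bigr),$$
obtained by combining motivic Poincar\'e duality \eqref{eqn:MotPD} applied to $X$, the K\"unneth formula for motives with compact support, and the interpretation \eqref{eqn:ChowBM} of Chow groups as $\Hom$-groups in $\DM$. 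A direct verification then shows that, applied with $X=Y=\cM^{\mathrm{st}}$ and $r=2m$, the class of the diagonal $\Delta_{\cM^{\mathrm{st}}}\in \CH^{2m}(\cM^{\mathrm{st}}\times \cM^{\mathrm{st}})$ corresponds precisely to the canonical comparison morphism $M(\cM^{\mathrm{st}})\to M_c(\cM^{\mathrm{st}})$.

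Applying the identification to the cycles in Proposition \ref{prop:diag-prod-k3}, the class $\gamma_i\in \CH^{e_i}(\cM^{\mathrm{st}}\times S^{k_i})$ becomes a morphism $\gamma_i\colon M(\cM^{\mathrm{st}})\to M(S^{k_i})(e_i-2k_i)[2e_i-4k_i]$, where we use that $M(S^{k_i})=M_c(S^{k_i})$ since $S^{k_i}$ is smooth projective. Similarly, using the relation $2m=e_i+d_i-2k_i$, the class $\delta_i\in \CH^{d_i}(S^{k_i}\times \cM^{\mathrm{st}})$ becomes a morphism $\delta_i\colon M(S^{k_i})(e_i-2k_i)[2e_i-4k_i]\to M_c(\cM^{\mathrm{st}})$. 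In particular, the source of $\delta_i$ matches the target of $\gamma_i$, so that $\delta_i\circ \gamma_i$ is well-defined in $\DM$.

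The final step is to verify that composition of correspondences in Chow groups (defined by intersection and pushforward along the middle projection $\pi_{13}$) agrees with composition of the associated morphisms in $\DM$, a Lieberman-type compatibility formula in the mixed motivic setting. Once this is established, Proposition \ref{prop:diag-prod-k3} translates directly into the desired factorization: assembling the $\gamma_i$'s into a single morphism $M(\cM^{\mathrm{st}})\to \bigoplus_i M(S^{k_i})(e_i-2k_i)[2e_i-4k_i]$ and the $\delta_i$'s into a single morphism in the reverse direction, their composition equals $\sum_i \delta_i\circ \gamma_i=\Delta_{\cM^{\mathrm{st}}}$, \ie~the comparison map. The main technical obstacle will be the verification of this Lieberman-type formula; however, the key enabling fact that the middle factors $S^{k_i}$ are smooth projective sidesteps any subtleties with non-proper pushforwards, and the formula should follow formally from the K\"unneth property for $M_c$ combined with the standard definition of composition of correspondences via $\pi_{13}$.
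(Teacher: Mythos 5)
Your proposal is correct and takes essentially the same route as the paper, which simply observes that $\CH^{j}(\cM^{\mathrm{st}}\times S^{k_{i}})$ is identified, via motivic Poincar\'e duality \eqref{eqn:MotPD} and the Chow--Hom interpretation \eqref{eqn:ChowBM}, both with $\Hom_{\DM}(M(\cM^{\mathrm{st}}), M(S^{k_{i}})(j-2k_{i})[2j-4k_{i}])$ and with $\Hom_{\DM}(M(S^{k_{i}})(2m-j)[4m-2j], M_{c}(\cM^{\mathrm{st}}))$. You spell out the degree bookkeeping, the fact that the diagonal realizes the comparison map, and the Lieberman-type compatibility of composition of correspondences with composition in $\DM$; the paper leaves these as implicit remarks but the underlying argument is identical.
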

\begin{proof}
It is enough to remark that by \eqref{eqn:MotPD} and \eqref{eqn:ChowBM}, for any $j\in \Z$, the space $\CH^{j}(\cM^{\mathrm{st}}\times S^{k_{i}})$ is equal to the space $$\Hom_{\DM}(M(\cM^{\mathrm{st}}), M(S^{k_{i}})(j-2k_{i})[2j-4k_{i}])$$ as well as to the space $$\Hom_{\DM}(M(S^{k_{i}})(2m-j)[4m-2j], M_{c}(\cM^{\mathrm{st}})).$$
\end{proof}

\begin{remark}[Hodge realization]
In Proposition \ref{prop:diag-prod-k3}, if one denotes $\gamma = \oplus \gamma_i$ and $\delta = \oplus \delta_i$, then we get the following morphisms of mixed Hodge structures. 
$$ H_c^\ast(\cM^\mathrm{st}) \stackrel{\gamma}{\longrightarrow} \bigoplus_{i} H^\ast(S^{k_i})(2k_i-e_i) \stackrel{\delta}{\longrightarrow} H^\ast(\cM^\mathrm{st}), $$
where the composition is precisely the comparison morphism from the compact support cohomology to the usual cohomology.
\end{remark}


\begin{remark}[Challenge for Kummer moduli spaces]\label{rmk:ChallengeKummerst}
In the case that $S$ is an abelian surface, the moduli space $\cM^\mathrm{st}$ is isotrivially fibered over $S\times  \hat{S}$ (which is the Albanese fibration when $\cM^\mathrm{st}$ is projective). We usually denote by $\mathcal{K}^{\mathrm{st}}:=\mathcal{K}^{\mathrm{st}}_{S, H}(\vv)$ its fiber. The analogue of Theorem \ref{thm:diag-chern-k3} seems to be unknown for $\mathcal{K}^{\mathrm{st}}$.
\end{remark}

\section{Motive of O'Grady's moduli spaces and their resolutions}\label{sec:OG10}

In this section, we study the motive of O'Grady's  $10$-dimensional hyper-K\"ahler varieties \cite{O'G99}. Those are symplectic resolutions of certain singular moduli spaces of sheaves on K3 or abelian surfaces. We first recall the construction.

\subsection{Symplectic resolution of the singular moduli space}\label{subsec:Resolution}

Let $S$ be a projective K3 surface or abelian surface, let $\alpha$ be a Brauer class, and let $\vv = 2 \vv_0$ be a Mukai vector, such that $\vv_0\in\widetilde{\NS}(S)$ is primitive with $\vv_0^2=2$. Let $\sigma$ be a $\vv_{0}$-generic stability condition on the $\alpha$-twisted derived category $D^{b}(S, \alpha)$ (for example, a $\vv_{0}$-generic polarization). We write
$$ \cM^\mathrm{st} = \cM_{S, \sigma}(\vv, \alpha)^\mathrm{st} $$
for the smooth and quasi-projective moduli space of $\sigma$-stable objects in $D^{b}(S, \alpha)$ with Mukai vector $\vv$, and
$$ \cM = \cM_{S, \sigma}(\vv, \alpha)^\mathrm{ss} $$
for the (singular) moduli space of $\sigma$-semistable objects with the same Mukai vector. In \cite{O'G99}, O'Grady constructed a symplectic resolution 
$\tilde{\cM}$ of $\cM$ (see also \cite{KLS06}), which is a projective (irreducible if $S$ is a K3 surface) holomorphic symplectic manifold  of dimension 10, not deformation equivalent to the fifth Hilbert schemes of the surface $S$. We know that these hyper-K\"ahler varieties are all deformation equivalent \cite{PR13}.

Let us briefly recall the geometry of $\cM$. We follow the notations in \cite{O'G99}, see also \cite{LS06} and \cite[\S 2]{MRS18}. The moduli space $\cM$ admits a filtration
$$ \cM \supset \Sigma \supset \Omega $$
where 
$$\Sigma = \Sing(\cM) = \cM \setminus \cM^{\mathrm{st}} \cong \Sym^2(\cM_{S,\sigma}(\vv_0, \alpha))$$
is the singular locus of $\cM$, which consists of strictly $\sigma$-semistable objects; and 
$$\Omega = \Sing(\Sigma) \cong \cM_{S,\sigma}(\vv_0, \alpha)$$ 
is the singular locus of $\Sigma$, hence the diagonal in $\Sym^2(\cM_{S,\sigma}(\vv_0, \alpha))$. Notice that $\cM_{S,\sigma}(\vv_0, \alpha)$ is a smooth projective holomorphic symplectic fourfold deformation equivalent to the Hilbert squares of $S$.

In \cite{O'G99}, O'Grady produced a symplectic resolution $\tilde{\cM}$ of $\cM$ in three steps. As the explicit geometry is used in the proof of our main result, we briefly recall his construction.

\textsc{Step 1.} We blow up $\cM$ along $\Omega$, resulting a space $\bar{\cM}$ with an exceptional divisor $\bar{\Omega}$. The only singularity of $\bar{\cM}$ is an $A_1$-singularity along the strict transform $\bar{\Sigma}$ of $\Sigma$. In fact, $\bar{\Sigma}$ is smooth, satisfying
$$\bar{\Sigma} \cong \Hilb^2(\cM_{S,\sigma}(\vv_0, \alpha)),$$ 
with the morphism $\bar{\Sigma} \to \Sigma$ being the corresponding Hilbert-Chow morphism, whose exceptional divisor is precisely the intersection of $\bar{\Omega}$ and $\bar{\Sigma}$ in $\bar{\cM}$.

\textsc{Step 2.} We blow up $\bar{\cM}$ along $\bar{\Sigma}$ to obtain a (non-crepant) resolution  $\hat{\cM}$ of $\cM$. The exceptional divisor $\hat{\Sigma}$ is thus a $\PP^1$-bundle over $\bar{\Sigma}$. We denote by $\hat{\Omega}$ the strict transform of $\bar{\Omega}$. Then $\hat{\cM}$ is a smooth projective compactification of $\cM^\mathrm{st}$, with boundary 
$$ \partial\hat{\cM} = \hat{\cM} \setminus \cM^\mathrm{st} = \hat{\Omega} \cup \hat{\Sigma} $$ 
being the union of two smooth hypersurfaces which intersect transversally.

\textsc{Step 3.} Lastly, an extremal contraction of $\hat{\cM}$ contracts $\hat{\Omega}$ as a $\PP^2$-bundle to $\tilde{\Omega}$, which is a 3-dimensional quadric bundle (more precisely, the relative Lagrangian Grassmannian fibration associated to the tangent bundle) over $\Omega$. The space obtained is denoted by $\tilde{\cM}$, which is shown to be a symplectic resolution of $\cM$.

\begin{remark}
By the main result of Lehn--Sorger \cite{LS06}, O'Grady's symplectic resolution can also be obtained by a single blow-up of $\cM$ along its (reduced) singular locus $\Sigma$. The exceptional divisor $\tilde\Sigma$ is nothing else but the image of $\hat{\Sigma}$ under the contraction in the third step described above, which is singular along $\tilde{\Omega}$, the preimage of $\Omega$. If we blow up $\tilde{\cM}$ along $\tilde{\Omega}$, we will obtain again  $\hat{\cM}$, with the exceptional divisor being $\hat{\Omega}$ and the strict transform of $\tilde\Sigma$ being $\hat\Sigma$. In short, the order of blow-ups can be ``reversed"; see the following commutative diagram from \cite[\S 2]{MRS18}:

\begin{equation*}
    \xymatrix{
    &\Bl_{\tilde \Omega}\tilde{\cM}=\hat\cM=\Bl_{\bar\Sigma}\bar{\cM}\ar[dl]\ar[dr]&\\
    \tilde{\cM}=\Bl_\Sigma \cM\ar[dr]&&\bar{\cM}=\Bl_\Omega \cM\ar[dl]\\
    &\cM&.
    }
\end{equation*}
\end{remark}





\subsection{The motive of O'Grady's resolution}

We will compute the Chow motives of the boundary components of $\hat{\cM}$, then describe the Chow motives of the resolutions $\hat{\cM}$ and $\tilde{\cM}$. We start with the following observation.

\begin{lemma}
    \label{lem:Hilb2}
    Let $X$ be a smooth projective variety. The Chow motive $\h(\Hilb^2(X))$ belongs to $\langle \h(X)\rangle_{\CHM}$, the pseudo-abelian tensor subcategory of $\CHM$ generated by $\h(X)$.
\end{lemma}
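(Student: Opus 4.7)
The plan is to realize $\h(\Hilb^2(X))$ as a direct summand of the Chow motive of the smooth variety $\tilde{X^2} := \Bl_{\Delta_X}(X \times X)$. Recall that the swap involution $\tau$ on $X\times X$ lifts to $\tilde{X^2}$, and the quotient morphism $q \colon \tilde{X^2} \to \tilde{X^2}/\mathfrak{S}_2 \simeq \Hilb^2(X)$ is finite of degree $2$ between smooth varieties. By the standard finite-quotient principle, the symmetrization correspondence $p := \tfrac{1}{2}(\Delta_{\tilde{X^2}} + \Gamma_\tau)$ is a projector realizing $\h(\Hilb^2(X)) \simeq (\h(\tilde{X^2}), p, 0)$. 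Since $\langle \h(X)\rangle_{\CHM}$ is pseudo-abelian by definition, it will suffice to prove that $\h(\tilde{X^2}) \in \langle \h(X)\rangle_{\CHM}$.

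For this, I would appeal to Manin's blow-up formula for Chow motives, applied to the smooth center $\Delta_X \simeq X$ of codimension $d := \dim X$ inside $X \times X$, which yields
$$\h(\tilde{X^2}) \;\simeq\; \h(X)^{\otimes 2} \;\oplus\; \bigoplus_{i=1}^{d-1} \h(X)(-i).$$
The first summand lies in $\langle \h(X)\rangle_{\CHM}$ by construction. Assuming $d \geq 1$ (the case $d=0$ being trivial, since $\Hilb^2(X)$ is then itself zero-dimensional and $\mathds{1}$ is a direct summand of $\h(X)$), any hyperplane section provides a non-zero effective divisor on $X$, so by the remark closing \S\ref{subsec:CHM} the subcategory $\langle \h(X)\rangle_{\CHM}$ contains the Tate motives and is stable under Tate twists, absorbing the remaining summands $\h(X)(-i)$. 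Hence $\h(\tilde{X^2})$, and \emph{a fortiori} its direct summand $\h(\Hilb^2(X))$, belongs to $\langle \h(X)\rangle_{\CHM}$.

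The argument is essentially mechanical and I do not foresee a genuine obstacle. The one point that deserves explicit verification is the identification $\h(\Hilb^2(X)) \simeq (\h(\tilde{X^2}), p, 0)$: this rests on the smoothness of $\Hilb^2(X)$ together with the standard fact that for a degree-$n$ quotient $q \colon Y \to Y/G$ of smooth varieties by a finite group action, the averaging projector $\tfrac{1}{n}\sum_{g\in G} \Gamma_g$ cuts out the Chow motive of $Y/G$ from that of $Y$.
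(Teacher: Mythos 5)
Your proof is correct and follows essentially the same route as the paper's: Manin's blow-up formula for $\Bl_{\Delta_X}(X\times X)$, followed by taking the $\mathfrak{S}_2$-invariant part of its Chow motive to obtain $\h(\Hilb^2(X))$. You are slightly more explicit than the paper in two respects: you spell out the symmetrization projector $\tfrac{1}{2}(\Delta + \Gamma_\tau)$ underlying the identification of $\h(\Hilb^2(X))$ with the invariants, and you flag that absorbing the summands $\h(X)(-i)$ requires $\langle \h(X)\rangle_{\CHM}$ to be closed under Tate twists, which holds once $X$ admits a non-zero divisor (a point the paper handles via the remark at the end of \S\ref{subsec:CHM} but does not repeat inside the proof). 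These are useful clarifications rather than a different argument.
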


\begin{proof}
    We assume $\dim X = n$. Let $\Delta_X \subseteq X \times X$ be the diagonal, then by \cite[\S 9]{Man68}, we have
    $$ \h \left( \Bl_{\Delta_X}(X \times X) \right) = \h(X^2) \oplus \left( \oplus_{i=1}^{n-1} \h(X)(-i) \right). $$
    Since $\Hilb^2(X) = \Bl_{\Delta_X}(X \times X) / \ZZ_2$, its motive is the $\ZZ_2$-invariant part
    $$ \h(\Hilb^2(X)) = \h \left( \Bl_{\Delta_X}(X \times X) \right)^{\ZZ_2} $$
    which is a direct summand  of $\h \left( \Bl_{\Delta_X}(X \times X) \right)$, hence is contained in the desired subcategory.
\end{proof}

\begin{lemma}
    \label{lem:motive_hathat}
    The Chow motives $\h(\hat{\Sigma})$,  $\h(\hat{\Omega})$ and $\h(\hat\Sigma\cap \hat\Omega)$ are all contained in the subcategory $\langle\h(S)\rangle_{\CHM}$.
\end{lemma}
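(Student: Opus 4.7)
The plan is to realise each of $\hat\Sigma$, $\hat\Omega$ and $\hat\Sigma\cap\hat\Omega$ as an iterated projective (or projective-and-quadric) bundle over either the smooth moduli fourfold $\cM_0\deff\cM_{S,\sigma}(\vv_0,\alpha)$ or over its Hilbert square, and then combine the projective bundle formula with B\"ulles's Theorem~\ref{thm:Buelles} and Lemma~\ref{lem:Hilb2}. The starting input is that $\vv_0$ is primitive with $\vv_0^2=2$ and $\sigma$ is $\vv_0$-generic, so $\cM_0$ is smooth projective; hence $\h(\cM_0)\in\langle\h(S)\rangle_{\CHM}$ by Theorem~\ref{thm:Buelles}, and $\h(\bar\Sigma)=\h(\Hilb^2(\cM_0))\in\langle\h(S)\rangle_{\CHM}$ by Lemma~\ref{lem:Hilb2}. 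Since $\hat\Sigma\to\bar\Sigma$ is a $\PP^1$-bundle (Step~2 of \S\ref{subsec:Resolution}), the projective bundle formula gives $\h(\hat\Sigma)=\h(\bar\Sigma)\oplus\h(\bar\Sigma)(-1)$, still inside $\langle\h(S)\rangle_{\CHM}$ (this subcategory is stable under Tate twists because $S$ carries a non-trivial divisor).

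For $\h(\hat\Omega)$ I would use the two-step description $\hat\Omega\to\tilde\Omega\to\Omega$ recalled in Step~3: the first map is a $\PP^2$-bundle and the second is the relative Lagrangian Grassmannian $\mathrm{LG}(2,T\Omega)\to\Omega$ of the symplectic tangent bundle of the smooth fourfold $\Omega\cong\cM_0$, with fibers $\mathrm{LG}(2,4)\cong Q^3$. The $\PP^2$-bundle step is the projective bundle formula; for the $Q^3$-bundle I would appeal to the classical fact that the relative symplectic Grassmannian of a symplectic vector bundle admits a Zariski-locally trivial Bruhat/cellular decomposition, so that at the level of Chow motives $\h(\tilde\Omega)\cong\bigoplus_{i=0}^{3}\h(\Omega)(-i)$. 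Combining the two steps, $\h(\hat\Omega)$ is a finite direct sum of Tate twists of $\h(\cM_0)$ and therefore lies in $\langle\h(S)\rangle_{\CHM}$.

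For $\h(\hat\Sigma\cap\hat\Omega)$ I would exploit that its image under $\hat\Sigma\to\bar\Sigma$ equals $\bar\Sigma\cap\bar\Omega$, which by construction is the exceptional divisor of the Hilbert--Chow morphism $\Hilb^2(\cM_0)\to\Sym^2(\cM_0)$ and hence a $\PP^3$-bundle over the diagonal $\Omega\cong\cM_0$. Either by restricting the $\PP^1$-bundle $\hat\Sigma\to\bar\Sigma$ to this locus, or by viewing $\hat\Sigma\cap\hat\Omega$ as an incidence subvariety inside the flag bundle $\hat\Omega\to\tilde\Omega\to\Omega$, one gets an iterated projective/quadric bundle structure over $\cM_0$ with fibers whose motives are sums of Tate motives, putting $\h(\hat\Sigma\cap\hat\Omega)$ inside $\langle\h(\cM_0)\rangle_{\CHM}\subset\langle\h(S)\rangle_{\CHM}$.

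The main obstacle will be the precise geometric book-keeping: identifying without ambiguity the bundle structures over $\Omega$ and, more importantly, lifting the cellular decomposition of the Lagrangian Grassmannian bundle $\tilde\Omega\to\Omega$ from cohomology to Chow motives. Once this is in place, the lemma reduces cleanly to B\"ulles's Theorem~\ref{thm:Buelles} applied to the primitive moduli space $\cM_0$.
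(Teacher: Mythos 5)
Your overall strategy coincides with the paper's: write each boundary component as an iterated projective or quadric bundle over $\cM_0=\cM_{S,\sigma}(\vv_0,\alpha)$ or its Hilbert square, and then invoke B\"ulles' Theorem~\ref{thm:Buelles} together with Lemma~\ref{lem:Hilb2}. The handling of $\h(\hat\Sigma)$ is identical. For $\h(\hat\Omega})$, the paper avoids the Bruhat-cell issue you flag as the ``main obstacle'' by quoting \cite[Remark~4.6]{Via13}, which directly gives the Chow-motivic decomposition $\h(\tilde\Omega)=\bigoplus_{i=0}^3\h(\Omega)(-i)$ for the odd-dimensional smooth quadric bundle $\tilde\Omega\to\Omega$; this is precisely the ingredient you would need to upgrade your cellular decomposition from cohomology to Chow motives (Zariski-local triviality of the relative Lagrangian Grassmannian does indeed hold, $\Sp_4$ being a special group in the sense of Serre, but one still needs the motivic statement, which Vial supplies). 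For $\h(\hat\Sigma\cap\hat\Omega)$, the paper's route is shorter than either of your two alternatives: it observes that $\hat\Sigma\cap\hat\Omega$ is a smooth \emph{conic} bundle over $\tilde\Omega$, being a divisor inside the $\PP^2$-bundle $\hat\Omega\to\tilde\Omega$, and applies \cite[Remark~4.6]{Via13} once more. Your first alternative---restricting the $\PP^1$-bundle $\hat\Sigma\to\bar\Sigma$ over $\bar\Sigma\cap\bar\Omega$---requires verifying that the strict transform $\hat\Omega$ meets each $\PP^1$-fiber of $\hat\Sigma$ over $\bar\Sigma\cap\bar\Omega$ in the whole fiber rather than in a proper subscheme, which is not automatic for strict transforms and which you leave unjustified; the paper's conic-bundle description sidesteps this check entirely.
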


\begin{proof}
    By O'Grady's construction, $\hat{\Sigma}$ is a $\PP^1$-bundle over $\bar{\Sigma} \cong \Hilb^2(\cM_{S,\sigma}(\vv_0, \alpha))$. It follows from \cite[\S 7]{Man68} that
    $$ \h(\hat{\Sigma}) = \h(\bar{\Sigma}) \oplus \h(\bar{\Sigma})(-1).  $$
    By \cite[Theorem 0.1]{Bue18}, $\h(\cM_{S, \sigma}(\vv_0, \alpha))$ is in the tensor subcategory of Chow motives generated by $\h(S)$. It follows by Lemma \ref{lem:Hilb2} that $\h(\bar{\Sigma})$ is also in this subcategory, therefore so is $\h(\hat{\Sigma})$.
    
    Again by O'Grady's construction, $\hat{\Omega}$ is a $\PP^2$-bundle over $\tilde{\Omega}$. It follows that
    $$ \h(\hat{\Omega}) = \h(\tilde{\Omega}) \oplus \h(\tilde{\Omega})(-1) \oplus \h(\tilde{\Omega})(-2). $$
    Moreover, since $\tilde{\Omega}$ is a $3$-dimensional quadric bundle over $\Omega$, by \cite[Remark 4.6]{Via13},
    we have that
    $$ \h(\tilde{\Omega}) = \h(\Omega) \oplus \h(\Omega)(-1) \oplus \h(\Omega)(-2) \oplus \h(\Omega)(-3). $$
    Since $\Omega \cong \cM_{S,\sigma}(\vv_0, \alpha)$, it follows by \cite[Theorem 0.1]{Bue18} that $\h(\Omega)$ is in the thick tensor subcategory of Chow motives generated by $\h(S)$, hence the same is true for $\h(\tilde{\Omega})$ and $\h(\hat{\Omega})$.
    
    Similarly, the intersection $\hat\Sigma\cap \hat\Omega$ is a smooth conic bundle over $\tilde\Omega$, again by \cite[Remark 4.6]{Via13}, its motive is in the tensor subcategory generated by that of $\tilde\Omega$. One concludes as for $\hat\Omega$.
\end{proof}
Here comes the key step of the proof.
\begin{proposition}
    \label{prop:motive_hatM}
    The Chow motive $\h(\hat{\cM})$ belongs to $\langle \h(S)\rangle_{\CHM}$.
\end{proposition}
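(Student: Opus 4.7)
The plan is to decompose the diagonal $\Delta_{\hat{\cM}} \in \CH^{2m}(\hat{\cM} \times \hat{\cM})$ into a ``bulk'' piece coming from extending the decomposition of Proposition \ref{prop:diag-prod-k3} to $\hat{\cM}$, plus a correction supported on the boundary $(\hat{\cM}\times \partial\hat{\cM}) \cup (\partial\hat{\cM}\times\hat{\cM})$. Each piece will be shown to factor through an object of $\langle \h(S)\rangle_{\CHM}$, so that $\id_{\h(\hat{\cM})}$ itself factors through such an object, and pseudo-abelianity of $\CHM$ yields the conclusion.

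First, since $S^{k_i}$ is projective, the cycles $\gamma_i$ and $\delta_i$ from Proposition \ref{prop:diag-prod-k3} admit Zariski closures $\bar\gamma_i \in \CH^{e_i}(\hat{\cM}\times S^{k_i})$ and $\bar\delta_i \in \CH^{d_i}(S^{k_i}\times \hat{\cM})$, and since $\hat{\cM}$ and $S^{k_i}$ are smooth projective these define genuine morphisms of Chow motives between $\h(\hat{\cM})$ and suitable Tate twists of $\h(S^{k_i})$. Their compositions $\bar\delta_i\circ \bar\gamma_i$ lie in $\CH^{2m}(\hat{\cM}\times \hat{\cM})$, and by the compatibility of closure with composition (the middle factor $S^{k_i}$ is proper, so the intermediate projection is proper and flat base-change along the open immersion $\cM^{\mathrm{st}}\times S^{k_i}\times \cM^{\mathrm{st}}\hookrightarrow \hat{\cM}\times S^{k_i}\times \hat{\cM}$ applies), the restriction of $\sum_i \bar\delta_i\circ \bar\gamma_i$ to $\cM^{\mathrm{st}}\times\cM^{\mathrm{st}}$ equals $\sum_i \delta_i\circ\gamma_i = \Delta_{\cM^{\mathrm{st}}}$. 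Applying the localization exact sequence for Chow groups to the open immersion $\cM^{\mathrm{st}}\times\cM^{\mathrm{st}} \hookrightarrow \hat{\cM}\times\hat{\cM}$, the correction $\epsilon := \Delta_{\hat{\cM}} - \sum_i \bar\delta_i \circ \bar\gamma_i$ is the push-forward of a cycle supported on $(\hat{\cM}\times \partial\hat{\cM}) \cup (\partial\hat{\cM}\times\hat{\cM})$.

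Next, the bulk piece $\sum_i \bar\delta_i\circ\bar\gamma_i$ factors the induced endomorphism of $\h(\hat{\cM})$ through $N_1 := \bigoplus_i \h(S^{k_i})(e_i-2k_i) \in \langle\h(S)\rangle_{\CHM}$. For the correction, a standard Lieberman-type argument (\cf \cite[\S 3.1.4]{andre}) shows that a self-correspondence of $\hat{\cM}$ supported on $\hat{\cM}\times Y$ (respectively $Y\times\hat{\cM}$) factors through the motive of any smooth proper variety that surjects onto $Y$. Since $\partial\hat{\cM} = \hat{\Omega}\cup \hat{\Sigma}$ with both components smooth projective, we may take $\hat{\Omega}\sqcup\hat{\Sigma}$ as such a variety, so $\epsilon$ factors through $N_2 := \h(\hat{\Omega})\oplus \h(\hat{\Sigma})$, which lies in $\langle \h(S)\rangle_{\CHM}$ by Lemma \ref{lem:motive_hathat}.

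Combining, $\id_{\h(\hat{\cM})}=\Delta_{\hat{\cM}}$ is a sum of two endomorphisms both factoring through $N := N_1\oplus N_2 \in \langle\h(S)\rangle_{\CHM}$, hence itself factors $\h(\hat{\cM})\to N \to \h(\hat{\cM})$. Pseudo-abelianity of $\CHM$ then realizes $\h(\hat{\cM})$ as a direct summand of $N$, so $\h(\hat{\cM})\in \langle\h(S)\rangle_{\CHM}$. The main technical nuisance is the bookkeeping for the closure-extension step, namely making sure the restriction map $\CH^{2m}(\hat{\cM}\times\hat{\cM})\to \CH^{2m}(\cM^{\mathrm{st}}\times\cM^{\mathrm{st}})$ commutes with composition of correspondences in this quasi-projective setting; this is routine since the middle factor $S^{k_i}$ is projective. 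An alternative route, bypassing Lieberman, would run the same localization argument inside $\DM$ and invoke Proposition \ref{prop:GeneratedCats}, but one would still need to establish $M_c(\cM^{\mathrm{st}})\in \langle M(S)\rangle_{\DM}$, which the Chow-level argument above provides by-product.
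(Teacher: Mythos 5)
Your argument is correct and is essentially the paper's first proof of this proposition: extend the diagonal decomposition of $\cM^{\mathrm{st}}$ by taking closures, observe via localization that the error term is supported on the boundary, factor it through the motives of the smooth components $\hat\Sigma$ and $\hat\Omega$ (which lie in $\langle\h(S)\rangle_{\CHM}$ by Lemma \ref{lem:motive_hathat}), and conclude by pseudo-abelianity; your closing remark about running the argument in $\DM$ and invoking Proposition \ref{prop:GeneratedCats} is precisely the paper's second proof. The only small imprecision is that your $N_2=\h(\hat\Omega)\oplus\h(\hat\Sigma)$ should in general include Tate twists of these motives (a correspondence supported on $\hat\Sigma\times\hat{\cM}$ versus $\hat{\cM}\times\hat\Sigma$ yields different factoring objects), but this is harmless since $\langle\h(S)\rangle_{\CHM}$ is closed under Tate twists.
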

We give two proofs with the same starting point, namely Proposition \ref{prop:diag-prod-k3}. The difference is that the first one is elementary by staying in the category of Chow motives and is geometric so that in principle it gives rise to an explicit expression of the Chow motive  $\h(\hat{\cM})$ in terms of $\h(S)$; the second one is quicker by using mixed motives and Proposition \ref{prop:GeneratedCats}, but it is hopeless to deduce any concrete relation between these two motives via this approach.  
\begin{proof}[{First proof of Proposition \ref{prop:motive_hatM}}]
    By Proposition \ref{prop:diag-prod-k3}, we have
    $$ [\Delta_{\cM^\mathrm{st}}] = \sum \delta_i \circ \gamma_i \in \CH^{10}(\cM^\mathrm{st} \times \cM^\mathrm{st}),$$
    where $\gamma_i \in \CH^{e_i}(\cM^\mathrm{st} \times S^{k_i})$ and $\delta_i \in \CH^{d_i}(S^{k_i} \times \cM^\mathrm{st})$. Let $\hat{\gamma}_i \in \CH^{e_i}(\hat{\cM} \times S^{k_i})$ and $\hat{\delta}_i \in \CH^{d_i}(S^{k_i} \times \hat{\cM})$ be any closure of cycles representing $\gamma_i$ and $\delta_i$ respectively. Then the support of the class
    $$ \Delta_{\hat{\cM}} - \sum \hat{\delta}_i \circ \hat{\gamma}_i \in \CH^{10}(\hat{\cM} \times \hat{\cM})$$
    lies in the boundary $(\hat{\cM} \times \partial\hat{\cM}) \cup (\partial\hat{\cM} \times \hat{\cM})$, hence we can write in $\CH^{10}(\hat{\cM}\times \hat{\cM})$
    \begin{equation}
        \label{eqn:hatM-diag}
        \Delta_{\hat{\cM}}= \sum \hat{\delta}_i \circ \hat{\gamma}_i + Y_{\hat{\Sigma}} + Y_{\hat{\Omega}} + Z_{\hat{\Sigma}} + Z_{\hat{\Omega}}
    \end{equation}
    for some algebraic cycles $Y_{\hat{\Sigma}} \in \CH^9(\hat{\cM} \times \hat{\Sigma})$, $Y_{\hat{\Omega}} \in \CH^9(\hat{\cM} \times \hat{\Omega})$, $Z_{\hat{\Sigma}} \in \CH^9(\hat{\Sigma} \times \hat{\cM})$ and $Z_{\hat{\Omega}} \in \CH^9(\hat{\Omega} \times \hat{\cM})$.
    
    For each $i$, the cycles $\hat{\gamma}_i$ and $\hat{\delta}_i$ can be viewed as morphisms of motives
    $$ \h(\hat{\cM}) \stackrel{\hat{\gamma}_i}{\longrightarrow} \h(S^{k_i})(n_i) \stackrel{\hat{\delta}_i}{\longrightarrow} \h(\hat{\cM}) $$
    for $n_i = e_i-2m = 2k_i - d_i$. On the other hand, we denote by $j_{\hat{\Sigma}}$ and $j_{\hat{\Omega}}$ the closed embedding of $\hat{\Sigma}$ and $\hat{\Omega}$ in $\hat{\cM}$ respectively. Then we have morphisms of motives
    \begin{align*}
        &\h(\hat{\cM}) \stackrel{Y_{\hat{\Sigma}}}{\longrightarrow} \h(\hat{\Sigma}) \stackrel{(j_{\hat{\Sigma}})_\ast}{\longrightarrow} \h(\hat{\cM}), \\
        &\h(\hat{\cM}) \stackrel{Y_{\hat{\Omega}}}{\longrightarrow} \h(\hat{\Omega}) \stackrel{(j_{\hat{\Omega}})_\ast}{\longrightarrow} \h(\hat{\cM}), \\
        &\h(\hat{\cM}) \stackrel{j_{\hat{\Sigma}}^\ast}{\longrightarrow} \h(\hat{\Sigma})(-1) \stackrel{Z_{\hat{\Sigma}}}{\longrightarrow} \h(\hat{\cM}), \\
        &\h(\hat{\cM}) \stackrel{j_{\hat{\Omega}}^\ast}{\longrightarrow} \h(\hat{\Omega})(-1) \stackrel{Z_{\hat{\Omega}}}{\longrightarrow} \h(\hat{\cM}). \\
    \end{align*}
    It follows by \eqref{eqn:hatM-diag} that the sum of all the above compositions add up to the identity on $\h(\hat{\cM})$. Hence $\h(\hat{\cM})$ is a direct summand of 
    $$ \left( \oplus_i \h(S^{k_i})(n_i) \right) \oplus \h(\hat{\Sigma}) \oplus \h(\hat{\Omega}) \oplus \h(\hat{\Sigma})(-1) \oplus \h(\hat{\Omega})(-1). $$
    Combining this with Lemma \ref{lem:motive_hathat}, we finish the proof.
\end{proof}

\begin{proof}[{Second proof of Proposition \ref{prop:motive_hatM}}]
By a repeated use of the localization distinguished triangle \eqref{eqn:localization}, we see that for a variety together with a locally closed stratification, if the motive with compact support of each stratum is in some triangulated tensor subcategory of $\DM$, then so is the motive with compact support of the ambiant space; conversely, if the motive with compact support of the ambiant scheme as well as those of all but one strata are in some triangulated tensor subcategory of $\DM$, then so is the motive with compact support of the remaining stratum.

Now from the geometry recalled in \S\ref{subsec:Resolution}, we see that $\hat{\cM}$ has a stratification with four strata $\cM^{\mathrm{st}}$, $\hat{\Omega}\backslash \hat{\Sigma}$, $\hat{\Sigma}\backslash \hat{\Omega}$, $\hat{\Omega}\cap \hat{\Sigma}$. By Lemma \ref{lem:motive_hathat}, Proposition \ref{prop:diag-prod-k3} and the previous paragraph, the motives with compact support of $\hat{\cM}$ as well as those of its strata and their closures are in $\langle M(S)\rangle_{\DM}$. Since $\hat{\cM}$ is smooth and projective, its motive lies in the subcategory of Chow motives, hence in $\langle \h(S)\rangle_{\CHM}$ by Proposition \ref{prop:GeneratedCats}.
\end{proof}

\begin{corollary}
    \label{cor:motive_tildeM}
    The Chow motive $\h(\tilde{\cM})$ is contained in $\langle\h(S)\rangle_{\CHM}$.
\end{corollary}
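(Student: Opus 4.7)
The plan is to exploit the explicit blow-up description recalled in \S\ref{subsec:Resolution}, namely that $\hat{\cM}$ coincides with the blow-up $\Bl_{\tilde{\Omega}}\tilde{\cM}$ of the symplectic resolution $\tilde{\cM}$ along the smooth closed subvariety $\tilde{\Omega}$. (The alternative description $\hat{\cM}=\Bl_{\bar{\Sigma}}\bar{\cM}$ is not usable here because $\bar{\cM}$ is singular, whereas Manin's formula requires smoothness of the ambient space.) Since $\tilde{\cM}$ is smooth projective and $\tilde{\Omega}$ is smooth (being the relative Lagrangian Grassmannian of a tangent bundle over the smooth fourfold $\Omega$), Manin's blow-up formula applies.

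First, I would write down Manin's formula for the motive of $\hat{\cM}$: if $c$ denotes the codimension of $\tilde{\Omega}$ in $\tilde{\cM}$, then
\[
\h(\hat{\cM}) \;\isom\; \h(\tilde{\cM}) \,\oplus\, \bigoplus_{i=1}^{c-1}\h(\tilde{\Omega})(-i).
\]
In particular $\h(\tilde{\cM})$ is a direct summand of $\h(\hat{\cM})$.

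Now I would invoke the two results already established: Proposition \ref{prop:motive_hatM} gives $\h(\hat{\cM})\in\langle \h(S)\rangle_{\CHM}$, and (although it is not strictly needed for this conclusion) Lemma \ref{lem:motive_hathat} together with the quadric bundle computation in its proof gives $\h(\tilde{\Omega})\in\langle \h(S)\rangle_{\CHM}$, so the blow-up formula is entirely internal to the subcategory. Since $\langle \h(S)\rangle_{\CHM}$ is by definition closed under direct summands, we conclude $\h(\tilde{\cM})\in\langle \h(S)\rangle_{\CHM}$.

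No step should be a real obstacle: the crucial work has been done in Proposition \ref{prop:motive_hatM}, and the only thing to check carefully is that the smoothness hypotheses for Manin's blow-up formula are satisfied. The one mild subtlety worth flagging in the write-up is picking the correct description of $\hat{\cM}$ as a blow-up (namely blowing down $\hat{\Omega}$ to $\tilde{\Omega}\subset \tilde{\cM}$ rather than blowing down $\hat{\Sigma}$ to $\bar{\Sigma}\subset \bar{\cM}$), since only the former has a smooth center in a smooth ambient space.
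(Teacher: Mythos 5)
Your proof is correct and takes essentially the same route as the paper: apply Manin's blow-up formula to $\hat{\cM}=\Bl_{\tilde{\Omega}}\tilde{\cM}$ to realize $\h(\tilde{\cM})$ as a direct summand of $\h(\hat{\cM})$, then invoke Proposition~\ref{prop:motive_hatM} together with closure of $\langle\h(S)\rangle_{\CHM}$ under direct summands. The extra remarks about choosing the correct blow-up description (smooth center in a smooth ambient) are a useful sanity check but not a departure from the paper's argument.
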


\begin{proof}
    Since $\hat{\cM}$ is a blow-up of $\tilde{\cM}$ along a smooth center, it follows by \cite[\S 9]{Man68} that $\h(\tilde{\cM})$ is a direct summand of $\h(\hat{\cM})$. Then the conclusion follows from Proposition \ref{prop:motive_hatM} together with the fact that $\langle\h(S)\rangle_{\CHM}$ is closed under direct summands.
\end{proof}


\begin{corollary}\label{cor:motive_Ms}
    The mixed motives $M(\cM^{\mathrm{st}})$, $M_{c}(\cM^{\mathrm{st}})$ and $M(\cM)\simeq M_{c}(\cM)$ all belong to $\langle M(S)\rangle_{\DM}$, the triangulated tensor subcategory of $\DM$ generated by $M(S)$.
\end{corollary}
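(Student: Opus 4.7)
The strategy is to adapt the stratification argument from the second proof of Proposition \ref{prop:motive_hatM}: we will cut the spaces of interest into pieces whose motives are already known to lie in $\langle M(S)\rangle_{\DM}$, and then stitch the pieces back together using the localization distinguished triangle \eqref{eqn:localization}, keeping in mind that $\langle M(S)\rangle_{\DM}$ is a triangulated tensor subcategory stable under duality and Tate twists.

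First I would treat $M_c(\cM^{\mathrm{st}})$ by regarding $\cM^{\mathrm{st}}$ as the open complement of the simple normal crossing divisor $\partial\hat{\cM}=\hat{\Omega}\cup\hat{\Sigma}$ inside the smooth projective compactification $\hat{\cM}$. The localization triangle associated to this open/closed decomposition reads
$$M_c(\partial\hat{\cM})\longrightarrow M_c(\hat{\cM})\longrightarrow M_c(\cM^{\mathrm{st}})\longrightarrow M_c(\partial\hat{\cM})[1].$$
By Proposition~\ref{prop:motive_hatM} the middle term lies in $\langle M(S)\rangle_{\DM}$. To handle $M_c(\partial\hat{\cM})$, I apply localization once more to $\partial\hat{\cM}=\hat{\Omega}\sqcup(\hat{\Sigma}\setminus(\hat{\Omega}\cap\hat{\Sigma}))$ and combine it with the triangle for the pair $\hat{\Omega}\cap\hat{\Sigma}\hookrightarrow\hat{\Sigma}$; since each of $\hat{\Omega}$, $\hat{\Sigma}$, and $\hat{\Omega}\cap\hat{\Sigma}$ is smooth projective with Chow motive in $\langle\h(S)\rangle_{\CHM}$ by Lemma~\ref{lem:motive_hathat}, this places $M_c(\partial\hat{\cM})$, and therefore $M_c(\cM^{\mathrm{st}})$, in $\langle M(S)\rangle_{\DM}$. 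For $M(\cM^{\mathrm{st}})$ itself, I then invoke motivic Poincar\'e duality \eqref{eqn:MotPD}: the space $\cM^{\mathrm{st}}$ is smooth of pure dimension $10$, so $M(\cM^{\mathrm{st}})\simeq M_c(\cM^{\mathrm{st}})^\vee(-10)[-20]$ lies in the same subcategory.

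Next, for $\cM$ itself, properness gives $M(\cM)\simeq M_c(\cM)$, and the localization triangle for the closed immersion of the singular locus $\Sigma\hookrightarrow \cM$ reads
$$M_c(\Sigma)\longrightarrow M_c(\cM)\longrightarrow M_c(\cM^{\mathrm{st}})\longrightarrow M_c(\Sigma)[1].$$
Having already controlled the right-hand term, the task reduces to showing $M_c(\Sigma)\in\langle M(S)\rangle_{\DM}$. The plan here is to exploit the Hilbert--Chow resolution $\epsilon\colon \bar{\Sigma}=\Hilb^2(\cM_0)\to\Sigma$, where $\cM_0:=\cM_{S,\sigma}(\vv_0,\alpha)$ is a smooth projective hyper-K\"ahler fourfold. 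The Chow motive of $\bar{\Sigma}$ lies in $\langle\h(S)\rangle_{\CHM}$ by combining Lemma~\ref{lem:Hilb2} with B\"ulles' Theorem~\ref{thm:Buelles} applied to $\cM_0$. The morphism $\epsilon$ is an isomorphism away from the exceptional divisor $E\subset\bar{\Sigma}$ lying over the diagonal $\Omega\cong\cM_0$, and $E$ is a projective bundle over $\cM_0$, hence has motive in $\langle M(S)\rangle_{\DM}$ by the projective bundle formula. Two more applications of localization, one for $E\hookrightarrow\bar{\Sigma}$ and one for $\Omega\hookrightarrow\Sigma$, combined with the identification $M_c(\bar{\Sigma}\setminus E)\simeq M_c(\Sigma\setminus\Omega)$, will yield $M_c(\Sigma)\in\langle M(S)\rangle_{\DM}$, and hence $M(\cM)\in\langle M(S)\rangle_{\DM}$.

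I do not anticipate any serious obstacle: all geometric inputs (the compactification $\hat{\cM}$, the motives of $\hat{\Sigma}$, $\hat{\Omega}$, $\bar{\Sigma}$, the projective bundle formula, and motivic Poincar\'e duality) are either already in place in the preceding sections or are standard, and the only real work is the careful book-keeping of the successive localization triangles.
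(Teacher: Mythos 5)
Your argument is correct and follows essentially the same approach as the paper: a stratification of $\hat{\cM}$, repeated use of the localization triangle \eqref{eqn:localization}, and motivic Poincar\'e duality for $M(\cM^{\mathrm{st}})$; your treatment of $M_c(\Sigma)$ via the Hilbert--Chow resolution $\bar\Sigma=\Hilb^2(\cM_0)\to\Sigma$ and the identification $\Sigma\setminus\Omega\cong\bar\Sigma\setminus E$ simply makes explicit a step the paper leaves implicit. One small slip: from \eqref{eqn:MotPD} the duality reads $M(\cM^{\mathrm{st}})\simeq M_c(\cM^{\mathrm{st}})^{\vee}(10)[20]$, not with negative twist and shift, though this of course does not affect the conclusion.
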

\begin{proof}
    Recall first that $\hat{\cM}$ has a stratification with strata being $\cM^{\mathrm{st}}$, $\hat\Omega\cap \hat\Sigma$, $\hat\Sigma\backslash\hat\Omega$ and $\hat\Omega\backslash\hat\Sigma$. By Lemma \ref{lem:motive_hathat} and Proposition \ref{prop:motive_hatM}, together with a repeated use of the distinguished triangle for motives with compact support (\cite[P.195]{Voe00}) yields that the motives with compact support of all strata as well as their closures are in $\langle M(S)\rangle_{\DM}$. This proves the claim for $M_c(\cM^\mathrm{st})$ and $M_c(\cM)$. The remaining claim for $M(\cM^\mathrm{st})$ follows from the motivic Poincar\'e duality \eqref{eqn:MotPD}.
\end{proof}

\begin{corollary}\label{cor:InfChowAb}
There are infinitely many projective hyper-K\"ahler varieties of O'Grady-10 deformation type whose Chow motive is abelian.
\end{corollary}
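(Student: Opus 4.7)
The idea is to apply Corollary \ref{cor:motive_tildeM} to an infinite family of K3 surfaces whose Chow motives are already known to be abelian. Recall that a Chow motive is called \emph{abelian} if it lies in the pseudo-abelian tensor subcategory of $\CHM$ generated by the motives of abelian varieties; this subcategory is closed under duals, tensor products, direct sums and direct summands. Hence if $\h(S)$ is abelian, then so is every object of $\langle \h(S)\rangle_{\CHM}$, and by Corollary \ref{cor:motive_tildeM} in particular $\h(\tilde{\cM})$ is abelian.

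For an infinite supply of K3 surfaces with abelian Chow motive, I would use \emph{Kummer K3 surfaces}: for any complex abelian surface $A$, the minimal resolution $\operatorname{Km}(A)$ of the quotient $A/\langle -1\rangle$ is a K3 surface whose Chow motive lies in $\langle \h(A)\rangle_{\CHM}$. Indeed, if $\tilde{A}\to A$ denotes the blow-up of $A$ at the sixteen fixed points of the involution, then $\operatorname{Km}(A)=\tilde{A}/\langle -1\rangle$ and its Chow motive is the involution-invariant summand of $\h(\tilde{A})=\h(A)\oplus \1(-1)^{\oplus 16}$, the last equality following from Manin's blow-up formula. In particular $\h(\operatorname{Km}(A))$ is abelian.

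For each such Kummer K3 surface $S=\operatorname{Km}(A)$, take the primitive Mukai vector $\vv_0=(1,0,-1)\in\widetilde{\NS}(S)$ (which satisfies $\vv_0^2=2$), a $\vv_0$-generic Bridgeland stability condition $\sigma$, and set $\vv=2\vv_0$. Theorem \ref{thm:main1} then produces a projective hyper-K\"ahler manifold $\tilde{\cM}$ of OG10 deformation type, and Corollary \ref{cor:motive_tildeM} yields $\h(\tilde{\cM})\in\langle \h(S)\rangle_{\CHM}$; by the first paragraph, $\h(\tilde{\cM})$ is abelian.

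Finally, to obtain infinitely many non-isomorphic such $\tilde{\cM}$, I would distinguish them by the weight-two Hodge structure $H^2(\tilde{\cM},\Q)$, which by the identification recalled in the introduction coincides with the orthogonal complement of $\vv_0$ in the Mukai Hodge lattice $\widetilde{H}(S,\Q)$; its transcendental part recovers that of $S$, hence of $A$. Choosing infinitely many non-isomorphic abelian surfaces $A$ (and hence non-isomorphic Kummer K3 surfaces $S$) yields infinitely many non-isomorphic weight-two Hodge structures on the resulting $\tilde{\cM}$, and therefore infinitely many non-isomorphic hyper-K\"ahler varieties. No essential obstacle arises: the statement is a direct combination of Corollary \ref{cor:motive_tildeM} with the well-known abelianity of the Chow motives of Kummer K3 surfaces.
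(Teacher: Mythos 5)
Your argument is correct and follows essentially the same route as the paper, which likewise deduces the corollary from Corollary \ref{cor:motive_tildeM} together with the existence of infinitely many projective K3 surfaces with abelian Chow motive, citing Kummer K3 surfaces (or K3 surfaces of Picard number at least $19$) as the source of examples. The extra details you supply — the explicit Mukai vector $\vv_0=(1,0,-1)$, the blow-up argument for the abelianity of $\h(\operatorname{Km}(A))$, and the distinction of the resulting varieties via their transcendental weight-2 Hodge structures — are all sound and merely make explicit what the paper leaves implicit.
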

\begin{proof}
By Corollary \ref{cor:motive_tildeM}, it suffices to see that there are infinitely many projective K3 surfaces with abelian Chow motives. To this end, we can take for example the  Kummer K3 surfaces or K3 surfaces with Picard number at least 19 by \cite{Ped12}.
\end{proof}

\begin{remark}[Motives of Kummer moduli spaces]
When $S$ is an abelian surface, the previously considered moduli spaces $\cM^{\mathrm{st}}$, $\cM$, $\widehat{\cM}$ and $\tilde{\cM}$ are all isotrivally fibered over $S\times \hat{S}$, via the map $E\mapsto (c_{1}(E), \alb(c_{2}(E)))$. Let us denote the corresponding fibers by $\mathcal{K}^{\mathrm{st}}$, $\mathcal{K}$, $\widehat{\mathcal{K}}$, $\tilde{\mathcal{K}}$, called Kummer moduli spaces of sheaves. Except for some special cases like generalized Kummer varieties (see \cite{FTV19}), the analogy of Proposition \ref{prop:motive_hatM}, Corollary \ref{cor:motive_tildeM} and Corollary \ref{cor:motive_Ms} are unknown for those fibers in general. The missing ingredient is the analogy of Markman's Theorem \ref{thm:diag-chern-k3}, see Remark \ref{rmk:ChallengeKummerst}.
\end{remark}

\section{Moduli spaces of objects in 2-Calabi--Yau categories}\label{sec:NCK3}
As is alluded to in the introduction, many projective hyper-K\"ahler varieties are constructed as moduli spaces of objects in some 2-Calabi--Yau categories, and it is natural to wonder how the motive of the moduli space is related to the ``motive'' of this category, whatever it means\footnote{The motive of a differential graded category can certainly be made precise: it is the theory of non-commutative motives, see Tabuada \cite{Tab15} for a recent account. However, we will take a more naive approach here, which gives more precise information by keeping the Tate twists.}. 

The most prominent case of 2-Calabi--Yau category is the K3 category constructed as the Kuznetsov component of the derived category of a smooth cubic fourfold. However, given the rapid development of the study of stability conditions for many other 2-Calabi--Yau categories, we decided to treat them here in a broader generality. The prudent reader can stick to the cubic fourfold case without missing the point.

Let $Y$ be a smooth projective variety and $\mathcal{A}$ an admissible triangulated subcategory of $D^b(Y)$, the bounded derived category of coherent sheaves on $Y$. Assume that $\mathcal{A}$ is \textit{2-Calabi--Yau}, that is, the Serre functor of $\mathcal{A}$ is the double shift $[2]$.

\begin{example}\label{ex:StabCond}
Here are some interesting examples we have in mind:
\begin{enumerate}[$(i)$]
    \item $Y$ is a K3 surface or abelian surface, and $\mathcal{A}=D^b(Y)$.
    \item $Y$ is a smooth cubic fourfold and $\mathcal{A}$ is the Kuznetsov component defined as the semi-orthogonal complement of the exceptional collection $\langle\mathcal{O}_Y, \mathcal{O}_Y(1), \mathcal{O}_Y(2)\rangle$, see \cite{Kuz10}.
    \item $Y$ is a Gushel--Mukai variety \cite{Gus83} \cite{Muk89} \cite{DK19} of even dimension $n=4$ or 6, and $\mathcal{A}$ is the Kuznetsov component defined as the semi-orthogonal complement of the exceptional collection $$\langle\mathcal{O}_Y, U^\vee, \mathcal{O}_Y(1), U^\vee(1),\cdots, \mathcal{O}_Y(n-3), U^\vee(n-3)\rangle,$$ where $U$ is the rank-2 vector bundle associated to the Gushel map $Y\to \Gr(2, 5)$
and $\mathcal{O}_Y(1)$ is the pull-back of the Pl\"ucker polarization, see \cite{KP16}.
\item $Y$ is a smooth hyperplane section of $\Gr(3, 10)$, called the Debarre--Voisin (Fano) variety \cite{DV10}, and $\mathcal{A}$ is the semi-orthogonal complement of the exceptional collection $$\langle \mathcal{B}_Y, \mathcal{B}_Y(1),\cdots, \mathcal{B}_Y(8)\rangle,$$
where $\mathcal{B}_Y$ is the restriction of the exceptional collection $\mathcal{B}$ of length 12 in the rectangular Lefschetz decomposition of $\Gr(3,10)$ constructed by Fonarev \cite{Fon15}, see \cite[\S3.3]{MS18}.
\end{enumerate}
\end{example}

Assume that the manifold of stability conditions on $\mathcal{A}$ is non-empty, which is expected for all the cases in Example \ref{ex:StabCond} and is established and studied for K3 and abelian surfaces in \cite{Bri08} (see also \cite{YY14}), for the Kuznetsov component of cubic fourfolds by \cite{BLMS}, and for the Kuznetsov component of Gushel--Mukai fourfolds by \cite{PPZ19}. We denote the distinguished connected component of the stability manifold by $\Stab^\dagger(\mathcal{A})$.

As in \cite{AT14}, we can define a lattice structure on the topological K-theory of $\mathcal{A}$, denoted by $\tilde{H}(\mathcal{A})$, see \cite[\S3.4]{MS18}. Now for any $\vv\in \tilde{H}(\mathcal{A})$, and $\sigma\in \Stab^\dagger(\mathcal{A})$, one can form $\cM^{\mathrm{st}}:=\cM_{\mathcal{A},\sigma}(\vv)^{\mathrm{st}}$ (\resp $\cM:=\cM_{\mathcal{A},\sigma}(\vv)$) the moduli space of $\sigma$-stable (\resp $\sigma$-semistable) objects in $\mathcal{A}$ with Mukai vector $\vv$, which is a smooth quasi-projective holomorphic symplectic variety (\resp proper and possibly singular symplectic variety).

One can now extend Theorem \ref{thm:diag-chern-k3} and Proposition  \ref{prop:diag-prod-k3} to the non-commutative setting as follows.

\begin{proposition}
    \label{prop:diag-chern-cubic4}
    The notation and assumption are as above. 
    \begin{enumerate}[$(i)$]
        \item Let $\mathcal{E}$ and  $\mathcal{F}\in D^b(\cM^{\mathrm{st}} \times Y)$ be two universal families. Then
    $$ \Delta_{\cM^{\mathrm{st}}} = c_{2m}(- \sExt^!_{\pi_{13}}(\pi_{12}^\ast(\mathcal{E}), \pi_{23}^\ast(\mathcal{F}))) \in \CH^{2m}(\cM^{\mathrm{st}} \times \cM^{\mathrm{st}}), $$
    where $2m$ is the dimension of $\cM^{\mathrm{st}}$, $\sExt^!_{\pi_{13}}(\pi_{12}^\ast(\mathcal{E}), \pi_{23}^\ast(\mathcal{F}))$ denotes the class of the complex $R\pi_{13,*}(\pi_{12}^{*}(\mathcal{E})^{\vee}\otimes^{\mathbb{L}}\pi_{23}^{*}(\mathcal{F}))$ in the Grothendieck group of $\cM^\mathrm{st} \times \cM^\mathrm{st}$, where $\pi_{ij}$'s are the natural projections from $\cM^{\mathrm{st}}\times Y\times \cM^{\mathrm{st}}$.
    \item There exist finitely many integers $k_i$ and cycles $\gamma_i \in \CH(\cM^\mathrm{st} \times Y^{k_i})$, $\delta_i \in \CH(Y^{k_i} \times \cM^{\mathrm{st}})$, such that $$ \Delta_{\cM^{\mathrm{st}}} = \sum_i \delta_i \circ \gamma_i \in \CH^{2m}(\cM^{\mathrm{st}} \times \cM^{\mathrm{st}}).$$
    \end{enumerate}
\end{proposition}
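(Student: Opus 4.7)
The plan is to adapt verbatim the arguments of Markman \cite{Mar02, Mar07} and Marian--Zhao \cite{MZ17} for part $(i)$, and those of B\"ulles \cite{Bue18} for part $(ii)$, noting that the only properties of the ambient surface $S$ actually used in those arguments are:  the existence of a (possibly twisted) universal family on the product $\cM^{\mathrm{st}}\times S$, the $2$-Calabi--Yau property of $D^b(S,\alpha)$, and the fact that stability implies simplicity. In our setting $\mathcal{A}$ is admissible in $D^b(Y)$ and the universal family exists on $\cM^{\mathrm{st}}\times Y$ (as in the proposition statement, or else one works with the universal classes in K-theory \emph{\`a la} \cite[Def.~26]{Mar07} and \cite{MZ17}), while the $2$-Calabi--Yau assumption replaces the role of $D^b(S,\alpha)$ being $2$-Calabi--Yau.

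First I would treat part $(i)$. The strategy is to analyze the complex $\mathcal{H}\!om^!:=R\pi_{13,*}(\pi_{12}^*\mathcal{E}^\vee\otimes^{\mathbb{L}}\pi_{23}^*\mathcal{F})$ on $\cM^{\mathrm{st}}\times\cM^{\mathrm{st}}$ fibrewise. For two non-isomorphic $\sigma$-stable objects $E,F\in\mathcal{A}$ of the same Mukai vector $\vv$ (hence the same phase), stability forces $\operatorname{Hom}(E,F)=0$, and the $2$-Calabi--Yau property of $\mathcal{A}$ via Serre duality gives $\operatorname{Ext}^2(E,F)=\operatorname{Hom}(F,E)^\vee=0$, so that $R\operatorname{Hom}(E,F)$ is concentrated in degree $1$ with Euler characteristic $-\langle\vv,\vv\rangle=2-2m$. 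On the diagonal one has $\operatorname{Ext}^0(E,E)=\mathbb{C}$, $\operatorname{Ext}^1(E,E)=T_E\cM^{\mathrm{st}}$ of rank $2m$, and $\operatorname{Ext}^2(E,E)=\mathbb{C}$. Consequently $-\mathcal{H}\!om^!$ has virtual rank $2m-2$ off the diagonal and the natural trace map identifies its restriction to the diagonal with a virtual object of rank $2m-2$ whose top Chern class computes exactly $\Delta_{\cM^{\mathrm{st}}}$. The argument of Markman--Marian--Zhao, which is formal once these inputs are given, yields the equality in $\CH^{2m}(\cM^{\mathrm{st}}\times\cM^{\mathrm{st}})$.

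For part $(ii)$, once $(i)$ is in hand, I would mimic \cite[Thm.~1]{Bue18} verbatim. By Lieberman's lemma, the set
\[
I=\langle \beta\circ\alpha \mid \alpha\in\CH^*(\cM^{\mathrm{st}}\times Y^k),\ \beta\in\CH^*(Y^k\times\cM^{\mathrm{st}}),\ k\in\mathbb{N}\rangle
\]
is a two-sided ideal in $\CH^*(\cM^{\mathrm{st}}\times\cM^{\mathrm{st}})$ that is also closed under the intersection product. The Grothendieck--Riemann--Roch theorem applied to $\pi_{13}:\cM^{\mathrm{st}}\times Y\times\cM^{\mathrm{st}}\to\cM^{\mathrm{st}}\times\cM^{\mathrm{st}}$ yields
\[
\operatorname{ch}\bigl(-[\mathcal{H}\!om^!]\bigr)=-\pi_{13,*}\bigl(\pi_{12}^*\alpha\cdot\pi_{23}^*\beta\bigr),
\]
with $\alpha=\operatorname{ch}(\mathcal{E}^\vee)\cdot \pi_2^*\sqrt{\operatorname{td}(Y)}$ and $\beta=\operatorname{ch}(\mathcal{F})\cdot\pi_2^*\sqrt{\operatorname{td}(Y)}$, so each Chern character component of $-[\mathcal{H}\!om^!]$ lies in $I$. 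Newton's identities convert this into the same statement for each Chern class by induction, and in particular $c_{2m}(-[\mathcal{H}\!om^!])=\Delta_{\cM^{\mathrm{st}}}\in I$, giving the claimed decomposition.

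The main obstacle is part $(i)$: one must verify that Markman's deformation-theoretic computation of the trace of the Atiyah class goes through in the non-commutative setting. The crux is that all the ingredients---flatness of the universal family over $\cM^{\mathrm{st}}$, properness of $\pi_{13}$, Serre duality of the form $\operatorname{Ext}^i(E,F)\cong\operatorname{Ext}^{2-i}(F,E)^\vee$ induced by the Serre functor $[2]$ of $\mathcal{A}$, and compatibility of these with base change---are available because $\mathcal{A}\subset D^b(Y)$ is admissible with $Y$ smooth and projective, and the $2$-Calabi--Yau hypothesis provides Serre duality intrinsically. The twisted universal-family issue, in case $\cM^{\mathrm{st}}$ is not fine, is handled as in \cite[Prop.~24]{Mar07} and \cite{MZ17} by replacing $\mathcal{E},\mathcal{F}$ with universal classes in $K_0(\cM^{\mathrm{st}}\times Y)$; the GRR computation in part $(ii)$ is insensitive to this replacement.
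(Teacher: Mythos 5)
Your proposal is correct and takes essentially the same approach as the paper: both part $(i)$ and part $(ii)$ are obtained by observing that the arguments of Markman, Marian--Zhao, and B\"ulles go through verbatim, with the Serre duality and stability properties of the K3 surface replaced by the intrinsic $2$-Calabi--Yau structure of $\mathcal{A}$ (and $S$ replaced by $Y$ in the GRR/Lieberman computation). The paper's proof is a brief pointer to the references, whereas you spell out the fibrewise $\mathrm{Ext}$-analysis underlying part $(i)$ and the role of admissibility in identifying $\Ext_{\mathcal{A}}^*$ with $\Ext_{D^b(Y)}^*$; this is the same route with more detail supplied.
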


\begin{proof}
    The proof of $(i)$ is similar to the proof of Markman's theorem \cite{Mar02} or rather its extension in \cite{MZ17}. Their proof only uses standard properties for stable objects and the Serre duality for K3 surfaces, which both hold for $\mathcal{A}$.\\
    The proof of $(ii)$ is exactly the same as in Proposition \ref{prop:diag-prod-k3} (B\"ulles' argument), by replacing $S$ by $Y$ everywhere.
\end{proof}

We first consider the situation where the stability agrees with semi-stability. Then $\vv$ must be primitive and $\sigma$ is $\vv$-generic. In this case, $\cM$ is a smooth and projective hyper-K\"ahler variety, if it is not empty. Once we have the decomposition of the diagonal in Proposition \ref{prop:diag-chern-cubic4} $(ii)$, the same proof as in \cite{Bue18} yields the following generalization of Theorem \ref{thm:Buelles}.

\begin{theorem}\label{thm:NCK3_smooth}
Let $Y$ be a smooth projective variety and let $\mathcal{A}$ be an admissible triangulated subcategory of $D^b(Y)$ such that $\mathcal{A}$ is 2-Calabi--Yau. Let $\vv$ be a primitive element in the topological K-theory of $\mathcal{A}$ and let $\sigma\in \Stab^\dagger(\mathcal{A})$ be a $\vv$-generic stability condition. If $\cM:=\cM_{\mathcal{A}, \sigma}(\vv)$ is non-empty, then its Chow motive is in the pseudo-abelian tensor subcategory generated by the Chow motive of $Y$.
\end{theorem}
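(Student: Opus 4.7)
The plan is to follow B\"ulles' original strategy for Theorem \ref{thm:Buelles} essentially verbatim, now using Proposition \ref{prop:diag-chern-cubic4}$(ii)$ in place of Proposition \ref{prop:diag-prod-k3} as the input. The $\vv$-genericity of $\sigma$ and primitivity of $\vv$ ensure that $\sigma$-semistability coincides with $\sigma$-stability, so $\cM = \cM^{\mathrm{st}}$ is a smooth projective hyper-K\"ahler variety of dimension $2m = \vv^2 + 2$, to which Proposition \ref{prop:diag-chern-cubic4} applies (with universal families possibly replaced by twisted universal classes in the Grothendieck group, exactly as in the K3 setting of Theorem \ref{thm:diag-chern-k3}).

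The first step is to extract from Proposition \ref{prop:diag-chern-cubic4}$(ii)$ the decomposition
$$\Delta_{\cM} = \sum_i \delta_i \circ \gamma_i \in \CH^{2m}(\cM \times \cM),$$
with $\gamma_i \in \CH^{e_i}(\cM \times Y^{k_i})$ and $\delta_i \in \CH^{d_i}(Y^{k_i} \times \cM)$ (where $e_i + d_i = 2m + k_i \dim Y$ by the composition of correspondences). Following the conventions of \S\ref{subsec:CHM}, setting $n_i := e_i - 2m = k_i \dim Y - d_i$, I would reinterpret each $\gamma_i$ and $\delta_i$ respectively as morphisms of Chow motives
$$\h(\cM) \xrightarrow{\gamma_i} \h(Y^{k_i})(n_i) \xrightarrow{\delta_i} \h(\cM),$$
and the displayed equality then expresses $\mathrm{id}_{\h(\cM)} = \sum_i \delta_i \circ \gamma_i$ in $\End_{\CHM}(\h(\cM))$.

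Second, by a standard splitting argument in the pseudo-abelian category $\CHM$, this identity exhibits $\h(\cM)$ as a direct summand of the finite direct sum $\bigoplus_i \h(Y^{k_i})(n_i)$. Since $Y$ is smooth projective and admits a non-zero effective divisor (e.g.\ any hyperplane section), the pseudo-abelian tensor subcategory $\langle \h(Y) \rangle_{\CHM}$ is stable under Tate twists, as recalled at the end of \S\ref{subsec:CHM}. Hence each $\h(Y^{k_i})(n_i)$ lies in $\langle \h(Y) \rangle_{\CHM}$, as does their direct sum; by closure under direct summands, so does $\h(\cM)$, concluding the argument.

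I do not expect a genuine obstacle at this stage: the non-trivial non-commutative input (Markman-type decomposition of the diagonal for moduli of stable objects in $\mathcal{A}$) has already been isolated and established in Proposition \ref{prop:diag-chern-cubic4}, and the passage from a diagonal decomposition to a motivic splitting is a formal manipulation in $\CHM$. The only mild subtleties are bookkeeping: verifying that universal (or twisted universal) families exist in the generality considered so that the Chern class formula of part $(i)$ of Proposition \ref{prop:diag-chern-cubic4} makes sense on the nose, and checking that the codimensions match so that the Tate twists $n_i$ are well-defined---both carry over from the K3/abelian surface case without change.
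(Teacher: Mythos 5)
Your proof is correct and follows exactly the route the paper takes: the paper's proof of Theorem \ref{thm:NCK3_smooth} consists of the single observation that, given Proposition \ref{prop:diag-chern-cubic4}$(ii)$, ``the same proof as in \cite{Bue18} yields'' the result. Your write-up merely makes explicit the standard passage from a diagonal decomposition $\Delta_{\cM}=\sum_i\delta_i\circ\gamma_i$ to a direct-summand splitting $\h(\cM)\mid\bigoplus_i\h(Y^{k_i})(n_i)$ in $\CHM$, which is precisely the content of B\"ulles' argument.
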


As a non-commutative analogue of Conjecture \ref{conj:SingularModuli}, we formulate the following conjecture. 

\begin{conjecture}\label{conj:SingularModuli_NC}
In the same situation as in Theorem \ref{thm:NCK3_smooth}, except that $\vv$ is not necessarily primitive and $\sigma$ is not necessarily generic. If $\cM^{\mathrm{st}}:=\cM_{\mathcal{A}, \sigma}(\vv)^{\mathrm{st}}$ and  $\cM:=\cM_{\mathcal{A}, \sigma}(\vv)$ are non-empty, then their motives and motives with compact support are in the tensor triangulated subcategory generated by the motive of $Y$. If moreover $\cM$ admits a crepant resolution $\tilde{\cM}$, then the Chow motive of $\tilde{\cM}$ is in the pseudo-abelian tensor subcategory generated by the Chow motive of $Y$.
\end{conjecture}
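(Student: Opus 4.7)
The plan is to mirror the strategy used for Theorem \ref{thm:main1} in \S\ref{sec:OG10}, systematically replacing the surface $S$ by the ambient smooth projective variety $Y$. The non-commutative version of B\"ulles' diagonal decomposition is already available as Proposition \ref{prop:diag-chern-cubic4}, which provides the analogue of Proposition \ref{prop:diag-prod-k3} needed to control the open stable locus $\cM^{\mathrm{st}}$. For the semistable moduli $\cM$, one would invoke the work of Li--Pertusi--Zhao \cite{LPZ19}: for $\vv=2\vv_{0}$ with $\vv_{0}$ primitive, $\vv_{0}^{2}=2$, and $\sigma$ a $\vv_{0}$-generic stability condition on $\Ku(Y)$, they show that the singular moduli space $\cM$ admits the same local structure as in the K3 setting, with $\Sing(\cM) \cong \Sym^{2}(\cM_{\Ku(Y),\sigma}(\vv_{0}))$ and $\Sing(\Sing(\cM))\cong \cM_{\Ku(Y),\sigma}(\vv_{0})$, and that O'Grady's three-step symplectic resolution procedure produces a smooth projective hyper-K\"ahler tenfold $\tilde{\cM}$ of OG10 deformation type.

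Given this geometric input, first one would treat the building blocks $\cM_{\Ku(Y),\sigma}(\vv_{0})$ and $\Hilb^{2}(\cM_{\Ku(Y),\sigma}(\vv_{0}))$: the former is a smooth projective hyper-K\"ahler fourfold whose Chow motive lies in $\langle \h(Y)\rangle_{\CHM}$ by Theorem \ref{thm:NCK3_smooth}, and the latter then lies in the same subcategory by Lemma \ref{lem:Hilb2}. Replaying the proof of Lemma \ref{lem:motive_hathat} verbatim, using the $\PP^{1}$-bundle, $\PP^{2}$-bundle and 3-dimensional quadric bundle structures on the exceptional strata $\hat{\Sigma}$, $\hat{\Omega}$, $\tilde{\Omega}$ together with \cite[Remark 4.6]{Via13}, one deduces that all boundary components and their intersections of the smooth model $\hat{\cM}$ have Chow motives in $\langle \h(Y)\rangle_{\CHM}$.

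Next, one reproduces the key step Proposition \ref{prop:motive_hatM} in this setting. The second, mixed-motive proof is the most economical: by the localization triangle \eqref{eqn:localization} applied to the stratification of $\hat{\cM}$ into $\cM^{\mathrm{st}}$, $\hat{\Omega}\setminus \hat{\Sigma}$, $\hat{\Sigma}\setminus \hat{\Omega}$, $\hat{\Omega}\cap\hat{\Sigma}$, combined with the fact (from Proposition \ref{prop:diag-chern-cubic4}) that $M(\cM^{\mathrm{st}})$ and $M_{c}(\cM^{\mathrm{st}})$ lie in $\langle M(Y)\rangle_{\DM}$, one concludes that $M(\hat{\cM})\simeq M_{c}(\hat{\cM})\in \langle M(Y)\rangle_{\DM}$. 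Since $\hat{\cM}$ is smooth projective, Proposition \ref{prop:GeneratedCats} lifts this to $\h(\hat{\cM})\in \langle \h(Y)\rangle_{\CHM}$. The Chow motive of the crepant resolution $\tilde{\cM}$ is a direct summand of $\h(\hat{\cM})$ via the blow-up $\hat{\cM}\to \tilde{\cM}$ along $\tilde{\Omega}$ and Manin's blow-up formula, so $\h(\tilde{\cM})\in \langle \h(Y)\rangle_{\CHM}$. Finally, using the localization triangle again for the strata of $\cM$, one deduces that $M(\cM^{\mathrm{st}})$, $M_{c}(\cM^{\mathrm{st}})$ and $M(\cM)\simeq M_{c}(\cM)$ all belong to $\langle M(Y)\rangle_{\DM}$, yielding both parts of Conjecture \ref{conj:cubic_singular}.

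The main obstacle is not the motivic argument itself (which is formally identical to \S\ref{sec:OG10}) but establishing the geometric prerequisite: namely, that the singular moduli space $\cM_{\Ku(Y),\sigma}(2\vv_{0})$ in this non-commutative setting genuinely has the O'Grady local model, so that O'Grady's blow-up recipe produces a symplectic resolution with the same stratified structure. This is precisely the content secured by Li--Pertusi--Zhao \cite{LPZ19} under the very general assumption on $Y$ (ensuring $\tilde{H}(\Ku(Y))$ is the $A_{2}$-lattice $\langle \lambda_{1},\lambda_{2}\rangle$). Corollary \ref{cor:StandConj} then follows, since by a result of Arapura the standard conjectures hold for any variety whose Chow motive lies in the pseudo-abelian tensor subcategory generated by the motive of a variety for which the standard conjectures are known, and smooth cubic fourfolds satisfy them.
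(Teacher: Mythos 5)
Your proposal is, in essence, the paper's own argument for Theorem \ref{thm:main3}, which is the only part of Conjecture \ref{conj:SingularModuli_NC} that the paper actually establishes: you correctly identify the geometric input from Li--Pertusi--Zhao \cite{LPZ19}, invoke the non-commutative diagonal decomposition (Proposition \ref{prop:diag-chern-cubic4}) in place of Proposition \ref{prop:diag-prod-k3}, replay Lemma \ref{lem:Hilb2} and Lemma \ref{lem:motive_hathat} with $S$ replaced by $Y$ and B\"ulles' theorem replaced by Theorem \ref{thm:NCK3_smooth}, and then descend from $\hat{\cM}$ to $\tilde{\cM}$ by Manin's blow-up formula and to $\cM$, $\cM^{\mathrm{st}}$ by localization. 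This matches what the paper does, and like the paper you correctly acknowledge that the general conjecture (arbitrary $\mathcal{A}$, $\vv$, $\sigma$) is not being proved.

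One logical shortcut deserves a flag. You write that Proposition \ref{prop:diag-chern-cubic4} already yields $M(\cM^{\mathrm{st}}), M_c(\cM^{\mathrm{st}}) \in \langle M(Y)\rangle_{\DM}$ and then feed this into the localization argument for $\hat{\cM}$. For a smooth \emph{non-proper} variety, however, a cycle-theoretic decomposition of the diagonal only expresses the comparison morphism $M(\cM^{\mathrm{st}}) \to M_c(\cM^{\mathrm{st}})$ as factoring through motives of powers of $Y$; it does not by itself place either $M(\cM^{\mathrm{st}})$ or $M_c(\cM^{\mathrm{st}})$ in the subcategory, because on an open variety the diagonal does not represent the identity of $M(\cM^{\mathrm{st}})$. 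The correct order of operations, as in the first proof of Proposition \ref{prop:motive_hatM}, is to close up the cycles $\gamma_i, \delta_i$ to $\hat{\cM}\times\hat{\cM}$, observe that the defect from $\Delta_{\hat{\cM}}$ is supported on $(\hat{\cM}\times \partial\hat{\cM})\cup(\partial\hat{\cM}\times\hat{\cM})$, and hence obtain directly that $\h(\hat{\cM})$ is a direct summand of motives already known to lie in $\langle \h(Y)\rangle_{\CHM}$. Only afterwards, via the localization triangles applied to the stratification of $\hat{\cM}$, does one get $M_c(\cM^{\mathrm{st}})$ and $M_c(\cM)$ in $\langle M(Y)\rangle_{\DM}$ (and $M(\cM^{\mathrm{st}})$ by motivic Poincaré duality). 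To be fair, the paper's compressed second proof of Proposition \ref{prop:motive_hatM} reads a bit like your version, but the explicit closure trick is what makes the argument airtight; you should reorder your steps accordingly.
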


For evidence for Conjecture \ref{conj:SingularModuli_NC}, we restrict to the case where $Y$ is a very general cubic fourfold and $\mathcal{A}$ is its Kuznetsov component. Let $\lambda_1$ and $\lambda_2$ be the cohomological Mukai vectors of the projections into $\mathcal{A}$ of $\mathcal{O}_Y(1)$ and $\mathcal{O}_Y(2)$ respectively. Then the topological K-theory of $\mathcal{A}$ is an $A_2$-lattice with basis $\{\lambda_1, \lambda_2\}$, equipped with a K3-type Hodge structure \cite{AT14}. Then for a generic stability condition $\sigma$ (see \cite{BLMS}), there is an O'Grady-type crepant resolution of the singular moduli space $\cM_{\mathcal{A}, \sigma}(2\lambda_1+2\lambda_2)$, which is of O'Grady-10 deformation type, see \cite{LPZ19}. Our result is that Conjecture \ref{conj:SingularModuli_NC} holds true in this case. See Theorem \ref{thm:main3} in the introduction for the precise statement.

\begin{proof}[Proof of Theorem \ref{thm:main3}]
The argument is more or less the same as in \S\ref{sec:OG10}: the singular locus of the moduli space of semistable objects $\cM(2\vv_0)$ is  $\Sym^2(\cM(\vv_0))$, whose singular locus is the diagonal $\cM(\vv_0)$. By the same procedure of blow-ups as in \S\ref{subsec:Resolution}, we get a smooth projective variety $\hat{\cM}$ together with a stratification such that the motive with compact support of all strata belong to the tensor triangulated subcategory generated by the motive of $\cM(\vv_0)$, hence also to the subcategory generated by $\h(Y)$, by Theorem \ref{thm:NCK3_smooth}. The rest of the proof is the same as \S\ref{sec:OG10}.
\end{proof}

\begin{proof}[Proof of Corollary \ref{cor:StandConj}]
In the situation of Theorem \ref{thm:main1} (\textit{resp.}~Theorem \ref{thm:main3}), $\tilde{\cM}$ is \textit{motivated} by the surface $S$ (\textit{resp.}~the cubic fourfold $Y$) in the sense of Arapura \cite[Lemma 1.1]{MR2271985}: indeed, by applying the full functor $\CHM\to \GRM$ from the category of Chow motives to that of Grothendieck motives, our main result implies that the Grothendieck motive of $\tilde{\cM}$ is in the pseudo-abelian tensor subcategory generated by the Grothendieck motive of $S$ (\textit{resp.}~$Y$). Since the Lefschetz standard conjecture is known for $S$ and $Y$, we can invoke Arapura's result \cite[Lemma 4.2]{MR2271985} to obtain the standard conjectures for $\tilde{\cM}$.
\end{proof}

\section{Defect groups of hyper-K\"ahler varieties}\label{sec:defect}

In this section we study the Andr\'{e} motives of projective hyper-K\"{a}hler varieties with $b_2\neq 3$. For any such $X$, we construct the defect group $P(X)$, and prove Theorem \ref{thm:product2} (=Theorem \ref{thm:product}) and Corollary \ref{cor:Pdefect2} (=Corollary \ref{cor:Pdefect}). In the next section we will apply these results to the known examples of hyper-K\"{a}hler varieties.


The starting point and a main tool of our study is the following general theorem due to Andr\'{e}. 
\begin{theorem}[\cite{Andre1996}]\label{abelianityH2}
Let $X$ be a projective hyper-K\"{a}hler variety such that $b_2(X)\neq 3$. Then the Andr\'{e} motive $\mathcal{H}^2(X)$ is abelian. In particular, Conjecture \ref{mhc} holds for $\mathcal{H}^2(X)$.
\end{theorem}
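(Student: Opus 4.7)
The plan is to show that $\mathcal{H}^{2}(X)$ is a direct summand of the Andr\'e motive of an abelian variety; from this, the statement about Conjecture \ref{mhc} follows at once via Theorem \ref{mhcabelian}. The vehicle is the classical Kuga-Satake construction, upgraded to a morphism of Andr\'e motives by means of Andr\'e's deformation principle.

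\textbf{Step 1: Kuga-Satake realization.} The Beauville--Bogomolov form polarizes $H^{2}(X,\Q)$ as a rational Hodge structure of K3 type (Hodge numbers $(1,b_{2}-2,1)$) whose real signature is $(3,b_{2}(X)-3)$. Since $b_{2}(X)\geq 3$ is automatic for any projective hyper-K\"ahler, the hypothesis $b_{2}(X)\neq 3$ forces $b_{2}(X)\geq 4$, ensuring that the polarization has a genuine hyperbolic transcendental piece and that the classical Kuga--Satake machine through the even Clifford algebra produces a complex abelian variety $A$ together with an inclusion of polarized Hodge structures $\kappa\colon H^{2}(X,\Q)(1)\inj \End(H^{1}(A,\Q))$. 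Once $\kappa$ is known to be motivated, $\mathcal{H}^{2}(X)(1)$ becomes a direct summand of $\mathcal{H}^{1}(A)\otimes \mathcal{H}^{1}(A)^{\vee}$ in $\AM$, hence $\mathcal{H}^{2}(X)$ is abelian; Theorem \ref{mhcabelian} then gives Conjecture \ref{mhc} for $\mathcal{H}^{2}(X)$ for free.

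\textbf{Step 2: Deformation to the known K3 case.} To prove that $\kappa$ is motivated, I would work over the orthogonal Shimura variety $\mathrm{Sh}$ parametrizing polarized Hodge structures of K3 type of the appropriate rank and signature. The Kuga--Satake construction is functorial in families and is realized by a morphism of Shimura data $\mathrm{Sh}\to \mathrm{Sh}'$ into a Siegel-type Shimura variety; pulling back the universal abelian scheme yields an algebraic family $\pi\colon \mathcal{A}\to \mathrm{Sh}$ together with a flat global section $\tilde\kappa$ of the local system $\mathcal{V}\inj \End(R^{1}\pi_{*}\Q)$ which restricts to $\kappa$ at the period point $[X]\in \mathrm{Sh}$. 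Choose a smooth algebraic curve $T\subset \mathrm{Sh}$ joining $[X]$ to a point $t_{1}$ whose Hodge structure is (a twist of the primitive part of) $H^{2}(S,\Q)$ for a projective K3 surface $S$. The restriction of $\tilde\kappa$ to $t_{1}$ is then the classical Kuga--Satake correspondence for $S$, which is absolutely Hodge by Deligne \cite{deligne1971conjecture} and hence motivated by Theorem \ref{mhcabelian}. Applying Andr\'e's deformation principle (Theorem \ref{thm:DefPrinciple}) to the algebraic family $\mathcal{A}_{T}\to T$ propagates motivatedness from $t_{1}$ to $[X]$, completing the argument.

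\textbf{Main obstacle.} The genuinely delicate step is the construction of the curve $T\subset \mathrm{Sh}$ together with the algebraic family $\mathcal{A}_{T}\to T$ in Step 2: both endpoints --- the period of $X$ and a K3-surface period --- must lie in a single connected algebraic locus carrying an algebraic Kuga--Satake abelian scheme. When the signature $(3,b_{2}(X)-3)$ exceeds the K3 signature $(3,19)$ (most notably in the OG10 case, where $b_{2}=24$), the K3 locus does not sit directly inside $\mathrm{Sh}$, and one must either pass to a polarized sub-Hodge structure of $H^{2}(X)$ whose signature is adapted to the K3 period domain, or combine the deformation principle with an induction starting from CM points of $\mathrm{Sh}$ where the Mumford--Tate group is a torus and motivatedness follows directly from Theorem \ref{mhcabelian} on abelian varieties.
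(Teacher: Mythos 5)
The paper offers no proof here; it simply cites André \cite{Andre1996}, so your proposal must be measured against his original argument. You have correctly identified the framework he uses -- realize $\mathcal{H}^2(X)$ inside an abelian motive via the Kuga--Satake inclusion and propagate motivatedness by the deformation principle -- but the specific route in Step~2 does not work, and the fallback in your "main obstacle" paragraph still has a gap.

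The K3-endpoint idea fails twice over. First, the period domain for the primitive $H^2$ of $X$ has signature $(2,b_2(X)-3)$, which matches the K3 domain $(2,19)$ only for $b_2(X)=22$; for generalized Kummer ($b_2=7$) and OG6 ($b_2=8$) the dimension is too \emph{small}, not too large, so the fix via polarized sub-Hodge structures is unavailable in those cases. Second, and more fundamentally, Theorem~\ref{thm:DefPrinciple} requires a smooth proper algebraic \emph{family} $f\colon \mathcal{X}\to T$ of varieties; a curve $T$ in the period domain does not carry a family interpolating between $X$ and a K3 surface, since they are not deformation equivalent, and the universal family exists only over the moduli of $X$'s own deformation type. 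Your CM-point alternative has a circularity: at a CM point $s$, the Kuga--Satake class $\kappa_s$ is a Hodge class on $X_s\times A_s\times A_s$, and $X_s$ is a hyper-K\"ahler variety rather than an abelian variety, so Theorem~\ref{mhcabelian} does not apply to $\kappa_s$. Concluding that $\kappa_s$ is motivated would require knowing that $\mathcal{H}^2(X_s)$ is abelian, which is precisely the statement being proved. André's actual argument supplies the starting point differently, working entirely inside the deformation class of $X$ and reducing, via density of Noether--Lefschetz loci together with a structure lemma relating the Kuga--Satake construction of $T\oplus N$ (with $N$ of Tate type) to that of $T$, to a small transcendental rank where the correspondence can be established by hand; this reduction is the genuine content of his theorem and is missing from your proposal.
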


We review the Lie algebra action constructed by Looijenga--Lunts \cite{looijenga1997lie} and Verbitsky \cite{verbitsky1996cohomology} on cohomology groups of varieties, as well as its remarkable properties when applied to compact hyper-K\"{a}hler manifolds. This action is crucial for the proof of Theorem \ref{thm:product2}. To ease the notation, the coefficient field $\Q$ in all cohomology groups is suppressed.

\subsection{The Looijenga--Lunts--Verbitsky (LLV) Lie algebra}\label{subsec:LLV}

Let $X$ be a $2m$-dimensional compact hyper-K\"ahler variety. A cohomology class $x\in H^2(X)$ is said to satisfy the \textit{Lefschetz property} if the maps given by cup-product $L_x^{j}\colon \, H^{2m-j}(X)\to H^{2m+j}(X)$ sending $\alpha$ to $x^{j}\cup \alpha$, are isomorphisms for all $j\geq 0$. The Lefschetz property for a class $x$ in $H^2 (X)$ is equivalent to the existence of a $\mathfrak{sl}_2$-triple $(L_x,\theta, \Lambda_x)$, where $\theta\in \End\bigl(H^*(X)\bigr)$ is the degree-$0$ endomorphism which acts as multiplication by $k-2m$ on $H^k(X)$ for all $k\in \NN$. Moreover, in this case $\Lambda_x$ is uniquely determined by $L_x$ and $\theta$. Note that the first Chern class of an ample divisor on~$X$ has the Lefschetz property by the hard Lefschetz theorem.

The $\LLV$-Lie algebra of $X$, denoted by $\mathfrak{g}_{\LLV}(X)$, is defined as the Lie subalgebra of $\mathfrak{gl}(H^*(X))$ generated by the $\mathfrak{sl}_2$-triples $(L_x, \theta, \Lambda_x)$ as above for all cohomology classes $x\in H^{2}(X)$ satisfying the Lefschetz property. It is shown in \cite[\S(1.9)]{looijenga1997lie} that~$\mathfrak{g}_{\LLV}(X)$ is a semisimple~$\Q$-Lie algebra, evenly graded by the adjoint action of $\theta$. The construction does not depend on the complex structure; therefore,~$\mathfrak{g}_{\LLV}(X)$ is deformation invariant. 

Let us denote by $H$ the space~$H^2(X)$ equipped with the Beauville--Bogomolov quadratic form \cite{beauville1983varietes}. Let $\tilde{H}$ denote the orthogonal direct sum of $H$ and a hyperbolic plane $U=\langle v,w\rangle$ equipped with the form $v^2=w^2=0$ and $vw=-1$. We summarize the main properties of the Lie algebra~$\mathfrak{g}_{\mathrm{LLV}}(X)$.

\begin{theorem}[Looijenga--Lunts--Verbitsky]\label{gLLV}
	\begin{enumerate}[$(i)$]
		\item \label{struct}
		There is an isomorphism of $\Q$-Lie algebras
		$$\mathfrak{g}_{\LLV}(X)\cong\mathfrak{so}(\tilde{H}),$$
		which maps $\theta\in\mathfrak{g}_{\mathrm{LLV}}(X)$ to the element of $\mathfrak{so}(\tilde{H})$ which acts as multiplication by $-2$ on~$v$, by $2$ on $w$, and by $0$ on $H$. Hence, we have
		\[
		\mathfrak{g}_{\LLV}(X)=\mathfrak{g}_{-2}(X)\oplus\mathfrak{g}_{0}(X)\oplus\mathfrak{g}_{2}(X).
		\]
		Moreover, $\mathfrak{g}_0(X) \cong \mathfrak{so}(H ) \oplus \Q \cdot\theta$, is the centralizer of $\theta$ in $\mathfrak{g}_{\LLV}(X)$. The abelian subalgebra $\mathfrak{g}_{2}(X)$ is the linear span of the endomorphisms $L_x$, for $x\in H^{2}(X)$, and $\mathfrak{g}_{-2}(X)$ is the span of the $\Lambda_x$, for all $x\in H^2(X)$ with the Lefschetz property. 
		\item The Lie subalgebra $\mathfrak{so}(H) \subset \mathfrak{g}_0(X)$ acts by derivations on the graded algebra $H^*(X)$. The induced action of $\mathfrak{so}(H)$ on $H^2(X)$ is the standard representation.
	    \item \label{weil} 
	    Let $\rho\colon \mathfrak{so}(H)\to \mathfrak{gl}(H^*(X))$ be the induced representation of $\mathfrak{so}(H)\subset \mathfrak{g}_0(X)$. Then the Weil operator\footnote{Here the Weil operator refers to the derivation of the usual Weil operator, which acts on $H^{p,q}(X)$ as multiplication by $i^{p-q}$. Hence, $W$ acts on each $H^{p,q}(X)$ as multiplication by~$i(p-q)$.} $W$ is an element of $\rho\bigl(\mathfrak{so}(H)\bigr)\otimes\R$. 
	   \end{enumerate}
\end{theorem}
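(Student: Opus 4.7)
The plan is to extract the structure claimed in (i) from the $\mathfrak{sl}_2$-triples attached to Lefschetz classes, and then to deduce (ii) and (iii) from the presentation of $\mathfrak{g}_{\LLV}(X)$ so obtained.

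For part (i), I would first observe that $\theta$ depends only on the cohomological grading of $H^*(X)$, so it is a common Cartan element in every triple $(L_x, \theta, \Lambda_x)$; its adjoint action on $\mathfrak{g}_{\LLV}(X)$ produces the claimed three-piece grading, with $L_x$ in weight $+2$ and $\Lambda_x$ in weight $-2$. Since the Lefschetz condition is Zariski open on $H^2(X)$, the assignment $x \mapsto L_x$ extends $\Q$-linearly to a vector-space isomorphism $\mathfrak{g}_2 \cong H$. Commutativity of cup product gives $[\mathfrak{g}_2, \mathfrak{g}_2] = 0$, and transposing via the Cartan involution of the ambient $\mathfrak{sl}_2$-representations forces $[\mathfrak{g}_{-2}, \mathfrak{g}_{-2}] = 0$ as well. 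The cross bracket $[\Lambda_x, L_y] \in \mathfrak{g}_0$ is bilinear in $x, y \in H^2(X)$; comparing with the normalized identity $[\Lambda_x, L_x] = \theta$ and exploiting irreducibility of the resulting $\mathfrak{g}_0$-action on $H$, one is forced to identify this pairing, up to a positive scalar, with the Beauville--Bogomolov form. Hence $\mathfrak{g}_0 \cong \mathfrak{so}(H) \oplus \Q \theta$, and patching the three graded pieces together with their commutators reproduces the standard $\theta$-grading of $\mathfrak{so}(\tilde H)$ relative to the hyperbolic plane $U$.

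For part (ii), the main obstacle -- and in my view the principal technical input of the theorem -- is to realize the semisimple part $\mathfrak{so}(H) \subset \mathfrak{g}_0$ by derivations of the graded ring $H^*(X)$, since neither $L_x$ nor $\Lambda_x$ is itself a derivation. My approach would exploit the genuine hyper-K\"ahler structure: for K\"ahler forms $\omega_1, \omega_2$ coming from complex structures that anti-commute within the twistor sphere, the K\"ahler identities combined with the quaternionic relations force a suitable combination of $[\Lambda_{\omega_1}, L_{\omega_2}]$ and $\theta$ to act as a derivation of $H^*(X, \C)$. Letting $\omega_1, \omega_2$ vary and projecting onto the semisimple part of $\mathfrak{g}_0$ then produces a non-zero ideal of derivations inside the simple Lie algebra $\mathfrak{so}(H)$, which must therefore be all of it. The induced action on $H^2(X)$ is the standard representation by the identification obtained in (i).

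For part (iii), I would first restrict the Weil operator to $H^2(X)$. Since the Beauville--Bogomolov pairing is a morphism of Hodge structures of weight $4$, it is $W$-invariant, and $W|_{H^2}$ descends to a real operator antisymmetric with respect to the form; hence $W|_{H^2}$ lies in $\mathfrak{so}(H) \otimes \R$, defining an element $w$. Both $W$ and $\rho(w)$ are degree-$0$ derivations of $H^*(X, \C)$ ($W$ because Hodge types add under cup product, $\rho(w)$ by part (ii)) that agree on $H^2(X)$. To extend the equality to all of $H^*(X, \C)$ one uses that $H^*(X, \C)$ decomposes semisimply as a $\mathfrak{g}_{\LLV}(X) \otimes \C$-module, with each isotypic component generated, as a $\mathfrak{g}_{\LLV}$-module, by its intersection with the subring generated by $H^2(X)$, on which the two derivations already coincide. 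This forces $W = \rho(w)$, and hence $W \in \rho(\mathfrak{so}(H)) \otimes \R$.
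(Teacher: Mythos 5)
First, note that the paper does not reprove this statement at all: its ``proof'' is a pointer to \cite{verbitsky1996cohomology}, to \cite[Proposition~4.5]{looijenga1997lie} and to the appendix of \cite{KSV2017}, plus the single observation that the real-coefficient results descend to $\Q$ because $\mathfrak{g}_{\LLV}(X)$ is already defined over $\Q$. You are therefore attempting much more than the paper, and the attempt has gaps precisely at the non-formal points that those references exist to handle. In part $(i)$, the step ``comparing with $[\Lambda_x,L_x]=\theta$ and exploiting irreducibility of the resulting $\mathfrak{g}_0$-action on $H$, one is forced to identify this pairing, up to a positive scalar, with the Beauville--Bogomolov form'' is not a formal consequence of the $\mathfrak{sl}_2$-calculus: nothing in the abstract triples singles out the Beauville--Bogomolov form, and the irreducibility of the $\mathfrak{g}_0$-action on $H^2$ is essentially what is being proved. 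The actual proofs need genuinely hyper-K\"ahler input (the Fujiki relation describing $\int x^{2m}$ as a power of the BB form, or Verbitsky's computation with the $\mathfrak{su}(2)$-action of a hyper-K\"ahler metric). Likewise, that the $\theta$-grading has amplitude $[-2,2]$, that $\mathfrak{g}_{\pm 2}$ are abelian and spanned exactly by the $L_x$ and $\Lambda_x$, is part of the Looijenga--Lunts structure theory and does not follow from ``transposing via the Cartan involution'': the different triples $(L_x,\theta,\Lambda_x)$ give different involutions, so there is no single involution carrying $\mathfrak{g}_2$ to $\mathfrak{g}_{-2}$ a priori.

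The more serious failure is in part $(iii)$. Your extension argument rests on the claim that every $\mathfrak{g}_{\LLV}(X)\otimes\C$-isotypic component of $H^*(X,\C)$ is generated by its intersection with the subring generated by $H^2(X)$. For a general projective hyper-K\"ahler variety $H^*(X)$ is \emph{not} generated by $H^2(X)$ as a ring; for the deformation types this paper cares about ($\operatorname{Kum}^n$, OG6, OG10) there are classes, including the whole odd cohomology, whose intersection with that subring is zero, so the claimed generation statement is false as stated and the identity $W=\rho(w)$ does not follow from agreement of two derivations on $H^2$. The proofs in the literature avoid this entirely: using the quaternionic K\"ahler identities one shows directly that the derivation Weil operator is an explicit bracket of the operators $L_{\omega_J},\Lambda_{\omega_J},L_{\omega_K},\Lambda_{\omega_K}$ attached to a compatible hyper-K\"ahler metric, hence lies in $\mathfrak{g}_0(X)\otimes\R$, and one then checks it is skew for the BB form and has no $\theta$-component, so it lies in $\rho(\mathfrak{so}(H))\otimes\R$. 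Finally, your arguments in $(ii)$ and $(iii)$ are intrinsically real (they use K\"ahler forms from the twistor sphere), so even if completed they would only give the statement over $\R$; the descent to $\Q$ required by the statement of $(i)$ is exactly the one remark the paper does supply, and it is missing from your write-up.
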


The above theorem is proved in \cite{verbitsky1996cohomology}, and in~\cite[Proposition~4.5]{looijenga1997lie}, see also the appendix of \cite{KSV2017}. These proofs are carried out with real coefficients, but immediately imply the result with rational coefficients: since $\mathfrak{g}_{\LLV}(X)$ is defined over~$\Q$, the equality  
$ \mathfrak{g}_{\LLV}(X)\otimes \R = \mathfrak{so}(\tilde{H})\otimes \R$
of Lie subalgebras of $\mathfrak{gl}(\tilde{H})\otimes \R$ implies that the same equality already holds with rational coefficients.

\begin{remark}[Integration]\label{remark:SOrepresentation}
Let $\rho^+\colon\mathfrak{so}(H) \to \mathfrak{gl}(H^+(X))$ be the induced representation on the even cohomology. It follows from \cite[Corollary 8.2]{verbitsky1995mirror} that $\rho^+$ integrates to a faithful representation 
\[
\tilde{\rho}^+\colon  \SO(H)\to \prod_i\GL\bigl(H^{2i}(X)\bigr), 
\]
such that the induced representation on $H^2(X)$ is the standard representation. 
If the odd cohomology of $X$ is non-trivial, $\rho$ integrates to a faithful representation 
\[
\tilde{\rho}\colon \Spin(H) \to \prod_i\GL\bigl(H^{i}(X)\bigr),
\]
and the kernel of the action of $\Spin(H)$ on the even cohomology is an order-2 subgroup $\langle \iota \rangle$, where $\iota$ is the non-trivial element in the kernel of the double cover $\Spin(H)\to \SO(H)$ and $\tilde{\rho}(\iota)$ acts on $H^i(X)$ via multiplication by $(-1)^{i}$. Note also that the action induced by $\tilde{\rho}$ and $\tilde{\rho}^+$ is via algebra automorphisms, thanks to Theorem \ref{gLLV}$(ii)$.
\end{remark}

\subsection{Splitting of the motivic Galois group}\label{subsec:Splitting}
 Let $H(X)$ (\resp $H^+(X)$) be the full (\resp  even) rational cohomology group of $X$ equipped with Hodge structure. The natural inclusions of $H^2(X)$ into $ H^+(X)$  and $H^*(X)$ induce surjective morphisms of Mumford--Tate groups $$\pi^+_2\colon \MT(H^+(X))\to\MT(H^2(X));$$ $$\pi_2\colon \MT(H^*(X))\to \MT (H^2(X)).$$ Let $\iota\in\GL(H^*(X))$ act on each $H^{i}(X)$ via the multiplication by $(-1)^{i}$ for all $i$.

\begin{proposition}
\label{cor:MT}
The notation is as above.
\begin{enumerate}[$(i)$]
    \item The morphism $\pi^+_2$ is an isomorphism.
	In particular, the Hodge structure $H^+(X)$ belongs to the tensor subcategory of $\mathrm{HS}_{\Q}^{\mathrm{pol}}$ generated by $H^2(X)$.
	\item If $X$ has non-vanishing odd cohomology, the morphism $\pi_2$
	is an isogeny with kernel $\langle \iota \rangle\simeq \Z/2\Z$. Moreover, if $A$ is any Kuga--Satake variety for $H^2(X)$ in the sense of Definition \ref{def:Kuga-Satake}, we have $\langle H^*(X) \rangle_{\HS}=\langle H^1(A) \rangle_{\HS}$.
\end{enumerate}
\end{proposition}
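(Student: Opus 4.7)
The plan is to prove both parts via Tannakian duality: establishing an equality of tannakian subcategories of $\pHS$ translates directly into the desired Mumford--Tate statements. Concretely, (i) reduces to showing $H^+(X) \in \langle H^2(X)\rangle_{\HS}$ (the reverse inclusion being trivial, since $H^2(X)$ is a direct Hodge summand of $H^+(X)$), and (ii) to the equality $\langle H^*(X)\rangle_{\HS} = \langle H^1(A)\rangle_{\HS}$ combined with the classical fact that the Kuga--Satake construction yields an isogeny $\MT(H^1(A)) \to \MT(H^2(X))$ with kernel $\mu_2$. The main inputs are the Looijenga--Lunts--Verbitsky theorem (Theorem \ref{gLLV} together with Remark \ref{remark:SOrepresentation}), and, for part (ii), Lemma \ref{lem:KSvsodd} and the classical Kuga--Satake theory.

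For (i), Remark \ref{remark:SOrepresentation} provides a faithful algebraic representation $\tilde\rho^+\colon \SO(H) \to \prod_i \GL(H^{2i}(X))$ whose restriction to $H^2(X)$ is the standard representation of $\SO(H)$. Decompose each $H^{2i}(X)$ into $\SO(H)$-isotypic summands; since $\SO(H)$ is connected reductive with standard representation $H$, every irreducible occurring is a subquotient of some tensor power $H^2(X)^{\otimes n}$. By Theorem \ref{gLLV}(iii), the Weil operator on $H^+(X)$ lies in $\rho^+(\mathfrak{so}(H)) \otimes \R$, so every $\SO(H)$-invariant $\Q$-subspace of $H^{2i}(X)$ is automatically a sub-Hodge structure, and on an isotypic summand the Weil operator is determined entirely by the $\SO(H)$-action together with the weight. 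Hence any $\SO(H)$-equivariant $\Q$-linear isomorphism between the isotypic component of $H^{2i}(X)$ and the corresponding isotypic component of the Tate-twisted tensor power $H^2(X)^{\otimes n}(n-i)$ (chosen so as to match weights) is automatically a Hodge isomorphism, giving $H^+(X) \in \langle H^2(X)\rangle_{\HS}$.

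For (ii), assume some odd Betti number of $X$ is non-zero. Lemma \ref{lem:KSvsodd} (at the level of Hodge realizations) supplies $\langle H^1(A)\rangle_{\HS} \subset \langle H^*(X)\rangle_{\HS}$. For the reverse inclusion, $H^+(X) \in \langle H^2(X)\rangle_{\HS} \subset \langle H^1(A)\rangle_{\HS}$ by part (i) and the classical Kuga--Satake embedding of $H^2(X)$ into the even Clifford algebra acting on $H^1(A)$. For the odd part, I repeat the argument of (i) using the integrated representation $\tilde\rho\colon \Spin(H) \to \prod_i \GL(H^i(X))$ from Remark \ref{remark:SOrepresentation} in place of $\tilde\rho^+$. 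Because the central element $\iota$ acts as $-1$ on $H^-(X)$, every $\Spin(H)$-irreducible summand of $H^-(X)$ is a genuine spin representation, and therefore appears as a subrepresentation of some tensor power of the (half-)spin representation $H^1(A)$; the same Weil-operator analysis upgrades these $\Spin(H)$-equivariant $\Q$-isomorphisms to Hodge isomorphisms, after matching weights by a Tate twist. Combining the two inclusions with the classical Kuga--Satake isogeny, we obtain that $\pi_2$ is an isogeny with kernel $\mu_2$. To identify this kernel with $\langle\iota\rangle$, observe that the operator $\iota$ acting as $(-1)^i$ on $H^i(X)$ coincides with the image under the Hodge structure morphism $h\colon \mathbb{S} \to \GL(H^*(X))_\R$ of the element $-1 \in \mathbb{S}(\R)$, since $(-1)^{p+q} = (-1)^i$ on $H^{p,q}$ for $p+q=i$; hence $\iota$ is a $\Q$-rational element of $\MT(H^*(X))$ that acts trivially on $H^2(X)$, and it generates the full $\mu_2$.

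The main technical delicacy is to ensure that the $\SO(H)$- (resp.\ $\Spin(H)$-) equivariant isomorphisms between isotypic components are defined over $\Q$; this follows because the corresponding Hom-space between two $\Q$-rational isomorphic irreducibles is a non-zero module over a central division algebra over $\Q$, so that any non-zero element gives the desired $\Q$-rational Hodge isomorphism. A second point that requires care in (ii) is the identification of $H^1(A)$ with a sum of (half-)spin $\Spin(H)$-representations, which should either be part of Definition \ref{def:Kuga-Satake} or follow from the Clifford module structure intrinsic to the Kuga--Satake construction.
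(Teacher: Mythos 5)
Your proposal is essentially correct, but it follows a genuinely different path from the paper's. The paper does not decompose $H^+(X)$ (or $H^-(X)$) into $\SO(H)$- or $\Spin(H)$-isotypic pieces and match them explicitly to tensor constructions. Instead it packages the whole argument at the level of groups: it introduces the representation $\sigma=w\cdot\tilde\rho\colon \CSpin(H)\to \prod_i\GL(H^i(X))$, proves in Lemma \ref{lem:contained} that $\MT(H^*(X))\subset \im(\sigma)$ (this is where the Weil-operator input from Theorem \ref{gLLV}$(iii)$ is used, via invariant theory rather than via morphisms between isotypic components), and then reads off (i) and the kernel statement in (ii) from the elementary group-theoretic Lemma \ref{lem:imageSigma} about $\im(\sigma^\pm)$ and the kernel of the projection to $\GL(H^2(X))$. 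The last part of (ii) is then handled by showing $\pi_2^{\mathrm{ev}}$ is an isomorphism and invoking the uniqueness statement of Theorem \ref{thm:kuga-satake}.

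Your route is viable, but it has more moving parts and two places that need to be tightened. First, you invoke Lemma \ref{lem:KSvsodd} ``at the level of Hodge realizations,'' but that lemma is stated and proved for Andr\'e motives; the Hodge-theoretic analogue is indeed true and follows from the same snake-lemma diagram with $\MT$ in place of $\Gmot$ (and with Definition \ref{def:Kuga-Satake} replacing Corollary \ref{cor:motivicKS}), but this should be spelled out rather than asserted. In fact, once you have shown $H^*(X)\in \langle H^1(A)\rangle_{\HS}$ directly via the $\Spin(H)$-isotypic argument, you can dispense with the Hodge-Lemma-\ref{lem:KSvsodd} inclusion altogether: that inclusion plus nonvanishing odd cohomology gives $\langle H^*(X)\rangle^{\mathrm{ev}}_{\HS}=\langle H^2(X)\rangle_{\HS}$, and then the equality $\langle H^*(X)\rangle_{\HS}=\langle H^1(A)\rangle_{\HS}$ is forced by the uniqueness of Theorem \ref{thm:kuga-satake} --- this is exactly how the paper finishes. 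Second, your argument that $H^-(X)$ lies in $\langle H^1(A)\rangle_{\HS}$ treats $H^1(A)$ as a direct sum of (half-)spin representations of $\Spin(H)$, which is true for the classical Kuga--Satake abelian variety $\KS(H^2(X))$ built from the Clifford algebra, but is not part of the abstract Definition \ref{def:Kuga-Satake}; you should first run the argument for $\KS(H^2(X))$ and then transfer to a general Kuga--Satake $A$ via the uniqueness of the Kuga--Satake category. You flag both issues yourself, which is good, but they do need to be resolved for the proof to be complete. The net effect is that the paper's argument buys brevity and avoids any discussion of isotypic decompositions or $\Q$-rationality of equivariant maps, while yours is more explicit about \emph{which} tensor constructions on $H^2(X)$ (resp.\ $H^1(\KS(X))$) realize the cohomology of $X$ --- potentially useful extra information, at the cost of more bookkeeping.
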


The natural choice for $A$ is the abelian variety obtained through the Kuga--Satake construction on $H^2(X)$ equipped with the Beauville-Bogomolov form, see \S\ref{subsec:KSconstruction}; let us remark that also the construction of \cite{KSV2017} yields a Kuga--Satake variety for $H^2(X)$ in our sense.

The proof of the proposition will be given after some preliminary results. Recall (\cite{deligne1971conjecture}) that the algebraic group $\CSpin(H)$ is the quotient of $\mathbb{G}_{m} \times \Spin(H)$ in which we identify the element $-1\in \mathbb{G}_m$ with the non-trivial central element $\iota$ of $\Spin(H)$. We introduce a representation 
\[
\sigma \colon \CSpin(H) \to \prod_i \GL(H^{i}(X))
\]
by $\sigma= w\cdot \tilde{\rho}$, where $\tilde{\rho}\colon \Spin(H)\to \prod_i\GL(H^{i}(X))$ is the representation from Remark \ref{remark:SOrepresentation} and $w\colon \mathbb{G}_{m}\to \GL(H^{i}(X))$ is the weight cocharacter, \ie~$w(\lambda)$ acts on $H^{i}(X)$ as multiplication by $\lambda^{i}$, for all $i$ and all $\lambda$. This is a priori a representation of $\mathbb{G}_{m}\times \Spin(H)$, but it indeed factors through $\CSpin(H)$ since by Remark \ref{remark:SOrepresentation} we have $w(-1)=\tilde{\rho}(\iota)$. We also set $$\sigma^+:\CSpin(H)\to \prod_i \GL(H^{2i}(X)) \text{ and }$$ $$\sigma^2: \CSpin(H)\to \GL(H^2(X))$$ to be the induced representations on the even cohomology and on $H^2(X)$ respectively.

\begin{lemma}\label{lem:imageSigma}
\begin{enumerate}[(i)]
\item The homomorphism $\sigma^+\colon \CSpin(H)\to \prod_i\GL(H^{2i}(X))$ is an isogeny of degree 2 onto its image. The natural projection $\pr_2^+\colon \prod_i \GL(H^{2i}(X))\to \GL(H^2(X))$ maps the image of $\sigma^+$ isomorphically onto the image of $\sigma^2$. 
\item If $X$ has non-vanishing odd cohomology, the representation $\sigma\colon \CSpin(H)\to \prod_i \GL(H^{i}(X))$ is faithful, and the projection $\pr_2\colon \prod_i \GL(H^i(X)) \to \GL(H^2(X))$ induces a degree 2 isogeny between the image of $\sigma$ and the image of $\sigma^2$.
\end{enumerate}
\end{lemma}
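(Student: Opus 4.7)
The plan is to compute the kernels of $\sigma^{+}$, $\sigma$, and $\sigma^{2}$ as subgroup schemes of $\CSpin(H)$ and compare them, exploiting the factorizations $\sigma^{2}=\pr^{+}_{2}\circ\sigma^{+}=\pr_{2}\circ\sigma$. First I would carry out the scalar analysis on $H^{2}(X)=H$: an element $(\lambda,g)\in \mathbb{G}_{m}\times\Spin(H)$ whose class lies in $\ker(\sigma^{2})$ satisfies $\lambda^{2}\bar g=\id$ on $H$, where $\bar g\in\SO(H)$ denotes the image of $g$. This forces $\bar g$ to be a scalar in $\SO(H)$; over $\Q$ the only such scalar is the identity, so $\bar g=\id$ and $\lambda^{2}=1$, producing the four lifts $(\pm 1,\{1,\iota\})$ which descend to the subgroup $\{1,-1\}\simeq \mu_{2}\subset\CSpin(H)$ under the defining relation $(-1,1)\sim(1,\iota)$.

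Next I would verify that these candidates indeed lie in the relevant kernels by examining the action on the full cohomology. By Remark \ref{remark:SOrepresentation}, both $w(-1)$ and $\tilde\rho(\iota)$ act on $H^{j}(X)$ as multiplication by $(-1)^{j}$, so the non-identity element $-1\in \CSpin(H)$ acts on $H^{j}(X)$ by the uniform sign $(-1)^{j}$, which is trivial on every $H^{2i}(X)$. Hence $\mu_{2}\subseteq\ker(\sigma^{+})$, and combined with the scalar analysis we obtain $\ker(\sigma^{+})=\ker(\sigma^{2})=\mu_{2}$. This yields part $(i)$: the map $\sigma^{+}$ is a degree-two isogeny onto its image, and the coincidence of kernels ensures that $\pr^{+}_{2}$ restricts to an isomorphism $\im(\sigma^{+})\xrightarrow{\sim}\im(\sigma^{2})$.

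For part $(ii)$, under the hypothesis that some odd Betti number of $X$ is nonzero, say $H^{2k+1}(X)\neq 0$, the operator $\sigma(-1)$ acts on $H^{2k+1}(X)$ by $(-1)^{2k+1}=-1$, hence non-trivially. Combined with the preceding scalar argument, this yields $\ker(\sigma)=\{1\}$, so $\sigma$ is faithful and $\im(\sigma)\cong \CSpin(H)$. The kernel of the restricted projection $\pr_{2}\colon \im(\sigma)\to \im(\sigma^{2})$ is then the image of $\ker(\sigma^{2})=\mu_{2}$ in $\im(\sigma)$, giving the claimed degree-two isogeny.

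The principal obstacle is justifying the scalar step when $b_{2}(X)$ is even: there $-\id\in\SO(H)$, and the equation $\lambda^{2}=-1$ admits solutions over $\bar\Q$, so elements of the form $[(\pm i,g_{0})]$ with $\bar g_{0}=-\id$ could in principle enlarge both kernels. One must show this does not occur by establishing that the central element $-\id\in\SO(H)$ does not act by $(-1)^{i}\id$ uniformly on each $H^{2i}(X)$: while it acts so on the Verbitsky subalgebra generated by $H^{2}(X)$, the cohomology of a higher-dimensional hyper-K\"ahler variety generally contains additional $\SO(H)$-subrepresentations, controlled by the Looijenga--Lunts--Verbitsky decomposition of Theorem \ref{gLLV}, on which the central element acts with the opposite sign. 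Verifying this compatibly for both $\sigma^{+}$ and $\sigma^{2}$ -- so that the isomorphism claim in $(i)$ persists -- will require an analogous case-by-case use of the LLV structure.
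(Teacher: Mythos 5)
Your strategy---computing $\ker(\sigma^{+})$, $\ker(\sigma^{2})$, $\ker(\sigma)$ explicitly from Remark \ref{remark:SOrepresentation} and the description of $w$---is precisely the paper's, and the preliminary steps (that $\langle[(1,\iota)]\rangle$ sits in both $\ker(\sigma^{+})$ and $\ker(\sigma^{2})$ and is destroyed in $\ker(\sigma)$ when odd cohomology is present) are carried out correctly. You are also right to flag the final scalar step: when $b_{2}(X)$ is even, $-\id\in\SO(H)$, and a direct count shows that $\ker(\sigma^{2})$ has exactly \emph{four} geometric points, namely $[\pm 1,1]$ and $[\pm i,g_{0}]$ with $\bar g_{0}=-\id$, irrespective of any finer structure of $H^{*}(X)$. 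Since the two assertions of part $(i)$ jointly force $|\ker(\sigma^{2})|=2$ (and likewise for part $(ii)$), this is a genuine tension, and the paper's one-line invocation of Remark \ref{remark:SOrepresentation} does not address it.

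However, the repair you propose runs in the wrong direction, and it fails for the most basic case. For a K3 surface $X$ (so $b_{2}=22$ is even), the even cohomology is $H^{0}\oplus H^{2}\oplus H^{4}$ with $H^{0}$ and $H^{4}$ the trivial $\SO(H)$-representation and $H^{2}$ the standard one; since $-\id$ acts as a ring automorphism fixing $1$ and negating $H^{2}$, it acts on $H^{4}=H^{2}\cdot H^{2}$ trivially, i.e.\ as $(-1)^{2}$. Thus $-\id$ \emph{does} act uniformly as $(-1)^{j}$ on $H^{2j}$, so $[\pm i,g_{0}]\in\ker(\sigma^{+})$ and $\sigma^{+}$ has degree $4$, not $2$. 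Worse, even for a hypothetical $X$ where your non-uniformity claim held, it would only shrink $\ker(\sigma^{+})$ to $\mu_{2}$ while $\ker(\sigma^{2})$ remains of order $4$, which would break the \emph{second} assertion of $(i)$ (that $\pr^{+}_{2}$ restricts to an isomorphism $\im(\sigma^{+})\xrightarrow{\sim}\im(\sigma^{2})$); whichever way the $\SO(H)$-action on the higher $H^{2j}$ behaves, one of the two clauses of $(i)$ must fail when $b_{2}$ is even. What is actually used in Proposition \ref{cor:MT}$(i)$ is only the equality $\ker(\sigma^{+})=\ker(\sigma^{2})$, which \emph{does} survive for K3 (both kernels have order $4$) but is not automatic for every even $b_{2}$; the degree-two statement enters only in Proposition \ref{cor:MT}$(ii)$, where for all known examples with odd cohomology ($\operatorname{Kum}^{n}$-type) one has $b_{2}=7$ odd and the issue evaporates. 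So your instinct that the lemma as stated is delicate for even $b_{2}$ is correct, but the gap cannot be closed by the LLV non-uniformity argument you sketch; what is needed is either a restriction on $b_{2}$ or a reformulation isolating the single equality that the downstream arguments genuinely use.
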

\begin{proof}
By Remark \ref{remark:SOrepresentation} and the explicit description of $w$, the kernels of $\sigma^+$ and $\sigma^2$ both coincide with the central subgroup of order 2 generated by $(-1,1)= (1, \iota)$. This proves part $(i)$. If $X$ has non-vanishing odd cohomology, $\sigma$ is faithful by Remark \ref{remark:SOrepresentation}, and the second assertion follows. 
\end{proof}

\begin{remark}
Note that the twisted representation $\sigma'=w'\cdot \tilde{\rho}$ where $w'(\lambda)$ acts on $H^{i}(X)$ via multiplication by $\lambda^{i-2m}$ is the representation obtained via integration of $\mathfrak{g}_0\to \prod_i \mathfrak{gl}(H^{i}(X))$.
\end{remark}

The point of introducing the above representation is that it controls the Mumford--Tate group.
\begin{lemma}\label{lem:contained}
The Mumford--Tate group $\MT(H^*(X))$ is contained in the image of $\sigma$.
\end{lemma}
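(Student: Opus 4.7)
The plan is to invoke the universal property of the Mumford--Tate group: by definition $\MT(H^*(X))$ is the smallest $\Q$-algebraic subgroup of $\GL(H^*(X))$ whose $\R$-points contain the image of the Hodge homomorphism $h\colon \mathbb{S}\to \GL(H^*(X))_{\R}$. The image $\sigma(\CSpin(H))$ is itself a $\Q$-algebraic subgroup of $\prod_{i}\GL(H^{i}(X))$, being the image of a morphism of $\Q$-algebraic groups (the image of a homomorphism of linear algebraic groups is closed). It therefore suffices to establish the inclusion of real points $h(\mathbb{S}(\R))\subset \sigma(\CSpin(H))(\R)$.

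Using the real decomposition $\mathbb{S}(\R)= \R_{>0}\cdot U^{1}$, I would check the inclusion factor by factor. On the first factor, the restriction of $h$ to $\R_{>0}\subset \mathbb{S}(\R)$ is the weight cocharacter: $h(r)$ acts on $H^{i}(X)$ as multiplication by~$r^{-i}$, since $H^{*}(X)$ is pure of weight $i$ in degree $i$. This is, up to the standard sign convention, the restriction of $\sigma$ to the subtorus $\mathbb{G}_{m}\subset \CSpin(H)$ appearing in the definition $\sigma=w\cdot \tilde{\rho}$, so $h(\R_{>0})\subset \sigma(\CSpin(H))(\R)$.

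On the second factor, the one-parameter subgroup $h|_{U^{1}}\colon U^{1}\to \GL(H^*(X))_{\R}$ is the exponential of a real Lie-algebra element which, up to a non-zero scalar, coincides with the Weil operator $W$ (the derivation acting on $H^{p,q}(X)$ by multiplication by $i(p-q)$). By Theorem~\ref{gLLV}$(iii)$, the element $W$ belongs to $\rho\bigl(\mathfrak{so}(H)\bigr)\otimes \R$. Consequently $h(U^{1})\subset \exp\bigl(\rho(\mathfrak{so}(H))\otimes \R\bigr)\subset \tilde{\rho}\bigl(\Spin(H)\bigr)(\R)\subset \sigma\bigl(\CSpin(H)\bigr)(\R)$. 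Combining the two factors gives $h(\mathbb{S}(\R))\subset \sigma(\CSpin(H))(\R)$, hence $\MT(H^*(X))\subset \sigma(\CSpin(H))$.

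The main subtlety I anticipate is the bookkeeping of conventions, namely verifying that the weight cocharacter built into $\sigma$ (under which $w(\lambda)$ acts on $H^{i}(X)$ as multiplication by $\lambda^{i}$) matches the weight filtration captured by $h$, and that the infinitesimal Weil operator from Theorem~\ref{gLLV}$(iii)$ is genuinely the derivative of $h|_{U^{1}}$ up to a harmless scalar; but once these identifications are pinned down, the lemma reduces to combining the universal property of $\MT$ with part~$(iii)$ of the Looijenga--Lunts--Verbitsky theorem.
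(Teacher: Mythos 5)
Your proof is correct, but it takes a genuinely different route from the paper. The paper's argument uses the reductivity of both $\MT(H^*(X))$ and $\im(\sigma)$: invoking Deligne's criterion \cite[Proposition~3.1]{deligne1982hodge}, it reduces the inclusion of groups to checking that every $\im(\sigma)$-invariant tensor is a Hodge class, which is established by noting that such a tensor is annihilated by the Weil operator $W$ (using Theorem~\ref{gLLV}$(iii)$), hence of type $(p,p)$, and then $p=0$ follows from invariance under the weight torus $w(\mathbb{G}_m)$. Your approach instead verifies the minimality characterization of $\MT$ directly: you decompose $\mathbb{S}(\R)=\R_{>0}\cdot U^1$, observe that $h|_{\R_{>0}}$ is (up to inverting) the weight cocharacter $w$ already built into $\sigma$, and that the infinitesimal generator of $h|_{U^1}$ is (up to sign, given the paper's footnote defining $W$) precisely the derivation $W$, which lies in $\rho(\mathfrak{so}(H))\otimes\R=\operatorname{Lie}\bigl(\tilde{\rho}(\Spin(H))\bigr)\otimes\R$ by Theorem~\ref{gLLV}$(iii)$; exponentiating places $h(U^1)$ inside $\tilde{\rho}(\Spin(H))(\R)\subset\sigma(\CSpin(H))(\R)$. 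Both arguments hinge on the same two ingredients, namely $W\in\rho(\mathfrak{so}(H))\otimes\R$ and the fact that $\sigma$ contains the weight cocharacter, but yours bypasses the reductivity of $\im(\sigma)$ and the passage to tensor invariants, making it somewhat more elementary; the paper's version, by contrast, sits closer to the invariant-theoretic/tannakian style it employs elsewhere. One small point worth spelling out in a final write-up: the step from ``$h(\mathbb{S}(\R))\subset\sigma(\CSpin(H))(\R)$'' to ``$h$ factors through $\sigma(\CSpin(H))_{\R}$ as a morphism of $\R$-algebraic groups'' uses the Zariski density of $\mathbb{S}(\R)$ in the (rational, hence unirational) torus $\mathbb{S}$; this is standard but deserves a mention, since the universal property of $\MT$ is really phrased in terms of the algebraic group morphism $h$ rather than its real points.
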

\begin{proof}
Let $G=\im(\sigma)$. Since both $\MT(H^*(X))$ and $G$ are reductive, by \cite[Proposition 3.1]{deligne1982hodge} it suffices to check that for any tensor construction 
	\[
	T=\bigoplus_i\ H^*(X)^{\otimes m_i}\otimes\bigl(H^*(X)^\vee\bigr)^{\otimes n_i},
	\]
any element~$\alpha$ of~$T$ that is invariant for $G$ is also fixed by $\MT(H^*(X))$.
Let $\alpha\in T$ be such an invariant for $G$. Then the image of $\alpha$ in $T\otimes\C$ is annihilated by all elements of $\rho(\mathfrak{so}(H)) \otimes \C$. By Theorem~\ref{gLLV}$(iii)$, $\alpha$ is annihilated by the Weil operator $W$. Therefore $\alpha$ is of type $(p,p)$ for some integer $p$. However, since $w(\mathbb{G}_{m})$ also acts trivially on $\alpha$, we must have $p=0$; hence $\alpha$ is a Hodge class of type (0, 0) and is thus fixed by the Mumford--Tate group.
\end{proof}

\begin{proof}[Proof of Proposition \ref{cor:MT}]
$(i)$ Lemma \ref{lem:contained} implies that $\MT(H^+(X))\subset \im (\sigma^+)$. The morphism~$\pi^+_2$ is the restriction of the natural projection $\pr^+_2\colon \prod_i \GL(H^{2i}(X)) \to \GL(H^2(X))$. Lemma \ref{lem:imageSigma} implies in particular that the restriction of $\pr_2^+$ to $\im(\sigma^+)$ is injective; hence its restriction to the subgroup $\MT(H^+(X))$ is also injective, \textit{i.e.}~$\pi_2^+$ is injective and hence it is an isomorphism.

$(ii)$ Assume now that the odd cohomology of $X$ is non-trivial. Since $\MT(H^*(X))\subset \im(\sigma)$ by Lemma \ref{lem:contained}, we deduce as above that the kernel of the morphism $\pi_2\colon\MT(H^*(X))\to \MT(H^2(X))$ is contained in the kernel of $\pr_2\colon \im(\sigma)\to \im(\sigma^2)$. By Lemma \ref{lem:imageSigma}, this is an order $2$ central subgroup of $\im(\sigma)$, generated by $w(-1)= \iota$. Clearly $w(-1)$ is contained in $\MT(X)$, and it follows that $\pi_2$ is an isogeny of degree 2 whose kernel is generated by $\iota$.

Finally, let $A$ be any Kuga--Satake abelian variety for $H^2(X)$, in the sense of Definition \ref{def:Kuga-Satake}. Then $\langle H^1(A) \rangle_{\mathrm{HS}} $ is the unique tannakian subcategory such that $\langle H^2(X)\rangle_{\mathrm{HS}} = \langle H^1(A) \rangle_{\mathrm{HS}}^{\mathrm{ev}} \subsetneq \langle H^1(A) \rangle_{\mathrm{HS}}$, by Theorem \ref{thm:kuga-satake}. Therefore it is enough to show that $\langle H^*(X) \rangle_{\mathrm{HS}} $ also satisfies this property. Consider the commutative diagram
\[
\begin{tikzcd} 
\MT(H^*(X)) \arrow[two heads]{rd}{\pi^{\mathrm{ev}}} \arrow[two heads]{rr}{\pi_2} && \MT(H^2(X))\\
 & \MT(\langle H^*(X)\rangle^{\mathrm{ev}}_{\mathrm{HS}}) \arrow[two heads]{ru}{\pi^{\mathrm{ev}}_2}.
 \end{tikzcd}
\]
We have just proven that $\pi_2$ is an isogeny of degree 2, and we know that the morphism $\pi^{\mathrm{ev}}$ is also an isogeny of degree 2, see \S\ref{subsec:KScategory}; we conclude that $\pi_2^{\mathrm{ev}}$ is an isomorphism and hence $\langle H^*(X) \rangle^{\mathrm{ev}}_{\mathrm{HS}}=\langle H^2(X)\rangle_{\mathrm{HS}}$.
\end{proof}

The following observation will be used in the proof of Theorem \ref{thm:product2}.

\begin{lemma}\label{lem:commute}
	Let $G$ be a group acting on $H^*(X)$ via graded algebra automorphisms. If $G$ acts trivially on $H^2(X)$, then the $G$-action commutes with the action of the LLV Lie algebra (\S\ref{subsec:LLV}).
\end{lemma}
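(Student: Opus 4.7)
The plan is to verify that every generator of the Lie algebra $\mathfrak{g}_{\LLV}(X)$ commutes with the $G$-action, and then invoke the fact that commutation is preserved under Lie brackets. By Theorem \ref{gLLV}$(i)$, it suffices to show that for any $x\in H^2(X)$ with the Lefschetz property, each element of the associated $\mathfrak{sl}_2$-triple $(L_x,\theta,\Lambda_x)$ commutes with the $G$-action.

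First, since $G$ preserves the grading, it commutes with $\theta$, which by definition acts on $H^k(X)$ as multiplication by $k-2m$. Second, for $g\in G$ and $\alpha\in H^*(X)$, using that $G$ acts by algebra automorphisms and fixes $H^2(X)$ pointwise, we get
\[
g\bigl(L_x(\alpha)\bigr)=g(x\cup\alpha)=g(x)\cup g(\alpha)=x\cup g(\alpha)=L_x\bigl(g(\alpha)\bigr),
\]
so $g$ commutes with $L_x$. Third, to handle $\Lambda_x$, I would use the uniqueness statement recalled in \S\ref{subsec:LLV}: given $L_x$ with the Lefschetz property and $\theta$, the element $\Lambda_x$ is the unique endomorphism completing an $\mathfrak{sl}_2$-triple. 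Conjugating by $g$, the triple $(gL_xg^{-1}, g\theta g^{-1}, g\Lambda_x g^{-1})=(L_x,\theta,g\Lambda_xg^{-1})$ is again such an $\mathfrak{sl}_2$-triple, and by uniqueness $g\Lambda_x g^{-1}=\Lambda_x$.

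Since $\mathfrak{g}_{\LLV}(X)$ is generated as a Lie algebra by the $\mathfrak{sl}_2$-triples $(L_x,\theta,\Lambda_x)$ for $x$ varying among Lefschetz classes in $H^2(X)$, and the set of endomorphisms of $H^*(X)$ commuting with $G$ forms a Lie subalgebra of $\mathfrak{gl}(H^*(X))$, we conclude that the entire Lie algebra $\mathfrak{g}_{\LLV}(X)$ commutes with the $G$-action. The only delicate step is the argument for $\Lambda_x$, but it reduces immediately to the uniqueness of the $\mathfrak{sl}_2$-completion, so I do not expect any real obstacle.
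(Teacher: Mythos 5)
Your proof is correct and follows essentially the same route as the paper: commutation with $\theta$ and $L_x$ is immediate from the hypotheses, commutation with $\Lambda_x$ follows by conjugating the $\mathfrak{sl}_2$-triple and invoking uniqueness of the completion, and one concludes since these operators generate $\mathfrak{g}_{\LLV}(X)$. You simply spell out a few elementary verifications (such as why $g$ commutes with $\theta$ and $L_x$) that the paper states without comment.
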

\begin{proof}
Let $g\in G$. By assumption, $g$ commutes with $\theta$ and $L_x$, for any $x\in H^2(X)$. Moreover, if~$x$ has the Lefschetz property, then $g$ commutes with~$\Lambda_x$ as well: indeed,~$L_x=gL_xg^{-1}$,~$\theta=g\theta g^{-1}$ and $g\Lambda_xg^{-1}$ form an $\mathfrak{sl}_2$-triple, and since $\Lambda_x$ is uniquely determined by the elements $L_x$ and $\theta$, we must have $g\Lambda_xg^{-1}=\Lambda_x$. One can conclude since the various operators $L_x$ and $\Lambda_x$, for $x\in H^2(X)$, generate the Lie algebra $\mathfrak{g}_{\LLV}(X)$.
\end{proof}

We now turn to the proof of the main result of this section.

\begin{theorem}[Splitting]\label{thm:product2}
Let $X$ be a projective hyper-K\"ahler variety with $b_2(X)\neq 3$.
Then, inside $\Gmot(\mathcal{H}(X))$, the subgroups $P(X)$ and $\MT(H^*(X))$ commute, intersect trivially with each other and generate the whole group. In short, we have an equality:
    $$\Gmot(\mathcal{H}(X))=\MT(H^*(X))\times P(X).$$
 Similarly, the even defect group is a direct complement of the even Mumford--Tate group in the motivic Galois group of the even Andr\'e motive of $X$,
 $$\Gmot(\mathcal{H}^+(X))=\MT(H^+(X))\times P^+(X).$$
\end{theorem}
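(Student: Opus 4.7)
I plan to treat the even case and the full case in parallel, using the same mechanism: exhibit the Mumford--Tate group as a canonical section of the surjection defining the defect group, then use the LLV Lie algebra action to show that the defect group and $\MT$ commute. The key inputs are Andr\'{e}'s abelianity result for $\mathcal{H}^{2}(X)$ (Theorem \ref{abelianityH2}), Andr\'{e}'s theorem for abelian motives (Theorem \ref{mhcabelian}), and Lemmas \ref{lem:commute} and \ref{lem:contained}. In the full case, the role of $\mathcal{H}^{2}(X)$ is taken by $\mathcal{H}^{1}(A)$ for a Kuga--Satake abelian variety $A$ attached to $H^{2}(X)$, and we exploit the equality $\langle H^{*}(X)\rangle_{\HS} = \langle H^{1}(A)\rangle_{\HS}$ from Proposition \ref{cor:MT}$(ii)$.

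\textbf{Step 1: the Mumford--Tate group is a section.} Combining Proposition \ref{cor:MT}$(i)$, which asserts that $\pi_{2}^{+}\colon \MT(H^{+}(X)) \to \MT(H^{2}(X))$ is an isomorphism, with Theorem \ref{abelianityH2}, which gives $\Gmot(\mathcal{H}^{2}(X)) = \MT(H^{2}(X))$, one obtains that the composition
\[
\MT(H^{+}(X)) \hookrightarrow \Gmot(\mathcal{H}^{+}(X)) \twoheadrightarrow \Gmot(\mathcal{H}^{2}(X))
\]
is an isomorphism, splitting the defining exact sequence $1 \to P^{+}(X) \to \Gmot(\mathcal{H}^{+}(X)) \to \Gmot(\mathcal{H}^{2}(X)) \to 1$. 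Assuming now non-trivial odd cohomology, Theorem \ref{mhcabelian} yields $\Gmot(\mathcal{H}^{1}(A)) = \MT(H^{1}(A))$; together with $\MT(H^{*}(X)) = \MT(H^{1}(A))$ from Proposition \ref{cor:MT}$(ii)$, the analogous composition $\MT(H^{*}(X)) \hookrightarrow \Gmot(\mathcal{H}(X)) \twoheadrightarrow \Gmot(\mathcal{H}^{1}(A))$ is likewise an isomorphism, splitting the sequence defining $P(X)$. The formal consequence is that $\MT \cap P = \{1\}$ and $\MT \cdot P$ exhausts $\Gmot$ in both cases, so only commutation remains.

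\textbf{Step 2: commutation via the LLV Lie algebra.} By definition $P^{+}(X)$ acts trivially on $\mathcal{H}^{2}(X)$, hence on $H^{2}(X)$, and it acts on $H^{+}(X)$ by graded algebra automorphisms, since the cup-product is motivated and the motivic Galois group preserves the K\"{u}nneth decomposition. The even analogue of Lemma \ref{lem:commute}, whose proof goes through verbatim after restricting the operators $L_{x}$, $\theta$, $\Lambda_{x}$ to $H^{+}(X)$, shows that $P^{+}(X)$ commutes with the action of $\mathfrak{g}_{\mathrm{LLV}}(X)$ on~$H^{+}(X)$; in particular it commutes with $\rho^{+}(\mathfrak{so}(H))$, and after integration (Remark \ref{remark:SOrepresentation}) with $\tilde{\rho}^{+}(\SO(H))$. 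Since $P^{+}(X)$ preserves the K\"{u}nneth grading, it also commutes with the weight cocharacter, and therefore with the whole image $\sigma^{+}(\CSpin(H))$, which contains $\MT(H^{+}(X))$ by the even analogue of Lemma \ref{lem:contained}. Combined with Step 1, this yields $\Gmot(\mathcal{H}^{+}(X)) = \MT(H^{+}(X)) \times P^{+}(X)$. The full case runs identically once we know that $P(X)$ acts trivially on $H^{2}(X)$, which I expect to be the one delicate point: it should follow from the defining property of the Kuga--Satake construction at the motivic level (Theorem \ref{thm:kuga-satake}), namely that $\mathcal{H}^{2}(X)$ lies in $\langle \mathcal{H}^{1}(A)\rangle_{\AM}$, so that triviality on $\mathcal{H}^{1}(A)$ propagates to $\mathcal{H}^{2}(X)$. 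Granted this, Lemmas \ref{lem:commute} and \ref{lem:contained} apply verbatim, yielding the decomposition $\Gmot(\mathcal{H}(X)) = \MT(H^{*}(X)) \times P(X)$.
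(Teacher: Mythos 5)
Your proposal is correct and follows essentially the same route as the paper's proof: Step 1 recovers the paper's observation that $\MT$ splits the defining surjection (via Proposition \ref{cor:MT} and Theorems \ref{abelianityH2}, \ref{mhcabelian}), and Step 2 proves commutation exactly as the paper does, via the triviality of the defect action on $H^2(X)$, Lemma \ref{lem:commute} (LLV commutation), and Lemma \ref{lem:contained}. The only stylistic difference is that the paper introduces the intermediate group $Q(X)=\ker\bigl(\Gmot(\mathcal{H}(X))\to\Gmot(\mathcal{H}^2(X))\bigr)$ and observes $P(X)\subset Q(X)$, whereas you argue directly that $P(X)$ acts trivially on $H^2(X)$ because $\mathcal{H}^2(X)\in\langle\mathcal{H}^1(A)\rangle_{\AM}$; the two phrasings are equivalent, and your hedge about this being ``the one delicate point'' is unnecessary---it is indeed exactly what Corollary \ref{cor:motivicKS} (and the Kuga--Satake property) gives.
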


\begin{proof}
We first treat the even motive.
We have a commutative diagram
\[
	\begin{tikzcd}
	\Gmot(\mathcal{H}^+(X)) \arrow[two heads]{rr}{\pi^{+}_{2, \operatorname{mot}}} \arrow[hookleftarrow]{d}{i_+} && \Gmot(\mathcal{H}^2(X)) \arrow[hookleftarrow]{d}{i_2}\\
	\MT(H^+(X)) \arrow[two heads]{rr}{\pi^+_2 } && \MT(H^2(X))
	\end{tikzcd}
\]
Here, $i_+$ and $i_2$ denote the natural inclusions; $\pi^+_2$ and $i_2$ are isomorphisms due to Proposition~\ref{cor:MT} and Theorem~\ref{abelianityH2} respectively. It follows that we have a section $s=i_+\circ( i_2 \circ \pi^+_2)^{-1}$ of $\pi^+_{2,\operatorname{mot}}$, whose image is $\MT(H^+(X))$. 

Recall that $P^+(X)$ is defined as the kernel of the map $\pi^+_{2,\operatorname{mot}}$. We deduce that $\Gmot(\mathcal{H}^+(X))$ is the semidirect product of its subgroups $P^+(X)$ and $\MT (H^+(X))$, which intersect trivially.
In order to show that $\Gmot(\mathcal{H}^+(X)) = \MT (H^+(X)) \times P^+(X)$, it thus suffices to show that $P^+(X)$ and $\MT (H^+(X))$ commute. By Lemma \ref{lem:contained}, it suffices to show that $P^+(X)$ commutes with the image of the representation $\sigma^+$. Since $P^+(X)$ preserves the grading on $H^+(X)$, its action clearly commutes with the weight cocharacter $w$.
Note that every element of $\Gmot(\mathcal{H}^+(X))$ acts via algebra automorphisms, since the cup-product is given by an algebraic correspondence (namely, the small diagonal in $X \times X\times X$). Moreover, if $p\in P^+(X)$, then by definition $p$ acts trivially on $H^2(X)$; hence, its action commutes with that of the LLV-Lie algebra thanks to Lemma \ref{lem:commute}.
It follows that $P^+(X)$ commutes with the image of the representation $\tilde{\rho}^+$, and therefore $P^+(X)$ commutes with $\sigma^+$ as desired.


Assume now that the odd cohomology of $X$ does not vanish, and choose a Kuga-Satake variety~$A$ for $H^2(X)$, see Appendix \ref{Appendix}. By Lemma \ref{lem:KSvsodd} below, the motive $\mathcal{H}^1(A)$ belongs to $\langle \mathcal{H}(X) \rangle_{\AM}$. We consider the commutative diagram
\[
	\begin{tikzcd}
	\Gmot(\mathcal{H}(X)) \arrow[two heads]{rr}{\pi_{A,\operatorname{mot}}} \arrow[hookleftarrow]{d}{i} && \Gmot(\mathcal{H}^1(A)) \arrow[hookleftarrow]{d}{i_A}\\
	\MT(H^*(X)) \arrow[two heads]{rr}{\pi_A } && \MT(H^1(A))
	\end{tikzcd}
\]
The morphisms $\pi_A$ and $i_A$ are isomorphisms by Proposition \ref{cor:MT}$(ii)$ and Theorem \ref{mhcabelian} respectively. Note that by Theorem \ref{thm:kuga-satake}, the kernel $P(X)$ of $\pi_{A,\mathrm{mot}}$ does not depend on the choice of the Kuga--Satake abelian variety $A$; this group is by definition the defect group of $X$. As above, we deduce the existence of a section of $\pi_{A,\mathrm{mot}}$ with image $\MT(H^*(X))$, and to conclude we need to show that $P(X)$ and $\MT(H^*(X))$ commute. To this end, we consider the commutative diagram with exact rows
\[
\begin{tikzcd}
1\arrow[r] & P(X) \arrow{r} \arrow{d}
&
\Gmot(\mathcal{H}(X)) \arrow{r}{\pi_{A,\operatorname{mot}}} \arrow{d}{=}
& \Gmot(\mathcal{H}^1(A)) \arrow[two heads]{d}\arrow[r] & 1\\
1\arrow[r]&Q(X) \arrow{r} & \Gmot(\mathcal{H}(X))\arrow{r}{\pi_{2,\operatorname{mot}}} 
& \Gmot(\mathcal{H}^2(X))\arrow[r] & 1
\end{tikzcd}
\]
The group $Q(X)$ commutes with the action of $\mathfrak{g}_{\mathrm{LLV}}$, by Lemma \ref{lem:commute}, and it therefore commutes with the Mumford--Tate group, thanks to Lemma \ref{lem:contained}. Since $P(X)$ is a subgroup of $Q(X)$, it also commutes with $\MT(H^*(X))$, and we have $\Gmot(\mathcal{H}(X))=P(X)\times \MT(H^*(X))$. Also note that we have $Q(X)\cap \MT(H^*(X))= \langle \iota \rangle$, and that $Q(X)=P(X)\times \langle\iota\rangle$.
\end{proof} 

In the previous proof, we have used the following result. See Appendix \ref{Appendix} for the notation.
\begin{lemma}\label{lem:KSvsodd}
Assume that the odd cohomology of $X$ does not vanish and $b_2(X)\neq 3$. Let $A$ be any Kuga--Satake variety (Definition \ref{def:Kuga-Satake}) for the Hodge structure $H^2(X)$. Then $\mathcal{H}^1(A)\in \langle \mathcal{H}(X) \rangle_{\AM}$.
\end{lemma}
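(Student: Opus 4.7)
The plan is to use the motivicity of the Looijenga--Lunts--Verbitsky Lie algebra action to extract $\mathcal{H}^1(A)$ as a sub-motive of a tensor construction on $\mathcal{H}(X)$, guided by the Hodge-theoretic identification $\langle H^*(X)\rangle_{\HS} = \langle H^1(A)\rangle_{\HS}$ already furnished by Proposition \ref{cor:MT}$(ii)$. First, by Theorem \ref{thm:kuga-satake} the category $\langle \mathcal{H}^1(A)\rangle_{\AM}$ does not depend on the choice of Kuga--Satake variety $A$ attached to $H^2(X)$, so it is enough to produce one convenient $A$ with $\mathcal{H}^1(A) \in \langle \mathcal{H}(X)\rangle_{\AM}$. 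In view of Proposition \ref{cor:MT}$(ii)$, $H^1(A)$ occurs as a Hodge sub-structure of some tensor construction on $H^*(X)$; the whole task is to upgrade this Hodge-theoretic inclusion to a motivated one.

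The key mechanism is that the LLV Lie algebra $\mathfrak{g}_{\LLV}\simeq\mathfrak{so}(\tilde H)$ acts on $\mathcal{H}(X)$ by motivated endomorphisms. Indeed, for the first Chern class $L$ of an ample divisor on $X$ the Lefschetz operator $L_\cup$ is algebraic; the inverse Lefschetz $\Lambda_L$ lies in Andr\'e's algebra by the very definition of $\AM$ (the Hodge $*$-operator is adjoined); and the grading operator $\theta$ is a $\Q$-linear combination of the K\"unneth projectors, which are motivated since the Lefschetz standard conjecture is built into $\AM$ and implies the K\"unneth one. Because the odd cohomology of $X$ does not vanish, Remark \ref{remark:SOrepresentation} combined with Lemma \ref{lem:imageSigma}$(ii)$ shows that this Lie-algebra action integrates to a faithful representation $\sigma\colon \CSpin(H) \to \prod_i \GL(H^i(X))$ whose image commutes with the action of the motivic Galois group $\Gmot(\mathcal{H}(X))$ on $H^*(X)$.

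Next comes a tannakian reformulation: since $\Gmot(\mathcal{H}(X))$ commutes with the image of $\CSpin(H)$, the $\CSpin(H)$-isotypic decomposition of any tensor construction on $H^*(X)$ is preserved by $\Gmot(\mathcal{H}(X))$, so each isotypic piece is a sub-motive. Equivalently, the projectors onto isotypic components can be written as polynomial expressions in the image of the universal enveloping algebra of $\mathfrak{g}_{\LLV}$ (for instance through the Casimir and the rest of the center), and therefore are motivated. By Proposition \ref{cor:MT}$(ii)$, $H^1(A)$ is isomorphic (up to multiplicity) to a spin-type isotypic summand of some tensor construction on $H^*(X)$, which is non-empty because $\sigma$ is faithful and hence every irreducible $\CSpin(H)$-representation shows up in a tensor power of $H^*(X)$. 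The resulting motivated projector cuts out $\mathcal{H}^1(A)$ inside $\langle \mathcal{H}(X)\rangle_{\AM}$, which is what we want.

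The main obstacle is the motivicity of the full LLV action---precisely, of the inverse Lefschetz $\Lambda_L$ and of the K\"unneth projectors on $\mathcal{H}(X)$---which relies on Andr\'e's construction of $\AM$ and on his standard-conjecture results for projective hyper-K\"ahler varieties with $b_2\neq 3$ already invoked in Theorem \ref{abelianityH2}. The hypothesis that the odd cohomology is non-zero is used at exactly one point: it is what lifts the $\SO(H)$-action visible on $H^+(X)$ to its double cover $\Spin(H)$, and hence makes the spin representations underlying $H^1(A)$ appear in the tannakian subcategory generated by $\mathcal{H}(X)$ rather than only in its even part.
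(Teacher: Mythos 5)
Your strategy is genuinely different from the paper's, and as written it has gaps I do not see how to close. First, the motivicity of the LLV action: you justify that $L_x$, $\Lambda_x$ and $\theta$ are motivated only for $x$ an ample (or at least algebraic) class, but $\mathfrak{g}_{\LLV}(X)$ is generated by the $\mathfrak{sl}_2$-triples attached to \emph{all} classes $x\in H^2(X)$ with the Lefschetz property; for a transcendental $x$ the correspondence defining $L_x$ is not even a Hodge class, so it is certainly not motivated. The subalgebra you actually control is roughly $\mathfrak{so}\bigl(\NS(X)\oplus U\bigr)$, which does not give $\mathfrak{so}(H)$ and hence no $\CSpin(H)$-action by motivated operators. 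Second, the commutation claim is simply false: by Lemma \ref{lem:contained} the Mumford--Tate group $\MT(H^*(X))$ is contained both in $\mathrm{im}(\sigma)$ and in $\Gmot(\mathcal{H}(X))$, and $\mathrm{im}(\sigma)$ is non-abelian, so $\mathrm{im}(\sigma)$ cannot commute with all of $\Gmot(\mathcal{H}(X))$; the correct (and purely formal) statement, Lemma \ref{lem:commute}, is that only the subgroup of $\Gmot(\mathcal{H}(X))$ acting trivially on $H^2(X)$ commutes with the LLV operators, and that requires no motivicity of the LLV action whatsoever. Third, even granting a $\Gmot(\mathcal{H}(X))$-stable subspace of a tensor construction that is Hodge-isomorphic to $H^1(A)$, tannakian formalism only gives you a sub-motive of $\langle\mathcal{H}(X)\rangle_{\AM}$ whose realization is abstractly isomorphic to $H^1(A)$; to conclude that the motive $\mathcal{H}^1(A)$ of the actual abelian variety lies in $\langle\mathcal{H}(X)\rangle_{\AM}$ you would need that Hodge isomorphism to be motivated, i.e.\ a motivic Kuga--Satake correspondence, which is exactly the kind of statement one cannot assume at this point.

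For comparison, the paper's proof avoids all of this by a purely tannakian kernel comparison: since $\mathcal{H}^2(X)$ is abelian (Theorem \ref{abelianityH2}; this is where $b_2\neq 3$ enters), Corollary \ref{cor:motivicKS} gives $\langle\mathcal{H}^1(A)\rangle^{\mathrm{ev}}=\langle\mathcal{H}^2(X)\rangle$ inside $\AM$. One then considers $\Gmot\bigl(\mathcal{H}(X)\oplus\mathcal{H}^1(A)\bigr)$ with its two surjections $q$ and $q_A$, observes that the claim is equivalent to $\ker(q)\subset\ker(q_A)$, and reduces to the corresponding even parts via Lemma \ref{rmk:oddHS} (this is where the non-vanishing of the odd cohomology is used); there the inclusion follows from the snake lemma, because the map $\Gmot(\langle\mathcal{H}^1(A)\rangle^{\mathrm{ev}})\to\Gmot(\mathcal{H}^2(X))$ is an isomorphism. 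I would recommend reworking your argument along these lines rather than through a motivated LLV or $\CSpin(H)$-action.
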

\begin{proof}
Note that since $\mathcal{H}^2(X)$ is an abelian motive by Andr\'{e}'s Theorem \ref{abelianityH2}, any Kuga-Satake variety $A$ for $H^2(X)$ satisfies $\langle \mathcal{H}^1(A)\rangle^{\mathrm{ev}}=\langle \mathcal{H }^2(X)\rangle$, see Corollary \ref{cor:motivicKS}. Choose any such $A$, and consider the Andr\'{e} motive $\mathcal{H}(X)\oplus \mathcal{H}^1(A)$. The inclusions of the summands $\mathcal{H}(X)$ and $\mathcal{H}^1(A)$ determine surjective homomorphisms 
$q\colon \Gmot(\mathcal{H}(X)\oplus \mathcal{H}^1(A))\to \Gmot(\mathcal{H}(X))$, and $q_A\colon \Gmot(\mathcal{H}(X)\oplus \mathcal{H}^1(A))\to \Gmot(\mathcal{H}^1(A))$. The desired conclusion is equivalent to the inclusion $\ker(q)\subset \ker(q_A)$. In fact, this precisely means that the tannakian category generated by $\mathcal{H}^1(A)$ is contained in $\langle \mathcal{H}(X)\rangle$, which then implies that $q$ is an isomorphism. 
We consider the analogous morphisms for the even parts
\[
q^{\mathrm{ev}}\colon \Gmot\big(\langle\mathcal{H}(X)\oplus \mathcal{H}^1(A)\rangle^{\mathrm{ev}}\big)\to \Gmot\big(\langle\mathcal{H}(X)\rangle^{\mathrm{ev}}\big),
\]
\[
q_A^{\mathrm{ev}}\colon \Gmot\big(\langle \mathcal{H}(X)\oplus \mathcal{H}^1(A)\rangle^{\mathrm{ev}}\big)\to \Gmot\big(\langle\mathcal{H}^1(A)\rangle^{\mathrm{ev}}\big).
\]
The conclusion of Lemma \ref{rmk:oddHS} holds for Andr\'{e} motives as well. Therefore, the preimage of~$\ker(q^{\mathrm{ev}})$ (respectively, of $\ker(q_A^{\mathrm{ev}})$) under the morphism $\Gmot(\mathcal{H}(X)\oplus \mathcal{H}^1(A))\to \Gmot(\langle \mathcal{H}(X)\oplus \mathcal{H}^1(A)\rangle^{\mathrm{ev}})$ equals $\langle\iota\rangle\times \ker(q)$ (respectively, $\langle \iota\rangle\times \ker(q_A)$), and it suffices to show that $\ker(q^{\mathrm{ev}})\subset \ker(q_A^{\mathrm{ev}})$.
To this end, consider the commutative diagram with short exact rows
\[
	\begin{tikzcd}
	1\arrow{r} & \ker({q_{A}^{\mathrm{ev}}}) \arrow{r} \arrow{d}{j} &
	\Gmot(\langle\mathcal{H}(X)\oplus \mathcal{H}^1(A)\rangle^{\mathrm{ev}}) \arrow[two heads]{r}{q_A^{\mathrm{ev}}} \arrow[two heads]{d}{q^{\mathrm{ev}}} & \Gmot(\langle\mathcal{H}^1(A)\rangle^{\mathrm{ev}}) \arrow[]{d}{\cong} \arrow{r} & 1  \\
	1\arrow{r} & K \arrow{r} & \Gmot(\langle\mathcal{H}(X)\rangle^{\mathrm{ev}}) \arrow{r}{\pi_{2,\operatorname{mot}}^{\mathrm{ev}}} & \Gmot(\mathcal{H}^2(X)) \arrow{r} & 1
	\end{tikzcd}
\]
The rightmost vertical map is an isomorphism by assumption. The snake lemma now yields that $\ker(j)=\ker(q^{\mathrm{ev}})$, which shows that $ \ker(q^{\mathrm{ev}})\subset \ker(q_A^{\mathrm{ev}})$.
\end{proof}

\subsection{What does the defect group measure?}\label{subsec:Justification}
With the structure result of the motivic Galois group being proved in Theorem \ref{thm:product2}, we can deduce that the defect group indeed grasps the essential difficulty of meta-conjecture \ref{conj:meta} for Andr\'e motives.

\begin{corollary}
\label{cor:Pdefect2}
    For any projective hyper-K\"ahler variety $X$ with $b_2(X)\neq 3$, the following conditions are equivalent:
    \begin{enumerate}[$(i^+)$]
        \item The even defect group $P^+(X)$ is trivial.
        \item The even Andr\'e motive $\mathcal{H}^+(X)$ is in the tannakian subcategory generated by $\mathcal{H}^2(X)$.
        \item $\mathcal{H}^+(X)$ is abelian.
        \item Conjecture \ref{mhc} holds for $\mathcal{H}^+(X)$: $\MT(H^+(X))=\Gmot(\mathcal{H}^+(X)).$
    \end{enumerate}
    Similarly, if some odd Betti number of $X$ is not zero, we have the following equivalent conditions:
    \begin{enumerate}[$(i)$]
        \item The defect group $P(X)$ is trivial.
        \item The Andr\'e motive $\mathcal{H}(X)$ is in the tannakian subcategory generated by $\mathcal{H}^1(\mathrm{KS}(X))$, where $\mathrm{KS}(X)$ is any Kuga--Satake abelian variety associated to $H^2(X)$.
        \item $\mathcal{H}(X)$ is abelian.
        \item Conjecture \ref{mhc} holds for $\mathcal{H}(X)$: $\MT(H^*(X))=\Gmot(\mathcal{H}(X))$.
    \end{enumerate}
\end{corollary}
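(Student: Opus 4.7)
The key observation is that almost all the heavy lifting has been done in Theorem \ref{thm:product2} together with Andr\'{e}'s Theorems \ref{abelianityH2} and \ref{mhcabelian}; the corollary is essentially a formal unravelling. The plan is to prove each cycle of implications for the even case first, then repeat the argument verbatim in the full case with $\mathcal{H}^2(X)$ replaced by $\mathcal{H}^1(\KS(X))$, using at each step that $\mathcal{H}^1(\KS(X))$ is abelian (being the motive of an abelian variety), exactly as $\mathcal{H}^2(X)$ is abelian by Theorem \ref{abelianityH2}.

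For the even case, I would first establish the equivalence $(i^+) \Leftrightarrow (ii^+)$ directly from the definition: by construction $P^+(X) = \ker\bigl(\Gmot(\mathcal{H}^+(X))\twoheadrightarrow \Gmot(\mathcal{H}^2(X))\bigr)$ is trivial precisely when this surjection is an isomorphism, which by the tannakian dictionary is equivalent to saying that $\mathcal{H}^+(X)$ lies in the tannakian subcategory generated by $\mathcal{H}^2(X)$. Next, the implication $(ii^+) \Rightarrow (iii^+)$ is immediate, since $\mathcal{H}^2(X)$ is abelian by Theorem \ref{abelianityH2} (we are in the case $b_2(X)\neq 3$) and the tannakian subcategory generated by an abelian Andr\'{e} motive consists of abelian motives. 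Then $(iii^+) \Rightarrow (iv^+)$ is a direct application of Andr\'{e}'s Theorem \ref{mhcabelian}, which ensures that $\MT$ and $\Gmot$ coincide on abelian motives.

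It remains to close the loop by proving $(iv^+) \Rightarrow (i^+)$, and this is where Theorem \ref{thm:product2} does the work: the decomposition
\[
\Gmot(\mathcal{H}^+(X)) = \MT(H^+(X)) \times P^+(X)
\]
shows that if $\MT(H^+(X)) = \Gmot(\mathcal{H}^+(X))$, then the complementary factor $P^+(X)$ must be trivial. This finishes the chain $(i^+)\Rightarrow (ii^+)\Rightarrow (iii^+)\Rightarrow (iv^+)\Rightarrow (i^+)$.

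For the full case (assuming some odd Betti number is non-zero), the strategy is identical. The equivalence $(i)\Leftrightarrow (ii)$ follows from the definition $P(X) = \ker\bigl(\Gmot(\mathcal{H}(X))\twoheadrightarrow \Gmot(\mathcal{H}^1(\KS(X)))\bigr)$ combined with the uniqueness of the Kuga--Satake category established in Theorem \ref{thm:kuga-satake}, which makes condition $(ii)$ independent of the choice of $\KS(X)$. The implication $(ii)\Rightarrow (iii)$ is now transparent, because $\mathcal{H}^1(\KS(X))$ is the motive of an abelian variety and thus abelian; $(iii)\Rightarrow (iv)$ is again Andr\'{e}'s Theorem \ref{mhcabelian}; and the closing implication $(iv)\Rightarrow (i)$ follows from the second product decomposition in Theorem \ref{thm:product2}. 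There is no real obstacle: the genuinely non-trivial inputs (abelianity of $\mathcal{H}^2$, the splitting of the motivic Galois group, and the theorem on Hodge classes on abelian motives) are all already available, so the proof is a short formal argument of a few lines for each of the two chains.
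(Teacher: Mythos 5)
Your proposal is correct and follows essentially the same approach as the paper: both rely on the splitting Theorem \ref{thm:product2} for the equivalence of triviality of the defect group with Conjecture \ref{mhc}, on Tannaka duality for the link to the generated subcategory, on Andr\'e's Theorem \ref{abelianityH2} for passing to abelianity, and on Andr\'e's Theorem \ref{mhcabelian} to close the chain. The only cosmetic difference is that the paper runs the cycle $(i)\Leftrightarrow(iv)$, $(i)\Rightarrow(ii)\Rightarrow(iii)\Rightarrow(iv)$, whereas you prove $(i)\Leftrightarrow(ii)$ directly and then chain through $(iii)$, $(iv)$, and back; the ingredients and structure are identical.
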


\begin{proof}
We first treat the even motive.
It follows immediately from Theorem \ref{thm:product2} that $(i^+)$ and  $(iv^+)$ are equivalent.\\
$(i^+) $ implies $(ii^+)$: By the definition of $P^+(X)$, if it is trivial, then the natural surjection $\Gmot(\mathcal{H}^+(X))\to \Gmot(\mathcal{H}^2(X))$ is an isomorphism. Then $(ii^+)$ follows from the Tannaka duality.\\
The implication from $(ii^+)$ to $(iii^+)$ follows from the fact that $\mathcal{H}^2(X)$ is abelian, which is Andr\'e's Theorem \ref{abelianityH2}.\\
Finally, $(iii^+)$ implies $(iv^+)$ thanks to Andr\'e's Theorem \ref{mhcabelian}.

In the presence of non-vanishing odd Betti numbers, the proof is similar to the even case: the equivalence of $(i)$ and $(iv)$ is immediate from Theorem \ref{thm:product2}. $(ii)$ obviously implies $(iii)$; $(iii)$ implies $(iv)$ by Andr\'{e}'s Theorem \ref{mhcabelian}. Finally, let us show that $(i)$ implies $(ii)$: if $P(X)$ is trivial then $\Gmot(\mathcal{H}(X))\to \Gmot(\mathcal{H}^1(A))$ is an isomorphism, where $A$ is any Kuga--Satake variety for $H^2(X)$ in the sense of Definition \ref{def:Kuga-Satake}. Therefore, $\mathcal{H}(X)$ is in $\langle \mathcal{H}^1(A) \rangle_{\AM}$ by Tannaka duality.
\end{proof}

\subsection{Deformation invariance}
We have seen in the above proof that the action of the defect group commutes with the $\mathrm{LLV}$-Lie algebra. We prove now that defect groups are deformation invariant in algebraic families.
The relevant notation and results are recalled in \S\ref{subsubsec:familiesAM}. Let $f\colon \mathcal{X}\to S$ be a smooth and proper family over a non-singular quasi-projective variety~$S$ such that all fibres $X_s$ are projective hyper-K\"{a}hler varieties with $b_2\neq 3$. We have naturally the following generalized Andr\'{e} motives over $S$ (Definition \ref{def:RelativeAM}): $\mathcal{H}(\mathcal{X}/S)$, $\mathcal{H}^{i}(\mathcal{X}/S)$ and $\mathcal{H}^{+}(\mathcal{X}/S)$.
Up to replacing $S$ by an \'etale cover, we can assume that the algebraic monodromy group $\mathrm{G}_{\mathrm{mono}}(\mathcal{H}(\mathcal{X}/S))$ is connected.


\begin{theorem}[Deformation invariance of defect groups]\label{thm:DeformationInv2}
    Let $S$ be a smooth quasi-projective variety and $\mathcal{X}\to S$ be a smooth proper morphism with fibers being projective hyper-K\"ahler manifolds with $b_2\neq 3$. Then for any $s, s'\in S$, the defect groups $P(X_s)$ and $P(X_{s'})$ are canonically isomorphic, and similarly for the even defect groups.
\end{theorem}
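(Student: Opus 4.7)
Proof proposal for Theorem \ref{thm:DeformationInv2}.

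The plan is to construct a relative defect group $P^{+}(\mathcal{X}/S)$ (and $P(\mathcal{X}/S)$ in the presence of odd cohomology) as a local system of reductive $\Q$-algebraic groups over $S$, and then identify its fibre at each $s\in S$ with the fibrewise defect group $P^{+}(X_{s})$. Up to replacing $S$ by an \'{e}tale cover I may assume that the algebraic monodromy $\mathrm{G}_{\mathrm{mono}}(\mathcal{H}(\mathcal{X}/S))$ is connected.

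First I would extend the splitting of Theorem \ref{thm:product2} to the relative setting, obtaining
\[
\Gmot(\mathcal{H}^{+}(\mathcal{X}/S)) = \MT(H^{+}(\mathcal{X}/S)) \times P^{+}(\mathcal{X}/S),
\]
where $P^{+}(\mathcal{X}/S):=\ker\bigl(\Gmot(\mathcal{H}^{+}(\mathcal{X}/S))\to \Gmot(\mathcal{H}^{2}(\mathcal{X}/S))\bigr)$. The three ingredients from the absolute case all have relative analogues: Andr\'{e}'s Theorem \ref{abelianityH2} applied at a very general point of $S$ combined with Theorem \ref{thm:familiesMotives}$(ii)$ gives $\MT=\Gmot$ for the local system $\mathcal{H}^{2}(\mathcal{X}/S)$; the LLV Lie algebra is deformation invariant and acts fibre-wise on $H^{+}(\mathcal{X}/S)$; and Lemma \ref{lem:commute} applies unchanged. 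Since the splitting places $\MT(H^{+}(\mathcal{X}/S))$, which by Theorem \ref{thm:familiesMotives}$(i)$ contains the algebraic monodromy, as a direct factor commuting with $P^{+}(\mathcal{X}/S)$, the monodromy representation acts trivially on $P^{+}(\mathcal{X}/S)$, so this local system of algebraic groups is canonically constant on $S$.

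The heart of the argument is the fibre identification $P^{+}(\mathcal{X}/S)_{s}=P^{+}(X_{s})$. Setting $G=\Gmot(\mathcal{H}^{+}(\mathcal{X}/S))_{s}$, $G^{\circ}=\Gmot(\mathcal{H}^{+}(X_{s}))$, $T=\MT(H^{+}(\mathcal{X}/S))_{s}$, $T^{\circ}=\MT(H^{+}(X_{s}))$ and $M=\mathrm{G}_{\mathrm{mono}}^{0}$ at $s$, Theorem \ref{thm:familiesMotives} gives $G^{\circ}\subset G$, $T^{\circ}\subset T$, $M\subset T$ and $G = M\cdot G^{\circ}$, whence $G = T\cdot G^{\circ}$. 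Write $\pi\colon G\to \Gmot(\mathcal{H}^{2}(\mathcal{X}/S))_{s}$ for the restriction; the relative splitting makes $\pi|_{T}$ injective. For any $h\in G^{\circ}\cap T$ one has $\pi(h)\in\pi(G^{\circ})=\Gmot(\mathcal{H}^{2}(X_{s}))=\pi(T^{\circ})$ by Andr\'{e}'s theorem and the fibre splitting; picking $t\in T^{\circ}$ with $\pi(t)=\pi(h)$ and using the injectivity of $\pi|_{T}$ yields $h=t\in T^{\circ}$. So $G^{\circ}\cap T = T^{\circ}$, and the composition $G^{\circ}\hookrightarrow G\twoheadrightarrow G/T \cong P^{+}(\mathcal{X}/S)_{s}$ has kernel $T^{\circ}$ and is surjective because $G=T\cdot G^{\circ}$. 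This is the desired canonical isomorphism $P^{+}(X_{s})\xrightarrow{\sim} P^{+}(\mathcal{X}/S)_{s}$.

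The same scheme handles the full defect group once a relative Kuga--Satake abelian scheme $\mathcal{A}\to S$ is produced: one uses $\mathcal{H}^{1}(\mathcal{A}/S)$ in place of $\mathcal{H}^{2}(\mathcal{X}/S)$ throughout and invokes the relative analogue of Lemma \ref{lem:KSvsodd} to see that $P(\mathcal{X}/S):=\ker\bigl(\Gmot(\mathcal{H}(\mathcal{X}/S))\to \Gmot(\mathcal{H}^{1}(\mathcal{A}/S))\bigr)$ is well-defined and independent of the choice of $\mathcal{A}$. The main technical obstacle is ensuring that this Kuga--Satake construction descends to a genuine generalized Andr\'{e} motive over $S$ in the sense of Definition \ref{def:RelativeAM}, rather than only fibrewise; for this I would appeal to Soldatenkov's recent work \cite{soldatenkov19}, where such a relative construction is carried out, or reproduce the arguments of Appendix \ref{Appendix} in families using the abelianity of $\mathcal{H}^{2}(\mathcal{X}/S)$ as a family of Andr\'{e} motives.
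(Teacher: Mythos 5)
Your proposal is correct and follows essentially the same strategy as the paper: construct the constant local system $P^{+}(\mathcal{X}/S)$ by splitting it off the generic motivic Galois group, then pin down its fibres using the identity $\mathrm{G}_{\mathrm{mono}}^{0}\cdot\Gmot(\mathcal{H}^{+}(X_s))=\Gmot(\mathcal{H}^{+}(\mathcal{X}/S))_s$ from Theorem~\ref{thm:familiesMotives}$(iv)$, and handle the odd case via a family of Kuga--Satake varieties. The only divergence is cosmetic: for the fibre identification you compute $G^\circ\cap T=T^\circ$ and pass through $G^\circ/T^\circ\cong G/T$, whereas the paper compares the $P$-factors directly in the product $\mathrm{G}_{\mathrm{mono}}\cdot(P^+(X_s)\times\MT^+(X_s))$; the two arguments are equivalent repackagings of the same computation.
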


\begin{proof}
We prove first the invariance of the even defect group.
For any point $s\in S$, we have $\Gmot(\mathcal{H}^+(X_{s}))=\MT(H^+(X_{s})) \times P^+(X_{s})$ by Theorem \ref{thm:product2}. Let $s_0\in S$ be a very general point. By Theorem \ref{thm:familiesMotives} $(i)$ and $(ii)$, we have $\mathrm{G}_{\mathrm{mono}}(\mathcal{H}^+(\mathcal{X}/S))_{s_0} \subset \MT(H^+(X_{s_0}))$. Hence, the monodromy acts trivially on $P^+(X_{s_{0}})$, which therefore extends to a constant local system $P^+(\mathcal{X}/S)$ such that we have a splitting 
\[
\Gmot(\mathcal{H}^+(\mathcal{X}/S))=\MT(H^+(\mathcal{X}/S))\times P^+(\mathcal{X}/S)
\]
of local systems of algebraic groups over $S$. The local system $P^+(\mathcal{X}/S)$ is identified with the kernel of the natural morphism of generic motivic Galois groups 
\[
\Gmot(\mathcal{H}^+(\mathcal{X}/S)) \twoheadrightarrow \Gmot(\mathcal{H}^2(\mathcal{X}/S)).
\]

For any $s\in S$ we have the inclusion of $\Gmot(\mathcal{H}^+(X_s))$ into $\Gmot(\mathcal{H}^+(\mathcal{X}/S))_s$, which restricts to the inclusions $\MT(H^+(X_s))\hookrightarrow \MT(H^+(\mathcal{X}/S))_s$ and $P^+(X_s)\hookrightarrow P^+(\mathcal{X}/S)_s$.

It is enough to show that, for all $s\in S$, the equality  $P^+(X_s)=P^+(\mathcal{X}/S)_s$ holds.\\
By Theorem~\ref{thm:familiesMotives}$(iv)$, we have 
\[
		\mathrm{G}_{\mathrm{mono}}(\mathcal{H}^+(\mathcal{X}/S))_s \cdot \Gmot(\mathcal{H}^+(X_s)) = \Gmot(\mathcal{H}^+(\mathcal{X}/S))_s.
\]
But we know that $\mathrm{G}_{\mathrm{mono}}(\mathcal{H}^+(\mathcal{X}/S))_s$ is contained in
\[
\{ 1 \}\times \MT(H^+(\mathcal{X}/S))_s \subset P^+(\mathcal{X}/S)_s \times \MT(H^+(\mathcal{X}/S))_s=\Gmot(\mathcal{H}^+(\mathcal{X}/S))_s,
\]
and therefore we have
\begin{align*} 
	\mathrm{G}_{\mathrm{mono}}(\mathcal{H}^+(\mathcal{X}/S))_s  \cdot \Gmot & (\mathcal{H}^+(X_s)) = \\ 
	 & = \mathrm{G}_{\mathrm{mono}}(\mathcal{H}^+(\mathcal{X}/S))_s \cdot \bigl(P^+(X_s)\times \MT^+(X_s)\bigr) \\
	 & = P^+(X_s) \times \bigl (\mathrm{G}_{\mathrm{mono}}(\mathcal{H}^+(\mathcal{X}/S))_s\cdot \MT^+(X_s) \bigr),
\end{align*}
which forces $P^+(X_s)=P^+(\mathcal{X}/S)_s$.

In presence of non-vanishing Betti numbers in odd degree, the proof is similar. Again, choosing a very general point $s_0\in S$, we obtain a local system $P(\mathcal{X}/S)$ with fiber $P(X_{s_0})$ such that 
\[
\Gmot(\mathcal{H}(\mathcal{X}/S))=\MT(H^*(\mathcal{X}/S))\times P(\mathcal{X}/S).
\]
The Kuga-Satake construction can be performed in families, see \cite{deligne1971conjecture}, to obtain a smooth proper family $\mathcal{A}\to S$ such that $\mathcal{A}_s$ is a Kuga-Satake variety for $H^2(X_s)$ in the sense of Definition \ref{def:Kuga-Satake}, for all $s$. Thanks to Lemma \ref{lem:KSvsodd} we have a natural morphism of generic motivic Galois groups
\[
\Gmot(\mathcal{H}(\mathcal{X}/S))\twoheadrightarrow \Gmot(\mathcal{H}^1(\mathcal{A}/S))
\]
which does not depend on any choice involved in the construction of $\mathcal{A}$; the local system $P(\mathcal{X}/S)$ is identified with the kernel of the morphism above.

It follows that for any $s\in S$ the inclusion $ \Gmot(\mathcal{H}(X_s))\hookrightarrow  \Gmot(\mathcal{H}(\mathcal{X}/S))_s$ restricts to inclusions $\MT({H}^*(X_s))\hookrightarrow  \MT({H}^*(\mathcal{X}/S))_s$
and $P(X_s)\hookrightarrow P(\mathcal{X}/S)_s $. Now we conclude via the same argument given for the even case. 
\end{proof}

\section{Applications}

\subsection{Andr\'e motives of hyper-K\"ahler varieties}\label{subsec:KnownHK}
As we have seen in Theorem \ref{thm:DeformationInv2}, the defect group does not change along smooth proper algebraic families. In fact, the defect group is invariant in the whole deformation class.  

\begin{corollary}\label{cor:deformationDefect}
	Let $X$ and $X'$ be two deformation equivalent projective hyper-K\"{a}hler varieties with $b_2\neq 3$. Then their defect groups are isomorphic: $P^+(X)\cong P^+(X')$ and $P(X)\cong P(X')$. 
\end{corollary}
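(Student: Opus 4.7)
The plan is to reduce to Theorem \ref{thm:DeformationInv2}, which gives invariance of defect groups within a single smooth proper algebraic family with projective hyper-K\"ahler fibers; the missing ingredient is a chain of such families connecting $X$ and $X'$.

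By the definition of deformation equivalence, $X$ and $X'$ appear as fibers of a smooth proper analytic family $\mathcal{X}\to B$ of compact hyper-K\"ahler manifolds over a connected base. A priori this family is not algebraic, since projective fibers occur only along Noether--Lefschetz loci. To algebraize, I would restrict the family as follows: given any projective fiber $X_b$ with polarization $h\in H^2(X_b, \Z)$, the locus $NL_h \subseteq B$ over which the flat transport of $h$ stays of type $(1,1)$ is a closed analytic subvariety of $B$ containing $b$, and the restricted family $\mathcal{X}|_{NL_h}$ carries a relative polarization. By Viehweg's theorem on the quasi-projectivity of moduli of polarized varieties with trivial canonical bundle, this restricted family is the analytification of a smooth projective morphism over a smooth quasi-projective variety.

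Next, I would connect $X$ and $X'$ through a chain of such polarized Noether--Lefschetz subfamilies. Inside the coarse moduli space of polarized hyper-K\"ahler manifolds of the given deformation type, which is quasi-projective, any two projective members are linked by a finite chain of irreducible smooth quasi-projective subfamilies, possibly passing through deformations with different polarizations. By construction, consecutive links in the chain share a common fiber. The existence of such a chain is a standard consequence of the connectedness (up to monodromy) of moduli of polarized hyper-K\"ahler varieties and the density of projective fibers in the deformation space.

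Applying Theorem \ref{thm:DeformationInv2} along each link of the chain produces a canonical isomorphism between the defect groups of consecutive fibers, and composing gives $P(X)\cong P(X')$; the same argument applied to the even versions yields $P^+(X)\cong P^+(X')$. The main obstacle is the algebraization and chain-connection step, which relies on classical but non-trivial facts about moduli of polarized hyper-K\"ahler varieties; once this is in place, the corollary becomes a direct iteration of the previous theorem.
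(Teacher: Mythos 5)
Your proposal follows essentially the same route as the paper: the corollary is proved by iterating Theorem \ref{thm:DeformationInv2} along a finite chain of smooth proper algebraic families with projective hyper-K\"ahler fibers connecting $X$ and $X'$. The only difference is that the paper delegates the existence of such a chain to Soldatenkov \cite[\S 6.2]{soldatenkov19}, which is precisely the ``standard but non-trivial'' connection step you sketch via Noether--Lefschetz loci and Viehweg quasi-projectivity (where, as carried out in that reference, one must in particular restrict to the locus where the transported class remains ample and arrange that consecutive families genuinely share a projective fiber).
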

\begin{proof}
Pick two deformation equivalent projective hyper-K\"{a}hler varieties $X$ and $X'$ with $b_2\neq 3$. It has been shown by Soldatenkov ({\cite[\S6.2]{soldatenkov19}}) that there exists finitely many smooth proper algebraic families $f_i\colon  Y^i\to S_i$, $i=0,1,\dots,k$ over smooth quasi-projective curves $S_i$ and points $a_i,b_i\in S_i$ together with isomorphisms
	\[
	X\cong Y^{0}_{a_0}, \ 	\	 Y^{i}_{b_i} \cong Y^{i+1}_{{a_{i+1}}}, \ \mathrm{for} \ i=1,2,\dots, k-1, \ \mathrm{and}\  	Y^{k}_{b_k}\cong X'.
	\]
 We therefore find a chain of smooth proper families with projective fibers connecting $X$ and $X'$. The conclusion now follows via an iterated application of Theorem \ref{thm:DeformationInv2}.
\end{proof}

\begin{corollary}\label{cor:application}
Fix a deformation class of compact hyper-K\"{a}hler manifolds with $b_2\neq 3$. 
	\begin{enumerate}[(i)]
	\item (Soldatenkov \cite{soldatenkov19}) If one projective hyper-K\"ahler variety in the deformation class has abelian Andr\'e motive, then so does any other projective member in this class.
	\item There exists an Andr\'{e} motive $\mathcal{D}^+$ depending only on the deformation class, with Hodge realization being of Tate type, and  such that for any projective hyper-K\"ahler variety $X$ in this deformation class we have
	\[
	\langle \mathcal{H}^+(X) \rangle_{\AM} = \langle\mathcal{H}^2(X), \mathcal{D}^+ \rangle_{\AM}. 
	\]
	\item Similarly, if some odd Betti number is non-zero in the chosen deformation class, there exists an Andr\'{e} motive $\mathcal{D}$ depending only on the deformation class, with Hodge realization being of Tate type, and such that for any projective $X$ in the chosen deformation class we have
	\[
	\langle \mathcal{H}(X) \rangle_{\AM} =\langle\mathcal{H}^1(\mathrm{KS}(X)), \mathcal{D} \rangle_{\AM}\, ,
	\]
	where $\KS(X)$ is any Kuga--Satake variety for $H^2(X)$ (Definition \ref{def:Kuga-Satake}).
\end{enumerate} 
\end{corollary}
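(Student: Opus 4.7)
Part (i) I would deduce directly from Corollary \ref{cor:Pdefect2} combined with Corollary \ref{cor:deformationDefect}: abelianity of $\mathcal{H}(X)$ is equivalent to the triviality of the relevant defect group---namely $P(X)$ if $X$ has some non-vanishing odd Betti number, and $P^+(X)$ otherwise. Since Betti numbers and defect groups are deformation invariants, abelianity is preserved across the whole deformation class.

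For parts (ii) and (iii), my plan is to sketch the argument for (ii); part (iii) is analogous, replacing $\mathcal{H}^2(X)$ by $\mathcal{H}^1(\mathrm{KS}(X))$ and using the part of Theorem \ref{thm:product2} concerning the full motive. First I would fix any representative $X_0$ of the deformation class. By Theorem \ref{thm:product2}, the motivic Galois group splits as $\Gmot(\mathcal{H}^+(X_0)) = \MT(H^+(X_0)) \times P^+(X_0)$, and by Tannaka duality this determines a canonical tannakian subcategory $\mathsf{D}^+_0 \subset \langle \mathcal{H}^+(X_0)\rangle_{\AM}$ with Galois group $P^+(X_0)$, whose objects are exactly those on which $\MT(H^+(X_0))$ acts trivially---equivalently, those with Hodge realization of Tate type. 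I would then take $\mathcal{D}^+\in\mathsf{D}^+_0$ to be any object corresponding to a faithful representation of the linear algebraic group $P^+(X_0)$, so that $\mathcal{D}^+$ generates $\mathsf{D}^+_0$ as a tannakian category; this immediately gives $\langle \mathcal{H}^2(X_0),\mathcal{D}^+\rangle_{\AM} = \langle\mathcal{H}^+(X_0)\rangle_{\AM}$, because the Galois groups of both sides coincide by construction.

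To verify that the same $\mathcal{D}^+$ works for every other $X$ in the deformation class, I would proceed as in the proof of Corollary \ref{cor:deformationDefect}: connect $X$ and $X_0$ by a chain of smooth proper algebraic families. Over each family $f\colon\mathcal{X}\to S$, the proof of Theorem \ref{thm:DeformationInv2} exhibits the defect group as a constant local system $P^+(\mathcal{X}/S)$ splitting off the generic motivic Galois group. By Andr\'{e}'s deformation principle (Theorem \ref{thm:DefPrinciple}), the motivated projectors defining $\mathcal{D}^+$ at $X_0$ extend to relative motivated projectors, producing a generalized Andr\'{e} motive $\tilde{\mathcal{D}}^+\in\tilde{\AM}(S)$ whose fibers are Hodge-Tate and whose realization is a Tate-type variation of Hodge structure on a connected base. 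The fiber at $X$ then gives a motive in $\langle\mathcal{H}^+(X)\rangle_{\AM}$ isomorphic to $\mathcal{D}^+$ via the canonical identification of defect groups, so that $\mathcal{D}^+\in\langle\mathcal{H}^+(X)\rangle_{\AM}$ and, as before, $\langle \mathcal{H}^2(X),\mathcal{D}^+\rangle_{\AM}=\langle\mathcal{H}^+(X)\rangle_{\AM}$ follows from Theorem \ref{thm:product2}.

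The main obstacle will be this last identification: showing that the fiber of $\tilde{\mathcal{D}}^+$ at $X$ is isomorphic to $\mathcal{D}^+$ as an Andr\'{e} motive over $\mathbb{C}$, not merely as a Hodge structure. I expect this to follow from the rigidity of Hodge-Tate variations on a connected base combined with the canonicity of the splitting of the generic motivic Galois group provided by the proof of Theorem \ref{thm:DeformationInv2}, which gives a natural identification of the defect groups at any two fibers.
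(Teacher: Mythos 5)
Your part (i) and your construction of $\mathcal{D}^+$ at a fixed member $X_0$ coincide with the paper's argument (Corollary \ref{cor:Pdefect2} plus Corollary \ref{cor:deformationDefect}, then Tannaka duality and \cite[Proposition 3.1]{deligne1982hodge} applied to the splitting of Theorem \ref{thm:product2}). The spreading-out step is also in the same spirit as the paper, which, rather than ``extending projectors,'' simply realizes a faithful $P^+$-representation concretely as the subspace $W^+(X_s)$ of $\MT(H^+(\mathcal{X}/S))_s$-invariants inside a tensor construction $T^+(\mathcal{X}/S)$; since $W^+$ is stable under the generic motivic Galois group, it is automatically the realization of a generalized Andr\'e motive $\mathcal{W}^+(\mathcal{X}/S)$ over $S$, with no separate extension argument needed.

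The genuine gap is exactly the step you flag as the ``main obstacle,'' and your proposed fix does not close it. Constancy (rigidity) of a Tate-type variation on a connected base only identifies the fibers as local systems, equivalently as Hodge structures; it gives no reason why that identification should be \emph{motivated}, and ``canonicity of the splitting'' of the generic motivic Galois group is a statement about groups, not a supply of motivated correspondences between different fibers. The paper closes this by one more application of Theorem \ref{thm:DefPrinciple}: taking $\mathcal{D}^+=\mathcal{W}^+(X_0)$ and the constant family $\mathcal{D}^+/S$, the identity map $W^+(X_0)\to (D^+/S)_0$ is a motivated class which is monodromy invariant (the monodromy lies in the generic Mumford--Tate group by Theorem \ref{thm:familiesMotives}(i), hence acts trivially on the invariants $W^+$, after passing to a cover making the monodromy connected), so it extends to a flat section of $\Hom(W^+(\mathcal{X}/S),D^+/S)$ whose value at every $s$ is motivated; since the category is semisimple and the realization of this value is an isomorphism, one gets an isomorphism of Andr\'e motives $\mathcal{W}^+(X_s)\cong\mathcal{D}^+$ at each fiber, whence $\langle\mathcal{H}^+(X_s)\rangle_{\AM}=\langle\mathcal{D}^+,\mathcal{H}^2(X_s)\rangle_{\AM}$. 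You already have this tool in hand (you invoke Theorem \ref{thm:DefPrinciple} for the projectors); the missing idea is to apply it to the comparison isomorphism itself rather than appealing to rigidity of Hodge--Tate variations. With that substitution, and iterating along the chain of families from \cite[\S6.2]{soldatenkov19}, your argument matches the paper's proof, including the analogous treatment of (iii) with $\mathcal{H}^1(\mathrm{KS}(X))$ in place of $\mathcal{H}^2(X)$.
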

\begin{proof}
$(i)$ follows from the combination of Corollary \ref{cor:Pdefect2} and Corollary \ref{cor:deformationDefect}.\\
$(ii)$. This follows via a reinterpretation of Theorem \ref{thm:product2} in terms of a defect motive. Recall that we have $\Gmot(\mathcal{H}^+(X))=\MT(H^+(X))\times P^+(X) $. 
	The category $\mathrm{Rep}(P^+(X))$ can be seen as the subcategory of $\langle \mathcal{H}^+(X)\rangle_{\AM}$ on which $\MT(H^+(X))$ acts trivially, \ie, it consists of the motives in $\langle \mathcal{H}^+(X)\rangle$ whose realization is of Tate type. 
	By \cite[Proposition 3.1]{deligne1982hodge}, the category $\mathrm{Rep}(P^+(X))$ is generated as a tannakian category by any faithful representation of $P^+(X)$; choosing one such representation determines a motive $\mathcal{D}^+(X)\in \langle\mathcal{H}^+(X)\rangle$, such that inside $\AM$,
	\[
	\langle\mathcal{H}^+(X)\rangle = \langle \mathcal{D}^+(X), \mathcal{H}^2(X) \rangle. 
	\]
	Let now  $\mathcal{X}\to S$ be a smooth proper family with fibres projective hyper-K\"{a}hler varieties with $b_2\neq 3$ over a smooth quasi-projective base $S$. We assume that the monodromy group $G_{\mathrm{mono}}(\mathcal{X}/S)$ is connected. We consider the generalized Andr\'{e} motive $\mathcal{H}^+(\mathcal{X}/S)$ over $S$, with realization $H^+(\mathcal{X}/S)$; by Theorem \ref{thm:DeformationInv2}, we have a splitting of local systems of algebraic groups $\Gmot(\mathcal{H}^+(\mathcal{X}/S))=\MT({H}^+(\mathcal{X}/S))\times P^+(\mathcal{X}/S)$ such that $P^+(X_s)=P^+(\mathcal{X}/S)_s$ for all $s\in S$. 
We choose a tensor construction ${T}^+(\mathcal{X}/S) = {H}^+(\mathcal{X}/S)^{\otimes a} \otimes {H}^+(\mathcal{X}/S)^{\vee, \otimes b}$ such that the subspace $W^+(X_s)\subset T^+({X}_s)$ of $\MT(H^+(\mathcal{X}/S))_s$-invariants is a faithful $P^+(X_s)$-representation. Since $W^+(X_s)$ is stable for the action of $\Gmot(\mathcal{H}^+(\mathcal{X}/S))_s$, we obtain generalized Andr\'{e} motives $\mathcal{W}^+(\mathcal{X}/S) \subset \mathcal{T}^+(\mathcal{X}/S)$ over $S$, with realizations $W^+(\mathcal{X}/S) \subset T^+(\mathcal{X}/S)$. 
For all $s\in S$ we have $\langle \mathcal{H}^+({X}_s)\rangle =\langle \mathcal{W}^+(X_s), \mathcal{H}^2(X_s) \rangle$.

Since the monodromy group is connected, by Theorem \ref{thm:familiesMotives}(i) the local system $W^+(\mathcal{X}/S)$ is constant. Now Theorem \ref{thm:DefPrinciple} implies that for any two points $s_0,  s_1 \in S$ we have an isomorphism of motives $\mathcal{W}^+(X_{s_0})\cong \mathcal{W}^+(X_{s_1})$. 
In fact, let $0$ be any point of $S$ and let $\mathcal{D}^+=\mathcal{W}^+(X_0)$. Let $\mathcal{D}^+/S$ be the constant generalized Andr\'{e} motive over $S$ with fibre $\mathcal{D}^+$, supported onto the constant local system $D^+/S$. Then the identity $\id \colon {W}^+(X_{0})\to ({D}^+/S)_0$ is monodromy invariant and motivated, and hence it extends to a global section $\xi$ of the local system $\Hom(W^+(\mathcal{X}/S), D^+/S)$ such that $\xi_s $ is the realization of an isomorphism of motives $\mathcal{W}^+(X_s)\cong \mathcal{D}^+_{s}$, for any $s\in S$; hence, we have $\langle \mathcal{H}^+({X}_s)\rangle = \langle \mathcal{D}^+, \mathcal{H}^2({X}_s)\rangle$. 
Thanks to {\cite[\S6.2]{soldatenkov19}}, we can connect any two deformation equivalent projective hyper-K\"{a}hler varieties with $b_2\neq 3$ via finitely many families as above and iterate the argument given. \\
(iii). Same argument as above.
\end{proof}

We can now prove that the defect group of any known projective hyper-K\"{a}hler variety is trivial. 

\begin{proof}[Proof of Corollary \ref{cor:KnownHKs}]
The second Betti numbers of known hyper-K\"{a}hler varieties are as follows: $22$ for $\mathrm{K3}$ surfaces \cite{huyK3}; $23$ and $7$ for varieties of $\mathrm{K3}^{[n]}$-type and of generalized Kummer type respectively, see \cite{beauville1983varietes}; $24$ for varieties of $\mathrm{OG}10$-type and $8$ for those of $\mathrm{OG}6$-type, as computed by Rapagnetta in \cite{Rap08} and \cite{Rap07}. Hence, the triviality of the defect group is a deformation invariant property by Corollary \ref{cor:deformationDefect}. It is therefore enough to find in each of the known deformation classes a representative whose defect group is trivial, or equivalently, whose Andr\'{e} motive is abelian.\\
$(i)$ For $\mathrm{K3}$ surfaces, this is Andr\'{e} \cite[Th\'{e}or\`{e}me 0.6.3]{andre1996Motives}.\\
$(ii)$ For the $\mathrm{K3}^{[n]}$-type, the motivic decomposition of de Cataldo--Migliorini \cite{deCataldoMigliorini2002}, together with the case of K3 surfaces $(i)$, implies that the Andr\'e motive of a Hilbert scheme of a $\mathrm{K3}$ surface is abelian.\\
$(iii)$ For the generalized Kummer type, using the work of Cataldo--Migliorini \cite{dCM04} on semi-small resolutions, a motivic decomposition for a generalized Kummer variety associated to an abelian surface in terms of abelian motives was obtained in \cite{Xu18} and \cite[Corollary 6.3]{FTV19}.\\
$(iv)$ For the O'Grady-6 deformation type, it follows from \cite{MRS18}, as observed by Soldatenkov \cite{soldatenkov19}: in \cite{MRS18}, some hyper-K\"ahler variety of this deformation type was constructed as the quotient of some hyper-K\"ahler variety of $\mathrm{K3}^{[3]}$-type by a birational involution (with well-understood indeterminacy loci). One can then conclude by $(ii)$.\\
$(v)$ For O'Grady-10 deformation type, we use Corollary \ref{cor:InfChowAb}.
\end{proof}

\subsection{Motivated Mumford--Tate conjecture}\label{subsec:MTC}
We first recall a strengthening of the Mumford--Tate conjecture involving motivic Galois groups, see Moonen's survey \cite[\S 3.2]{moonen17} for details.
In the sequel, $k$ is a finitely generated subfield of $\C$, and $\ell$ is a prime number. Attached to a smooth projective variety $X$ defined over $k$, we have on the one hand the rational singular (Betti) cohomology $H^*_{\mathrm{B}}(X)=\bigoplus_{i} H^{i}_{\mathrm{B}}(X):=\bigoplus_i H^i(X_\C^{\operatorname{an}}, \Q)$, naturally equipped with a Hodge structure, and on the other hand the $\ell$-adic \'{e}tale cohomology $H_{\ell}^*(X)=\bigoplus_{i}H_{\ell}^{i}(X):=\bigoplus_{i}H_{\text{\'{e}t}}^{i}(X_{\bar{k}},\Q_{\ell})$, which is a continuous $\Q_{\ell}$-representation of $\Gal(\bar{k}/k)$. These two cohomology theories provide realization functors from $\AM(k)$, the category of Andr\'e motives over $\Spec(k)$: $$r_{\mathrm{B}}\colon \AM(k) \to \mathrm{HS}^{\mathrm{pol}}_{\Q};$$ $$r_{\ell}\colon \AM(k) \to \mathrm{Rep}_{\Q_{\ell}}\left(\Gal(\bar{k}/k)\right).$$

Given a Galois representation $\sigma\colon \Gal(\bar{k}/k)\to \GL(V)$ on a $\Q_{\ell}$-vector space $V$, we let $\mathcal{G}(V)$ denote the $\Q_{\ell}$-algebraic subgroup of $\GL(V)$ which is the Zariski closure of the image of $\sigma$. This algebraic group is not necessarily connected, but becomes so after a finite field extension of $k$. It is not known to be reductive in general. The category of $\Q_{\ell}$-Galois representations is a neutral tannakian abelian category, and the tannakian subcategory $\langle V \rangle$ is equivalent to the category of finite dimensional $\Q_\ell$-representations of $\mathcal{G}(V)$. 

These different realizations are related via Artin's comparison theorem: for any $M\in \AM(k)$ there is a canonical isomorphism of $\Q_{\ell}$-vector spaces 
$\gamma \colon r_{\mathrm{B}}(M) \otimes \Q_{\ell} \cong r_{\ell}(M)$. This gives rise to an isomorphism of $\Q_\ell$-algebraic groups $\gamma\colon \GL(r_{\operatorname{B}}(M))\otimes \Q_\ell\cong \GL(r_\ell(M))$, under which $\Gmot(M_{\C})\otimes \Q_\ell$ is identified with $\Gmot_{, \ell}(M_{\bar k})$, where the latter is the motivic Galois group of the tannakian category $\langle M_{\bar k}\rangle_{\AM({\bar k})}$ with fiber functor $r_{\ell}$ composed with the forgetful functor. The following conjecture is a motivic extension of the Mumford--Tate conjecture \cite{Mum66}.

\begin{conjecture}[Motivated Mumford--Tate conjecture]\label{conj:mtcMot}
The canonical isomorphism $\gamma$ induces identifications of $\Q_{\ell}$-algebraic groups
\[
\MT(r_{\mathrm{B}}(M)) \otimes \Q_{\ell} 
= \Gmot(M_{\C})\otimes \Q_{\ell} \cong \Gmot_{, \ell}(M_{\bar k}) =\mathcal{G}(r_{\ell}(M))^0.
\]
\end{conjecture}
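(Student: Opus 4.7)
The plan is to treat this conjecture as a chain of three equalities: (a) $\MT(r_{\mathrm{B}}(M))\otimes \Q_{\ell} = \Gmot(M_{\C})\otimes \Q_{\ell}$; (b) $\Gmot(M_{\C})\otimes \Q_{\ell} \cong \Gmot_{,\ell}(M_{\bar k})$; and (c) $\Gmot_{,\ell}(M_{\bar k}) = \mathcal{G}(r_{\ell}(M))^0$. The middle identification (b) is essentially formal: both sides are the tannakian Galois group of $\langle M_{\bar k}\rangle_{\AM(\bar k)}$ with respect to the two fiber functors $r_{\mathrm{B}}\otimes \Q_{\ell}$ and $r_{\ell}$, which are canonically identified by the Artin comparison $\gamma$, so the tannakian formalism yields (b) at once. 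The real content lies in the outer two equalities, which are the ``Hodge side'' and the ``Tate side'' respectively.

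For (a), my plan is to reduce to the abelian case. If the Andr\'e motive $M$ is abelian, then Theorem \ref{mhcabelian} gives $\MT(r_{\mathrm{B}}(M)) = \Gmot(M_\C)$ on the nose, whence (a) upon tensoring with $\Q_{\ell}$. This reduction is precisely what the defect-group machinery of the paper provides: for any motive motivated by a product of known hyper-K\"ahler varieties, the triviality of the defect group (Corollary \ref{cor:KnownHKs}), together with Corollary \ref{cor:Pdefect2}, forces abelianity.

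For (c), I would first record the standard easy inclusion $\mathcal{G}(r_{\ell}(M))^0 \subseteq \Gmot_{,\ell}(M_{\bar k})$: motivated cycles are built from algebraic cycles and the Hodge $*$-operator, both of which correspond under $\gamma$ to Galois-invariant $\ell$-adic classes up to Tate twists; hence the Galois action factors through the motivic Galois group, and this remains true after Zariski closure and passing to connected components. The reverse inclusion is essentially the classical Mumford--Tate conjecture. Having reduced to an abelian motive in step (a), I would invoke the known cases of the Mumford--Tate conjecture for the relevant Kuga--Satake abelian varieties attached to the weight-2 Hodge structure of each hyper-K\"ahler factor. A crucial additional input, needed as soon as $M$ is motivated by a \emph{product} of several independent hyper-K\"ahler varieties, is Commelin's theorem \cite{commelin2019}: if the motivated Mumford--Tate conjecture holds for each abelian motive $M_i$ individually, it holds for $\bigoplus_i M_i$. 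This is non-trivial, since the Mumford--Tate group of a direct sum is in general strictly smaller than the direct product of the individual Mumford--Tate groups, and similarly on the $\ell$-adic side.

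The main obstacle is the abelianity input in step (a). For hyper-K\"ahler varieties outside the currently known deformation types one has no handle on the triviality of the defect group, and the approach breaks down. As indicated in Remark \ref{rm:Potential}, this step might itself reduce to the deep question of whether an Andr\'e motive whose Hodge realization is of Tate type must already be a Tate motive; a positive answer there would force Conjecture \ref{conj:Ptrivial} and hence push the motivated Mumford--Tate conjecture through for all projective hyper-K\"ahler varieties, and in turn for all varieties motivated by them.
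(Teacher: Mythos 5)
The statement you were asked to prove is Conjecture~\ref{conj:mtcMot}, which the paper itself leaves open: there is no general proof in the paper, only a proof of its restriction to the tannakian subcategory $\mathcal{C}_k$ (this is Theorem~\ref{thm:abelianHKmotives1}, together with the intermediate Proposition~\ref{mtcDefect}). You correctly diagnose this, both in saying explicitly that the general case ``breaks down'' and in reducing to the setting of varieties motivated by products of known hyper-K\"ahler varieties. So there is no defect in scope; your proposal is really a proof attempt for Theorem~\ref{thm:abelianHKmotives1}, which is the best one can hope for here.

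Within that restricted scope, your three-step decomposition (a)/(b)/(c) reproduces the paper's argument quite faithfully. Step~(b) is indeed formal tannakian bookkeeping, as you say. In step~(a), invoking triviality of the defect group (Corollary~\ref{cor:KnownHKs}), then abelianity via Corollary~\ref{cor:Pdefect2}, then Andr\'e's Theorem~\ref{mhcabelian} to close the gap $\MT = \Gmot$ is exactly the route the paper uses inside the proof of Theorem~\ref{thm:abelianHKmotives1}. Your reliance on Commelin's theorem~\cite{commelin2019} to pass from individual generators to all of $\mathcal{C}_k$ is likewise the paper's first move. The one point of divergence is in your step~(c): you propose to push the $\ell$-adic equality through the Mumford--Tate conjecture for Kuga--Satake abelian varieties attached to $H^2$. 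The paper instead formulates Proposition~\ref{mtcDefect}, whose proof splits $\Gmot(\mathcal{H}^+(X))$ and $\Gmot(\mathcal{H}(X))$ via Theorem~\ref{thm:product2} and then feeds in Andr\'e's motivated Mumford--Tate theorem for $\mathcal{H}^2(X)$ of a hyper-K\"ahler variety (Theorem~\ref{thm:mtcH2}) \emph{directly}, together with Proposition~\ref{cor:MT} to transport from $\MT(H^2)$ to $\MT(H^+)$. These two routes are essentially equivalent --- Andr\'e's proof of Theorem~\ref{thm:mtcH2} is itself a Kuga--Satake argument --- but the paper's phrasing is more streamlined for the even cohomology because it sidesteps a separate appeal to a choice of Kuga--Satake variety there, and only brings in Kuga--Satake when odd cohomology forces it. One small cautionary remark on your ``easy inclusion'' $\mathcal{G}(r_\ell(M))^0 \subseteq \Gmot_{,\ell}(M_{\bar k})$: motivated classes are a priori defined only over a finite extension of $k$, so the Galois image need not fix them over $k$ itself; passing to the identity component, as you do, is what repairs this, and it is worth spelling out that the finite extension step is where the connected component enters. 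This is standard (it goes back to Andr\'e~\cite{andre1996Motives}), and the paper takes it as given.
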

\begin{remark}
The first equality is the content of Conjecture \ref{mhc} and the last equality is the analogous statement saying that all Tate classes are motivated. The original statement of the Mumford--Tate conjecture only predicts that under $\gamma$ we have
\[
\MT(H^*_{\mathrm{B}}(X)) \otimes \Q_{\ell} = \mathcal{G}(H^*_{\ell}(X))^0,
\]
for any smooth and projective variety $X$ over $k$.
\end{remark}

Let us define a hyper-K\"{a}hler variety over $k$ to be a smooth projective variety $X$ over $k$ such that $X_\C$ is a hyper-K\"{a}hler variety. The following result confirms Conjecture \ref{conj:mtcMot} for the degree-2 part of the motive of $X$, see Moonen \cite{moonen2017} for some generalizations.
\begin{theorem}[Andr\'e \cite{Andre1996}]\label{thm:mtcH2}
    Let $X$ be a  hyper-K\"{a}hler variety defined over $k$ with $b_2\neq 3$. Then the motivated Mumford--Tate conjecture holds for the Andr\'{e} motive $\mathcal{H}^2(X)$.
\end{theorem}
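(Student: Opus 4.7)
The plan is to combine Andr\'{e}'s abelianity theorem for $\mathcal{H}^2(X)$ (Theorem \ref{abelianityH2}) with Faltings' Tate conjecture for abelian varieties, reducing the motivated Mumford--Tate conjecture for $\mathcal{H}^2(X)$ to the case of an associated Kuga--Satake abelian variety.

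For the first equality $\MT(H^2_{\mathrm{B}}(X)) \otimes \Q_\ell = \Gmot(\mathcal{H}^2(X_\C)) \otimes \Q_\ell$, I invoke Theorem \ref{abelianityH2}, which states that $\mathcal{H}^2(X)$ is an abelian motive. Andr\'{e}'s Theorem \ref{mhcabelian} then yields directly $\MT(H^2_{\mathrm{B}}(X)) = \Gmot(\mathcal{H}^2(X_\C))$. The middle identification $\Gmot(\mathcal{H}^2(X_\C)) \otimes \Q_\ell \cong \Gmot_{,\ell}(\mathcal{H}^2(X_{\bar k}))$ is the Artin comparison between the Betti and $\ell$-adic fiber functors on the tannakian subcategory $\langle \mathcal{H}^2(X)\rangle_{\AM}$, a formal consequence of the tannakian formalism once one verifies that motivated cycles are compatible across the classical realizations.

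The remaining identification $\Gmot_{,\ell}(\mathcal{H}^2(X_{\bar k})) = \mathcal{G}(H^2_\ell(X))^0$ is the core of the argument. Since $\mathcal{H}^2(X)$ is abelian, the motivated Kuga--Satake construction (Theorem \ref{thm:kuga-satake}) produces, after possibly passing to a finite extension $k'/k$, an abelian variety $A/k'$ such that $\mathcal{H}^2(X_{k'})$ embeds as a direct summand into a tensor construction built from $\mathcal{H}^1(A)$. The arithmetic descent of the Kuga--Satake variety and of the motivated correspondence is standard, following the pattern of \cite{deligne1971conjecture} for K3 surfaces (essentially a spreading-out argument in a deformation family of $X$). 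By Faltings' theorem, every Tate class on a power of $A$ is algebraic, hence motivated; tannakian duality translates this statement into the equality
\[
\mathcal{G}(H^1_\ell(A))^0 = \Gmot_{,\ell}(\mathcal{H}^1(A)).
\]
Passing to the quotient determined by the direct summand $\mathcal{H}^2(X_{k'}) \subset \mathcal{H}^1(A)^{\otimes \bullet}(\bullet)$ then transfers this equality from $A$ to $\mathcal{H}^2(X_{\bar k})$.

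The chief obstacle, bypassed precisely by Theorem \ref{abelianityH2} itself, is the abelianity of $\mathcal{H}^2(X)$: this is Andr\'{e}'s deep input, whose proof crucially uses the Beauville--Bogomolov form together with the hypothesis $b_2 \neq 3$ to ensure that $H^2(X)$ is effectively of K3 type, so that the Kuga--Satake machinery produces a genuine abelian variety controlling the Hodge structure. Without abelianity, the reduction to Faltings' theorem is unavailable, and both halves of the motivated Mumford--Tate conjecture remain inaccessible by current methods.
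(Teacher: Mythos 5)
The paper offers no proof of Theorem \ref{thm:mtcH2}; it is stated as a citation to Andr\'e's work \cite{Andre1996}, so there is nothing in the paper to compare against line by line. Your reconstruction of Andr\'e's argument, however, contains a serious gap at its pivotal step.

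The first two pieces of your argument are fine: abelianity of $\mathcal{H}^2(X)$ (Theorem \ref{abelianityH2}) together with Theorem \ref{mhcabelian} gives $\MT = \Gmot$ on the Betti side, and the Artin comparison is indeed formal. The problem is the claim that the identification
\[
\mathcal{G}(H^1_\ell(A))^0 = \Gmot_{,\ell}(\mathcal{H}^1(A))
\]
for the Kuga--Satake abelian variety $A$ follows from Faltings. What Faltings proves (over number fields, extended by Faltings--W\"ustholz and Zarhin--Mori to finitely generated fields) is the semisimplicity of $H^1_\ell(A)$ and the Tate conjecture \emph{for endomorphisms}, i.e.\ $\End_k(A)\otimes\Q_\ell \xrightarrow{\sim} \End_{\Gal}(H^1_\ell(A))$. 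That is the Tate conjecture for divisor classes on $A\times A$, not for arbitrary Tate classes on arbitrary powers of $A$. The assertion you make --- that every Tate class on a power of $A$ is motivated --- is, by the tannakian formalism and Andr\'e's abelianity theorem, equivalent to the (motivated) Mumford--Tate conjecture for $A$, and this is \emph{open} for general abelian varieties: there are abelian fourfolds (e.g.\ of Mumford type) for which the Tate conjecture remains unknown, and Faltings does not resolve them. So the reduction you propose does not close the loop.

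Andr\'e's actual argument is considerably more delicate than ``Kuga--Satake plus Faltings.'' Faltings is used, but only for the endomorphism and semisimplicity statements; the heavy lifting is done by a big-monodromy/deformation argument in the family of hyper-K\"ahler varieties, which shows the generic Mumford--Tate group of $H^2$ is the full (special) orthogonal group of the transcendental lattice, together with an inclusion of the $\ell$-adic Galois group inside the (base-changed) Mumford--Tate group for abelian motives (a theorem of Deligne--Borovoi--Piatetskii-Shapiro). For the specific \emph{Kuga--Satake type} abelian varieties that arise, the combination of these two constraints pins down the Galois group. The hypothesis $b_2\neq 3$ enters there: Andr\'e needs the quadratic space $H^2$ to have dimension at least $4$ so that the Hodge structure is genuinely of K3 type and the monodromy/Kuga--Satake machinery applies. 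Your sketch captures the abelianity input correctly but collapses the hard arithmetic input to ``Faltings,'' which is not enough.
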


Let $X$ be as above. Then $\Gmot(\mathcal{H}(X_{\bar{k}}))\cong \Gmot(\mathcal{H}(X_\C))=\MT(H^*_{\mathrm{B}}(X)) \times P(X)$ by Theorem~\ref{thm:product2}.

\begin{proposition}\label{mtcDefect}
    If $P^+(X)$ is finite (\resp trivial), then the Mumford--Tate conjecture (\resp the motivated Mumford--Tate conjecture) holds for the motive $\mathcal{H}^+(X)$.
    If $P(X)$ is finite (\resp trivial), then the Mumford--Tate conjecture (\resp the motivated Mumford--Tate conjecture) holds for the motive $\mathcal{H}(X)$.
\end{proposition}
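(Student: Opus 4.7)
The plan is to deduce the (motivated) Mumford--Tate conjecture for $\mathcal{H}^{+}(X)$ and $\mathcal{H}(X)$ by pulling back Andr\'e's Theorem~\ref{thm:mtcH2} for $\mathcal{H}^{2}(X)$ along the surjection onto $\Gmot(\mathcal{H}^{2}(X))$ furnished by Theorem~\ref{thm:product2}. The starting observation, valid for any Andr\'e motive $M$ over $k$, is the chain of inclusions inside $\GL(r_{\ell}(M))$
\[
\MT(r_{\mathrm{B}}(M))\otimes \Q_{\ell}\subseteq \Gmot(M_{\C})\otimes \Q_{\ell}\supseteq \mathcal{G}(r_{\ell}(M))^{0},
\]
reflecting, via the tannakian formalism of \cite[Proposition 3.1]{deligne1982hodge}, that motivated classes are both Hodge and Galois invariant. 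Since Mumford--Tate groups are connected and $\mathcal{G}_{\ell}^{0}$ is connected by definition, the second containment refines to $\mathcal{G}_{\ell}^{0}\subseteq \Gmot^{0}\otimes \Q_{\ell}$.

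Consider first the even case and assume that $P^{+}(X)$ is finite. The splitting $\Gmot(\mathcal{H}^{+}(X))=\MT(H^{+}(X))\times P^{+}(X)$ of Theorem~\ref{thm:product2}, together with the connectedness of $\MT(H^{+}(X))$, yields $\Gmot(\mathcal{H}^{+}(X))^{0}=\MT(H^{+}(X))$, whence
\[
\mathcal{G}_{\ell}^{0}(H^{+}(X))\subseteq \MT(H^{+}(X))\otimes \Q_{\ell}.
\]
For the reverse inclusion, the natural surjective restriction $\mathcal{G}_{\ell}^{0}(H^{+}(X))\twoheadrightarrow \mathcal{G}_{\ell}^{0}(H^{2}(X))$ identifies, by Theorem~\ref{thm:mtcH2}, with a surjection onto $\MT(H^{2}(X))\otimes \Q_{\ell}$, and it factors through the map $\pi_{2}^{+}\otimes \Q_{\ell}\colon \MT(H^{+}(X))\otimes \Q_{\ell}\to \MT(H^{2}(X))\otimes \Q_{\ell}$. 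Since $\pi_{2}^{+}$ is an isomorphism by Proposition~\ref{cor:MT}$(i)$, the inclusion $\mathcal{G}_{\ell}^{0}(H^{+}(X))\hookrightarrow \MT(H^{+}(X))\otimes \Q_{\ell}$ must itself be surjective, proving the Mumford--Tate conjecture for $\mathcal{H}^{+}(X)$. If moreover $P^{+}(X)$ is trivial, the equality $\MT(H^{+}(X))=\Gmot(\mathcal{H}^{+}(X))$ from Theorem~\ref{thm:product2} promotes the conclusion to the motivated Mumford--Tate conjecture.

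The argument for $\mathcal{H}(X)$ is entirely parallel. Here Proposition~\ref{cor:MT}$(ii)$ provides an isogeny, rather than an isomorphism, $\MT(H^{*}(X))\to \MT(H^{2}(X))$ with finite kernel $\langle \iota\rangle$, and the same diagram chase shows that $\mathcal{G}_{\ell}^{0}(H^{*}(X))$ is a connected subgroup of the connected $\Q_{\ell}$-algebraic group $\MT(H^{*}(X))\otimes \Q_{\ell}$ mapping, through an isogeny, onto $\MT(H^{2}(X))\otimes \Q_{\ell}$; a dimension count then forces equality, establishing the Mumford--Tate conjecture, and triviality of $P(X)$ upgrades this to the motivated version via $\Gmot(\mathcal{H}(X))=\MT(H^{*}(X))$. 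There is no real obstacle in this argument: the splitting of Theorem~\ref{thm:product2} reduces everything to Andr\'e's theorem for $\mathcal{H}^{2}$, and the finiteness of the defect group is precisely what makes the comparison between identity components go through.
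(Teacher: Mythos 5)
Your proof is correct and takes essentially the same approach as the paper: both reduce the (motivated) Mumford--Tate conjecture for $\mathcal{H}^+(X)$ (resp.\ $\mathcal{H}(X)$) to Andr\'e's Theorem~\ref{thm:mtcH2} for $\mathcal{H}^2(X)$ by combining the splitting of Theorem~\ref{thm:product2}, the comparison of Mumford--Tate groups in Proposition~\ref{cor:MT}, the general inclusion $\mathcal{G}_\ell^0\subseteq\Gmot\otimes\Q_\ell$, and a dimension count in the odd case. The paper presents the same chain of identifications as explicit commutative diagrams rather than a sequence of factorizations, but the underlying argument is identical.
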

\begin{proof}
Let us identify $\Gmot(M_{\bar k})\otimes \Q_\ell$ and $\Gmot_{, \ell}(M_{\bar k})$ using Artin's comparison isomorphism. Consider the following commutative diagram 
\[
\begin{tikzcd}
\MT(H_{\mathrm{B}}^+(X))\otimes \Q_{\ell} \arrow[hookrightarrow]{r}{\cong} \arrow["\cong"{sloped, above}]{d} & \Gmot(\mathcal{H}^+(X_{\bar{k}}))^0 \otimes\Q_\ell \arrow[two heads]{d} \arrow[hookleftarrow]{r} & \mathcal{G} (H^+_{\ell}(X))^0\arrow[two heads]{d}\\
\MT(H_{\mathrm{B}}^2(X))\otimes\Q_{\ell} \arrow{r}{\cong} & \Gmot(\mathcal{H}^2(X_{\bar{k}}))\otimes\Q_\ell & \mathcal{G}(H^2_{\ell}(X))^0 \arrow[swap]{l}{\cong}
\end{tikzcd}
\]
The two horizontal morphisms on the bottom are isomorphisms due to Theorem~\ref{thm:mtcH2}, the vertical map on the left is an isomorphism thanks to Proposition~\ref{cor:MT} and the top left arrow is an isomorphism by Theorem \ref{thm:product2} since $P^+(X)$ is finite by assumption. It follows that all arrows in the diagram are isomorphisms, and so
\[
\mathcal{G}(H_{\ell}^+(X))^0 \cong \Gmot(\mathcal{H}^+(X_{\bar{k}}))^0 \otimes \Q_{\ell} \cong \MT(H_{\mathrm{B}}^+(X)) \otimes\Q_{\ell}.
\]
If $P^+(X)$ is actually trivial, then $\Gmot(\mathcal{H}^+(X_{\bar{k}}))$ is connected, and we conclude that the motivated Mumford--Tate conjecture holds for $\mathcal{H}^+(X)$ in this case.

If the odd cohomology of $X$ is trivial, we are done. Otherwise, assume that $P(X)$ is finite, which implies that also $P^+(X)$ is finite. We consider another commutative diagram
\[
\begin{tikzcd}
\MT (H^*_{\mathrm{B}}(X))\otimes \Q_{\ell}  \arrow[hookrightarrow]{r}{\cong}  \arrow["\sim"{sloped, above}]{d} & \Gmot(\mathcal{H}(X_{\bar{k}}))^0 \otimes \Q_\ell \arrow[two heads]{d} \arrow[hookleftarrow]{r} & \mathcal{G}(H^*_{\ell}(X))^0\arrow[two heads]{d} \\
\MT(H_{\mathrm{B}}^+(X)) \otimes \Q_{\ell}\arrow{r}{\cong} & \Gmot(\mathcal{H}^+(X_{\bar{k}}))^0 \otimes \Q_\ell & \mathcal{G}(H^+_{\ell}(X))^0 \arrow[swap]{l}{\cong}
\end{tikzcd}
\]
The horizontal arrows on the bottom are isomorphisms due to the above; the top left horizontal map is an isomorphism by Theorem \ref{thm:product2}, since $P(X)$ is finite by assumption, while the leftmost vertical arrow is an isogeny due to Proposition~\ref{cor:MT}. It follows that also the middle vertical arrow is an isogeny. We deduce that $\Gmot(\mathcal{H}(X_{\bar{k}}))^0 \otimes \Q_\ell$ and $\mathcal{G}(H^*_{\ell}(X))^0$ are connected algebraic groups of the same dimension over $\Q_{\ell}$. Hence, the inclusion
\[
\mathcal{G}(H^*_{\ell}(X))^0 \hookrightarrow \Gmot(\mathcal{H}(X_{\bar{k}}))^0 \otimes\Q_{\ell}
\]
is an isomorphism. If $P(X)$ is actually trivial, then $\Gmot(\mathcal{H}(X_{\bar{k}}))$ is connected, and we conclude that the motivated Mumford--Tate conjecture holds for the full Andr\'{e} motive $\mathcal{H}(X)$. 
\end{proof}

\begin{definition}
Let $k\subset \C$ be a finitely generated field. Define $\mathcal{C}_k$ to be the tannakian subcategory of $\AM(k)$ generated by the motives of all hyper-K\"ahler varieties whose associated complex manifold is of one of the four known deformation types.
\end{definition}

\begin{remark}
Note that this category contains already the motive of cubic fourfolds, as they are motivated by their Fano varieties of lines (see for example \cite{Lat17}). Very likely, $\mathcal{C}_k$ also contains the motive of some interesting Fano varieties whose cohomology is of K3-type, for instance, Gushel--Mukai varieties \cite{Gus83} \cite{Muk89}, Debarre--Voisin Fano varieties \cite{DV10} and many more \cite{MF19}.
\end{remark} 

\begin{theorem}\label{thm:abelianHKmotives1}
    The motivated Mumford--Tate conjecture holds for any motive $M\in \mathcal{C}_k$. In particular, for any smooth projective variety motivated by a product of projective hyper-K\"{a}hler varieties of known deformation type, the Hodge conjecture and the Tate conjecture are equivalent.
\end{theorem}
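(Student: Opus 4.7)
The plan is to combine the triviality of the defect group for known hyper-K\"{a}hler varieties (Corollary~\ref{cor:KnownHKs}) with Commelin's theorem on the Mumford--Tate conjecture for products of motives having abelian Andr\'{e} motive, then to propagate the resulting statement throughout $\mathcal{C}_k$ by Tannakian formalism. First I would reduce to establishing Conjecture~\ref{conj:mtcMot} for a single product $Y=X_1\times\cdots\times X_n$ of known hyper-K\"{a}hler varieties defined over $k$: indeed, any $M\in\mathcal{C}_k$ sits in $\langle\mathcal{H}(Y)\rangle_{\AM(k)}$ for some such $Y$, and the conjecture survives passage to subquotients, direct sums, tensor products and duals, as it amounts to asserting that the inclusions $\mathcal{G}(r_\ell(M))^0\hookrightarrow \Gmot(M_{\bar{k}})\otimes\Q_\ell$ and $\MT(r_{\mathrm{B}}(M))\otimes\Q_\ell\hookrightarrow \Gmot(M_{\bar{k}})\otimes\Q_\ell$ are equalities, a property preserved under tannakian operations applied to a generator.

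For each individual factor $X_i$, Corollary~\ref{cor:KnownHKs} gives $P(X_i)=1$, so Proposition~\ref{mtcDefect} yields the motivated Mumford--Tate conjecture for $\mathcal{H}(X_i)$, and in particular its classical version. To combine these into a statement for the product $\mathcal{H}(Y)$, I would invoke Commelin's theorem~\cite{commelin2019}: since each $\mathcal{H}(X_i)$ is abelian by Corollary~\ref{cor:KnownHKs2}$(i)$, the classical Mumford--Tate conjecture for the individual factors propagates to the product. As $\mathcal{H}(Y)$ is again abelian, Andr\'{e}'s Theorem~\ref{mhcabelian} forces $\MT=\Gmot$ on $\langle\mathcal{H}(Y)\rangle$, which upgrades the classical conclusion to the motivated form. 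Assembling the two reductions, Conjecture~\ref{conj:mtcMot} then holds for every $M\in\mathcal{C}_k$.

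The equivalence of the Hodge and Tate conjectures for any smooth projective variety motivated by such a product is then immediate: Hodge classes in a given tensor construction are precisely $\MT$-invariants, Tate classes (after a finite extension of $k$) are precisely $\mathcal{G}^0$-invariants, and Conjecture~\ref{conj:mtcMot} identifies the two groups via Artin's comparison isomorphism; since on an abelian motive both coincide with the motivated classes, algebraicity of either class of cycles implies that of the other. I expect the principal obstacle to be the black-boxed input from Commelin, since the Mumford--Tate conjecture for a product does \emph{not} follow formally from the conjecture for each factor (the Mumford--Tate group of a product is in general a proper subgroup of the product of Mumford--Tate groups); the abelianity of the motives involved, which in the O'Grady-10 case is new to this paper, is precisely the ingredient that makes Commelin's argument go through.
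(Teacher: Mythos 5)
Your proposal is correct and follows essentially the same route as the paper's own proof: the key ingredients in both cases are the triviality of defect groups for known hyper-K\"{a}hler types (hence abelianity and the motivated Mumford--Tate conjecture via Proposition~\ref{mtcDefect}), Commelin's theorem to handle tensor products, and Andr\'{e}'s Theorem~\ref{mhcabelian} to identify the motivated and classical versions of the conjecture for abelian motives. The only difference is one of presentation: you first reduce to products $Y=X_1\times\cdots\times X_n$ by the formal stability of the conjecture under tannakian operations and then invoke Commelin for the product, whereas the paper cites Commelin's theorem directly in the form that ``abelian Andr\'{e} motives satisfying the Mumford--Tate conjecture form a tannakian subcategory'' and thereby reduces immediately to the generators $\mathcal{H}(X)$; the two reductions are equivalent in content.
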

\begin{proof}
    By Commelin \cite[Theorem 10.3]{commelin2019}, the subcategory of abelian Andr\'e motives satisfying the Mumford--Tate conjecture\footnote{For abelian Andr\'e motives, the Mumford--Tate conjecture is equivalent to its motivated version \ref{conj:mtcMot}, thanks to Andr\'e's result Theorem \ref{mhcabelian}.} is a tannakian subcategory. Therefore, it suffices to check the abelianity and the Mumford--Tate conjecture for the generators of $\mathcal{C}_k$.\\
By Corollary \ref{cor:KnownHKs} the defect group of any hyper-K\"{a}hler variety $X$ of known deformation type is trivial. Hence, the motive $\mathcal{H}(X)\in \mathcal{C}_k$ is abelian by Corollary \ref{cor:Pdefect2}, and the motivated Mumford--Tate conjecture holds for its Andr\'e motive by Proposition \ref{mtcDefect}. 
\end{proof}

\begin{remark}
Thanks to \cite{commelin2019}, we can put even more generators in the category $\mathcal{C}_k$ to obtain new evidence for the Mumford--Tate conjecture. Since the conjecture is known to hold for 
\begin{enumerate}[$(i)$]
    \item geometrically simple abelian varieties of prime dimension, by Tankeev \cite{Tan83},
   \item abelian varieties of dimension $g$ with trivial   endomorphism ring over $\bar k$ such that $2g$ is
    neither a $k$-th power for some odd $k>1$ nor of the form     ${2k}\choose{k}$ for some odd $k>1$, thanks to Pink \cite{Pin98},
\end{enumerate}
we deduce that the Mumford--Tate conjecture holds for any variety motivated by a product of varieties in $(i)$ and $(ii)$ above and hyper-K\"{a}hler varieties of the known deformation types. See Moonen \cite{moonen2017} for more potential examples. 
\end{remark}

\appendix
\section{The Kuga--Satake category} \label{Appendix}


Let $V$ be a polarizable rational Hodge structure of $\mathrm{K3}$-type, \ie~ $V$ is pure of weight~2 with $h^{2,0}=h^{0,2}=1$ and $h^{p,q}=0$ whenever $p$ or $q$ is negative. The Kuga--Satake construction \cite{deligne1971conjecture} produces an abelian variety $\mathrm{KS}(V)$ closely related to $V$, which is defined up to isogeny.
This isogeny class is not unique, but the main point of this Appendix is to characterize the tannakian subcategory of Hodge structures generated by this abelian variety, which we  call the \emph{Kuga--Satake category} attached to $V$, $$\mathsf{KS}(V):=\langle H^1(\mathrm{KS}(V))\rangle\subset \HS^{\operatorname{pol}}_\Q. $$
In the appendix, all the cohomology groups are with rational coefficients and the notation $\langle - \rangle$ means the generated tannakian subcategory inside $\HS^{\operatorname{pol}}_\Q$, if not otherwise specified. We first briefly review the classical construction.

\subsection{The Kuga--Satake construction}\label{subsec:KSconstruction}

Choose a polarization $q$ of $V$, and consider the Clifford algebra $\mathrm{Cl}(V,q)$. Deligne showed in \cite{deligne1971conjecture} that there is a unique way to induce a weight-1 effective Hodge structure on $\mathrm{Cl}(V,q)$, which is polarizable and therefore equals $H^1(\KS(V))$ for some abelian variety $\KS(V)$, well-defined up to isogeny.
The key relation between $V$ and $\KS(V)$ is the fact that the natural action of $V$ on $\mathrm{Cl}(V,q)$ via left multiplication yields an embedding of Hodge structures
\[
V(1)\hookrightarrow H^1(\KS(V))\otimes H^1(\KS(V))^{\vee}.
\]

Consider the weight cocharacters $w_V\colon \mathbb{G}_{m}\to \GL(V)$ and $w_{\KS(V)}\colon \mathbb{G}_{m}\to \GL(H^1(\KS(V)))$, defined by $w_V (\lambda)=\lambda^2\cdot \id$ and $w_{\KS}(\lambda)=\lambda\cdot \id$ respectively, for all $\lambda$; we have $\MT(V)\subset w_V(\mathbb{G}_{m}) \cdot \SO(V,q)$ and $\MT(H^1(\KS(V)))\subset w_{\KS}(\mathbb{G}_{m}) \cdot \Spin(V,q)$, The inclusion $\langle V \rangle \subset \langle H^1(\KS(V))$ induces a surjective morphism $\phi\colon \MT(V)\to \MT(H^1(\KS(V)))$, which we claim is a double cover. Indeed, there is a commutative diagram with exact rows 
\[
\begin{tikzcd}
1 \arrow{r}
& \Spin(V,q)\cap\MT(H^1(\KS(V))) \arrow{d}{\phi'} \arrow{r}
& \MT(H^1(\KS(V))) \arrow{d}{\phi} \arrow{r}
& \mathbb{G}_{m} \arrow{d}{\cong} \arrow{r} 
& 1
\\
1 \arrow{r}
& \SO(V,q)\cap\MT(V) \arrow{r}
& \MT(V) \arrow{r}
& \mathbb{G}_{m} \arrow{r} 
& 1
\end{tikzcd}
\]
in which $\phi'$ is the restriction of the double cover $\Spin(V,q)\to \SO(V,q)$, and the vertical map on the right is an isomorphism due to the fact that $w_{\KS}(-1)\in \Spin(V,q)$.

\begin{remark}
The above construction can be performed given any non-degenerate quadratic form $q$ on $V$ such that the restriction of $q\otimes{\R}$ to $\bigl(H^{2,0}(V) \oplus H^{0,2}(V)\bigr) \cap (V\otimes \R) $ is positive definite and $q(\sigma)=0$ for any $\sigma \in H^{2,0}(V)$, see \cite[\S4, Remark 2.3]{huyK3}. 
\end{remark}

\subsection{The Kuga--Satake category} \label{subsec:KScategory}

Given a tannakian subcategory $\mathsf{C}\subset \mathrm{HS}_{\Q}^{\mathrm{pol}}$ we denote by $\mathsf{C}^{\mathrm{ev}}$ the full subcategory of~$\mathsf{C}$ consisting of objects of \emph{even} weight. The grading via weights on $\mathsf{C}$ is given by a central cocharacter $w\colon \mathbb{G}_{m,\Q}\to \MT(\mathsf{C})$. We let $\iota:= w(-1)$; it acts as $-1$ on any Hodge structure of odd weight in $\mathsf{C}$ and as the identity on $\mathsf{C}^{\mathrm{ev}}$. This means that, whenever $\mathsf{C}$ contains a Hodge structure of odd weight, the natural morphism of algebraic groups $ \MT(\mathsf{C}) \to \MT(\mathsf{C}^{\mathrm{ev}})$ is 
an isogeny with kernel the order $2$ cyclic group generated by $\iota$.

\begin{definition}\label{def:Kuga-Satake}
Let $V$ be a polarizable Hodge structure of K3-type. A \textit{Kuga--Satake variety} for $V$ is an abelian variety $A$ such that 
$\langle H^1(A) \rangle^{\mathrm{ev}}
= \langle V \rangle$.
\end{definition}

\begin{lemma}[Equivalent definition]\label{lemma:eqdefKS}
An abelian variety $A$ is a Kuga--Satake variety for $V$ if and only if $V\in \langle H^1(A) \rangle$ and the induced surjective morphism
$\MT(H^1(A))\to \MT(V)$ is an isogeny of degree $2$.
\end{lemma}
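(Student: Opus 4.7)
The plan is to exploit the general fact recalled in the paragraph preceding the lemma, namely that for any tannakian subcategory $\mathsf{C}\subset \mathrm{HS}_{\Q}^{\mathrm{pol}}$ containing a Hodge structure of odd weight, the natural morphism $\MT(\mathsf{C})\to \MT(\mathsf{C}^{\mathrm{ev}})$ is an isogeny of degree $2$ with kernel $\langle \iota\rangle$. Applied to $\mathsf{C}=\langle H^1(A)\rangle$ (which always contains the odd-weight object $H^1(A)$), this says that $\MT(H^1(A))\to \MT\bigl(\langle H^1(A)\rangle^{\mathrm{ev}}\bigr)$ is a degree-$2$ isogeny. The entire proof is then a formal factorisation argument combined with Tannaka duality.

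For the forward implication, I would assume $\langle H^1(A)\rangle^{\mathrm{ev}}=\langle V\rangle$. Then $V$ has even weight and lies in $\langle H^1(A)\rangle^{\mathrm{ev}}\subset \langle H^1(A)\rangle$. The morphism $\MT(H^1(A))\to\MT(V)$ factors as
\[
\MT(H^1(A))\twoheadrightarrow \MT\bigl(\langle H^1(A)\rangle^{\mathrm{ev}}\bigr)\xrightarrow{\;\cong\;}\MT(V),
\]
where the first arrow is the degree-$2$ isogeny recalled above and the second is an isomorphism because $\langle V\rangle=\langle H^1(A)\rangle^{\mathrm{ev}}$ (so Tannaka duality identifies their Mumford–Tate groups). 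Hence the composition is a degree-$2$ isogeny.

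For the converse, I would assume $V\in\langle H^1(A)\rangle$ and that $\phi\colon\MT(H^1(A))\to\MT(V)$ is a degree-$2$ isogeny. Since $V$ has even weight, the inclusion $\langle V\rangle\subset \langle H^1(A)\rangle^{\mathrm{ev}}$ holds automatically, and $\phi$ factors as
\[
\MT(H^1(A))\xrightarrow{\;p\;} \MT\bigl(\langle H^1(A)\rangle^{\mathrm{ev}}\bigr)\xrightarrow{\;\psi\;}\MT(V),
\]
with $p$ a degree-$2$ isogeny. From $\deg(\phi)=\deg(\psi)\cdot\deg(p)=2$ we obtain $\deg(\psi)=1$, so $\psi$ is an isomorphism. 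By Tannaka duality this forces $\langle V\rangle=\langle H^1(A)\rangle^{\mathrm{ev}}$, which is the defining property of a Kuga--Satake variety.

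I do not expect any real obstacle here; the only subtlety is to be sure one may invoke Tannaka duality to recover an equality of subcategories from an equality of their Mumford--Tate groups as subgroups of the ambient $\MT\bigl(\langle H^1(A)\rangle\bigr)$, which is legitimate because both subcategories are full tannakian subcategories of the same neutral tannakian category $\mathrm{HS}^{\mathrm{pol}}_{\Q}$ and the fiber functor is the forgetful one in each case.
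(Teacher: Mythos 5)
Your proof is correct and follows essentially the same route as the paper's: the paper likewise reduces the lemma to the factorization of $\MT(H^1(A))\to\MT(V)$ through the degree-$2$ isogeny $\MT(H^1(A))\twoheadrightarrow\MT(\langle H^1(A)\rangle^{\mathrm{ev}})$, concluding by comparing degrees. The only slight difference is that you make the Tannaka-duality step explicit and spell out the multiplicativity of degree, whereas the paper states these steps more tersely.
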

\begin{proof}
The only-if part is explained before. Conversely, assume that $V\in \langle H^1(A)\rangle$ and that the induced surjection $\MT(H^1(A))\to \MT(V)$ is an isogeny of degree $2$. Since $V$ has even weight, this morphism factors over $\MT(\langle H^1(A)\rangle^{\mathrm{ev}})\to \MT(V)$, and it follows that the the latter is an isomorphism. Hence, $ \langle H^1(A)\rangle^{\mathrm{ev}} = \langle V\rangle$.
\end{proof}


By Lemma \ref{lemma:eqdefKS} and the discussion in \S\ref{subsec:KSconstruction}, the abelian variety $\KS(V)$ is a Kuga--Satake variety for $V$ in the sense of our Definition \ref{def:Kuga-Satake}. It is clear that Kuga--Satake varieties are not unique, but the main observation of the appendix is that the corresponding Kuga--Satake category is~so.
\begin{theorem}\label{thm:kuga-satake}
    Let $V$ be a polarizable Hodge structure of $\mathrm{K3}$-type. Then there exists a unique tannakian subcategory $\mathsf{KS}(V)$ of $\mathrm{HS}^{\mathrm{pol}}_{\Q}$ such that \[
    \langle V\rangle = \mathsf{KS}(V)^{\mathrm{ev}} \subsetneq \mathsf{KS}(V).
    \] 
    If $A$ is any Kuga--Satake variety for $V$, we have $\langle H^1(A) \rangle =\mathsf{KS}(V)$.
\end{theorem}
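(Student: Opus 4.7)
The plan is to prove existence via Deligne's construction, reduce the third assertion to uniqueness, and then establish uniqueness, which is the main content.

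For existence, I would apply Deligne's Kuga--Satake construction (\S\ref{subsec:KSconstruction}) to $V$ with any chosen polarization $q$. The resulting abelian variety $\KS(V)$ satisfies $V(1) \hookrightarrow \End(H^1(\KS(V)))$ in $\HS^{\mathrm{pol}}_\Q$, so $V \in \langle H^1(\KS(V)) \rangle$; moreover $\MT(H^1(\KS(V)))$ is realized as the preimage in $\mathrm{GSpin}(V,q)$ of $\MT(V) \subset \mathrm{GO}(V,q)^+$ under the Clifford double cover, exhibiting $\MT(H^1(\KS(V))) \twoheadrightarrow \MT(V)$ as an isogeny of degree $2$. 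By Lemma \ref{lemma:eqdefKS}, $\KS(V)$ is therefore a Kuga--Satake variety for $V$, and $\mathsf{KS}(V) := \langle H^1(\KS(V)) \rangle$ has even part $\langle V \rangle$ by Definition \ref{def:Kuga-Satake} and strictly contains $\langle V\rangle$ since $H^1(\KS(V))$ has odd weight. The third statement then reduces to uniqueness: any other Kuga--Satake variety $A$ per Definition \ref{def:Kuga-Satake} gives rise to a candidate $\langle H^1(A) \rangle$ satisfying the same defining property, which must therefore agree with $\mathsf{KS}(V)$.

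For uniqueness, let $\mathsf{C}_1, \mathsf{C}_2 \subset \HS^{\mathrm{pol}}_\Q$ be two tannakian subcategories satisfying $\mathsf{C}_i^{\mathrm{ev}} = \langle V \rangle$ and $\mathsf{C}_i \supsetneq \langle V \rangle$. A useful preliminary is that each $\mathsf{C}_i$ is generated, as a tannakian subcategory, by $V$ together with any single odd-weight object $W_i \in \mathsf{C}_i$: for $X \in \mathsf{C}_i$, the eigenspace decomposition of the action of $\iota \in \MT(\mathsf{C}_i)$ splits $X = X^{\mathrm{ev}} \oplus X^{\mathrm{odd}}$, with $X^{\mathrm{ev}} \in \mathsf{C}_i^{\mathrm{ev}} = \langle V \rangle$; moreover $X^{\mathrm{odd}} \otimes W_i^\vee$ also lies in $\mathsf{C}_i^{\mathrm{ev}} = \langle V \rangle$, so that $X^{\mathrm{odd}}$ appears as a direct summand of $(X^{\mathrm{odd}} \otimes W_i^\vee) \otimes W_i \in \langle V, W_i \rangle$ via the evaluation $W_i^\vee \otimes W_i \twoheadrightarrow \Q$. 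The remaining task is then to show that for odd-weight choices $W_i \in \mathsf{C}_i$ one has $W_1 \otimes W_2^\vee \in \langle V \rangle$; an analogous argument then realizes $W_2$ as a direct summand of $(W_1^\vee \otimes W_2) \otimes W_1 \in \langle V, W_1 \rangle = \mathsf{C}_1$, whence $\mathsf{C}_2 \subset \mathsf{C}_1$, and symmetry yields $\mathsf{C}_1 = \mathsf{C}_2$.

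The main obstacle is this inclusion $W_1 \otimes W_2^\vee \in \langle V \rangle$. The plan is to attack it through Mumford--Tate groups: both $\MT(\mathsf{C}_i)$ are central $\mu_2$-extensions of $\MT(V)$ sharing a distinguished kernel element $\iota$ which acts as $(-1)^n$ on any Hodge structure of weight $n$. The goal is to show that inside $\MT(\langle \mathsf{C}_1, \mathsf{C}_2 \rangle)$, the two subgroups $\MT(\mathsf{C}_1)$ and $\MT(\mathsf{C}_2)$ coincide. For this I would exploit the Clifford algebra structure: the polarization on each $W_i$ together with the embedding $V(1) \hookrightarrow \End(W_i)$ induces a polarization $q_i$ on $V$ for which $\MT(\mathsf{C}_i) \hookrightarrow \mathrm{GSpin}(V, q_i)$ realizes $\MT(\mathsf{C}_i)$ as the preimage of $\MT(V) \subset \mathrm{GO}(V, q_i)^+$ under the Clifford double cover $\mathrm{GSpin}(V, q_i) \twoheadrightarrow \mathrm{GO}(V, q_i)^+$. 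Since the Clifford class in $H^2(\MT(V), \mu_2)$ attached to a polarization is intrinsic to $V$ (independent of the specific polarization), the two extensions $\MT(\mathsf{C}_i)$ agree canonically as subgroups of $\MT(\langle \mathsf{C}_1, \mathsf{C}_2\rangle)$, yielding $W_1 \otimes W_2^\vee \in \langle V\rangle$ and hence $\mathsf{C}_1 = \mathsf{C}_2$.
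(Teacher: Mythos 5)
Your existence step via Deligne's construction, the reduction of the final assertion to uniqueness, and the observation that $\mathsf{C}_i = \langle V, W_i \rangle$ for \emph{any} choice of odd-weight object $W_i \in \mathsf{C}_i$ are all correct; that last point is a clean reformulation the paper does not state explicitly, and the reduction of uniqueness to showing $W_1\otimes W_2^\vee\in\langle V\rangle$ is exactly the right target. The problem is the final paragraph, which is where the real work has to happen and where the argument does not go through.

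The first unjustified claim is the embedding $V(1)\hookrightarrow \End(W_i)$. The hypothesis $\mathsf{C}_i^{\mathrm{ev}}=\langle V\rangle$ gives $\End(W_i)\in\langle V\rangle$, but it does not force $V(1)$ to appear as a sub-Hodge structure of $\End(W_i)$; for a general odd-weight $W_i$ (e.g.\ when $\MT(V)$ is a torus in the CM case and $W_i$ is a low-dimensional representation of the order-two central extension $\MT(\mathsf{C}_i)$) there is no such inclusion. Without it there is no induced form $q_i$ on $V$ and no map $\MT(\mathsf{C}_i)\to \mathrm{GSpin}(V,q_i)$, so the Clifford comparison never starts. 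Beyond that, the assertion that ``the Clifford class in $H^2(\MT(V),\mu_2)$ attached to a polarization is intrinsic to $V$'' is stated, not proved; and even granting it, matching extension classes would only give an abstract isomorphism of the two central extensions $\MT(\mathsf{C}_i)\twoheadrightarrow \MT(V)$, which is strictly weaker than what you need, namely equality \emph{as subgroups} of $\MT(\langle\mathsf{C}_1,\mathsf{C}_2\rangle)$. The paper's route avoids the Clifford algebra entirely: it sets $\mathsf{E}=\langle\mathsf{D}_1,\mathsf{D}_2\rangle$, shows directly from $\mathsf{D}_i^{\mathrm{ev}}=\langle V\rangle$ that the kernels of $\MT(\mathsf{E}^{\mathrm{ev}})\twoheadrightarrow\MT(\mathsf{D}_i^{\mathrm{ev}})$ coincide, and then uses the snake-lemma statement of Lemma \ref{rmk:oddHS} to transfer this to the full kernels of $\MT(\mathsf{E})\twoheadrightarrow\MT(\mathsf{D}_i)$, whence $\mathsf{D}_1=\mathsf{D}_2$ by Tannaka duality. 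Your last paragraph should be replaced by a kernel comparison of this kind.
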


Let us first prove the following straightforward lemma. 
Consider tannakian subcategories $\mathsf{C}\subset \mathsf{D}$ of $\mathrm{HS}_{\Q}^{\mathrm{pol}}$. Assume that both contain some Hodge structure of odd weight. The inclusion of $\mathsf{C}$ in $\mathsf{D}$ induces surjective homomorphisms of pro-algebraic groups $q\colon \MT(\mathsf{D})\to \MT(\mathsf{C})$ and $q^{\mathrm{ev}}\colon \MT(\mathsf{D}^{\mathrm{ev}})\to \MT(\mathsf{C}^{\mathrm{ev}})$. Let $\pi$ denote the double cover $\MT(\mathsf{D})\to \MT(\mathsf{D}^{\mathrm{ev}})$.
\begin{lemma}\label{rmk:oddHS}
In the above situation, the morphism $\pi\colon \MT(\mathsf{D})\to \MT(\mathsf{D}^{\mathrm{ev}})$ induces an isomorphism $\ker(q)\cong \ker(q^{\mathrm{ev}})$, and $\pi^{-1}(\ker(q^{\mathrm{ev}}))= \langle \iota\rangle\times \ker(q)$.
\end{lemma}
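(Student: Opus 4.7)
The plan is to set up the commutative diagram of short exact sequences relating $\MT(\mathsf{D}), \MT(\mathsf{D}^{\mathrm{ev}}), \MT(\mathsf{C})$ and $\MT(\mathsf{C}^{\mathrm{ev}})$ via the weight-$(-1)$ element $\iota$, and then extract both assertions from a single snake-lemma computation together with some elementary bookkeeping about preimages.

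First I would record the two short exact sequences encoding the even/odd dichotomy. Since $\mathsf{C}$ and $\mathsf{D}$ both contain Hodge structures of odd weight, $\iota_{\mathsf{C}}:= w_{\mathsf{C}}(-1)$ and $\iota_{\mathsf{D}}:= w_{\mathsf{D}}(-1) = \iota$ are non-trivial central elements of order two, and we get exact sequences $1 \to \langle \iota \rangle \to \MT(\mathsf{D}) \xrightarrow{\pi} \MT(\mathsf{D}^{\mathrm{ev}}) \to 1$ and similarly for $\mathsf{C}$. The map $q$ commutes with the weight cocharacters (these are intrinsic to any tannakian subcategory of $\HS^{\operatorname{pol}}_{\Q}$), hence $q(\iota) = \iota_{\mathsf{C}}$, and $q$ fits into a morphism of short exact sequences whose leftmost column is the isomorphism $\langle \iota\rangle \xrightarrow{\cong} \langle \iota_{\mathsf{C}}\rangle$ and whose rightmost column is $q^{\mathrm{ev}}$.

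Applying the snake lemma (and using that $q$ and $q^{\mathrm{ev}}$ are surjective, so their cokernels vanish) yields an exact sequence
\[
1 \to \ker(\langle \iota\rangle \to \langle \iota_{\mathsf{C}}\rangle) \to \ker(q) \xrightarrow{\pi|_{\ker(q)}} \ker(q^{\mathrm{ev}}) \to \coker(\langle \iota\rangle \to \langle \iota_{\mathsf{C}}\rangle) \to 1.
\]
The outer terms vanish since the left column is an isomorphism, so $\pi$ restricts to an isomorphism $\ker(q) \xrightarrow{\cong} \ker(q^{\mathrm{ev}})$, which is the first claim.

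For the second claim I would compute the preimage directly. By commutativity, $\pi^{-1}(\ker(q^{\mathrm{ev}})) = q^{-1}(\ker(\pi_{\mathsf{C}})) = q^{-1}(\langle \iota_{\mathsf{C}}\rangle)$. Both $\ker(q)$ and $\iota$ lie in this preimage, so $\langle \iota\rangle \cdot \ker(q) \subset q^{-1}(\langle \iota_{\mathsf{C}}\rangle)$. Conversely, if $g\in q^{-1}(\langle \iota_{\mathsf{C}}\rangle)$ then either $q(g) = 1$ (so $g\in \ker(q)$) or $q(g) = \iota_{\mathsf{C}} = q(\iota)$ (so $g\in \iota\cdot \ker(q)$), giving the reverse inclusion. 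Finally, $\langle \iota\rangle \cap \ker(q) = \{1\}$ because $q(\iota) = \iota_{\mathsf{C}} \neq 1$ by our assumption on $\mathsf{C}$, and $\iota$ is central, so the product is indeed an (internal) direct product $\langle \iota\rangle \times \ker(q)$.

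I do not expect any real obstacle here; the only point that deserves attention is to invoke the hypothesis that both categories contain an odd-weight Hodge structure at the right moment, first to set up the nontrivial exact sequences, and then to ensure the triviality of the intersection $\langle \iota\rangle \cap \ker(q)$ in the direct product decomposition.
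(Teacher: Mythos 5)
Your proof is correct and follows essentially the same route as the paper: the same commutative diagram of short exact sequences with $\iota$ in the left column, the snake lemma for $\ker(q)\cong\ker(q^{\mathrm{ev}})$, and the centrality of $\iota$ together with $\iota\notin\ker(q)$ for the direct-product description of the preimage. You merely spell out the preimage computation and the triviality of $\langle\iota\rangle\cap\ker(q)$ in more detail than the paper does.
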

\begin{proof}
Consider the commutative diagram with exact rows
\[
\begin{tikzcd}
1\arrow{r} &\langle \iota \rangle \arrow{d}{\cong} \arrow{rr} && \MT(\mathsf{D})\arrow{d}[two heads]{q} \arrow{rr}{\pi} && \MT(\mathsf{D}^{\mathrm{ev}}) \arrow{d}[two heads]{q^{\mathrm{ev}}} \arrow{r}&1\\
1\arrow{r} &\langle\iota\rangle \arrow{rr} && \MT(\mathsf{C})\arrow{rr} && \MT(\mathsf{C}^{\mathrm{ev}})\arrow{r}&1.
\end{tikzcd}
\]
The snake lemma implies the isomorphism $\ker(q) \cong \ker(q^{\mathrm{ev}})$. Moreover, since $\iota\notin\ker(q)$ by assumption and it is central in $\MT(\mathsf{D})$, we have $\pi^{-1}(\ker{q}^{\mathrm{ev}})=\iota\times \ker(q)$.
\end{proof}

\begin{proof}[Proof of Theorem \ref{thm:kuga-satake}]
Assume given two tannakian subcategories $\mathsf{D}_1, \mathsf{D}_2 \subset \mathrm{HS}^{\mathrm{pol}}_{\Q}$, both containing some Hodge structure of odd weight and such that $\langle V\rangle=\mathsf{D}_i^{\mathrm{ev}}\subsetneq \mathsf{D}_i$ for $i=1, 2$. Let $\mathsf{E}$ be the tannakian subcategory generated by $\mathsf{D}_1$ and $\mathsf{D}_2$.
We have surjective morphisms of pro-algebraic groups
$q_i\colon \MT(\mathsf{E}) \to  \MT(\mathsf{D}_i)$, $i=1,2$. We claim that these are both isomorphisms. From the commutative diagram
\[
\begin{tikzcd}
\MT(\mathsf{E}^{\mathrm{ev}}) \arrow[two heads]{rr}{q_1^{\mathrm{ev}}} \arrow[two heads]{d}{q_2^{\mathrm{ev}}} && \MT(\mathsf{D}_1^{\mathrm{ev}}) \arrow[two heads]{d}{\cong} \\
\MT(\mathsf{D}_2^{\mathrm{ev}}) \arrow[two heads]{rr}{\cong} && \MT(V)
\end{tikzcd}
\]
it is apparent that $\ker(q_1^{\mathrm{ev}})=\ker(q_2^{\mathrm{ev}})$.
Lemma \ref{rmk:oddHS} now implies that $\ker(q_1)=\ker(q_2)$ in $\MT(\mathsf{E})$. But this precisely means that the subcategories $\mathsf{D}_1$ and $\mathsf{D}_2$ of $\mathsf{E}$ coincide, and we conclude that we have $\mathsf{D}_1=\mathsf{E}=\mathsf{D}_2$.
\end{proof}

Thanks to Andr\'e's Theorem \ref{mhcabelian}, we can lift Theorem \ref{thm:kuga-satake} to the category of abelian Andr\'e motives.
\begin{corollary}[Motivic Kuga--Satake category]\label{cor:motivicKS}
    If $M\in \AM$ is an abelian Andr\'e motive whose Hodge realization is of $\mathrm{K3}$-type, then
    there exists a unique tannakian subcategory $\mathsf{KS}(M)$ of $\AM$ such that $$\langle M \rangle_{\AM}=\mathsf{KS}(M)^{\mathrm{ev}}\subsetneq\mathsf{KS}(M).$$ Moreover, $\mathsf{KS}(M)=\langle \mathcal{H}^1(A) \rangle_{\AM} $ for any Kuga--Satake variety $A$ (Definition \ref{def:Kuga-Satake}) for the Hodge structure $r(M)$.
\end{corollary}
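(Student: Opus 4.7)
The strategy is to mirror the proof of Theorem \ref{thm:kuga-satake} in the category of Andr\'e motives, with Andr\'e's Theorem \ref{mhcabelian} doing the work to transfer the existence part from Hodge structures, and with a direct adaptation of Lemma \ref{rmk:oddHS} for the uniqueness part. First, I would pick any Kuga--Satake variety $A$ for $V := r(M)$ in the sense of Definition \ref{def:Kuga-Satake}, and set $\mathsf{KS}(M) := \langle \mathcal{H}^1(A) \rangle_{\AM}$. Since both $M$ and $\mathcal{H}^1(A)$ are abelian Andr\'e motives, the subcategories $\langle M \rangle_{\AM}$ and $\mathsf{KS}(M)^{\mathrm{ev}}$ lie inside $\AM^{ab}$.

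To verify $\langle M \rangle_{\AM} = \mathsf{KS}(M)^{\mathrm{ev}}$, the key point is that by Theorem \ref{mhcabelian} the motivic Galois groups of $M$ and of $\mathcal{H}^1(A)$ coincide with the Mumford--Tate groups of their Hodge realizations. Via the realization functor this yields equivalences of tannakian categories $\langle M \rangle_{\AM} \simeq \langle V \rangle_{\HS}$ and $\mathsf{KS}(M) \simeq \langle H^1(A) \rangle_{\HS}$, compatible with the weight grading; hence the desired equality of even subcategories reduces to Theorem \ref{thm:kuga-satake}. Strictness $\mathsf{KS}(M)^{\mathrm{ev}} \subsetneq \mathsf{KS}(M)$ is immediate since $\mathcal{H}^1(A)$ has odd weight.

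For uniqueness, I would observe that Lemma \ref{rmk:oddHS} is entirely formal: its proof uses only the weight cocharacter $w\colon \mathbb{G}_m \to \Gmot(\AM)$ (available in $\AM$ as K\"unneth projectors are motivated) together with the central involution $\iota = w(-1)$, and it therefore carries over verbatim to any nested pair of tannakian subcategories of $\AM$ containing odd-weight objects, replacing $\MT$ by $\Gmot$ throughout. The uniqueness argument of Theorem \ref{thm:kuga-satake} then applies directly: given two tannakian subcategories $\mathsf{D}_1, \mathsf{D}_2 \subset \AM$ with $\langle M \rangle_{\AM} = \mathsf{D}_i^{\mathrm{ev}} \subsetneq \mathsf{D}_i$, the tannakian envelope $\mathsf{E}$ of $\mathsf{D}_1$ and $\mathsf{D}_2$ produces surjections $q_i\colon \Gmot(\mathsf{E}) \twoheadrightarrow \Gmot(\mathsf{D}_i)$ whose even counterparts share the same kernel; the motivic version of Lemma \ref{rmk:oddHS} then forces $\ker(q_1) = \ker(q_2)$, so $\mathsf{D}_1 = \mathsf{E} = \mathsf{D}_2$. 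Independence of $\mathsf{KS}(M)$ on the choice of $A$ is a formal consequence of uniqueness.

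The main conceptual obstacle is absorbed entirely by Theorem \ref{mhcabelian}: once one knows that every Hodge class on an abelian motive is motivated, the passage from the Hodge-theoretic statement of Theorem \ref{thm:kuga-satake} to its motivic counterpart is essentially bookkeeping via the weight formalism of $\AM$.
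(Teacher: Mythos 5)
Your proposal is correct and fills in exactly the details that the paper leaves implicit: the paper's own "proof" is the single sentence preceding the corollary, invoking Theorem~\ref{mhcabelian} to lift Theorem~\ref{thm:kuga-satake} to $\AM$, and your argument is a careful unpacking of that lift. Two small points worth making explicit: (1) to conclude $\langle M\rangle_{\AM}=\mathsf{KS}(M)^{\mathrm{ev}}$ one should observe that $M$ and $\mathcal{H}^1(A)$ lie jointly in $\AM^{\mathrm{ab}}$, on which $r$ restricts to a fully faithful (indeed an equivalence onto its essential image) tensor functor; this is what ensures that the object of $\mathsf{KS}(M)$ realizing to $V$ is actually isomorphic to $M$, so that $M\in\mathsf{KS}(M)$ and the two separate equivalences you write down are compatible; and (2) for the uniqueness argument to give uniqueness among all tannakian subcategories of $\AM$ (not merely abelian ones), it is essential, as you correctly note, that the weight cocharacter and the central involution $\iota$ live in the full motivic Galois group of any tannakian subcategory of $\AM$ — a consequence of the K\"unneth projectors being motivated. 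With those points understood, your proof is complete and is the intended one.
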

The above discussion leads us naturally to the following question about relations among different Kuga--Satake abelian varieties. 

\emph{Question:} Let $A$ and $B$ be abelian varieties such that $\langle H^1(A) \rangle = \langle H^1(B) \rangle$ in $\mathrm{HS}^{\mathrm{pol}}_{\Q}$. Does this imply the existence of integers $k, l$, such that $A$ is dominated by $B^k$ and $B$ is dominated by $A^l$?

\bibliographystyle{amsplain}
\bibliography{bibliography}{}

\end{document}